\numberwithin{equation}{section}
\numberwithin{figure}{section}
\theoremstyle{plain}
\newtheorem{thm}{\protect\theoremname}[section]
\theoremstyle{remark}
\newtheorem{rem}[thm]{\protect\remarkname}
\theoremstyle{definition}
\newtheorem{problem}[thm]{\protect\problemname}
\theoremstyle{definition}
\newtheorem{defn}[thm]{\protect\definitionname}
\theoremstyle{plain}
\newtheorem{lem}[thm]{\protect\lemmaname}
\theoremstyle{plain}
\newtheorem{prop}[thm]{\protect\propositionname}
\newenvironment{lyxlist}[1]
	{\begin{list}{}
		{\settowidth{\labelwidth}{#1}
		 \setlength{\leftmargin}{\labelwidth}
		 \addtolength{\leftmargin}{\labelsep}
		 }}
	{\end{list}}
\theoremstyle{plain}
\newtheorem{cor}[thm]{\protect\corollaryname}
\newcommand{\N}{\mathbb{N}}
\newcommand{\df}{\mathrm{d}}
\providecommand{\corollaryname}{Corollary}
\providecommand{\definitionname}{Definition}
\providecommand{\lemmaname}{Lemma}
\providecommand{\problemname}{Problem}
\providecommand{\propositionname}{Proposition}
\providecommand{\remarkname}{Remark}
\providecommand{\theoremname}{Theorem}
\begin{document}
\global\long\def\df{\mathrm{def}}%
\global\long\def\eqdf{\stackrel{\df}{=}}%
\global\long\def\ep{\varepsilon}%
\global\long\def\ind{\mathds{1}}%
\global\long\def\cl{\mathrm{cl}}%
\global\long\def\N{\mathbb{N}}%
\global\long\def\C{\mathbb{C}}%
\global\long\def\tr{\textup{tr}}%
\global\long\def\d{\mathbf{d}}%
\global\long\def\t{\mathbf{t}}%

\title{Spectral gap with polynomial rate for Weil-Petersson random surfaces}
\author{Will Hide, Davide Macera and Joe Thomas}
\maketitle
\begin{abstract}
We show that there is a constant $c>0$ such that a genus $g$ closed
hyperbolic surface, sampled at random from the moduli space $\mathcal{M}_{g}$
with respect to the Weil-Petersson probability measure, has Laplacian
spectral gap at least $\frac{1}{4}-O\left(\frac{1}{g^{c}}\right)$
with probability tending to $1$ as $g\to\infty$. This extends and
gives a new proof of a recent result of Anantharaman and Monk proved
in the series of works \cite{An.Mo2022,An.Mo2023,An.Mo.sg2024,AM-Mobius,An.Mo2025}.

Our approach adapts the polynomial method for the strong convergence
of random matrices, introduced by Chen, Garza-Vargas, Tropp and van
Handel \cite{Ch.Ga.Tr.va2024}, and its generalization to the strong
convergence of surface groups by Magee, Puder and van Handel \cite{Ma.Pu.vH2025},
to the Laplacian on Weil-Petersson random hyperbolic surfaces.

{\footnotesize\tableofcontents{}}{\footnotesize\par}

\newpage{}
\end{abstract}

\section{Introduction}

For a closed and connected hyperbolic surface $X$, the $L^{2}$-spectrum
of the Laplacian $\Delta_{X}$ consists of discrete eigenvalues 
\[
0=\lambda_{0}\left(X\right)<\lambda_{1}\left(X\right)\leqslant\dots\leqslant\lambda_{j}\left(X\right)\leqslant\dots,
\]
with $\lambda_{j}\left(X\right)\to\infty$ as $j\to\infty$. The spectral
gap $\lambda_{1}\left(X\right)$ captures important geometric and
dynamical information about the surface $X$. It governs the exponential
rate of mixing of the geodesic flow, gives error terms in prime geodesic
theorems and quantifies how highly connected the surface is. By a
result of Huber \cite{Huber}, for any sequence of closed surfaces
$\left\{ X_{i}\right\} $ with genera $g_{i}\to\infty$, $\limsup_{i\to\infty}\lambda_{1}\left(X_{i}\right)\leqslant\frac{1}{4}$
so that $\frac{1}{4}$ is the asymptotically optimal spectral gap
in large genus. In this paper, we study the size of the spectral gap
for random closed hyperbolic surfaces.

Let $\mathcal{M}_{g}$ denote the moduli space of genus $g$ hyperbolic
surfaces. The normalised volume form arising from the Weil-Petersson
metric gives a natural probability measure $\mathbb{P}_{g}$ on $\mathcal{M}_{g}$
($\S$\ref{subsec:Background}). The main theorem of this paper is
the following.
\begin{thm}
\label{thm:Main-Thm}There is a $c>0$ such that a Weil-Petersson
random hyperbolic surface $X\in\mathcal{M}_{g}$ satisfies 
\begin{equation}
\lambda_{1}\left(X\right)\geqslant\frac{1}{4}-O\left(\frac{1}{g^{c}}\right)\label{eq:main-theorem}
\end{equation}
with probability tending to $1$ as $g\to\infty$.
\end{thm}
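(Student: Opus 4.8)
The plan is to detect small Laplacian eigenvalues through a \emph{bounded} self-adjoint operator and then run the polynomial method on the Selberg trace formula. Fix a compactly supported, bi-$K$-invariant point-pair invariant $\varphi$ on $\mathrm{PSL}(2,\mathbb{R})$ and let $\pi_X(\varphi)$ be the resulting convolution operator on $L^2_0(X)=L^2(X)\ominus\mathbb{C}$; on the tempered part of $L^2_0(X)$ its spectrum lies in the interval $I_\varphi=\widehat\varphi\big([\tfrac14,\infty)\big)$ (the spherical transform over the tempered spectrum), whereas an eigenvalue $\lambda_1(X)=\tfrac14-\eta$ with $\eta>0$ produces an eigenvalue $\widehat\varphi(\tfrac14-\eta)$ of $\pi_X(\varphi)$ a distance $\asymp\eta$ outside $I_\varphi$, for suitably chosen $\varphi$. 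Thus, for a bump function $\chi\geq 0$ supported just outside $I_\varphi$ and equal to $1$ on the relevant range, $\#\{j\geq 1:\lambda_j(X)\leq\tfrac14-\eta\}\leq \tr\,\chi(\pi_X(\varphi)|_{L^2_0(X)})$, and it suffices to prove $\mathbb{E}_g\big[\tr\,\chi(\pi_X(\varphi)|_{L^2_0(X)})\big]=O(g^{-c'})$ for $\chi$ of width $\asymp g^{-c}$; Markov's inequality then yields \eqref{eq:main-theorem}. The handful of genuinely small eigenvalues forced by atypically short geodesics, together with the crude deterministic bound $O(g)$ on the number of eigenvalues below $\tfrac14$, are handled separately using Mirzakhani's estimate $\mathbb{P}_g(\mathrm{sys}(X)<\delta)=O(\delta^2)$.

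The Selberg trace formula expresses $\mathbb{E}_g$ of traces of functions of $\pi_X(\varphi)$ as a volume (identity) term plus the expectation of a weighted sum over closed geodesics of $X$. When these are assembled into $\tr\,\chi(\pi_X(\varphi))$ --- via a resolvent representation of $\chi$, or a polynomial approximation organized so as to avoid coefficient blow-up --- the volume terms reconstitute the continuous spectral mass of the limiting operator, which lives inside $I_\varphi$ where $\chi$ vanishes, so they contribute only the approximation error and effectively cancel. What remains is the expected geodesic contribution, which I would analyze with Mirzakhani's integration formula and its extension to non-simple closed geodesics: each expectation becomes an integral of Weil--Petersson volume polynomials $V_{g,n}$ against explicit geometric weights, and the large-genus asymptotics of the $V_{g,n}$ --- controlled by Mirzakhani's topological recursion --- produce an expansion in powers of $1/g$ whose leading term matches the regular representation of the surface group (equivalently the $\mathbb{H}$-model, whose Laplacian has spectrum exactly $[\tfrac14,\infty)$) and hence contributes nothing to $\chi$, while the successive corrections are organized by topological types of bounded complexity. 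This is the surface analogue of the genus expansion of $\mathbb{E}[\tfrac1n\tr p(X_n)]$, with Mirzakhani's recursion in the role played by the loop equations in \cite{Ch.Ga.Tr.va2024}.

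The core step is then to run the polynomial method of \cite{Ch.Ga.Tr.va2024}, in the surface-group incarnation of \cite{Ma.Pu.vH2025}, on this expansion: one shows that for each fixed $k$ the remainder in the $1/g$-expansion is bounded by $\mathrm{poly}(D)\cdot g^{-k}$, where $D$ is the relevant degree (equivalently, the cutoff on geodesic length), and then invokes the interpolation-and-positivity argument of \cite{Ch.Ga.Tr.va2024} --- exploiting analyticity in $1/g$ together with the positivity of $\tr\,\chi(\pi_X(\varphi))$ --- to conclude $\mathbb{E}_g[\tr\,\chi(\pi_X(\varphi)|_{L^2_0(X)})]=O(g^{-c'})$. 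Because $\chi$ must resolve a spectral window of width $\asymp g^{-c}$, the degrees that have to be reached grow like a fixed power of $g$, so the expansion must be controlled over closed geodesics of length up to a power of $g$ --- a range in which the number of contributing geodesics is already super-polynomial in $g$, and for which a single fixed test function fed into the trace formula, as in Huber's classical argument, loses a logarithm and reaches only $\tfrac14-O\big((\log\log g/\log g)^2\big)$.

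Keeping that control uniform is the main obstacle, and is where essentially all the technical work lies: it is the precise analogue of estimating $\mathbb{E}[\tr w(X_n)]$ for words $w$ whose length grows with $n$ in the random-matrix proofs. Establishing the $\mathrm{poly}(D)\cdot g^{-k}$ bound should require combining the combinatorics of Mirzakhani's recursion and the error terms in the $V_{g,n}$ asymptotics with an a priori geometric regularity of Weil--Petersson random surfaces of ``tangle-free'' type --- no short closed geodesic filling a subsurface of small complexity, deduced from the volume estimates --- so that the entropy of long geodesics does not overwhelm the power saving, and then making the resulting bookkeeping of topological types interact cleanly with the interpolation step of the polynomial method.
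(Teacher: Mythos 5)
Your high-level architecture --- a bounded operator built from a point-pair invariant with compactly supported Fourier data, Selberg's trace formula, Mirzakhani's integration formula, a $1/g$-expansion, and the Chen--Garza-Vargas--Tropp--van Handel master inequality scheme --- is the one the paper follows (with $\pi_X(\varphi)$ replaced by $f(\sqrt{\Delta-1/4})$ where $f=\hat{f_0}$, the same object in different notation). But as written there is a gap precisely at the point where the proof closes, and one of your proposed mechanisms is not the one that works.

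The gap: you argue that the leading ($g^0$) term of the expansion lies in $I_\varphi$ and hence dies against $\chi$, but you never say why the $1/g$ coefficient $\nu_1$ also dies against $\chi$, and the master inequality only gives you $\big|\mathbb{E}_g[\tr h]-\nu_0(\tilde h)-\tfrac{1}{g}\nu_1(\tilde h)\big|\lesssim g^{-2}$. Showing $\nu_1(\tilde h)=0$ is a genuine and separate step: in \cite{Ch.Ga.Tr.va2024,Ma.Pu.vH2025} this is where the ``strong convergence of measures'' input and a soft support lemma for the $\nu_i$ enter, and neither appears in your outline. In the present setting the paper proves it by an \emph{explicit} computation (Lemma~\ref{lem:support-dists}): $\nu_1$ arises from the simple-geodesic ($k=1$) term of the trace formula and the trivial eigenvalue, one writes $\nu_1=\tilde{\nu}_1+(\nu_1-\tilde\nu_1)$ with $\tilde\nu_1(P)=f(i/2)\,P(f(i/2))$, and shows $\nu_1-\tilde\nu_1$ is supported in $[-f(0),f(0)]$ by estimating its moments via the inequality $\sinh^2(x/2k)\leq\tfrac1k\sinh(x/2)$. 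This direct-analysis step replaces the abstract support argument of prior works and is flagged as a point of departure; without it, or its abstract analogue, the argument does not close.

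Two further points. First, the ``tangle-free'' conditioning you propose is not needed and is not what the paper does: the expectation is computed unconditionally, non-simple geodesics are handled by the Anantharaman--Monk extension of Mirzakhani's integration formula (Lemma~\ref{lem:integration-lemma}), and the entropy is controlled unconditionally because the test function $\check f^{*t}$ has support $[-c_1 t,c_1 t]$ (so only geodesics of length $\lesssim t$ contribute), filling geodesics on a subsurface of Euler characteristic $\chi$ have length $\gtrsim|\chi|$ (Lemma~\ref{lem:filling-bound}), and their number grows like $|\chi|e^L$ (Lemma~\ref{lem:counting-filling}). Second, ``$\mathrm{poly}(D)\cdot g^{-k}$'' is not the error shape you actually need: the polynomial method requires a Gevrey-type bound $(cq)^{cq}/g^{q}$ uniformly in $q$ (Theorem~\ref{thm:main-technical-result}), and producing this for the Weil--Petersson model is the heart of the paper. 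It forces new \emph{effective} large-genus expansions of ratios of Weil--Petersson volumes and intersection numbers with explicit error $(ck)^{ck}/g^{k+1}$ (Theorem~\ref{thm:Wp-vols}, Theorem~\ref{thm:expansions}), obtained by a delicate bookkeeping of the coefficients in Mirzakhani--Zograf's recursion; ``controlled by Mirzakhani's topological recursion'' elides the part that occupies most of Section~\ref{sec:Large-genus-expansion}.
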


For the random covering model (c.f. $\S$\ref{subsec:Related-work}),
the analogue of Theorem \ref{thm:Main-Thm} was proven by the authors
in \cite{Three-mates}, relying on the recent breakthrough work of
Magee, Puder and van Handel \cite{Ma.Pu.vH2025}. Theorem \ref{thm:Main-Thm}
with $o(1)$ error instead of $O\left(\frac{1}{g^{c}}\right)$ was
proven by Anantharaman and Monk in a series of papers \cite{An.Mo2022,An.Mo2023,An.Mo.sg2024,AM-Mobius,An.Mo2025};
our work improves upon and gives a new proof of their result. The
significance of the polynomial error rate $O\left(\frac{1}{g^{c}}\right)$
is explained in the next section.

The overall approach of the current article is inspired heavily by
\cite{Ma.Pu.vH2025}, which builds upon the polynomial method \cite{Ch.Ga.Tr.va2024,Magee-delaSalle,Ch.Ga.Ha2024}
and shows how spectral gap information can be deduced from Gevrey
statistics. Adapting a similar strategy to approach the Weil-Petersson
model brings about several significant challenges that require new
technical innovations to overcome that we believe will be of use for
results beyond Theorem \ref{thm:Main-Thm}. We explain this strategy
and the difficulties in detail in $\S$ \ref{subsec:Overview-of-the-proof}.
\begin{rem}
The constant $c$ in Theorem \ref{thm:Main-Thm} can in principle
be made explicit but we do not pursue this here. 
\end{rem}

\subsubsection*{Motivation}

Selberg's Conjecture \cite{Se1965} predicts that $\lambda_{1}\left(X\left(N\right)\right)\geqslant\frac{1}{4}$
for every $N\geqslant1$ where $X\left(N\right)\eqdf\Gamma\left(N\right)\backslash\mathbb{H}$
and $\Gamma\left(N\right)$ denotes the principal congruence subgroup
of $\textup{SL}_{2}\left(\mathbb{Z}\right)$ of level $N$. One might
call surfaces $X$ with $\lambda_{1}\left(X\right)\geqslant\frac{1}{4}$
$\textit{Ramanujan surfaces}$ in analogy with Ramanujan graphs or
$\textit{Selberg surfaces}$ in light of Selberg's conjecture. 

The fact that there exists a sequence of closed surfaces $\left\{ X_{i}\right\} _{i\in\mathbb{N}}$
with genera $g_{i}\to\infty$ and $\lambda\left(X_{i}\right)\to\frac{1}{4}$
was conjectured by Buser \cite{Buser2} and proven by Magee and the
first named author \cite{Hi.Ma2023}. A significant and well-known
open problem -- likely originating in some form since the conception
of Buser's conjecture and experiencing a resurgence in interest since
its recent resolution -- is whether there exist $\left\{ X_{i}\right\} _{i\in\mathbb{N}}$
with $g_{i}\to\infty$ and $\lambda_{1}\left(X_{i}\right)\geqslant\frac{1}{4}$. 
\begin{problem}
\label{prob:Selberg-surfaces}Do there exist Selberg surfaces of arbitrarily
large genus?
\end{problem}

The existence of infinite families of Ramanujan graphs was proven
by Lubotzky, Phillips and Sarnak \cite{LPS} and independently by
Margulis \cite{M88} in seminal works. In another remarkable work,
Marcus, Spielman and Srivastava \cite{MSS15} proved the existence
of infinite families of bipartite Ramanujan graphs of every degree.

Recently, Huang, Mckenzie and Yau \cite{Hu.Mc.Ya2024} studied the
distribution of $\left(\lambda_{1}\left(\mathcal{G}_{n}\right)-2\sqrt{d-1}\right)n^{\frac{2}{3}}$
for uniformly random $d$-regular graphs $\mathcal{G}_{n}$ on $n$
vertices, showing that, after re-scaling by a constant $C(d)$, it
converges to the Tracy-Widom distribution with $\beta=1$ as $n\to\infty$.
As a consequence, they conclude that approximately $69\%$ of $d$-regular
graph are Ramanujan. 

It is expected from the Bohigas, Giannoni and Schmidt conjecture \cite{Bo.Gi.Sc1984}
that due to the time-reversal symmetry and chaotic nature of the geodesic
flow on hyperbolic surfaces, the spectral statistics of the Laplacian
on `typical' hyperbolic surfaces should exhibit fluctuation properties
akin to the Gaussian Orthogonal Ensemble. For this random matrix ensemble,
the limiting behavior of the largest eigenvalue (with suitable normalization)
is given by the Tracy-Widom distribution with $\beta=1$ \cite{Tr.Wi1996}.
It is therefore natural to conjecture that, after possibly re-scaling
by a constant, $\left(\frac{1}{4}-\lambda_{1}\left(X\right)\right)\textup{\ensuremath{\left(\textup{Vol}\left(X\right)\right)}}^{\frac{2}{3}}$
converges to the Tracy-Widom distribution for suitable models of random
hyperbolic surfaces, which would provide a positive answer to Problem
\ref{prob:Selberg-surfaces}. 

A first step in proving this difficult conjecture would be to determine
the correct scale at which $\lambda_{1}$ fluctuates around $\frac{1}{4}$.
Theorem \ref{thm:Main-Thm} makes significant progress in this direction.
By the above discussion, it is natural to expect that the optimal
value of $c$ one could obtain in (\ref{eq:main-theorem}) is $\frac{2}{3}$.

\subsection{Related work\label{subsec:Related-work}}

\subsubsection*{Spectral gaps of random hyperbolic surfaces}
\begin{enumerate}
\item The Brooks-Makover model \cite{BrooksMakover}: take $2n$ ideal hyperbolic
triangles, glue them according to a random $3$-regular graph and
apply a compactification procedure.
\item The Random Covering model \cite{Ma.Na.Pu2022}: fix a base manifold
and consider degree-$n$ Riemannian covers uniformly at random. Here
one considers the relative spectral gap $\lambda_{1}^{\textup{new}}$,
ignoring the eigenvalues of the base manifold.
\item The Weil-Petersson model \cite{Gu.Pa.Yo11,MirzakhaniRandom}: obtained
by normalising the Weil-Petersson volume form on $\mathcal{M}_{g}$.
\end{enumerate}
Brooks and Makover \cite{BrooksMakover} were the first to study the
spectral gap of random hyperbolic surfaces, proving that a random
closed surface in their model has a spectral gap with high probability
(w.h.p.). Mirzakhani \cite{MirzakhaniRandom} obtained the first explicit
spectral gap result, proving that a Weil-Petersson random surface
has spectral gap at least $\frac{1}{4}\left(\frac{\log(2)}{2\pi+\log(2)}\right)^{2}\approx0.0024$
w.h.p..

For the random covering model, the first spectral gap result was achieved
by Magee, Naud and Puder \cite{Ma.Na.Pu2022} who proved that random
covers of closed hyperbolic surfaces have relative spectral gap at
least $\frac{3}{16}-\varepsilon$ w.h.p.. A gap of $\frac{3}{16}-\varepsilon$
was subsequently proved for the Weil-Petersson model by two independent
teams, Wu and Xue \cite{Wu.Xu2022} and Lipnowski and Wright \cite{Li.Wr2024}.

The first proof that random hyperbolic surfaces have near optimal
spectral gaps was obtained by Magee and the first named author \cite{Hi.Ma2023},
where it was shown that random covers of finite-area non-compact hyperbolic
surfaces have spectral gap at least $\frac{1}{4}-\ep$ w.h.p.. Using
a compactification procedure of Buser, Burger and Dodzuik \cite{BBD},
this led to a proof of Buser's conjecture \cite{Buser2}.

As a byproduct of a recent breakthrough on the strong convergence
of surface groups, Magee, Puder and van Handel \cite{Ma.Pu.vH2025}
obtained the relative spectral gap of $\frac{1}{4}-\varepsilon$ for
random covers of closed hyperbolic surfaces w.h.p.. Later work of
the present authors \cite{Three-mates} showed that $\varepsilon$
can be taken to be $O\left(\frac{1}{n^{c}}\right)$.

For near optimal spectral gaps in the Weil-Petersson model, Anantharaman
and Monk proved a spectral gap of $\frac{1}{4}-\varepsilon$ w.h.p.
in a series of papers \cite{An.Mo2022,An.Mo2023,An.Mo.sg2024,AM-Mobius,An.Mo2025}.
This followed an intermediate spectral gap result of $\frac{2}{9}-\varepsilon$
by the same authors \cite{An.Mo2023}.

We also highlight the recent work of Calderon, Magee and Naud \cite{CMN}
where is was proved that random covers of Schottky surfaces have near
(conjecturally) optimal spectral gap. Moreover, for closed negatively
curved surfaces \cite{Hi.Mo.Nau2025} and subsequently for geometrically
finite surfaces (possibly of infinite-area and possibly with cusps)
\cite{Mo25}, a near optimal spectral gap was obtained for random
covers, see also the recent work \cite{Ba.Mo.Po2025}.

\subsubsection*{Random graphs and strong convergence}

It was conjectured by Alon \cite{Alon} and proved by Friedman \cite{Friedman}
that a random regular graph $\mathcal{G}_{n}$ on $n$ vertices has
spectral gap $\lambda_{1},\left|\lambda_{n}\right|\leqslant2\sqrt{d-1}+\ep$
w.h.p.. In other words, a random $d$-regular graph is a near optimal
expander. Bordenave gave an alternative proof \cite{bordenave2015new}
of Friedman's Theorem, and showed that one can take $\ep=\text{const}\cdot\left(\frac{\log\log n}{\log n}\right)^{2}.$
Subsequently, it was shown by Huang and Yau \cite{HY21} that one
can take $\ep=O\left(n^{-c}\right)$ for some $c>0$ and then Mckenzie,
Huang and Yau proved the optimal result with $\ep=\frac{1}{n^{\frac{2}{3}-o(1)}}$
\cite{Hu.Mc.Ya2024}. 

Shortly after the appearance of the article \cite{Hu.Mc.Ya2024},
a breakthrough of Chen, Tropp, Garza-Vargas and van Handel \cite{Ch.Ga.Tr.va2024}
gave a remarkable new proof of Friedman's theorem with $\ep=O\left(\frac{1}{n^{\frac{1}{8}}}\right)$
(subsequently improved to $O\left(\frac{1}{n^{\frac{1}{6}}}\right)$
in \cite{Ch.Ga.Ha2024} by a refinement of their methods). It is this
method \cite{Ch.Ga.Tr.va2024}, coined the polynomial method, and
its subsequent development in \cite{Ch.Ga.Ha2024,Magee-delaSalle,Ma.Pu.vH2025},
that the methods of the current article are based on. 

It was conjectured by Friedman \cite{FriedmanRelative} that for any
fixed finite graph $\mathcal{G}$ and for any $\ep>0$, a uniformly
random degree $n$ cover $\mathcal{G}_{n}$ has no new eigenvalues
with absolute value above $\rho\left(\tilde{\mathcal{G}}\right)+\ep$
with probability tending to $1$ as $n\to\infty$. Here $\rho\left(\tilde{\mathcal{G}}\right)$
is the spectral radius of the adjacency operator on $\ell^{2}\left(\tilde{\mathcal{G}}\right)$,
where $\tilde{\mathcal{G}}$ is the universal cover of $\mathcal{G}$.
Friedman's Conjecture was proved in a breakthrough of Bordenave and
Collins \cite{BordenaveCollins}. In fact they proved a stronger statement,
which we now explain.

For a discrete group $\Gamma$, a sequence of finite-dimensional unitary
representations $\left\{ \left(\rho_{i},V_{i}\right)\right\} $ are
said to $\textit{strongly converge}$ to the left regular representation
$\left(\lambda,\ell^{2}\left(\Gamma\right)\right)$ if for any $z\in\mathbb{C}\left[\Gamma\right]$,

\begin{equation}
\|\rho_{i}\left(z\right)\|_{\textup{Op}:V_{i}\to V_{i}}\to\|\lambda\left(z\right)\|_{\textup{Op}:\ell^{2}\left(\Gamma\right)\to\ell^{2}\left(\Gamma\right)},\label{eq:strong-conv}
\end{equation}
as $i\to\infty$. We refer the reader to the recent articles of Magee
\cite{Magee-Survey} and van Handel \cite{vH2025} for more complete
historical accounts and surveys on this property. When $\Gamma=\mathbb{F}$
is a finitely generated free group, the existence of a sequence $\left\{ \left(\rho_{i},V_{i}\right)\right\} _{i\in\mathbb{N}}$
satisfying (\ref{eq:strong-conv}) was proved by Haagerup and Thorbjørnsen
\cite{HaagerupThr}. Sampling $\phi\in\textup{Hom}\left(\mathbb{F},S_{n}\right)$
uniformly at random, Bordenave and Collins \cite{BordenaveCollins}
prove that the representations $\left(\textup{Std}_{n-1}\circ\phi_{i},\ell_{0}^{2}\left(\left\{ 1,\dots,n\right\} \right)\right)$
strongly converge to $\left(\lambda,\ell^{2}\left(\mathbb{F}\right)\right)$
in probability where $\left(\textup{Std}_{n-1},\ell_{0}^{2}\left(\left\{ 1,\dots,n\right\} \right)\right)$
is the standard $n-1$ dimensional irreducible representation of $S_{n}$.
Bordenave and Collins also prove strong quantitative forms of this
statement in \cite{Bo.Co2023}. Chen, Tropp, Garza-Vargas and van
Handel \cite{Ch.Ga.Ha2024} introduced a new and robust approach to
proving quantitative forms of (\ref{eq:strong-conv}), which as a
special case implies Friedman's Theorem with $\ep=O\left(\frac{1}{n^{\frac{1}{8}}}\right)$.

Property (\ref{eq:strong-conv}) is particularly relevant for producing
spectral gaps of covering manifolds. It is shown by Magee and the
first named author in \cite{Hi.Ma2023,Lo.Ma2023} that for a finite-area
hyperbolic surface $\Gamma\backslash\mathbb{\mathbb{H}}$, property
(\ref{eq:strong-conv}) for $\left(\textup{Std}_{n-1}\circ\phi_{i},\ell_{0}^{2}\left(\left\{ 1,\dots,n\right\} \right)\right)$
where $\phi_{i}\in\textup{Hom}\left(\Gamma,S_{n}\right)$ implies
that the relative spectral gap $\lambda_{1}^{\textup{new}}\left(X_{i}\right)\to\frac{1}{4}$
where $X_{i}=\textup{Stab}_{\phi_{i}}\left(1\right)\backslash\mathbb{H}$.
Therefore proving near optimal spectral gaps in the random covering
model is reduced to establishing (\ref{eq:strong-conv}) for uniformly
random $\phi\in\textup{Hom}\left(\Gamma,S_{n}\right)$.

When the base manifold is a finite-area non-compact hyperbolic surface,
the work of Bordenave and Collins \cite{BordenaveCollins} can be
applied, as in \cite{Hi.Ma2023}. When $\Gamma\cong\Sigma_{g}$ is
a surface group of genus $g$, the situation is more difficult. In
this case, the existence of strongly convergent representations factoring
through $S_{n}$ was proven by Magee and Louder \cite{Lo.Ma2023}.
Recently the impressive work of Magee, Puder and van Handel \cite{Ma.Pu.vH2025},
proved that $\left(\textup{Std}_{n-1}\circ\phi_{i},\ell_{0}^{2}\left(\left\{ 1,\dots,n\right\} \right)\right)$
strongly converge to $\left(\lambda,\ell^{2}\left(\Sigma_{g}\right)\right)$
in probability when $\phi_{i}\in\textup{Hom}\left(\Sigma_{g},S_{n}\right)$
is sampled uniformly at random, extending the polynomial method \cite{Ch.Ga.Tr.va2024}
to this setting. For $3$-manifolds and right-angled Artin groups,
see \cite{Ma.Th23}.

Finally we mention that the polynomial method has been applied and
further developed by Magee and de la Salle to prove strong convergence
for unitary representations of quasi-exponential dimension \cite{Magee-delaSalle}.
Along this direction, a particularly striking result of Cassidy \cite{Cassidy}
shows that for fixed $r$, the random Schreier graphs corresponding
to the action of $r$ random permutations on $k_{n}$-tuples of distinct
elements of $\left\{ 1,\dots,n\right\} $ are near optimal expanders
w.h.p. for any $k\leqslant n^{\frac{1}{12}-o(1)}$. 

\subsection{Overview of the proof\label{subsec:Overview-of-the-proof}}

\subsubsection*{Adapting the polynomial method\label{subsec:overview-Analytic-part}}

We aim to adapt the strategy of \cite{Ma.Pu.vH2025} to our setting.
We fix a function $f$ (see $\S$\ref{sec:Geometric-estimates}) whose
inverse Fourier transform $\check{f}$ is compactly supported, even
and non-negative on the real line, and consider the operator $f\left(\sqrt{\Delta-\frac{1}{4}}\right)$
which will play the role of the self-adjoint random matrix in the
polynomial method. 

A first point to note is that $f\left(\sqrt{\Delta-\frac{1}{4}}\right)$
has a trivial eigenvalue $f\left(\frac{i}{2}\right)$, corresponding
to the zero eigenvalue of $\Delta$, which we want to discard. By
the choice of $f$, we have $t\mapsto f(it)$ is strictly increasing
and $f(0)>0$. So, if we let $h$ be a smooth non-negative function
which is equal to $1$ in $\left[f\left(\sqrt{\ep}\right),f\left(\sqrt{\frac{1}{4}-\delta}\right)\right]$,
then
\[
\mathbb{P}_{g}\left[\delta<\lambda_{1}<\frac{1}{4}-\ep\right]\leqslant\mathbb{E}_{g}\left[\text{Tr}\left(h\left(f\left(\sqrt{\Delta-\tfrac{1}{4}}\right)\right)\right)\right].
\]
By, for example, Mirzakhani's spectral gap result \cite{MirzakhaniRandom},
we know that there is some $\delta>0$ such that $\mathbb{P}_{g}\left[\lambda_{1}<\delta\right]\to0$,
so our goal is to show that the right hand side goes to $0$ as $g\to\infty$.

We want to apply the generalised polynomial method of \cite{Ma.Pu.vH2025}
to be able to bound 
\begin{equation}
\mathbb{E}_{g}\left[\text{Tr}\left(h\left(f\left(\sqrt{\Delta-\tfrac{1}{4}}\right)\right)\right)\right]\label{eq:exp-trace-smooth}
\end{equation}
for $h\in C^{\infty}\left(\mathbb{R}\right)$. Let $\mathrm{tr}\eqdf\frac{1}{g}\mathrm{Tr}$
be a normalisation of the usual trace of a trace-class operator. The
key input is the following result, analogous to Assumption 1.3 in
\cite{Ma.Pu.vH2025}. 
\begin{thm}
\label{thm:main-technical-result}There exists a constant $c>0$ such
that for each $t\in\mathbb{N}$, there are constants $\left\{ a_{i}^{t}\right\} _{i\geq0}$
so that
\[
\left|\mathbb{E}_{g}\left[\textup{tr}\left(\left(f\left(\sqrt{\Delta-\tfrac{1}{4}}\right)\right)^{t}\right)\right]-\sum_{i=0}^{q-1}\frac{a_{i}^{t}}{g^{i}}\right|\leqslant\frac{\left(cq\right)^{cq}}{g^{q}},
\]
for all $q>t$ and $g>cq^{c}$. We have that
\[
a_{0}^{t}=\int_{\frac{1}{4}}^{\infty}f\left(\sqrt{r-\tfrac{1}{4}}\right)^{t}\tanh\left(\pi\sqrt{r-\tfrac{1}{4}}\right)\mathrm{d}r,
\]
 and 
\[
a_{1}^{t}=\int_{0}^{\infty}\sum_{k=1}^{\infty}2\frac{\sinh\left(\frac{\ell}{2}\right)^{2}}{\sinh\left(\frac{k\ell}{2}\right)}\check{f}^{*t}(k\ell)\mathrm{d}\ell-\int_{\frac{1}{4}}^{\infty}f\left(\sqrt{r-\tfrac{1}{4}}\right)^{t}\tanh\left(\pi\sqrt{r-\tfrac{1}{4}}\right)\mathrm{d}r.
\]
\end{thm}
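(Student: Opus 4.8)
The plan is to expand $\textup{tr}\bigl(\bigl(f(\sqrt{\Delta-\tfrac14})\bigr)^{t}\bigr)$ via the Selberg/Lax--Phillips pre-trace formula for a test function whose Fourier transform is compactly supported, and then integrate the resulting geometric sum against the Weil--Petersson measure using Mirzakhani's integration formula. Write $\varphi \eqdf \check f^{*t}$ (the $t$-fold convolution, whose support is contained in $[-tR, tR]$ if $\hat f$ is supported in $[-R,R]$), so that $f(\sqrt{\Delta-\tfrac14})^{t} = F(\Delta)$ for the radial test function whose Fourier transform is $\varphi$. The pre-trace formula then gives
\[
\textup{Tr}\bigl(F(\Delta_X)\bigr) = \frac{\textup{vol}(X)}{4\pi}\int_{-\infty}^{\infty} \widehat F(r)\, r\tanh(\pi r)\,\df r \;+\; \sum_{\gamma}\,\frac{\ell_0(\gamma)}{2\sinh\bigl(\tfrac{\ell(\gamma)}{2}\bigr)}\,\varphi\bigl(\ell(\gamma)\bigr),
\]
where $\gamma$ runs over nontrivial conjugacy classes (equivalently primitive closed geodesics with multiplicities), $\ell_0$ is the primitive length and $\ell = k\ell_0$. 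Dividing by $\frac{\textup{vol}(X)}{4\pi}$, the identity term is exactly $a_0^t$ after the substitution $r \mapsto \sqrt{r-\tfrac14}$; it is $g$-independent, which is why it carries no negative power of $g$.

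The core of the argument is then to show that
\[
\mathbb{E}_g\!\left[\frac{4\pi}{\textup{vol}(X)}\sum_{\gamma}\frac{\ell_0(\gamma)}{2\sinh(\tfrac{\ell(\gamma)}{2})}\varphi(\ell(\gamma))\right]
\]
admits an asymptotic expansion in $1/g$ with factorially-bounded coefficients and the stated remainder. Since $\textup{vol}(X) = 4\pi(g-1)$ is deterministic, this reduces to expanding $\mathbb{E}_g[\sum_\gamma (\dots)]$. I would organise the geodesics by the topological type of (a regular neighbourhood of) the corresponding closed curve: simple non-separating, simple separating into genera $(g',g-g')$, and the various non-simple types, each classified by the signature of the subsurface it fills. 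For each topological type $\Gamma$ one applies Mirzakhani's integration formula, which writes the expectation of the corresponding sum as a ratio $V_{g,\Gamma}(\ell)/V_g$ of Weil--Petersson volumes integrated against $\varphi$ over the relevant length parameters. The compact support of $\varphi$ truncates every length integral to a fixed window $[0,tR]$, so all the analytic factors $\sinh(\tfrac{\ell}{2})^{-1}$, $\tanh$, etc. are bounded with all derivatives on the domain of integration; the only source of $g$-dependence is the volume ratios.

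The main obstacle — and where the real work lies — is controlling these ratios $V_{g,\Gamma}(\ell)/V_g$ uniformly in the topological type, with a genuine asymptotic expansion in powers of $1/g$ and factorial control on the coefficients, rather than just leading-order estimates. This requires precise large-genus asymptotics for intersection numbers / Weil--Petersson volumes in the spirit of Mirzakhani--Zograf and Mirzakhani--Petri, pushed to all orders with effective bounds: one shows $V_{g,n}(\ell)/V_{g,n} = \prod_i \frac{\sinh(\ell_i/2)}{\ell_i/2}\bigl(1 + O(1/g)\bigr)$ with an all-order refinement, and that non-simple and separating contributions are suppressed by powers of $1/g$ determined by the Euler characteristic of the filling subsurface. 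Summing over the (infinitely many, but suitably weighted) topological types and over the multiplicity $k$ must be done so that the tails are absorbed into the error $\tfrac{(cq)^{cq}}{g^q}$; the factorial growth $(cq)^{cq}$ is exactly what one expects from this combinatorial proliferation of types together with the growth of volume-expansion coefficients. The only types contributing at order $1/g^1$ are the simple non-separating geodesics (separating ones and non-simple ones being $O(1/g^2)$ or smaller), and tracking that single contribution through Mirzakhani's formula with $\ell = k\ell_0$, the factor $\frac{\sinh(\ell/2)^2}{\sinh(k\ell/2)}$ coming from $\frac{\ell_0}{2\sinh(\ell/2)}\cdot\frac{\sinh(\ell_0/2)}{\ell_0/2}$ combined with the one-handle volume factor, and summing over $k\geqslant1$, yields precisely the stated formula for $a_1^t$ after renaming $\ell_0 \mapsto \ell$. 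Finally, to produce the uniform-in-$t$ statement with thresholds $q>t$ and $g>cq^c$, I would track how $R$, the support of $\hat f$, enters: the number of relevant topological types and the size of the volume-ratio expansion coefficients grow at a controlled rate in $tR$, and choosing $q>t$ ensures the expansion is taken far enough that the geometric/ combinatorial remainder beats $g^{-q}$ once $g$ exceeds a polynomial threshold in $q$.
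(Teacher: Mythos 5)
Your proposal is correct and follows essentially the same route as the paper: Selberg's trace formula gives the identity term $a_0^t$ plus a geodesic sum, which is split by the topological type of the filling subsurface and integrated via Mirzakhani's formula (the Anantharaman--Monk extension for non-simple geodesics), reducing the whole problem to the effective all-order large-genus expansions of Weil--Petersson volume ratios that you correctly flag as the main technical burden --- precisely the content of the paper's Section 4, which the paper's own proof invokes as a black box. Your derivation of $a_1^t$ from the leading asymptotic $V_{g-1,2}(\ell,\ell)/V_g \to 4\sinh^2(\ell/2)/\ell^2$ of the simple non-separating contribution, and the observation that separating and non-simple types enter only at order $g^{-2}$, matches the paper's Propositions \ref{prop:effective-simple-expansion} and \ref{prop:effective-nonsimple-expansion}.
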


Establishing Theorem \ref{thm:main-technical-result} occupies the
bulk of the paper. It requires establishing new estimates for ratios
of Weil-Petersson volumes, building on work of Mirzakhani and Zograf
\cite{Mi.Zo2015}. This problem is discussed in more detail in the
next subsection.

Once we have Theorem \ref{thm:main-technical-result}, there are several
points of departure from the works \cite{Ch.Ga.Tr.va2024,Ma.Pu.vH2025}.
The idea is to use Theorem \ref{thm:main-technical-result} to extend
bounds on (\ref{eq:exp-trace-smooth}) from polynomials to smooth
functions $h$. One issue is that assumptions on $h$ are required
to even ensure that $h\left(f\left(\sqrt{\Delta-\frac{1}{4}}\right)\right)$
is a trace class operator, i.e. one should not expect to be able to
extend to all $h\in C^{\infty}\left(\mathbb{R}\right)$. Instead,
we consider functions of the form $h(x)\eqdf x\tilde{h}(x)$ and extend
bounds on (\ref{eq:exp-trace-smooth}) from polynomial $\tilde{h}$
to all $\tilde{h}\in C^{\infty}\left(\mathbb{R}\right)$. This ensures
that $h\left(f\left(\sqrt{\Delta-\frac{1}{4}}\right)\right)$ is trace
class and also allows us to establish an a priori bound for $\mathbb{E}\left[\text{Tr}\left(h\left(f\left(\sqrt{\Delta-\frac{1}{4}}\right)\right)\right)\right]$
(c.f. (\ref{eq:apriori-bound})), which essentially follows from weak
convergence. We then show that there is a compactly supported distribution
$\nu_{1}$ so that the following holds.
\begin{thm}
There exist $C,m>0$ such that for any $\tilde{h}\in C^{\infty}\left(\mathbb{R}\right)$
and any $g\geqslant2$, 
\begin{align*}
 & \left|\mathbb{E}_{g}\left[\tr\,h\left(f\left(\sqrt{\Delta-\tfrac{1}{4}}\right)\right)\right]-\int_{\frac{1}{4}}^{\infty}h\left(f\left(\sqrt{r-\tfrac{1}{4}}\right)\right)\tanh\left(\pi\sqrt{r-\tfrac{1}{4}}\right)\mathrm{d}r-\frac{\nu_{1}\left(\tilde{h}\right)}{g}\right|\\
 & \hspace*{\fill}\leqslant\frac{C}{g^{2}}\left(\|w^{(m)}\|_{[0,2\pi]}+\|\tilde{h}\|_{\left[-f\left(\frac{i}{2}\right),f\left(\frac{i}{2}\right)\right]}\right),
\end{align*}
where $w(\theta)\eqdf\tilde{h}\left(f\left(\frac{i}{2}\right)\cos\theta\right)$.
\end{thm}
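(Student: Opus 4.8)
The plan is to deduce this smooth-function estimate from Theorem \ref{thm:main-technical-result} via a polynomial-approximation argument in the spirit of \cite{Ch.Ga.Tr.va2024,Ma.Pu.vH2025}, carried out for the family $h(x)=x\tilde h(x)$. First I would record the consequence of Theorem \ref{thm:main-technical-result} for polynomials: writing $\tilde h(x)=\sum_{t} b_t x^{t}$ and $h(x)=\sum_t b_t x^{t+1}$, linearity gives $\mathbb{E}_g[\tr\, h(f(\sqrt{\Delta-\frac14}))]=\sum_t b_t\,\mathbb{E}_g[\tr\,(f(\sqrt{\Delta-\frac14}))^{t+1}]$, and substituting the expansion from Theorem \ref{thm:main-technical-result} identifies the leading term with $\sum_t b_t a_0^{t+1}=\int_{1/4}^\infty h(f(\sqrt{r-\tfrac14}))\tanh(\pi\sqrt{r-\tfrac14})\,\mathrm dr$ (here one checks the integral converges because $\hat f$ is compactly supported, so $f(\sqrt{r-1/4})$ decays and the extra factor of $x$ in $h$ is harmless), and the $1/g$ coefficient with a linear functional $\nu_1(\tilde h)\eqdf\sum_t b_t a_1^{t+1}$, which one argues extends to a compactly supported distribution acting on $\tilde h$ through the values of $\tilde h$ (equivalently of $w$) on $[0,2\pi]$ via the change of variables $x=f(\tfrac i2)\cos\theta$. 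The point of the $\cos\theta$ substitution is that the relevant spectral data of $f(\sqrt{\Delta-1/4})$ (its trivial eigenvalue and the bulk it governs) lives in an interval symmetric about $0$ of half-width $f(\tfrac i2)$, and Chebyshev expansion in this variable is what converts Gevrey-type bounds on the moments into control by a finite-order derivative norm $\|w^{(m)}\|_{[0,2\pi]}$.

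Next I would set up the approximation step. Fix $\tilde h\in C^\infty(\mathbb R)$ and let $w(\theta)=\tilde h(f(\tfrac i2)\cos\theta)$, a smooth $2\pi$-periodic even function. Expand $w$ in a Chebyshev/Fourier series and truncate at degree $q$, obtaining a trigonometric polynomial $w_q$ with $\|w-w_q\|_{L^\infty[0,2\pi]}$ and the derivative norms of the tail controlled by $\|w^{(m)}\|_{[0,2\pi]}$ for suitable $m$ (standard Jackson/Bernstein estimates: the degree-$q$ tail of a $C^m$ function is $O(q^{-m}\|w^{(m)}\|)$, with an extra loss if one also wants control of how fast the polynomial coefficients grow). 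Correspondingly $w_q$ is a polynomial $\tilde h_q$ of degree $q$ in $x=f(\tfrac i2)\cos\theta$, hence in $x$, and $h_q(x)=x\tilde h_q(x)$. For $h_q$ the polynomial identity above applies exactly; I then estimate the three error contributions: (i) the difference $\mathbb{E}_g[\tr(h-h_q)(\cdot)]$, bounded using the a priori bound mentioned in the overview (the one denoted (\ref{eq:apriori-bound}) there, which follows from weak convergence of the empirical spectral measure and gives $\big|\mathbb{E}_g[\tr\,H(f(\sqrt{\Delta-1/4}))]\big|\lesssim \|H\|$ on the relevant compact set, plus the trivial-eigenvalue contribution of size $O(1/g)$ times $|\tilde h(f(\tfrac i2))|$) together with $\|h-h_q\|_\infty\lesssim q^{-m}\|w^{(m)}\|$; (ii) the difference of the leading integrals, bounded again by $\|h-h_q\|_\infty$ times an integrable weight; (iii) the difference $\nu_1(\tilde h)-\nu_1(\tilde h_q)$, bounded by continuity of the distribution $\nu_1$ in a $C^{m'}$ norm of $w-w_q$. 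Adding these to the polynomial-case remainder $\frac{(cq)^{cq}}{g^q}$ from Theorem \ref{thm:main-technical-result} (now with $t\le q$, so it is applicable once $g>cq^c$), and optimising by choosing $q=q(g)$ roughly $\log g/\log\log g$, balances the super-exponential-in-$q$ but $g^{-q}$ term against the polynomial-in-$q$, $g$-independent approximation errors to yield a bound of the form $\frac{C}{g^2}\|w^{(m)}\|_{[0,2\pi]}$; for the small-$g$ range $2\le g\le cq^c$ one absorbs everything into the constant using the a priori bound.

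The main obstacle I expect is item (i): controlling $\mathbb{E}_g[\tr\,(h-h_q)(f(\sqrt{\Delta-1/4}))]$ uniformly in $g$ and $q$. Unlike the finite-dimensional matrix setting of \cite{Ch.Ga.Tr.va2024}, here $f(\sqrt{\Delta-1/4})$ is an operator on an infinite-dimensional space whose spectrum is only \emph{a priori} contained in a compact set with high probability, not deterministically; so one must combine a deterministic bound on the operator norm of $f(\sqrt{\Delta-1/4})$ (coming from the compact support of $\hat f$ and the fact that $\sqrt{\Delta-1/4}$ is real or purely imaginary of bounded size on the low-lying part) with an argument that the \emph{trace} of a high-degree polynomial of it does not blow up — this is exactly where the factor $x$ in $h(x)=x\tilde h(x)$ is essential, as flagged in the overview, since it kills the constant mode and makes $h(f(\sqrt{\Delta-1/4}))$ trace class with a trace norm one can bound in terms of $\sup|\tilde h|$ on the spectrum via the a priori estimate. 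Making this rigorous, and pinning down precisely which derivative order $m$ and which constants $C$ emerge from the interplay of Jackson's theorem, the growth of Chebyshev coefficients, and the $(cq)^{cq}$ factor, is the technical heart of the argument; the rest is bookkeeping.
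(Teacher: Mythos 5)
Your high-level outline captures several of the right ingredients — Chebyshev/trigonometric approximation, the a priori bound via weak convergence and the trivial eigenvalue, and the role of the factor $x$ in $h(x)=x\tilde h(x)$ — but it misses the central technical step that the paper uses, and the ``optimize $q$ against $g$'' strategy as you've framed it does not close the argument.

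The gap is in your step from Theorem \ref{thm:main-technical-result} to the two-term expansion. Theorem \ref{thm:main-technical-result} gives, for a polynomial $\tilde h_q$ of degree $q$, an approximation of $\mathbb{E}_g\bigl[\operatorname{tr}h_q(\cdot)\bigr]$ by the \emph{full} truncated series $f_{h_q}(1/g)=\sum_{i=0}^{q'-1}(\sum_t s_t a_i^{t+1})\,g^{-i}$ with error $(cq')^{cq'}/g^{q'}$. You want to retain only $f_{h_q}(0)+f_{h_q}'(0)/g$ and discard $\sum_{i\geq 2}$. That tail is \emph{not} covered by the $(cq')^{cq'}/g^{q'}$ remainder — it is part of the main expansion, with coefficients $a_i^t$ about which Theorem \ref{thm:main-technical-result} says nothing directly. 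Controlling it is precisely the Taylor-remainder estimate $\bigl|f_{h_q}(1/g)-f_{h_q}(0)-f_{h_q}'(0)/g\bigr|\leqslant \tfrac12 g^{-2}\|f_{h_q}''\|$, and the paper obtains $\|f_{h_q}''\|_{[0,1/(2q^2)]}\lesssim q^{4c}\|\tilde h_q\|$ by combining the a priori bound $|f_{h_q}(1/g)|\lesssim\|\tilde h_q\|$ with the Markov brothers' inequality for polynomials (\cite[Lemma 2.1]{Ma.Pu.vH2025}). You invoke the a priori bound only to estimate $\mathbb{E}_g[\operatorname{tr}(h-h_q)]$ — a valid but secondary use — and never mention the Markov inequality. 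Without that step you have no handle on the polynomial coefficients $a_i^t$ for $i\geqslant 2$, and no way to produce the $g^{-2}$ rate with a polynomial-in-$q$ prefactor.

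Structurally, this is why the paper first proves Lemma \ref{lem:master-ineqaulity-polynomials}: a polynomial master inequality $\bigl|\mathbb{E}_g[\operatorname{tr}h]-\nu_0(\tilde h)-\nu_1(\tilde h)/g\bigr|\leqslant Cq^{4c}g^{-2}\|\tilde h\|$ valid for \emph{all} $g\geqslant 2$, obtained via the a priori bound plus Markov (for $g>c'q^{c'}$) and a direct triangle-inequality absorption (for small $g$). Proposition \ref{prop:smooth-master} then applies this to each Chebyshev mode $a_kT_k(x/f(\tfrac i2))$ \emph{individually} and sums, rather than truncating and optimizing a single $q(g)$. This term-by-term use sidesteps your difficulty with the $\ell^1$-growth of Chebyshev monomial coefficients (handled in Lemma \ref{lem:master-ineqaulity-polynomials} by V.\ Markov's inequality), and the summability $\sum_k |a_k|k^{4c+1}\leqslant C\|w^{(4c+2)}\|$ is exactly \cite[Lemma 2.3]{Ma.Pu.vH2025}. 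Two further minor points: your item (iii) appeals to continuity of $\nu_1$ as a distribution, but the distributional extension is \emph{part of} the statement and must be established from the $|\nu_1(\tilde h_q)|\lesssim\|w_q^{(4c+2)}\|$ bound rather than presupposed; and your plan to ``absorb'' small $g$ into the constant is only viable because the polynomial-level bound has the shape $Cq^{4c}/g^2$ — which again rests on Markov. In short: the missing idea is the Markov brothers' inequality applied to the auxiliary polynomial $f_h$, and once you reinstate it the natural organization is to prove the uniform polynomial lemma first and then sum the Chebyshev series, rather than optimize a truncation level.
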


Here and throughout, $\|\xi\|_{A}$ denotes the sup norm of a function
$\xi$ on the set $A$. We then want to show that $\nu_{1}(\tilde{h})=0$
for $h$ supported away from $\left[0,f(0)\right]\cup\left[f\left(i\sqrt{\frac{1}{4}-\delta}\right),f\left(\frac{i}{2}\right)\right]$,
which would allow us to conclude Theorem \ref{thm:Main-Thm} since
this is precisely where $f\left(\sqrt{\Delta-\tfrac{1}{4}}\right)$
maps the spectrum of the Laplacian in $[0,\delta]\cup[\frac{1}{4},\infty)$.
Remarkably, and in contrast to previous settings in which the polynomial
method has been applied, the distribution $\nu_{1}$ has a fairly
explicit description which allows us to compute its moments, and hence
its support (see \cite[Lemma 4.9]{Ch.Ga.Tr.va2024}), with relative
ease, c.f. Lemma \ref{lem:support-dists}. 

\subsubsection*{Regularity of Weil-Petersson volumes\label{subsec:overview-Effective-approximation-by}}

The key technical result of the paper is Theorem \ref{thm:main-technical-result}.
It implies that the spectral statistics of Weil-Petersson random $f\left(\sqrt{\Delta-\frac{1}{4}}\right)$
are in a Gevrey class. A related expansion appears in the work of
Anantharaman and Monk \cite{An.Mo2023}. We stress two points here. 

Firstly, it is not too difficult to prove the $\textit{existence}$
of an asymptotic expansion of $\mathbb{E}_{g}\left[\textup{tr}\left(\left(f\left(\sqrt{\Delta-\frac{1}{4}}\right)\right)^{t}\right)\right]$
in powers of $g$ using Selberg's trace formula (c.f. Theorem \ref{thm:Selberg-TF}),
an application of Mirzakhani's integration formula (\cite[Theorem 6.1]{An.Mo2023},
c.f. Lemma \ref{lem:integration-lemma}) and large genus expansions
of ratios of Weil-Petersson volumes due to Mirzakhani-Zograf \cite{Mi.Zo2015}
and Anantharaman-Monk \cite{An.Mo2022}. However, obtaining a detailed
understanding of the coefficients presents a serious challenge, and
this issue is the focus of the vast majority of the impressive articles
\cite{An.Mo2023,An.Mo2025}. In contrast, we do not require any knowledge
of the coefficients $a_{i}^{t}$ beyond $a_{0}^{t}$ and $a_{1}^{t}$
which are already known explicitly \cite{Mi.Pe19}.

Secondly, the main difficulty for us is to obtain an inequality with
the precise error term $\frac{\left(cq\right)^{cq}}{g^{q}}$. In order
to achieve this we need to prove effective genus expansions for ratios
of Weil-Petersson volumes with explicit error terms, extending results
of Mirzakhani-Zograf \cite{Mi.Zo2015}. An example of the kind of
results we prove (c.f. Section \ref{sec:Large-genus-expansion}) is
the following which we believe to be of independent interest.
\begin{thm}
\label{thm:Wp-vols}There exists $c>0$ such that for each $n$ there
are continuous functions $\left\{ F_{n,j}\left(\mathbf{x}\right)\right\} _{j}$
such that for each $k$, 
\[
\left|\frac{V_{g,n}\left(\mathbf{x}\right)}{V_{g,n}}-\sum_{j=0}^{k}\frac{F_{n,j}\left(\mathbf{x}\right)}{g^{j}}\right|\leqslant\frac{\left(|\mathbf{x}|+1\right)^{ck}\left(ckn\right)^{ck}\exp\left(|\mathbf{x}|\right)}{g^{k+1}},
\]
for all $g\geqslant(ck)^{c}$.
\end{thm}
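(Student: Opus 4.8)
The plan is to establish Theorem~\ref{thm:Wp-vols} by reducing the expansion of $V_{g,n}(\mathbf{x})/V_{g,n}$ to a controlled expansion of the individual volume coefficients in Mirzakhani's formula
\[
V_{g,n}(\mathbf{x}) = \sum_{|\boldsymbol{\alpha}| \leqslant 3g-3+n} c_{\boldsymbol{\alpha}}(g,n)\, \mathbf{x}^{2\boldsymbol{\alpha}},
\]
and then to expand the normalised coefficients $c_{\boldsymbol{\alpha}}(g,n)/V_{g,n}$ in powers of $1/g$. The starting point is the known structure of these coefficients: by Mirzakhani's work (and the Mirzakhani--Zograf analysis) each $c_{\boldsymbol\alpha}(g,n)$ is, up to explicit combinatorial factors involving $\prod 1/(2\alpha_i+1)!$ and powers of $\pi^2$, a ratio of intersection numbers, and the quantity $[\tau_{\alpha_1}\cdots\tau_{\alpha_n}]_{g,n} \eqdf \frac{\prod(2\alpha_i+1)!! \, 2^{|\boldsymbol\alpha|}}{V_{g,n}}\langle \tau_{\alpha_1}\cdots\tau_{\alpha_n} \prod \kappa\rangle$ admits a large-genus asymptotic expansion. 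So the first step is to record, in an effective form with explicit error control, the expansion
\[
\frac{c_{\boldsymbol\alpha}(g,n)}{V_{g,n}} = \prod_{i=1}^n \frac{1}{2^{2\alpha_i}(2\alpha_i+1)!}\left( \sum_{j=0}^{k} \frac{e_{\boldsymbol\alpha,n,j}}{g^{j}} + (\text{error}) \right),
\]
uniformly over all multi-indices $\boldsymbol\alpha$, where the leading term $e_{\boldsymbol\alpha,n,0}$ is $1$ (this is the content of the Mirzakhani--Zograf estimate $[\tau_{\boldsymbol\alpha}]_{g,n} = 1 + O(1/g)$). The hard analytic content is to upgrade the $O(1/g)$ statements of \cite{Mi.Zo2015} to an all-orders expansion with an error of the shape $\frac{(ck)^{ck}}{g^{k+1}}$ times a factor that is at most polynomial in $|\boldsymbol\alpha|$; this is exactly the Gevrey-type control that Theorem~\ref{thm:main-technical-result} needs downstream.

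Granting such coefficient-wise expansions, the second step is to sum against $\mathbf{x}^{2\boldsymbol\alpha}$ and reorganise. Writing
\[
\frac{V_{g,n}(\mathbf{x})}{V_{g,n}} = \sum_{\boldsymbol\alpha} \left(\sum_{j=0}^k \frac{e_{\boldsymbol\alpha,n,j}}{g^j}\right)\prod_i \frac{x_i^{2\alpha_i}}{2^{2\alpha_i}(2\alpha_i+1)!} + (\text{error}),
\]
we define $F_{j,n}(\mathbf{x}) \eqdf \sum_{\boldsymbol\alpha} e_{\boldsymbol\alpha,n,j}\prod_i \frac{x_i^{2\alpha_i}}{2^{2\alpha_i}(2\alpha_i+1)!}$. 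One must check these series converge and define continuous functions; the leading one is $F_{0,n}(\mathbf{x}) = \prod_i \frac{\sinh(x_i/2)}{x_i/2}$ since $e_{\boldsymbol\alpha,n,0}=1$, and in general the polynomial-in-$|\boldsymbol\alpha|$ growth of $e_{\boldsymbol\alpha,n,j}$ together with the super-exponentially decaying combinatorial weights $\frac{1}{2^{2\alpha_i}(2\alpha_i+1)!}$ guarantees absolute convergence and the bound $|F_{j,n}(\mathbf x)| \lesssim (cj)^{cj}\exp(|\mathbf x|)$. The error term needs to be summed as well: the total error is bounded by $\sum_{\boldsymbol\alpha} \frac{(ck)^{ck}(|\boldsymbol\alpha|+1)^{ck}}{g^{k+1}}\prod_i \frac{|x_i|^{2\alpha_i}}{2^{2\alpha_i}(2\alpha_i+1)!}$, and since $\sum_{\alpha} \frac{m^{ck} r^{2\alpha}}{2^{2\alpha}(2\alpha+1)!}$ grows only like $\exp(r)$ times a polynomial correction absorbed into the $(ck)^{ck}$ factor, this yields the claimed $\frac{(|\mathbf x|+1)^{ck}(ck)^{ck}\exp(|\mathbf x|)}{g^{k+1}}$ bound; the constraint $g \geqslant ck^c$ enters because the sum over $\boldsymbol\alpha$ has roughly $(3g-3+n)^n$ terms, so one needs $g$ large relative to $k$ for the tail beyond degree $3g-3+n$ (which is identically zero) not to matter and for the coefficient-wise error to dominate.

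The main obstacle, and where essentially all the work lies, is the first step: proving the effective all-orders large-genus expansion for the normalised intersection numbers $[\tau_{\boldsymbol\alpha}]_{g,n}$ with the precise factorial-in-$k$ error rate, uniformly in $\boldsymbol\alpha$. The tool is Mirzakhani's recursion (the do-Norbury / Mirzakhani--Zograf recursions) relating $[\tau_{\boldsymbol\alpha}]_{g,n}$ to intersection numbers of lower complexity; \cite{Mi.Zo2015} extract the first-order behaviour from this recursion, and one must run the recursion to all orders while tracking constants. Concretely, I would set up an induction on the Euler characteristic $2g-2+n$: assume the effective expansion holds for all $(g',n')$ with $2g'-2+n' < 2g-2+n$ (and also controlling how the coefficients depend on $n$), insert these into the recursion, and verify that the resulting expansion for $[\tau_{\boldsymbol\alpha}]_{g,n}$ still has an error of the form $\frac{(ck)^{ck}}{g^{k+1}}$ after summing the recursion's convolution sums. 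The delicate points are (i) the recursion produces sums over partitions $g = g_1+g_2$ and over splittings of the index set, so one must show the "boundary" terms with $g_1$ or $g_2$ small are exponentially suppressed (using the known $V_{g_1,n_1}V_{g_2,n_2}/V_{g,n}$ decay estimates, themselves needing effective versions), and (ii) the bookkeeping of how the constant $c$ and the factorial factor propagate through $O(k)$ nested applications of the recursion — this is what forces the $(ck)^{ck}$ rather than a simple geometric bound, and is the analogue of the Gevrey estimates in \cite{Ma.Pu.vH2025}. I expect the cleanest route is to prove a slightly stronger statement carrying an explicit ansatz for the $g$-dependence (e.g.\ that $[\tau_{\boldsymbol\alpha}]_{g,n}$ is a ratio of polynomials in $g$, or equals a rational function with controlled poles), so that the induction closes on an algebraic identity rather than only on inequalities.
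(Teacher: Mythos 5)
Your high-level plan coincides with the paper's: start from Mirzakhani's volume polynomial $V_{g,n}(\mathbf{x}) = \sum_{\mathbf{d}}\,[\prod\tau_{d_i}]_{g,n}\prod_i \frac{x_i^{2d_i}}{2^{2d_i}(2d_i+1)!}$, prove an effective large-genus expansion of $[\prod\tau_{d_i}]_{g,n}/V_{g,n}$ with coefficients and error polynomials in $\mathbf{d}$ whose coefficients decay factorially, sum against the monomials to get $F_{j,n}$, and observe that the factorial decay both gives convergence and keeps the total error at $(ck)^{ck}(1+|\mathbf{x}|)^{ck}e^{|\mathbf{x}|}/g^{k+1}$. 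The identification of the leading term as $\prod_i \frac{\sinh(x_i/2)}{x_i/2}$ and of the three sources of error (tail in $\mathbf{d}$, truncation of the intersection-number sum, coefficient-wise expansion error) is also in line with the paper's proof of Lemma~\ref{lem:bordered}.

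However, there is a genuine structural problem with the inductive scheme you propose. You suggest inducting on the Euler characteristic $2g-2+n$, assuming the effective expansion for all $(g',n')$ of smaller complexity. But the quantity you are expanding is an asymptotic in $1/g$ and the error bound must hold uniformly for all $g$ above a threshold depending only on $k$ and $n$; inducting on a parameter that grows without bound as $g\to\infty$ does not give a well-founded proof of such a uniform statement. The induction hypothesis at $(g-1,n+1)$ would yield an expansion in $1/(g-1)$ that must be re-based to $1/g$ (the paper's Lemma~\ref{lem:shift} does exactly this), and the splitting terms $g_1+g_2=g$ can have $g_2$ comparable to $g$, so "lower complexity" does not cleanly organise the recursion. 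What the paper actually does is induct on the expansion order $k$: the statements $A1(k)$--$A4(k)$ of Theorem~\ref{thm:expansions} are universally quantified over $(g,n,\mathbf{d})$ simultaneously, and the recursion relating $(g,n)$ to $(g,n-1)$, $(g-1,n+1)$ and split surfaces is iterated at most $O(k)$ times because deeper recursion produces terms suppressed by additional powers of $1/g$ (via Lemma~\ref{lem:WP-sum-product-bound}, which bounds products of volumes of subsurfaces). Your proposal also does not recognise that the induction requires carrying four interlocking statements — an expansion of the intersection number $A1(k)$, of its discrete derivative $A2(k)$, and of the auxiliary volume ratios $4\pi^2(2g-2+n)V_{g,n}/V_{g,n+1}$ and $V_{g-1,n+2}/V_{g,n}$ in $A3(k)$, $A4(k)$ — which is what makes the recursion close; this is a central feature of the Mirzakhani--Zograf method. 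Finally, you describe $[\tau_{\mathbf d}]_{g,n}$ as "ratios of intersection numbers", but as defined in the paper (equation~\ref{eq:def-int-numbers}) they are intersection numbers of $\psi$-classes with powers of $\omega_{WP}$, not ratios; they are only normalised by $V_{g,n}$ in the expansion itself. In summary, the top-level reduction is correct and agrees with the paper, but the proposed induction would not execute as stated; the correct scheme is an induction on $k$ with the intersection-number statement quantified over all $(g,n,\mathbf{d})$, together with the three auxiliary expansions $A2$--$A4$.
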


Theorem \ref{thm:Wp-vols} and the results of Section \ref{sec:Large-genus-expansion}
rely on analogous expansions for the intersection numbers $\left[\tau_{1}\cdots\tau_{n}\right]_{g,n}$
of tautological classes and of ratios of Weil-Petersson volumes, ultimately
showing that they lie in a Gevrey class.

In relation to this, it was shown by Mirzakhani and Zograf \cite{Mi.Zo2015}
that for each $n,k$ there is a polynomial $P_{n,k}$ of degree $2k$
such that for any $\mathbf{d}=\left(d_{1},\dots,d_{n}\right)\in\mathbb{Z}_{\geqslant0}^{n}$,
there are constants $\left\{ e_{n,\mathbf{d},i}\right\} _{i\geqslant0}$
such that for any $g$, 
\begin{equation}
\left|\frac{\left[\tau_{d_{1}}\dots\tau_{d_{n}}\right]_{g,n}}{V_{g,n}}-\sum_{i=1}^{k-1}\frac{e_{n,\mathbf{d},i}}{g^{i}}\right|\leqslant\frac{P_{n,k}\left(d_{1},\dots,d_{n}\right)}{g^{k}}.\label{eq:int-num-mz}
\end{equation}

To establish estimates of the strength of Theorem \ref{thm:main-technical-result}
we need to obtain a much more precise understanding of $P_{n,k}\left(d_{1},\dots,d_{n}\right)$
in terms of $n$ and $k$ than that obtained by Mirzakhani and Zograf.
For this, we develop a more involved version of their induction algorithm
used to prove (\ref{eq:int-num-mz}). 

The reasoning for this is that when applying an inductive argument
to obtain (\ref{eq:int-num-mz}) based on the recursion relations
of intersection numbers, we find that in the passage to an expansion
of order $k+1$ from order $k$ the degree $\asymp k$ terms in the
polynomial $P_{n,k}\left(d_{1},\dots,d_{n}\right)$ create new factors
of size $\asymp$$k!$ in their contribution to $P_{n,k+1}\left(d_{1},\dots,d_{n}\right)$.
For the desired $(ck)^{ck}$ error, it is essential to that these
factors only contribute additively at each step rather than multiplicatively
which would result in errors of the size $(ck)^{ck^{2}}$ that are
too large for the polynomial method to work.

It is therefore essential for us to track and control the individual
coefficients of monomials in $P_{n,k}$ (and in fact, for similar
polynomial bounds for the individual coefficients $e_{n,\mathbf{d},i}$)
at each step throughout the induction. This results in our enhanced
induction process to be highly non-trivial and far more involved in
order to obtain the desired level of precision, especially in light
of the complexity of the recursive formulae that govern the intersection
numbers (see Theorem \ref{thm-Recurrsions}). 

With all this considered, we eventually prove (see Theorem \ref{thm:main-estimate-WP-expansion})
that the coefficient of $d_{1}^{t_{1}}\dots d_{n}^{t_{n}}$ in $P_{n,k}$
are bounded by
\[
\frac{(ck)^{ck}(n+k)^{k}}{t_{1}!\cdots t_{n}!}.
\]

We believe that this new understanding of the expansions of volumes
and intersection numbers also bring us closer to those obtained in
the computations found through physics heuristics \cite{Ey.Ga.Gr.Le.Sc2024},
see also \cite{Ey.Ga.Gi.Gr.Le2023} for recent rigorous expansions.

\subsection*{Organisation of the paper}

The effective genus expansions for Weil-Petersson volumes proven in
Section \ref{sec:Large-genus-expansion} are the most difficult and
technical part of the paper. The key result of $\S$\ref{sec:Large-genus-expansion}
is Corollary \ref{thm:main-estimate-WP-expansion} which can treated
as a black box in the remainder of the paper.

In Section \ref{sec:Geometric-estimates}, we prove Theorem \ref{thm:main-technical-result},
assuming the results of $\S$\ref{sec:Large-genus-expansion}. In
Section \ref{sec:Proof-of-Theorem} we prove Theorem \ref{thm:Main-Thm}.

\subsection*{Notation}

For $\mathbf{x}=\left(x_{1},\dots,x_{n}\right)\in\mathbb{R}^{n}$
we write $\left|\mathbf{x}\right|=\sum_{i=1}^{n}\left|x_{i}\right|$.

For functions $x,y:\mathbb{N}\to\mathbb{R}$ we write $x=O\left(y(g)\right)$
to mean that there are constants $C_{0},g_{0}>0$ so that $\left|x(g)\right|\leqslant C_{0}y(g)$
for all $g\geqslant g_{0}$. If the constants $C_{0},g_{0}$ depend
on another parameter $\ep$ we indicate this by writing $x=O_{\ep}\left(y(g)\right)$.

Throughout the paper, \textbf{with the exception of Theorem \ref{thm:expansions}
and its constituent propositions}, $C$ and $c$ will denote positive
universal constants whose value we do not need to track. We warn that
sometimes the precise values of $C$ and $c$ may change from line
to line as they absorb other universal constants.

For a closed hyperbolic surface $X$ of genus $g$ and a trace class
operator $F:L^{2}\left(X\right)\to L^{2}\left(X\right)$, we denote
the normalised trace $\tr\ F\eqdf\frac{1}{g}\textup{Tr}\ F$, where
$\textup{Tr}$ denotes the usual (non-normalised) trace. 

We use the convention that the Fourier transform of an appropriate
function $\phi$ is given by $\hat{\phi}(r)=\int_{-\infty}^{\infty}e^{-irx}\phi(x)\mathrm{d}x$. 

\subsection*{Acknowledgments}

We thank Alessandro Giacchetto, Michael Magee and Ramon van Handel
for very helpful conversations and suggestions on an earlier version
of this work. D.M. is funded by an Argelander Grant awarded by the
University of Bonn. He also gratefully acknowledges funding from the
Deutsche Forschungsgemeinschaft under Germany's Excellence Strategy-GZ
2047/1, project-id 390685813. J.T. is funded by the Leverhulme Trust
through a Leverhulme Early Career Fellowship (Grant No. ECF-2024-440).

\section{Geometric estimates\label{sec:Geometric-estimates}}

We now fix the function $f$ that we will take of $\sqrt{\Delta-\frac{1}{4}}$
in the remainder of the paper. Let $f_{0}\in C_{c}^{\infty}(\mathbb{R})$
be a non-negative and even function whose support is equal to $(-1,1)$
and whose Fourier transform $\hat{f_{0}}$ is non-negative on $\mathbb{R}\cup i\mathbb{R}$.
The existence of $f_{0}$ is shown in \cite[Section 2.2]{Ma.Na.Pu2022}.
 We now set $f\eqdf\hat{f_{0}}$ so that $f\left(\sqrt{\Delta-\frac{1}{4}}\right)$
is a bounded operator. By evenness and non-negativity of $f_{0}$,
the function $t\in[0,\frac{1}{2}]\mapsto f(ti)$ (corresponding to
eigenvalues of $\Delta$ below $\frac{1}{4}$) is strictly increasing,
$f(0)>0$, and $0\leqslant f([0,\infty))\leqslant f(0)$ (corresponding
to eigenvalues of $\Delta$ above $\frac{1}{4}$). In particular,
the spectrum of $f\left(\sqrt{\Delta-\frac{1}{4}}\right)$ is contained
in $[0,f(\frac{i}{2})]$.

With the convention of the Fourier transform as $\hat{\phi}(r)=\int_{-\infty}^{\infty}e^{-irx}\phi(x)\mathrm{d}x$,
the convolution theorem states that $\widehat{\phi_{1}*\phi_{2}}=\hat{\phi_{1}}\hat{\phi_{2}}$.

The purpose of this section is to establish Theorem \ref{thm:main-technical-result}.

\subsection{Background\label{subsec:Background}}

\subsection*{Moduli space}

Let $\Sigma_{g,n}$ denote a topological surface with genus $g$ and
$n$ labeled boundary components where $2g+n\geqslant3$. A \emph{marked
surface} of signature $\left(g,n\right)$ is a pair $\left(X,\varphi\right)$
where $X$ is a hyperbolic surface and $\varphi:\Sigma_{g,n}\to X$
is a homeomorphism. Given $\left(\ell_{1},...,\ell_{n}\right)\in\mathbb{R}_{\geqslant0}^{n}$,
we define the \emph{Teichmüller space} $\mathcal{T}_{g,n}\left(\ell_{1},\dots,\ell_{n}\right)$
by
\begin{align*}
\mathcal{T}_{g,n}\left(\ell_{1},...,\ell_{n}\right) & \stackrel{\text{def}}{=}\left\{ \substack{\text{\text{Marked surfaces} }\left(X,\varphi\right)\text{ of signature }\left(g,n\right)\\
\text{with labelled totally geodesic boundary components }\\
\left(\beta_{1},\dots,\beta_{n}\right)\text{ with lengths }\left(\ell_{1},\dots,\ell_{n}\right)
}
\right\} /\sim,
\end{align*}
where $\left(X_{1},\varphi_{1}\right)\sim\left(X_{2},\varphi_{2}\right)$
if and only if there exists an isometry $m:X_{1}\to X_{2}$ such that
$\varphi_{2}$ and $m\circ\varphi_{1}$ are isotopic. Let $\text{Homeo}^{+}\left(\Sigma_{g,n}\right)$
denote the group of orientation preserving homeomorphisms of $\Sigma_{g,n}$
which leave every boundary component setwise fixed and let $\text{Homeo}_{0}^{+}\left(\Sigma_{g,n}\right)$
denote the subgroup of homeomorphisms isotopic to the identity. The
\emph{mapping class group} is defined as 
\[
\text{MCG}_{g,n}\stackrel{\text{def}}{=}\text{Homeo}^{+}\left(\Sigma_{g,n}\right)/\text{Homeo}_{0}^{+}\left(\Sigma_{g,n}\right).
\]
$\text{Homeo}^{+}\left(\Sigma_{g,n}\right)$ acts on $\mathcal{T}_{g,n}\left(\ell_{1},...,\ell_{n}\right)$
by pre-composition of the marking, and $\text{Homeo}_{0}^{+}\left(\Sigma_{g,n}\right)$
acts trivially, hence $\text{MCG}_{g,n}$ acts on $\mathcal{T}_{g,n}\left(\ell_{1},...,\ell_{n}\right)$
and we define the \emph{moduli space} $\mathcal{M}_{g,n}\left(\ell_{1},...,\ell_{n}\right)$
by 
\[
\mathcal{M}_{g,n}\left(\ell_{1},...,\ell_{n}\right)\stackrel{\text{def}}{=}\mathcal{T}_{g,n}\left(\ell_{1},...,\ell_{n}\right)/\text{MCG}_{g,n}.
\]

\subsection*{Weil-Petersson metric}

By the work of Goldman \cite{Go1984}, the space $\mathcal{T}_{g,n}\left(\mathbf{x}\right)$
carries a natural symplectic structure known as the Weil-Petersson
symplectic form and is denoted by $\omega_{WP}$. In the case where
$\mathbf{x}=\mathbf{0}$, up to a scalar multiple, this agrees with
the form arising from the Weil-Petersson K$\ddot{\text{a}}$hler metric
on $\mathcal{T}_{g,n}$. It is invariant under the action of the mapping
class group and descends to a symplectic form on the quotient $\mathcal{M}_{g,n}\left(\mathbf{x}\right)$.
The form $\omega_{WP}$ induces the volume form
\[
\text{dVol}_{WP}\stackrel{\text{def}}{=}\frac{1}{\left(3g-3+n\right)!}\bigwedge_{i=1}^{3g-3+n}\omega_{WP},
\]
which is also invariant under the action of the mapping class group
and descends to a volume form on $\mathcal{M}_{g,n}\left(\mathbf{x}\right)$.
We write $\mathrm{d}X$ as shorthand for $\text{dVol}_{WP}(X)$. We
let $V_{g,n}\left(\mathbf{x}\right)$ denote $\text{Vol}_{WP}\left(\mathcal{M}_{g,n}\left(\mathbf{x}\right)\right)$,
the total volume of $\mathcal{M}_{g,n}\left(\mathbf{x}\right)$, which
is finite. We write $V_{g,n}$ to denote $V_{g,n}\left(\mathbf{0}\right)$.

As in \cite{Gu.Pa.Yo11,MirzakhaniRandom}, we define a probability
measure $\mathbb{P}_{g}$ on $\mathcal{M}_{g}$ by normalising $\text{dVol}_{WP}$.
We write $\mathbb{E}_{g}$ to denote expectation with respect to $\mathbb{P}_{g}$.

\subsection*{Selberg's trace formula}

We briefly recall Selberg's trace formula \cite{Selberg,Be2016} .
\begin{thm}
\label{thm:Selberg-TF}Let $X$ be a closed hyperbolic surface. For
any $\phi\in C_{c}^{\infty}\left(\mathbb{R}\right)$,
\begin{align*}
\sum_{j}\hat{\phi}\left(\sqrt{\lambda_{j}-\frac{1}{4}}\right)= & \frac{\textup{Vol}\left(X\right)}{4\pi}\int_{\frac{1}{4}}^{\infty}\hat{\phi}\left(\sqrt{r-\frac{1}{4}}\right)\tanh\left(\pi\sqrt{r-\frac{1}{4}}\right)\mathrm{d}r\\
 & +\sum_{\gamma\in\mathcal{P}(X)}\sum_{k=1}^{\infty}\frac{\ell_{\gamma}\left(X\right)}{2\sinh\left(\frac{k\ell_{\gamma}(x)}{2}\right)}\phi\left(k\ell_{\gamma}\left(X\right)\right),
\end{align*}
where $\mathcal{P}(X)$ denotes the set of primitive oriented closed
geodesics on $X$ and $\hat{\phi}$ denotes the Fourier transform
of $\phi$.
\end{thm}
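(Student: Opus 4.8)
The plan is to prove Selberg's trace formula by the classical pre-trace argument, realising both sides as the trace of a single integral operator on $L^{2}(X)$. Write $X=\Gamma\backslash\mathbb{H}$ with $\Gamma<\PSL_{2}(\mathbb{R})$ discrete, torsion-free and cocompact, and assume $\phi$ even (the only case needed below, and the one for which the point-pair construction is adapted). From $\phi$ one builds, via the Selberg/Harish--Chandra transform --- the composition of the Abel (Mehler) transform with Fourier inversion --- a smooth function $\kappa$ on $[0,\infty)$ and sets $k(z,w)\eqdf\kappa\bigl(d_{\mathbb{H}}(z,w)\bigr)$, arranged so that the $\PSL_{2}(\mathbb{R})$-invariant integral operator on $L^{2}(\mathbb{H})$ with kernel $k$ multiplies a Laplace eigenfunction of eigenvalue $\tfrac14+r^{2}$ by the scalar $\hat\phi(r)$. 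Because $\phi$ is compactly supported, so is $\kappa$, which trivialises every convergence issue below.

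First I would periodise: set $K(z,w)\eqdf\sum_{\gamma\in\Gamma}k(z,\gamma w)$, a locally finite sum, giving a smooth kernel that descends to $X\times X$ and defines a trace-class operator $A$ on $L^{2}(X)$ with $\mathrm{Tr}\,A=\int_{X}K(z,z)\,dz$. Expanding $A$ in an orthonormal basis $\{u_{j}\}$ of $\Delta_{X}$-eigenfunctions gives $Au_{j}=\hat\phi\bigl(\sqrt{\lambda_{j}-\tfrac14}\bigr)u_{j}$, whence $\mathrm{Tr}\,A=\sum_{j}\hat\phi\bigl(\sqrt{\lambda_{j}-\tfrac14}\bigr)$; this converges by Weyl's law together with the rapid decay of $\hat\phi$ along $\mathbb{R}$ and its boundedness on the finite segment $i[0,\tfrac12]$ carrying the small eigenvalues. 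This is the left-hand side.

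Next I would compute $\mathrm{Tr}\,A$ geometrically by unfolding the integral over the $\Gamma$-action: $\mathrm{Tr}\,A=\sum_{\{\gamma\}}\int_{\Gamma_{\gamma}\backslash\mathbb{H}}k(z,\gamma z)\,dz$, the sum being over $\Gamma$-conjugacy classes. Torsion-freeness rules out elliptic terms and cocompactness rules out parabolic terms, leaving the identity class and the hyperbolic ones. The identity class contributes $\Vol(X)\,\kappa(0)$, and the Plancherel formula for the spherical transform evaluates $\kappa(0)=\frac{1}{4\pi}\int_{-\infty}^{\infty}\hat\phi(r)\,r\tanh(\pi r)\,dr$; the substitution $r=\sqrt{s-\tfrac14}$ and evenness of $\hat\phi$ rewrite this as $\frac{\Vol(X)}{4\pi}\int_{1/4}^{\infty}\hat\phi\bigl(\sqrt{s-\tfrac14}\bigr)\tanh\bigl(\pi\sqrt{s-\tfrac14}\bigr)\,ds$. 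Each hyperbolic class is $\gamma=\gamma_{0}^{k}$ for a primitive $\gamma_{0}$ of translation length $\ell=\ell_{\gamma_{0}}(X)$ with centraliser $\langle\gamma_{0}\rangle$; evaluating $\int_{\langle\gamma_{0}\rangle\backslash\mathbb{H}}k(z,\gamma_{0}^{k}z)\,dz$ in Fermi coordinates along the axis --- the $\ell$-wide fundamental domain of $\langle\gamma_{0}\rangle$ contributing the factor $\ell$, and undoing the Abel transform contributing $\frac{1}{2\sinh(k\ell/2)}\phi(k\ell)$ --- collapses it to $\frac{\ell}{2\sinh(k\ell/2)}\,\phi(k\ell)$. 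Summing over $\gamma_{0}\in\mathcal{P}(X)$ and $k\geqslant1$, a locally finite sum because $\phi$ is compactly supported and the length spectrum grows at most exponentially, produces the geodesic term; equating the two expressions for $\mathrm{Tr}\,A$ completes the proof.

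The hard part is not the structure --- the trace-class property, the interchanges of sums and integrals, and all convergence are routine given $\phi\in C_{c}^{\infty}(\mathbb{R})$ --- but the constant-accurate bookkeeping of the Selberg transform, namely checking simultaneously that (i) the spectral multiplier is exactly $\hat\phi(r)$ with the paper's convention $\hat\phi(r)=\int_{-\infty}^{\infty}e^{-irx}\phi(x)\,dx$, (ii) $\kappa(0)=\frac{1}{4\pi}\int_{-\infty}^{\infty}\hat\phi(r)\,r\tanh(\pi r)\,dr$, and (iii) the hyperbolic orbital integral equals $\frac{\ell}{2\sinh(k\ell/2)}\phi(k\ell)$. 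These are the classical Abel/Mehler-transform computations, and the only delicate point is matching all normalisations consistently across the three places they enter.
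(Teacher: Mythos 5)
The paper does not give a proof of this statement: it is recalled as a classical result with citations to Selberg's original paper and to a modern reference (Bergeron). Your sketch is the standard proof of the cocompact Selberg trace formula via the Selberg/Harish--Chandra transform and the pre-trace formula, and it is correct in outline: build the point-pair invariant $k(z,w)=\kappa(d_{\mathbb{H}}(z,w))$ from an even $\phi$, periodise to get a smooth trace-class kernel on $X\times X$, compute the trace spectrally to get $\sum_{j}\hat\phi(\sqrt{\lambda_j-1/4})$, then geometrically by unfolding over conjugacy classes (only the identity and hyperbolic classes survive for torsion-free cocompact $\Gamma$), with the identity term $\Vol(X)\kappa(0)$ evaluated by the Plancherel density $\frac{1}{4\pi}r\tanh(\pi r)\,\mathrm{d}r$ and the hyperbolic orbital integrals evaluated in Fermi coordinates to yield $\frac{\ell}{2\sinh(k\ell/2)}\phi(k\ell)$. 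One small remark worth recording: the identity as written is really a statement about even $\phi$ (or, equivalently, about the pair $g=\phi$, $h=\hat\phi$ both even), and you correctly note that this is the only case the paper uses, since $\hat\phi=f^{t}$ with $f=\hat{f_0}$ and $f_0$ even. Both the restriction and the trace-class/convergence points are handled correctly, and the normalisation checks you flag at the end are exactly the ones that must be matched against the paper's Fourier convention $\hat\phi(r)=\int e^{-irx}\phi(x)\,\mathrm{d}x$.
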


\subsection*{Filling geodesics}
\begin{defn}
Let $X$ be a hyperbolic surface, possibly with totally geodesic boundary,
and let $\gamma$ be a closed geodesic on $X.$ We say that $\gamma$
\emph{fills} $X$ if $X\setminus\gamma$ is a union of discs and cylinders
such that each cylinder is homotopic to a boundary component of $X$.
\end{defn}

\begin{lem}
\label{lem:filling-bound}There is a $C>0$ such that for any hyperbolic
surface $X$, any geodesic $\gamma$ which fills $X$ satisfies $\ell_{\gamma}\left(X\right)\geqslant C\left|\chi\left(X\right)\right|.$
\end{lem}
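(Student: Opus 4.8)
The plan is to show that a filling geodesic must be "long" by a packing/area argument: a filling geodesic cuts $X$ into finitely many discs and boundary-parallel cylinders, and the number of such complementary pieces is controlled by $|\chi(X)|$, while each piece contributes a definite amount of length to $\gamma$ because hyperbolic polygons and cylinders cannot be too thin.

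First I would set up combinatorics. Realising $\gamma$ as a graph $\mathcal{G}$ embedded on $X$ (vertices $=$ self-intersection points of $\gamma$, edges $=$ arcs of $\gamma$ between consecutive self-intersections, plus some trivial loop if $\gamma$ is simple), the complement $X\setminus\gamma$ has faces which by hypothesis are discs or boundary-parallel annuli. An Euler characteristic count for this cell decomposition of $X$ gives $V - E + F = \chi(X)$ (treating annuli appropriately, or first doubling along $\partial X$), and since $\gamma$ is a closed geodesic every vertex has degree $4$, so $E = 2V$; hence $F - V = \chi(X)$, giving $F \leqslant |\chi(X)| + V$ and, combined with $F\geqslant 1$ plus the degree condition, a two-sided bound showing $V, E, F$ are all $O(|\chi(X)|)$. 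In particular $\gamma$ is a union of at most $C|\chi(X)|$ geodesic arcs.

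Next I would put in the geometry. The key local fact is that there is a universal $\delta>0$ such that any embedded geodesic arc that is part of a boundary of a complementary disc and "turns" at a self-intersection with a definite angle, or bounds a disc together with other such arcs, has length bounded below — more robustly, I would use that each complementary disc is an immersed hyperbolic polygon whose sides are arcs of $\gamma$, with at least some sides and with geodesic boundary, so its area is at least a universal constant (a hyperbolic polygon with $n$ geodesic sides has area $(n-2)\pi - \sum(\text{angles})\geqslant$ something bounded below once $n\geqslant 1$, using that the relevant angles at a $4$-valent vertex of a geodesic graph are bounded away from $\pi$ — the two strands cross transversally). Summing areas of the complementary discs and cylinders recovers $\operatorname{Area}(X) = 2\pi|\chi(X)|$, which is consistent but does not by itself bound $\ell_\gamma$ from below; so instead I would bound length directly: each complementary disc $D$ is bounded by geodesic arcs of total length $\ell(\partial D)$, and by the collar lemma / the fact that a geodesic bigon or monogon in $\mathbb{H}^2$ is degenerate, each $\partial D$ contains at least one arc of length $\geqslant \delta_0$ for a universal $\delta_0>0$ (an embedded geodesic $n$-gon with all angles $<\pi - \delta$ cannot have all sides shorter than some $\delta_0(\delta)$, else it would violate Gauss–Bonnet / be contained in a tiny ball where $\mathbb{H}^2$ is nearly Euclidean and geodesic polygons close up only if perimeter is bounded below relative to the defect). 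Since each arc of $\gamma$ borders at most two faces, summing the per-face lower bound over the $\geqslant c|\chi(X)|$ faces and dividing by $2$ yields $\ell_\gamma(X) \geqslant C|\chi(X)|$.

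The main obstacle I expect is making the "each complementary disc forces a definite amount of length" step genuinely uniform and clean, rather than hand-wavy: a monogon or bigon complementary region is impossible for a geodesic (no geodesic monogons/bigons in a CAT($0$) space), so every disc face has at least $3$ sides, but I must rule out a disc with three very short geodesic sides and angles close to $\pi$. This is where Gauss–Bonnet does the work: $\operatorname{Area}(D) = (k-2)\pi - \sum \theta_i$ for a $k$-gon, and the exterior turning at each self-intersection vertex of a transverse geodesic graph is bounded below by a universal constant (the two branches meet at a well-defined nonzero angle, though a priori that angle could be small — here I would instead avoid needing a lower bound on the crossing angle and argue via the embedded-ness and total area: the number of faces is $\Theta(|\chi|)$, total area is $2\pi|\chi|$, so the average face has area $\Theta(1)$, but a single face could be tiny; to handle tiny faces, note a tiny disc face sits in a small ball, its boundary geodesic arcs are nearly Euclidean straight segments, and a Euclidean polygon of small diameter has small perimeter — but then most of $\gamma$'s length must be concentrated on the non-tiny faces or on arcs shared between faces, and a more careful accounting, e.g. bounding $\ell_\gamma$ below by counting arcs of length $\geqslant \delta_0$ directly, closes the gap). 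I would therefore structure the final write-up around: (i) $E = O(|\chi(X)|)$ and $F = \Theta(|\chi(X)|)$ from the $4$-valent Euler count; (ii) a collar-lemma/injectivity-radius argument that, because $\gamma$ fills, the injectivity radius contribution forces $\ell_\gamma$ to grow at least linearly — concretely, balls of a fixed radius $r_0$ around points of $\gamma$ cover $X$ (else there is an embedded disc disjoint from $\gamma$, contradicting filling), so $\operatorname{Area}(X) \leqslant \ell_\gamma(X)\cdot(\text{area of an }r_0\text{-collar of a curve}) = O(\ell_\gamma(X))$, hence $\ell_\gamma(X) \geqslant c\operatorname{Area}(X) = 2\pi c|\chi(X)|$. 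This last covering argument is the cleanest route and is probably the one I would actually present: it needs only that $\gamma$ fills (so no large empty disc) and the standard bound on the area of a tubular neighbourhood of a curve of given length in a hyperbolic surface.
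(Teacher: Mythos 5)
The covering route you say you would actually present has a genuine gap at its pivotal step. You assert that balls of a fixed radius $r_0$ centred on $\gamma$ cover $X$, with the parenthetical justification ``else there is an embedded disc disjoint from $\gamma$, contradicting filling''. But that is not a contradiction: filling is a \emph{topological} condition. It says precisely that the components of $X\setminus\gamma$ are topological discs (and boundary-parallel cylinders); it places no a priori bound on their metric size. A complementary face that is a geodesic polygon with $n$ sides can have inradius of order $\log n$, and nothing in the filling hypothesis bounds $n$: a filling geodesic may have arbitrarily many self-intersections. (This also invalidates your combinatorial preliminary: from $E=2V$ and $V-E+F=\chi$ one gets $F=V+\chi$, but $V$ is not $O(|\chi(X)|)$, so neither are $E$ and $F$.) Thus there is no universal $r_0$, and the inequality $\mathrm{Area}(X)\leqslant C\,\ell_\gamma(X)$ does not follow as you wrote it.

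The correct version of the idea you were circling is the linear isoperimetric inequality in $\mathbb{H}^2$: each complementary disc $D$, being simply connected, lifts isometrically to a domain in $\mathbb{H}^2$, where $\mathrm{Area}(D)\leqslant\mathrm{length}(\partial D)$ holds unconditionally, with no lower bound on individual face size required. Summing over faces gives $2\pi|\chi(X)|=\mathrm{Area}(X)\leqslant\sum_D\mathrm{length}(\partial D)=2\ell_\gamma(X)$, hence $\ell_\gamma(X)\geqslant\pi|\chi(X)|$ (with a little extra care for cylinder faces when $\partial X\neq\emptyset$). This eliminates exactly the ``tiny face'' obstacle that derailed your Gauss--Bonnet attempt. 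The paper's own proof is along entirely different lines and is topological rather than metric: filling makes $i_*:\pi_1(N_\varepsilon(\gamma))\to\pi_1(X)$ surjective, so the graph $\mathcal{G}$ traced out by $\gamma$ (which carries the free group $\pi_1(N_\varepsilon(\gamma))$) must contain at least $|\chi(X)|+1$ loops non-trivial in $X$; pairing them yields $\gtrsim|\chi(X)|$ edge-disjoint subloops of $\gamma$ each freely homotopic to a figure-eight, and each such subloop has length at least $2\operatorname{arccosh}3$ by the universal lower bound on the length of a non-simple closed geodesic.
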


\begin{proof}
Let $\mathcal{G}$ be the graph on $X$ whose vertices are the self-intersections
of $\gamma$ and whose edges are the geodesic arcs between them. Consider
a regular neighbourhood $N_{\ep}\left(\gamma\right)\subset X$ of
$\gamma$ in $X$. Since $\gamma$ is filling, the inclusion map $\iota:N_{\ep}\left(\gamma\right)\to X$
defines a surjective homomorphism 
\[
\iota_{*}:\pi_{1}\left(N_{\ep}\left(\gamma\right)\right)\to\pi_{1}\left(X\right).
\]
Since $N_{\ep}\left(\gamma\right)$ deformation retracts onto $\mathcal{G}$,
we have $\pi_{1}\left(N_{\ep}\left(\gamma\right)\right)=\pi_{1}\left(\mathcal{G},v\right)$.
It follows that there are at least $\left|\chi\left(X\right)\right|+1$
loops in $\mathcal{G}$ which are homotopically non-trivial in $X$.
Partitioning these into pairs we can find at least $\lfloor\frac{\left|\chi\left(X\right)\right|+1}{2}\rfloor$
disjoint components of $\gamma$ which are freely homotopic in $X$
to a figure of eight. Since any non-simple geodesic has length at
least $2\textup{arccosh}3$ \cite[Theorem 4.2.2]{Bu2010}, we conclude
that 

\[
\ell_{\gamma}(X)\geqslant\lfloor\frac{\left|\chi\left(X\right)\right|+1}{2}\rfloor\cdot2\textup{arccosh}3\geqslant C|\chi(X)|.
\]
\end{proof}
We record a basic upper bound on the number of filling geodesics on
a hyperbolic surface, see for example \cite[Lemma 10]{Wu.Xu2022}.
\begin{lem}
\label{lem:counting-filling}There is a $C>0$ so that for any hyperbolic
surface $X$, the number of closed geodesics of length $\leqslant L$
that fill $X$ is bounded above by $C\left|\chi\left(X\right)\right|e^{L}$.
\end{lem}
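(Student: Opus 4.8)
The plan is to bound the number of filling geodesics by comparing with the total number of closed geodesics of bounded length, localised near a fixed region, and exploiting the length lower bound from Lemma~\ref{lem:filling-bound}. First I would recall the classical fact (the prime geodesic theorem, or more elementarily the exponential upper bound on the number of closed geodesics of length at most $L$ on a hyperbolic surface) that controls $\#\{\gamma : \ell_\gamma(X) \leqslant L\}$. However, such a global count carries an implicit dependence on the geometry of $X$ (e.g. on its systole or diameter), so a naive application is not uniform in the surface. The key observation to get a clean bound with the stated shape $C|\chi(X)|e^L$ is that a filling geodesic, being non-simple with many self-intersections, can be encoded combinatorially: cut $X$ along a spine, or fix an ideal triangulation / a pants decomposition, and record the homotopy class of $\gamma$ by its sequence of crossings with the edges.

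More concretely, the approach I would take: fix a finite set of simple closed curves or arcs cutting $X$ into a bounded number (in terms of $|\chi(X)|$) of elementary pieces, for instance a pants decomposition consisting of $3g-3+n$ curves together with finitely many seams in each pair of pants. A closed geodesic $\gamma$ of length $\leqslant L$ crosses these curves a total of at most $O(L)$ times, with a uniform implied constant, since each crossing costs a definite amount of length (the curves in question can be taken with a uniform lower bound on their collar widths only if the systole is bounded below — to avoid this, use instead a triangulation by arcs whose complementary regions are triangles, where crossing-segments have length bounded below by a universal constant). The free homotopy class of $\gamma$ is then determined by the cyclic word in the edge-alphabet it traces out; the alphabet has size $O(|\chi(X)|)$ and the word has length $O(L)$, giving at most $(C|\chi(X)|)^{CL}$ classes. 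That is too weak — it is exponential in $L\log|\chi(X)|$ rather than linear in $|\chi(X)|$.

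To repair this I would instead argue as follows. Use Lemma~\ref{lem:filling-bound}: a geodesic $\gamma$ of length $L$ that fills $X$ forces $L \geqslant C|\chi(X)|$, i.e. $|\chi(X)| \leqslant L/C$. So it suffices to bound the number of \emph{all} closed geodesics of length $\leqslant L$ on $X$ by $Ce^{L}$ uniformly, and then multiply by $|\chi(X)| \leqslant L/C \leqslant Ce^L$; actually one wants the product $C|\chi(X)|e^L$, and $|\chi(X)| \leqslant L/C$ already lets us absorb the extra factor, so it is enough to prove the uniform bound $\#\{\gamma : \ell_\gamma(X)\leqslant L\} \leqslant C e^{L}$ with $C$ independent of $X$. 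This uniform bound follows from a standard covering argument: lift to $\mathbb{H}$, fix a point $p$, and count the number of $\Gamma$-translates $\Gamma p$ in a ball of radius roughly $L$ around $p$; the volume of a hyperbolic ball of radius $L$ is $O(e^L)$, and the $\Gamma$-orbit points are $2\,\mathrm{InjRad}$-separated only under an injectivity radius assumption — again the issue of thin parts. The cleanest fully uniform statement is the one proved in \cite[Lemma 10]{Wu.Xu2022}, which the paper explicitly invokes, and whose proof handles the thin-part contributions by a separate (and elementary) argument showing short geodesics in a thin collar cannot fill.

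The main obstacle, then, is the non-uniformity coming from the thin parts of $X$: a surface with a very short geodesic has a huge number of closed geodesics of moderate length winding around the collar, so a genuinely uniform count of \emph{all} geodesics is false and one really does need to use the filling hypothesis. The resolution is that a filling geodesic cannot be concentrated in a collar — it must spread over the whole surface — and combining this with Lemma~\ref{lem:filling-bound} (filling forces length $\gtrsim |\chi(X)|$) confines attention to the regime $L \gtrsim |\chi(X)|$, where the Euler-characteristic factor is harmless. I would therefore simply cite \cite[Lemma 10]{Wu.Xu2022} for the precise bound, as the paper does, and if a self-contained proof were wanted, present it as: (i) the ball-volume/orbit-counting estimate in the thick part, giving $O(e^L)$ geodesics that spend a definite fraction of their length in the thick part; (ii) the observation that geodesics staying in the thin part are simple-ish and non-filling; (iii) multiply by the $O(|\chi(X)|)$ possible thick regions (pairs of pants) a filling geodesic must visit.
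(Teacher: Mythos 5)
Your proposal correctly identifies that the paper does not give a proof but simply cites \cite[Lemma 10]{Wu.Xu2022}, and you land on exactly the same citation; the obstructions you raise (thin parts making a uniform bound on all closed geodesics false, the weakness of the combinatorial word-counting estimate) and the resolution you sketch (filling forces $|\chi(X)|\lesssim L$ via Lemma~\ref{lem:filling-bound}, and filling geodesics cannot concentrate in collars) are the right intuitions for why the cited result holds. So this is the same approach as the paper, with some extra exploratory discussion that is a little meandering in the middle but self-corrects before arriving at the correct reference.
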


\subsection{Estimates for Weil-Petersson volumes}

In this subsection we recall some useful estimates for Weil-Petersson
volumes which we will apply throughout the remainder of the section.
\begin{lem}[{\cite[Lemma 3.2, part 2]{MirzakhaniRandom}}]
For any $g,n$ with $2g+n\geqslant3$ and for any $\mathbf{x}=\left(x_{1},\dots,x_{n}\right)$,\label{lem:Wp-expbound}
\[
1\leqslant\frac{V_{g,n}\left(\mathbf{x}\right)}{V_{g,n}}\leqslant\exp\left(\frac{\sum_{i=1}^{n}x_{i}}{2}\right).
\]
\end{lem}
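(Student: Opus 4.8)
The plan is to reduce both inequalities to known properties of Mirzakhani's volume polynomials $V_{g,n}(\mathbf{x})$. Recall that $V_{g,n}(\mathbf{x})$ is a polynomial in $x_1^2,\dots,x_n^2$ with non-negative coefficients (this is part of Mirzakhani's theorem expressing $V_{g,n}(\mathbf{x})$ as a sum of intersection numbers of $\psi$-classes and $\kappa_1$, all of which are non-negative). The lower bound $V_{g,n}(\mathbf{x})\geqslant V_{g,n}$ is then immediate: since all coefficients are non-negative and the constant term is exactly $V_{g,n}(\mathbf{0})=V_{g,n}$, evaluating at $\mathbf{x}\in\mathbb{R}_{>0}^n$ only adds non-negative quantities. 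So the entire content is the upper bound.

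For the upper bound I would use Mirzakhani's differential/recursive structure, or more directly the sinh-expansion estimate. The cleanest route: Mirzakhani proved the coefficient bound
\[
[\mathbf{x}^{2\boldsymbol{\alpha}}]\,V_{g,n}(\mathbf{x}) \;\leqslant\; \frac{V_{g,n}}{2^{|\boldsymbol{\alpha}|}\,(2\boldsymbol{\alpha}+1)!}
\]
(with multi-index notation $(2\boldsymbol{\alpha}+1)!=\prod_i(2\alpha_i+1)!$), which says that $V_{g,n}(\mathbf{x})/V_{g,n}$ is coefficient-wise dominated by $\prod_{i=1}^n \sum_{\alpha_i\geqslant0}\frac{x_i^{2\alpha_i}}{2^{\alpha_i}(2\alpha_i+1)!}$. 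One then checks the elementary inequality $\sum_{\alpha\geqslant0}\frac{x^{2\alpha}}{2^{\alpha}(2\alpha+1)!}\leqslant \exp(x/2)$ for $x\geqslant0$: indeed $\frac{x^{2\alpha}}{2^\alpha(2\alpha+1)!}\leqslant \frac{(x/2)^{2\alpha}}{(2\alpha)!}\cdot\frac{2^{2\alpha}}{2^\alpha(2\alpha+1)}\le \frac{(x/2)^{2\alpha}}{(2\alpha)!}$ is not quite it, so one argues instead via $\frac{x^{2\alpha}}{2^\alpha (2\alpha+1)!}\leqslant \frac{(x/2)^{2\alpha}}{(2\alpha)!}+\frac{(x/2)^{2\alpha+1}}{(2\alpha+1)!}$ and sums the right-hand side to $e^{x/2}$. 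Multiplying the $n$ single-variable bounds and using non-negativity of all coefficients gives $V_{g,n}(\mathbf{x})/V_{g,n}\leqslant \prod_i e^{x_i/2}=\exp(\tfrac12\sum_i x_i)$, as desired.

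The main obstacle is simply citing/establishing the right coefficient bound: the whole lemma rests on Mirzakhani's structural results (positivity of the coefficients of $V_{g,n}(\mathbf{x})$ and the comparison of each coefficient with $V_{g,n}$), which are genuinely non-trivial inputs proven in \cite{MirzakhaniRandom}. Given those, everything else is the elementary power-series estimate above. Since this is exactly Lemma 3.2 of \cite{MirzakhaniRandom}, in the write-up I would either quote it directly or sketch the two-line argument from the coefficient bound; there is no need to reprove Mirzakhani's volume recursion here.
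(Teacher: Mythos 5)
The paper offers no proof here: Lemma~\ref{lem:Wp-expbound} is quoted verbatim as \cite[Lemma 3.2, part 2]{MirzakhaniRandom}, so there is no in-paper argument to compare against. Your strategy --- positivity of the coefficients of Mirzakhani's volume polynomial for the lower bound, plus a coefficient-by-coefficient comparison with $e^{x/2}$ for the upper bound --- is indeed the standard route and is essentially what Mirzakhani does.

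However, your coefficient bound is off by a power of two, and the patch you propose to compensate is actually false. Theorem~\ref{thm:Mirz-vol-exp} in this paper expresses $V_{g,n}(2x_1,\dots,2x_n)$, not $V_{g,n}(\mathbf{x})$, as $\sum_{\mathbf{d}}\left[\prod\tau_{d_i}\right]_{g,n}\prod\frac{x_i^{2d_i}}{(2d_i+1)!}$. Rescaling gives
\[
[\mathbf{x}^{2\boldsymbol{\alpha}}]\,V_{g,n}(\mathbf{x})=\frac{\left[\prod\tau_{\alpha_i}\right]_{g,n}}{4^{|\boldsymbol{\alpha}|}\prod_i(2\alpha_i+1)!}\leqslant\frac{V_{g,n}}{4^{|\boldsymbol{\alpha}|}\prod_i(2\alpha_i+1)!},
\]
with $4^{|\boldsymbol{\alpha}|}$ in the denominator rather than your $2^{|\boldsymbol{\alpha}|}$. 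With the correct power there is nothing to patch: each single-variable generating series is
\[
\sum_{\alpha\geqslant0}\frac{x^{2\alpha}}{4^{\alpha}(2\alpha+1)!}=\frac{\sinh(x/2)}{x/2}\leqslant\sum_{\alpha\geqslant0}\frac{(x/2)^{2\alpha}}{(2\alpha)!}=\cosh(x/2)\leqslant e^{x/2},
\]
term-by-term, and multiplying over $i$ and using $\left[\prod\tau_{d_i}\right]_{g,n}\leqslant V_{g,n}$ gives the upper bound. By contrast, your proposed inequality $\frac{x^{2\alpha}}{2^{\alpha}(2\alpha+1)!}\leqslant\frac{(x/2)^{2\alpha}}{(2\alpha)!}+\frac{(x/2)^{2\alpha+1}}{(2\alpha+1)!}$ fails for small $x$: already at $\alpha=3$ the left side behaves like $\frac{x^{6}}{8\cdot 7!}$ while the right side behaves like $\frac{x^{6}}{64\cdot 6!}+O(x^{7})$, and $\frac{1}{8\cdot7!}>\frac{1}{64\cdot6!}$, so for $x$ small enough the inequality is violated. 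So either correct the exponent and conclude by the $\sinh$ comparison above, or (as the paper does) simply cite Mirzakhani, noting that the required inputs are the non-negativity of intersection numbers, the normalization $\left[\prod\tau_{d_i}\right]_{g,n}\leqslant V_{g,n}$, and the factor-of-$2$ convention in Theorem~\ref{thm:Mirz-vol-exp}.
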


We often apply the following bound which is a simple consequence of
work of Gruschevsky \cite{Gr2001} and Schumacher-Trapani \cite{ST2001}.
\begin{lem}
\label{lem:(Grushevsky's-bound,-)}There is a $C>0$ so that for any
$g,n$ with $2g+n\geqslant3$,
\[
V_{g,n}\leqslant C^{2g+n}\left(2g+n\right)!
\]
\end{lem}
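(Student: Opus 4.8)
The statement to prove is Lemma~\ref{lem:(Grushevsky's-bound,-)}: there is a $C>0$ so that $V_{g,n}\leqslant C^{2g+n}(2g+n)!$ for all $g,n$ with $2g+n\geqslant1$.

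\bigskip

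The plan is to deduce this from the two cited works of Grushevsky \cite{Gr2001} and Schumacher--Trapani \cite{ST2001}, which provide upper bounds on the Weil--Petersson volumes $V_{g,n}$, and then to massage their estimates into the stated factorial form. Concretely, those papers give a bound of the shape $V_{g,n}\leqslant C_1^{2g+n}\,(2g-3+n)!$ for a universal constant $C_1$ (one common formulation controls $V_{g,n}/(2g-3+n)!$ uniformly), and possibly with extra lower-order polynomial factors in $g$ and $n$. So the first step is simply to quote the relevant inequality from \cite{Gr2001,ST2001} in whatever precise normalization they use, being careful about the conventions for $V_{g,n}$ (some authors include or omit factors of $2^{\,\text{something}}$ or $\pi^{6g-6+2n}$ in the definition of the Weil--Petersson volume; here $V_{g,n}=\mathrm{Vol}_{WP}(\mathcal M_{g,n}(\mathbf 0))$ is fixed by the normalization in $\S$\ref{subsec:Background}).

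\bigskip

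The second step is bookkeeping on the combinatorial factors. We need to pass from $(2g-3+n)!$ (or similar) up to $(2g+n)!$: since $(2g+n)! = (2g+n)(2g+n-1)(2g+n-2)\,(2g-3+n)! \geqslant (2g-3+n)!$ whenever $2g+n\geqslant 3$, losing the lower-order terms only helps us, so $(2g-3+n)! \leqslant (2g+n)!$ in the relevant range. Any stray polynomial factor $(2g+n)^{A}$ coming from the cited bound is absorbed into $C^{2g+n}$ at the cost of enlarging $C$: indeed $(2g+n)^{A} \leqslant A!\, e^{\,2g+n} \leqslant (C')^{2g+n}$ for a suitable $C'$ (using $m^A/A! \leqslant e^m$), or alternatively one can absorb it directly into the factorial. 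The finitely many small cases with $2g+n\in\{1,2\}$ (e.g. $(g,n)=(0,1)$ or $(0,2)$, which may be degenerate or conventionally assigned particular values, and $(g,n)=(1,0)$) are handled by hand: there are only finitely many of them, each $V_{g,n}$ is a finite explicit number, so we simply enlarge $C$ to accommodate them. Combining, $V_{g,n}\leqslant C_1^{2g+n}(2g+n)^A \cdot (C_2)^{\text{const}} \leqslant C^{2g+n}(2g+n)!$ for $C$ large enough depending only on $C_1$, $A$ and the finitely many exceptional values.

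\bigskip

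I do not expect a genuine obstacle here; the lemma is explicitly flagged in the excerpt as ``a simple consequence of work of Grushevsky and Schumacher--Trapani,'' so the content is entirely in citing the correct inequality and doing the elementary arithmetic above. The only point requiring a little care is matching normalizations and conventions between the cited papers and the present one, and making sure the form of the cited bound (the precise power of the factorial and the precise polynomial correction) is what one expects; once that is pinned down, the factorial manipulation and the absorption of polynomial and constant factors into $C^{2g+n}$ are routine. One clean way to present the proof in the paper is: ``By \cite[\,\ldots\,]{Gr2001} (see also \cite{ST2001}) there is a universal $C_1$ with $V_{g,n}\leqslant C_1^{2g+n}(2g-3+n)!$ for $2g+n\geqslant3$; since $(2g-3+n)!\leqslant(2g+n)!$ in this range and the finitely many remaining cases are trivial, the claim follows.''
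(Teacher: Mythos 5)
Your proposal misses the key reduction step that the paper uses, and this is a genuine gap. The paper does \emph{not} simply quote a bound for general $V_{g,n}$ from \cite{Gr2001,ST2001}: it first invokes Mirzakhani's monotonicity inequality $V_{g,n}\leqslant V_{g+1,n-2}$ (from \cite[Lemma 3.2, part 3]{MirzakhaniRandom}), iterates it to obtain $V_{g,n}\leqslant V_{g+\lfloor n/2\rfloor,\,n-2\lfloor n/2\rfloor}$, and only then appeals to the cited estimates, which are applied solely in the boundary cases $n=1$ (via \cite[Section 7]{Gr2001}, for $n$ odd) and $n=0$ (via \cite[Corollary 1]{ST2001}, for $n$ even). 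Your proposal assumes from the outset that both cited works furnish a bound of the shape $V_{g,n}\leqslant C_1^{2g+n}(2g-3+n)!$ for \emph{all} $(g,n)$, but Schumacher--Trapani's Corollary~1 concerns $V_{g}=V_{g,0}$ only, so the direct route you sketch does not go through without the reduction to $n\in\{0,1\}$.

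Once the reduction is in place, the remaining bookkeeping in your write-up (passing from $(2g-3+n)!$ to $(2g+n)!$, absorbing polynomial prefactors into $C^{2g+n}$, treating the finitely many small cases by hand) is the right kind of elementary manipulation and is compatible with what the paper does implicitly. So the fix is not cosmetic but is short: insert the monotonicity step $V_{g,n}\leqslant V_{g+\lfloor n/2\rfloor,\,n-2\lfloor n/2\rfloor}$ before citing the volume bounds, note that the Euler characteristic $2g-2+n$ is preserved by this reduction (so the exponent $2g+n$ in the final estimate only improves), and then cite Grushevsky and Schumacher--Trapani only for $n\in\{0,1\}$.
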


\begin{proof}
By \cite[Lemma 3.2, part 3]{MirzakhaniRandom} we have that 
\[
V_{g,n}\leqslant V_{g+1,n-2}
\]
which implies that 
\[
V_{g,n}\leqslant V_{g+\lfloor\frac{n}{2}\rfloor,n-2\lfloor\frac{n}{2}\rfloor}.
\]
It then follows by \cite[Section 7]{Gr2001} when $n$ is odd and
by \cite[Corollary 1]{ST2001} when $n$ is even, that there is a
constant $C>0$ so that 
\[
V_{g,n}\leqslant C^{2g+n}\left(2g+n\right)!.
\]
\end{proof}
Mirzakhani and Zograf \cite{Mi.Zo2015} provide a large genus asymptotic
formula for $V_{g,n}$.
\begin{thm}[{\cite[Theorem 1.8]{Mi.Zo2015}}]
\label{Large-g-Vg}There exists a constant $B>0$ such that for any
$n\geqslant0$,
\[
V_{g,n}=\frac{B}{\sqrt{g}}\left(2g-3+n\right)!\left(4\pi^{2}\right)^{2g-3+n}\left(1+O\left(\frac{1+n^{2}}{g}\right)\right),
\]
as $g\to\infty.$
\end{thm}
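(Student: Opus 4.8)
The plan is to recover the asymptotics from Mirzakhani's topological recursion for the Weil--Petersson volume polynomials $V_{g,n}(\mathbf L)$, combined with the string and dilaton (Do--Norbury) equations, after reducing everything to a one-dimensional recursion in $g$. First I would pass from the polynomials to their Taylor coefficients, i.e.\ the normalised intersection numbers $\langle\tau_{d_1}\cdots\tau_{d_n}\kappa_1^{3g-3+n-|d|}\rangle_{g,n}$, and write out the three recursions this produces. Mirzakhani's recursion expresses a coefficient of $V_{g,n}$ as a ``non-separating'' contribution built from $V_{g-1,n+2}$ plus ``separating'' contributions $\sum_{g_1+g_2=g}V_{g_1,\bullet}\,V_{g_2,\bullet}$, the numerical kernels being polynomials in $\pi^2$ coming from the integrals $\int_0^\infty \frac{x^{2k+1}}{e^x\pm1}\,\mathrm{d}x$ (hence rational multiples of $\zeta(2k)$ and of powers of $\pi^2$, which is where the $4\pi^2$ in the statement originates); the string equation relates $V_{g,n+1}$ to $V_{g,n}$ with one fewer marked point; and the dilaton equation gives $\partial_{L_{n+1}}V_{g,n+1}\big|_{L_{n+1}=2\pi i}=2\pi i\,(2g-2+n)\,V_{g,n}(\mathbf L)$.

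Next I would show the lower-order pieces are negligible. Using the crude factorial bound $V_{g,n}\le C^{2g+n}(2g+n)!$ of Lemma~\ref{lem:(Grushevsky's-bound,-)} and the monotonicity bounds of Lemma~\ref{lem:Wp-expbound}, one checks that the separating sum $\sum_{g_1+g_2=g}V_{g_1,n_1+1}V_{g_2,n_2+1}$ is dominated by the extreme partitions $g_i\in\{1,2\}$ and is an $O(1/g)$ fraction of $V_{g,n+1}$, and likewise that coefficients of large $\psi$-degree $|d|$ contribute an $O(1/g)$ fraction. Setting $a_{g,n}:=V_{g,n}\big/\big((2g-3+n)!\,(4\pi^2)^{2g-3+n}\big)$, the recursions then collapse to the approximate identities
\[
a_{g,n+1}=a_{g,n}\bigl(1+O(\tfrac{1+n}{g})\bigr),\qquad a_{g-1,n+2}=a_{g,n}\bigl(1+O(\tfrac{1+n}{g})\bigr),
\]
so that to leading order $a_{g,n}$ is independent of $n$ and is governed by $a_g:=a_{g,0}$, which satisfies a scalar recursion $a_g/a_{g-1}=1+\alpha/g+O(1/g^2)$ with $\alpha$ an explicit constant extracted from the subleading part of Mirzakhani's recursion.

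Finally I would solve this scalar recursion: from $a_g/a_{g-1}=1+\alpha/g+O(1/g^2)$ one gets $a_g=B\,g^{\alpha}\bigl(1+O(1/g)\bigr)$, the limit $B=\lim_g g^{-\alpha}a_g$ existing because $\log a_g-\alpha\log g$ is Cauchy (the $O(1/g^2)$ errors are summable), and $B>0$ since all $V_{g,n}>0$. Computing $\alpha$ from the $\zeta(2)=\pi^2/6$ terms in the three recursions gives $\alpha=-\tfrac12$, hence $V_{g,n}=\frac{B}{\sqrt g}(2g-3+n)!\,(4\pi^2)^{2g-3+n}(1+O(1/g))$ for $n$ fixed; feeding this back into the two displayed relations --- each step $n\mapsto n+1$ costing a factor $1+O(n/g)$, which compounds to $1+O(n^2/g)$ over $n$ steps --- upgrades the error to $O\bigl(\frac{1+n^2}{g}\bigr)$, uniformly in the relevant range of $n$. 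I expect the main obstacle to be twofold: organising the error analysis of the second step so that the suppression of the separating and high-$\psi$-degree terms is genuinely uniform in $n$, and pinning down the exponent $\alpha=-\tfrac12$ --- equivalently, computing the exact $O(1/g)$ correction in the recursion --- which is the delicate computational core of the Mirzakhani--Zograf argument.
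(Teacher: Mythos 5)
The paper does not prove this statement: it is imported verbatim as \cite[Theorem 1.8]{Mi.Zo2015} and used as a black box (it feeds the Stirling estimates in Lemma~\ref{lem:WP-sum-product-bound} and the induction base for Theorem~\ref{thm:expansions}). So there is no internal argument to compare yours against; the relevant comparison is with the Mirzakhani--Zograf proof itself, whose broad shape your sketch reconstructs correctly: combine Mirzakhani's recursion with the string and Do--Norbury dilaton equations, normalise $a_{g,n} := V_{g,n}\big/\bigl((2g-3+n)!\,(4\pi^2)^{2g-3+n}\bigr)$, show the separating and high-$\psi$-degree contributions are $O(1/g)$-suppressed, reduce to a scalar recursion for $a_g := a_{g,0}$, and extract the exponent $-\tfrac12$.

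The caveat is that the reduction in your step 2 is not as cheap as the sketch suggests, and this is where the real work lies. The crude bounds of Lemmas~\ref{lem:Wp-expbound} and~\ref{lem:(Grushevsky's-bound,-)} suffice to bound the separating sum and the high-$|\mathbf d|$ tail by $O(V_{g,n}/g)$, but to obtain $a_g/a_{g-1} = 1 + \alpha/g + O(1/g^2)$ with an \emph{identified} $\alpha$ you need the first-order asymptotics of the ratios $V_{g,n+1}/(2gV_{g,n})$ and $V_{g-1,n+2}/V_{g,n}$, and of the normalised intersection numbers $[\tau_{d_1}\cdots\tau_{d_n}]_{g,n}/V_{g,n}$, with controlled error --- exactly the content of the recursions in Theorem~\ref{thm-Recurrsions} applied inductively. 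In \cite{Mi.Zo2015} (and in the quantitative refinement this paper carries out in Section~\ref{sec:Large-genus-expansion}) those ratio asymptotics are not inputs but must be bootstrapped simultaneously with the main statement; treating them as already available before computing $\alpha$ is circular. The $n$-uniformity step is fine as stated, since $O\bigl(\tfrac{1+n^2}{g}\bigr)$ is only meaningful for $n = O(\sqrt g)$ anyway. In short: the plan is the right one, but the ``delicate computational core'' you flag at the end is essentially the entire theorem, which is why the present paper cites it rather than reproves it --- and why Section~\ref{sec:Large-genus-expansion} has to redo that analysis from scratch when explicit Gevrey-type error bounds, rather than $o(1)$, are required.
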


For convenience we import notation from \cite{NXW2023,Wu.Xu2022}.
We define

\begin{equation}
W_{r}=\begin{cases}
V_{\frac{r}{2}+1,0} & \text{if \ensuremath{r} is even, }\\
V_{\frac{r+1}{2},1} & \text{if \ensuremath{r} is odd. }
\end{cases}\label{eq:W-notation}
\end{equation}

\begin{lem}[{\cite[Lemma 24]{NXW2023}}]
\label{Lemma 22} Assume $q\geqslant1,n_{1},...,n_{q}\geqslant0,r\geqslant2$.
There exists two universal constants $c,D>0$ such that 
\[
\sum_{\mathbf{g}}V_{g_{1},n_{1}}\cdots V_{g_{q},n_{q}}\leqslant c\left(\frac{D}{r}\right)^{q-1}W_{r},
\]
where the sum is taken over $\mathbf{g}=(g_{1},...,g_{q})\in\mathbb{Z}_{\geqslant0}^{q}$
such that $2g_{i}-2+n_{i}\geqslant1\text{ for }i=1,...,q$ and $\sum_{i=1}^{q}(2g_{i}-2+n_{i})=r$.
\end{lem}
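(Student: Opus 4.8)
The plan is to prove Lemma \ref{Lemma 22} by induction on the number of pieces $q$, using the well-known estimate of Mirzakhani--Zograf (Theorem \ref{Large-g-Vg}) together with elementary manipulations of factorials. First I would recall that, by Theorem \ref{Large-g-Vg} (and its analogue for one boundary component), there are constants $0<b_1<b_2$ such that for all $r\geqslant 1$ one has $b_1 \, (r-1)! \,(4\pi^2)^{r} \leqslant W_r \leqslant b_2 \, (r-1)!\,(4\pi^2)^{r}$; the $\sqrt{g}$ in the statement only helps, so $W_r$ is comparable to $(r-1)!(4\pi^2)^r$ up to a bounded factor. Rewriting each $V_{g_i,n_i}$ in terms of $W_{r_i}$ with $r_i = 2g_i-2+n_i$, the claim becomes a statement purely about the quantities $W_{r_1}\cdots W_{r_q}$ summed over compositions $r_1+\cdots+r_q = r$ with each $r_i \geqslant 1$.

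The base case $q=1$ is trivial. For the inductive step, I would split off the last index $r_q = s$, ranging over $1\leqslant s\leqslant r-(q-1)$, apply the inductive hypothesis to the first $q-1$ factors (which sum to $r-s \geqslant q-1 \geqslant 1$, so the hypothesis applies provided $r-s\geqslant 2$; the boundary case $r-s = 1$, i.e.\ $q=2$ with $r_1=1$, is handled separately and trivially), obtaining a bound of the form $c (D/(r-s))^{q-2} W_{r-s}$. It then remains to control $\sum_{s} W_{r-s} W_s / W_r$ and show it is $\lesssim D/r$ uniformly, with a little room to spare to absorb the change from $(r-s)^{q-2}$ to $r^{q-1}$ in the denominator. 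Using the two-sided bound on $W$, this reduces to estimating $\sum_{s=1}^{r-1} \frac{(r-s-1)!(s-1)!}{(r-1)!}$, a classical Beta-function-type sum. One has $\frac{(r-s-1)!\,(s-1)!}{(r-1)!} = \frac{1}{(r-1)\binom{r-2}{s-1}}$, so the sum is $\frac{1}{r-1}\sum_{s=1}^{r-1}\binom{r-2}{s-1}^{-1}$; the reciprocal-binomial sum is bounded by an absolute constant (the terms are largest, equal to $1$, at the two ends, and decay geometrically towards the middle), hence the whole sum is $O(1/r)$. This gives the factor $D/r$; one also needs $(D/(r-s))^{q-2}\leqslant (D'/r)^{q-2}$ after splitting the $s$-sum into $s\leqslant r/2$ and $s> r/2$ and using the symmetry $s\leftrightarrow r-s$, which is where the precise bookkeeping lives.

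The main obstacle I anticipate is purely bookkeeping rather than conceptual: one must choose the constants $c,D$ so that the single inequality propagates through the induction for \emph{all} $q$ simultaneously, which forces the per-step sum $\sum_s W_{r-s}W_s/W_r$ to come with a genuinely small constant times $1/r$ (not merely $O(1/r)$), so that the loss from replacing $r-s$ by $r$ in the denominator of $(D/(r-s))^{q-2}$ is absorbed. Concretely, after restricting to $s\leqslant r/2$ one has $r-s\geqslant r/2$, so $(D/(r-s))^{q-2}\leqslant (2D/r)^{q-2}$, and one needs the residual constant from $\sum_s W_{r-s}W_s$ to beat the factor $2^{q-2}$; since that sum is in fact $O(W_r/r)$ with an absolute implied constant and one can always enlarge $D$ at the end, this closes. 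I would also need to double-check the degenerate low-$r$ cases (small $r$, or summands with $r_i$ forced by $2g_i-2+n_i\geqslant 1$) separately, but these involve only finitely many configurations for each fixed small $r$ and are absorbed into the constant $c$. No new geometric input beyond Theorem \ref{Large-g-Vg} is required.
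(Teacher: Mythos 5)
The paper does not prove this lemma; it quotes it verbatim from \cite{NXW2023}, so your proposal must stand on its own. Your reduction is the right one: bound $V_{g_i,n_i}\leqslant W_{r_i}$ via Mirzakhani's monotonicity $V_{g,n}\leqslant V_{g+1,n-2}$, invoke $W_m\asymp(m-1)!(4\pi^2)^m$ from Theorem \ref{Large-g-Vg}, and reduce to a sum over compositions $r_1+\cdots+r_q=r$ with $r_i\geqslant 1$.

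However, the inductive step does not close as written, and the failure is exactly where you wave at ``bookkeeping''. After applying the hypothesis to the first $q-1$ parts you face $\sum_s W_s\,(D/(r-s))^{q-2}\,W_{r-s}$, and for $s\leqslant r/2$ you replace $(D/(r-s))^{q-2}$ by $(2D/r)^{q-2}$ before invoking the one-step estimate $\sum_s W_s W_{r-s}\leqslant C_0 W_r/r$. This forces $2^{q-2}C_0\leqslant D$. You note this and claim it closes because $\sum_s W_sW_{r-s}$ is ``$O(W_r/r)$ with an absolute implied constant and one can always enlarge $D$ at the end''. That does not work: $C_0$ is a fixed positive geometric constant (of order $W_1/(4\pi^2)$), not one you control, and $q$ ranges all the way up to $r$, which is unbounded, so no universal $D$ satisfies $D\geqslant 2^{q-2}C_0$ for all $q$. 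The ``symmetry $s\leftrightarrow r-s$'' does not help either, since the weight $(r-s)^{-(q-2)}$ is not symmetric. This is a real gap.

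The fix is to refuse to discard the weight uniformly in $s$. Dividing by $W_r$ and using the factorial asymptotics, what one actually needs is that $\sum_{s=1}^{r-q+1}\bigl(\tfrac{r}{r-s}\bigr)^{q-2}\binom{r-2}{s-1}^{-1}$ is bounded by an absolute constant, \emph{uniformly in $q$ and $r$}. This is true, and the crucial input is the built-in constraint $r-s\geqslant q-1$ coming from $r_i\geqslant 1$: it caps the weight $(r/(r-s))^{q-2}$ precisely where the reciprocal binomial is large (near $s=r-q+1$ one gets $(r/(q-1))^{q-2}/\binom{r-2}{q-2}=O(1)$ by Stirling), while near $s=1$ the weight is $(r/(r-1))^{q-2}\leqslant e^{(q-2)/(r-1)}\leqslant e$ because $q\leqslant r$, and the reciprocal binomials decay fast enough in between that the sum is dominated by its two endpoints. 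Your $2^{q-2}$ is an artefact of the crude pointwise bound, not a feature of the sum. Alternatively, skip the induction carrying powers of $D/r$ entirely: prove $\sum_{m_1+\cdots+m_q=M,\,m_i\geqslant 0}\prod_i m_i!\leqslant C^{q-1}M!$ by an easy induction, then compare $(r-q)!/(r-1)!$ with $(D/r)^{q-1}$ directly via Stirling; this sidesteps the problem.
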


\subsection{The contribution of simple geodesics}

For a closed hyperbolic surface $X$ we write $\mathcal{P}^{\textup{simp}}\left(X\right)$
(resp. $\mathcal{P}^{\textup{n-simp}}\left(X\right)$) to denote the
set of primitive oriented simple (resp. non-simple) closed geodesics
on $X$. The purpose of this section is to prove the following.
\begin{prop}
\label{prop:effective-simple-expansion}There is a constant $c>0$
such that for any $L>0$ and any function $F_{L}$ supported in $[0,L]$
such that $\ell\mapsto\ell F_{L}\left(\ell\right)$ is continuous,
there exist continuous functions $\left\{ f_{j}^{\textup{simp}}\right\} $
such that for any $k\geqslant1$,
\begin{align*}
 & \left|\mathbb{E}_{g}\left[\sum_{\gamma\in\mathcal{P}^{\textup{simp}}\left(X\right)}F_{L}\left(\ell_{\gamma}\left(X\right)\right)\right]-\int_{0}^{\infty}4\frac{\sinh\left(\frac{\ell}{2}\right)^{2}}{\ell}F_{L}\left(\ell\right)d\ell-\sum_{j=1}^{k}\frac{1}{g^{j}}\int_{0}^{\infty}F_{L}\left(\ell\right)f_{j}^{\textup{simp}}(\ell)\mathrm{d}\ell\right|\\
\leqslant & \frac{L^{ck}\left(ck\right)^{ck}}{g^{k+1}}\int_{0}^{\infty}\ell F_{L}\left(\ell\right)\exp\left(\ell\right)\mathrm{d}\ell,
\end{align*}
for $g>(ck)^{c}$.
\end{prop}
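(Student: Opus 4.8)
The plan is to apply Mirzakhani's integration formula to reduce the expected sum over simple closed geodesics to an integral over lengths against ratios of Weil--Petersson volumes, and then feed in the effective genus expansion (Theorem \ref{thm:Wp-vols}, or rather the black-box Corollary \ref{thm:main-estimate-WP-expansion}). First I would split the sum $\sum_{\gamma \in \mathcal{P}^{\textup{simp}}(X)} F_L(\ell_\gamma(X))$ according to the topological type of the complement $X \setminus \gamma$: a simple closed geodesic on a closed genus-$g$ surface is either non-separating, or separates $X$ into two pieces of signature $(i,1)$ and $(g-i,1)$ for $1 \le i \le g-1$. Mirzakhani's integration formula then gives, for the non-separating part, a contribution of the form
\[
\frac{1}{V_g}\int_0^\infty F_L(\ell)\,\ell\,\tfrac{1}{2}\,V_{g-1,2}(\ell,\ell)\,\mathrm{d}\ell,
\]
and for each separating type $i$ a contribution involving $V_{i,1}(\ell)\,V_{g-i,1}(\ell)$, with an appropriate symmetry factor. (The constant $\tfrac12$ and the symmetry factors are the standard ones; the main term $4\frac{\sinh(\ell/2)^2}{\ell}$ will come out of the leading order of $V_{g-1,2}/V_g$.)

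Next I would normalise each volume ratio. Writing $\frac{V_{g-1,2}(\ell,\ell)}{V_g} = \frac{V_{g-1,2}}{V_g}\cdot\frac{V_{g-1,2}(\ell,\ell)}{V_{g-1,2}}$, I apply Theorem \ref{Large-g-Vg} (Mirzakhani--Zograf) in its effective form to control $\frac{V_{g-1,2}}{V_g}$, and Theorem \ref{thm:Wp-vols} / Corollary \ref{thm:main-estimate-WP-expansion} to expand $\frac{V_{g-1,2}(\ell,\ell)}{V_{g-1,2}} = \sum_{j=0}^k \frac{F_{j,2}(\ell,\ell)}{g^j} + (\text{error})$, where the error is bounded by $\frac{(\ell+1)^{ck}(ck)^{ck}\exp(2\ell)}{g^{k+1}}$ (using $|\mathbf{x}| = 2\ell$ here). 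Multiplying the two expansions and collecting powers of $g$ — being careful that products of two expansions up to order $k$ still give error $O(g^{-(k+1)})$ after absorbing constants into $c$, and that replacing $g-1$ by $g$ in the indices only shifts the expansion coefficients, which is harmless since we do not need to identify them beyond $j=0$ — produces the claimed functions $f_j^{\textup{simp}}$ for the non-separating contribution. Using Lemma \ref{lem:Wp-expbound} to bound $\frac{V_{g-1,2}(\ell,\ell)}{V_{g-1,2}} \le e^\ell$ gives the crude a priori bound needed to absorb the error into $\int_0^\infty \ell F_L(\ell)\exp(2\ell)\,\mathrm{d}\ell$.

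The separating contributions I expect to be routine in comparison: by Theorem \ref{Large-g-Vg}, $\frac{V_{i,1}V_{g-i,1}}{V_g}$ is of order $g^{-(i \wedge (g-i))}$ up to constants — in particular the $i=1$ and $i=g-1$ terms are $O(1/g)$ and all others are $O(1/g^2)$ — so after multiplying by $\frac{V_{i,1}(\ell)V_{g-i,1}(\ell)}{V_{i,1}V_{g-i,1}} \le e^\ell$ (Lemma \ref{lem:Wp-expbound}) and summing over $i$ using Lemma \ref{Lemma 22} (with $q=2$, $r = 2g-2$) to control the total mass, these contribute only to the $f_j^{\textup{simp}}$ with $j \ge 1$ plus an error of the required shape. \textbf{The main obstacle} is bookkeeping the error term through the multiplication of expansions while keeping the factorial-type bound $(ck)^{ck}$ intact: one must check that the Cauchy product of two Gevrey-class expansions, each truncated at order $k$ with remainder $(ck)^{ck}/g^{k+1}$ (times polynomial and exponential factors in $\ell$), is again of that form — this uses that $\sum_{a+b=k}\binom{k}{a}(ca)^{ca}(cb)^{cb} \le (c'k)^{c'k}$ for a possibly larger constant $c'$ — together with tracking the constraint $g > ck^c$ through the shift $g \mapsto g-1$ and through the effective Mirzakhani--Zograf estimate. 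Once this is in place, collecting the non-separating and separating pieces and identifying the $j=0$ term as $4\frac{\sinh(\ell/2)^2}{\ell}$ (from $F_{0,2} \equiv 1$ and the leading constant $\frac{V_{g-1,2}}{V_g} \sim \frac{1}{(4\pi^2)^2}\cdot\frac{(2g-3)!(4\pi^2)^{2g-1}}{(2g-3)!(4\pi^2)^{2g-3}}$-type computation, combined with $\tfrac12\ell$ and the normalisation) completes the proof.
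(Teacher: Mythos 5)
Your overall skeleton — Mirzakhani's integration formula, the non-separating/separating split, the effective genus expansion as a black box — is the same as the paper's, and the non-separating contribution is handled essentially correctly. One small remark there: the paper applies Corollary \ref{thm:main-estimate-WP-expansion} part 2 directly to the combined ratio $V_{g-1,2}(\ell,\ell)/V_g$ (with $2a+b=2g$, $n=0$), which gives the leading coefficient $4\sinh^2(\ell/2)/\ell^2$ outright. Your version proposes splitting into $\frac{V_{g-1,2}}{V_g}\cdot\frac{V_{g-1,2}(\ell,\ell)}{V_{g-1,2}}$ and citing Theorem \ref{Large-g-Vg} for an ``effective form'' of the constant ratio — but Theorem \ref{Large-g-Vg} as stated gives only a one-term expansion with a non-explicit $O((1+n^2)/g)$ remainder; the effective multi-term expansion of $V_{g-1,2}/V_g$ is exactly what $A3(k)$/$A4(k)$ in Theorem \ref{thm:expansions} and Corollary \ref{thm:main-estimate-WP-expansion} provide, so you would still need to invoke the black box there. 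Related to this, your identification $F_{0,2}\equiv 1$ is incorrect: from Theorem \ref{thm:Wp-vols}/Lemma \ref{lem:bordered} the leading coefficient is $F_{0,n}(\mathbf{x})=\prod_i\frac{\sinh(x_i/2)}{x_i/2}$, so $F_{0,2}(\ell,\ell)=4\sinh^2(\ell/2)/\ell^2$, and the stated leading term comes directly from this rather than from a separate constant computation.

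The genuine gap is in the separating contribution. Saying these terms ``contribute only to the $f_j^{\textup{simp}}$ with $j\ge1$ plus an error of the required shape'' asserts the conclusion but does not produce it. The issue is that a $k$-term expansion with remainder $O\left(\frac{(ck)^{ck}}{g^{k+1}}\right)$ forces you to split the $i$-sum at a $k$-dependent threshold: the paper cuts at $\lfloor k/2\rfloor$, expands each of the finitely many terms $i\le\lfloor k/2\rfloor$ via Corollary \ref{thm:main-estimate-WP-expansion} (these do feed into $f_1^{\textup{simp}},\dots,f_k^{\textup{simp}}$, not just into the error), and then bounds the tail $\sum_{i>\lfloor k/2\rfloor}\frac{V_{i,1}V_{g-i,1}}{V_g}$ by $\frac{(ck)^{ck}}{g^{k+1}}$ using Lemma \ref{lem:WP-sum-product-bound} together with Remark \ref{rem:volume-product-cylinder}. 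The Lemma \ref{Lemma 22} you cite is the wrong tool here: it bounds $\sum_{\mathbf{g}}V_{g_1,n_1}\cdots V_{g_q,n_q}$ by roughly $W_r$ and does not yield the $g^{-(k+1)}$ decay of the tail; only Lemma \ref{lem:WP-sum-product-bound}, which restricts to the large-$i$ range, gives that. Without the $k$-dependent threshold and the correct tail estimate, you cannot meet the $O(g^{-(k+1)})$ error required. (Your heuristic $O(1/g^{i\wedge(g-i)})$ is also off in the exponent — $V_{g-i,1}/V_g\asymp g^{-(2i-1)}$ — though that does not affect the structure of the argument.)
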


\begin{proof}
By Mirzakhani's integration formula \cite{Mi2007},
\begin{equation}
\mathbb{E}_{g}\left[\sum_{\gamma\in\mathcal{P}^{\textup{simp}}\left(X\right)}F_{L}\left(\ell_{\gamma}\left(X\right)\right)\right]=\int_{0}^{\infty}\frac{V_{g-1,2}\left(\ell,\ell\right)}{V_{g}}F_{L}\left(\ell\right)\ell\mathrm{d}\ell+\int_{0}^{\infty}\sum_{i=1}^{\lfloor\frac{g}{2}\rfloor}\frac{V_{i,1}\left(\ell\right)V_{g-i,1}\left(\ell\right)}{V_{g}}F_{L}\left(\ell\right)\ell\mathrm{d}\ell.\label{eq:MIF_simple}
\end{equation}
We begin by treating the first summand. 

By Corollary \ref{thm:main-estimate-WP-expansion}, there exists
a constant $c>0$ and continuous functions $\left\{ w_{i}(\ell)\right\} _{i}$
such that for any $k\in\mathbb{N}$ and $g>(ck)^{c}$,
\[
\left|\frac{V_{g-1,2}\left(\ell,\ell\right)}{V_{g}}-\frac{4\sinh\left(\frac{\ell}{2}\right)^{2}}{\ell^{2}}-\sum_{i=1}^{k}\frac{w_{i}(\ell)}{g^{i}}\right|\leqslant\frac{\left((\ell+1)ck\right)^{ck}\exp\left(\ell\right)}{g^{k+1}}.
\]
It follows that 
\begin{align}
 & \left|\int_{0}^{\infty}\frac{V_{g-1,2}\left(\ell,\ell\right)}{V_{g}}F_{L}\left(\ell\right)\ell\mathrm{d}\ell-\int\frac{4\sinh\left(\frac{\ell}{2}\right)^{2}}{\ell^{2}}\ell F_{L}\left(\ell\right)\mathrm{d}\ell-\sum_{i=1}^{k}\int_{0}^{\infty}\frac{w_{i}(\ell)}{g^{i}}F_{L}\left(\ell\right)\ell\mathrm{d}\ell\right|\label{eq:simple-leading}\\
\leqslant & \int_{0}^{\infty}\frac{\left((\ell+1)ck\right)^{ck}\exp\left(\ell\right)}{g^{k}}F_{L}\left(\ell\right)\ell\mathrm{d}\ell\leqslant\frac{L^{ck}\left(ck\right)^{ck}}{g^{k+1}}\int_{0}^{\infty}\ell F_{L}\left(\ell\right)\exp\left(\ell\right)\mathrm{d}\ell.\nonumber 
\end{align}
 Also for $1\leqslant i\leqslant\left\lceil \frac{k}{2}\right\rceil $
by Corollary \ref{thm:main-estimate-WP-expansion} there exist continuous
functions $\left\{ f_{j}^{i}\left(\ell\right)\right\} $ so that 
\[
\left|\frac{V_{g-i,1}\left(\ell\right)}{V_{g}}-\sum_{j=2i-1}^{k}\frac{f_{j}^{i}\left(\ell\right)}{g^{j}}\right|\leqslant\frac{\left(ck(\ell+1)\right)^{ck}\exp\left(\frac{\ell}{2}\right)}{g^{k+1}}.
\]
Now for $i>\left\lceil \frac{k}{2}\right\rceil $, 
\[
\frac{V_{g-i,1}\left(\ell\right)V_{i,1}\left(\ell\right)}{V_{g}}\leqslant e^{\ell}\frac{V_{g-i,1}V_{i,1}}{V_{g}}.
\]
It follows from Lemma \ref{lem:WP-sum-product-bound} and Remark \ref{rem:volume-product-cylinder}
that
\[
\sum_{i=\left\lceil \frac{k}{2}\right\rceil +1}^{\lfloor\frac{g}{2}\rfloor}\frac{V_{i,1}V_{g-i,1}}{V_{g}}\leqslant\frac{\left(ck\right)^{ck}}{g^{k+1}}.
\]
Thus we see that 
\begin{align}
 & \left|\int_{0}^{\infty}\sum_{i=1}^{\lfloor\frac{g}{2}\rfloor}\frac{V_{i,1}\left(\ell\right)V_{g-i,1}\left(\ell\right)}{V_{g}}F_{L}\left(\ell\right)\ell\mathrm{d}\ell-\sum_{i=1}^{\left\lceil \frac{k}{2}\right\rceil }\sum_{j=2i-1}^{k}\int_{0}^{\infty}\frac{f_{j}^{i}\left(\ell\right)V_{i,1}(\ell)}{g^{j}}\ell F_{L}\left(\ell\right)\mathrm{d}\ell\right|\label{eq:Simple-error}\\
\leqslant & \sum_{i=1}^{\left\lceil \frac{k}{2}\right\rceil }\frac{V_{i,1}\cdot\left(ck\right)^{ck}}{g^{k+1}}\int_{0}^{\infty}(\ell+1)^{ck}e^{\ell}\ell F_{L}(\ell)\mathrm{d}\ell+\sum_{i=\left\lceil \frac{k}{2}\right\rceil +1}^{\lfloor\frac{g}{2}\rfloor}\frac{V_{i,1}V_{g-i,1}}{V_{g}}\int_{0}^{\infty}\ell F_{L}\left(\ell\right)e^{\ell}\mathrm{d}\ell\nonumber \\
\leqslant & \frac{(k+1)!L^{ck}\left(ck\right)^{ck}}{g^{k+1}}\int_{0}^{\infty}\ell F_{L}\left(\ell\right)e^{\ell}\mathrm{d}\ell\leqslant\frac{L^{ck}\left(ck\right)^{ck}}{g^{k+1}}\int_{0}^{\infty}\ell F_{L}\left(\ell\right)e^{\ell}\mathrm{d}\ell,\nonumber 
\end{align}
where the penultimate inequality uses Lemma \ref{lem:(Grushevsky's-bound,-)}
and the constant $c$ changes from line to line. Note that the coefficient
of $g^{-j}$ in (\ref{eq:Simple-error}) can equivalently be written
as
\[
\sum_{i=1}^{\left\lceil \frac{j}{2}\right\rceil }\int_{0}^{\infty}f_{j}^{i}\left(\ell\right)V_{i,1}(\ell)\ell F_{L}\left(\ell\right)\mathrm{d}\ell.
\]
The conclusion then follows from (\ref{eq:MIF_simple}), (\ref{eq:simple-leading})
and (\ref{eq:Simple-error}).
\end{proof}

\subsection{The contribution of non-simple geodesics}

In this section we prove the following.
\begin{prop}
\label{prop:effective-nonsimple-expansion}There is a constant $c>2$
such that for any $L>0$ and any smooth function $F_{L}$ supported
in $[0,L]$ there exist constants $\left\{ f_{j}^{\textup{n-simp}}\right\} $
such that for any $k\geqslant1$,
\[
\left|\mathbb{E}_{g}\left[\sum_{\gamma\in\mathcal{P}^{\textup{n-simp}}\left(X\right)}F_{L}\left(\ell_{\gamma}\left(X\right)\right)\right]-\sum_{j=1}^{k}\frac{f_{j}^{\textup{n-simp}}}{g^{j}}\right|\leqslant\frac{L^{c(L+k)}\left(ck\right)^{ck}}{g^{k+1}}\|F_{L}\|_{\infty},
\]
for $g>\max\left\{ (ck)^{c},cL\right\} $. 
\end{prop}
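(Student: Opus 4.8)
The plan is to proceed in parallel with the proof of Proposition~\ref{prop:effective-simple-expansion}, using Mirzakhani's integration formula to express the expectation as an integral against Weil--Petersson volumes and then feeding in the effective genus expansions of Corollary~\ref{thm:main-estimate-WP-expansion}; the extra work compared to the simple case is that non-simple geodesics come in infinitely many topological types, and the combinatorial/geometric contributions must be summed and controlled. First I would decompose a non-simple primitive closed geodesic $\gamma$ on $X$ according to its topological type: $\gamma$ lies on (i.e.\ fills) an embedded subsurface $X_\gamma\subseteq X$ with totally geodesic boundary, and by Lemma~\ref{lem:filling-bound} a filling geodesic of length $\leqslant L$ on a subsurface $Y$ forces $|\chi(Y)|\leqslant C L$, so only finitely many topological types of pair $(Y,\gamma)$ with $\gamma\subset Y$ filling, $\ell_\gamma\leqslant L$ occur—and by Lemma~\ref{lem:counting-filling} the number of such $\gamma$ on a fixed $Y$ is at most $C|\chi(Y)|e^{L}$. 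Summing over mapping-class-group orbits of such subsurfaces via Mirzakhani's integration formula, $\mathbb{E}_{g}\big[\sum_{\gamma\in\mathcal P^{\mathrm{n\text{-}simp}}}F_{L}(\ell_\gamma)\big]$ becomes a finite sum over topological types $\Gamma$ of terms of the shape
\[
\int_{(\mathbb{R}_{>0})^{b(\Gamma)}} c_\Gamma\big(\mathbf{y}\big)\,F_{L}^{\Gamma}(\mathbf{y})\,\frac{V_{g',n'}(\mathbf{y})\,\prod_j V_{g_j,n_j}(\mathbf{y}_j)}{V_{g}}\,\mathbf{y}\,\mathrm{d}\mathbf{y},
\]
where the cut subsurface has total complexity $\asymp |\chi(X_\Gamma)|$, the boundary lengths $\mathbf y$ are bounded by $L$ (since they are lengths of geodesic arcs/boundary curves contained in or bounding $X_\Gamma$, controlled by $\ell_\gamma\leqslant L$), and $F_{L}^\Gamma$ is a bounded weight built from $F_L$.

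Next, for each fixed type $\Gamma$, I would write the volume ratio as $\frac{V_{g',n'}}{V_{g}}\cdot\frac{V_{g',n'}(\mathbf y)}{V_{g',n'}}\cdot\prod_j\frac{V_{g_j,n_j}(\mathbf y_j)}{V_{g_j,n_j}}\cdot\prod_j V_{g_j,n_j}$ and expand each normalised factor using Corollary~\ref{thm:main-estimate-WP-expansion}, exactly as in the simple case; since all $|\mathbf y|\leqslant CL$, the error factors $(|\mathbf y|+1)^{ck}(ck)^{ck}\exp(|\mathbf y|)/g^{k+1}$ are bounded by $L^{cL}(ck)^{ck}e^{cL}/g^{k+1}$, producing (after integrating the bounded weight $F_L^\Gamma$) a contribution to the coefficients $f^{\mathrm{n\text{-}simp}}_{F_L,j}$ and an error of the claimed form with an extra factor counting the number of types. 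The number of topological types $\Gamma$ with $|\chi(X_\Gamma)|\leqslant CL$ is bounded by $L^{cL}$ (crude bound on the number of surfaces with arcs of bounded complexity), and for each the associated geometric constant $c_\Gamma$ and the number of geodesics per type contribute at most polynomially in $L$; multiplying $L^{cL}$ types against the per-type error $L^{cL}(ck)^{ck}e^{cL}\|F_L\|_\infty/g^{k+1}$ gives the stated bound $L^{c(L+k)}(ck)^{ck}\|F_L\|_\infty e^{2L}/g^{k+1}$. Here one also uses, for subsurface pieces that are "too large" relative to $k$ (total complexity $\gtrsim k$), Lemma~\ref{lem:WP-sum-product-bound} together with Lemma~\ref{Lemma 22} to show their total contribution is already $\leqslant (ck)^{ck}/g^{k+1}$ and can be absorbed, precisely mirroring the tail estimate in Proposition~\ref{prop:effective-simple-expansion}; the constraint $g>\max\{cL,ck^{c}\}$ is exactly what is needed for Corollary~\ref{thm:main-estimate-WP-expansion} to apply uniformly across all the finitely many pieces, whose complexities are $O(L)$.

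The main obstacle I expect is bookkeeping the geometry of the topological types: one must set up a clean correspondence between non-simple geodesics of bounded length and pairs (embedded filling subsurface with geodesic boundary, filling multicurve-with-arcs inside it), ensure Mirzakhani's integration formula is applied with the correct symmetry factors $c_\Gamma$, and verify that the boundary lengths appearing in the cut pieces are genuinely bounded by a constant times $L$ so that the exponential error factors stay under control—this is where Lemma~\ref{lem:filling-bound} is essential. A secondary subtlety is the smoothness hypothesis on $F_L$: it is used (in contrast with Proposition~\ref{prop:effective-simple-expansion}) because after cutting, the relevant weights $F_L^\Gamma(\mathbf y)$ involve $F_L$ evaluated at sums of the arc-length variables, and to integrate these against the volume expansions and obtain genuine \emph{constants} $f^{\mathrm{n\text{-}simp}}_{F_L,j}$ one wants $F_L$ regular enough that these integrals converge and behave well; alternatively the smoothness could be traded for a $\|F_L\|_\infty$-type bound as in the statement. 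Once the reduction to finitely many types is in place, each individual term is handled verbatim as in the simple case, and the only new ingredient is the counting bounds of Lemmas~\ref{lem:filling-bound} and~\ref{lem:counting-filling} to keep the number of types and geodesics sub-exponential in $L$.
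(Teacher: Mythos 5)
Your proposal follows essentially the same route as the paper: it decomposes by the topological type of the filling subsurface, applies an integration formula over moduli space, truncates to finitely many types via Lemma~\ref{lem:filling-bound}, expands the volume ratios effectively via Corollary~\ref{thm:main-estimate-WP-expansion} (with the tail controlled by Lemma~\ref{lem:WP-sum-product-bound}), and uses Lemma~\ref{lem:counting-filling} plus a crude count of types to assemble the final error. The paper packages the decomposition step into the Anantharaman--Monk integration formula (Lemma~\ref{lem:integration-lemma}) and isolates the genus expansion of $\phi_g^{(g_0,n_0)}$ as a separate lemma (Lemma~\ref{lem:vol-function-bound}), but the underlying argument is the one you describe; note that the smoothness hypothesis on $F_L$ is not really used beyond $\|F_L\|_\infty$, so your speculation about why it is needed is not essential to the proof.
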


We begin by introducing some notation. For $g_{0},n_{0}$ with $2g_{0}+n_{0}\geqslant3$,
we define $\mathcal{A}_{g_{0},n_{0}}$ to be the set of triples $\left(\mathbf{i},\mathbf{j},q\right)$
where $q\geqslant1$, $\mathbf{i}=\left(g_{1},\dots,g_{q}\right)$
and $\mathbf{j}$ is a partition of $\left\{ 1,\dots,n_{0}\right\} $
into $q$ non-empty subsets $I_{1},\dots,I_{q}$ with $\left|I_{i}\right|\eqdf n_{i}$
so that $\sum_{i=1}^{q}n_{i}=n_{0}$ and $i\mapsto\min I_{i}$ is
an increasing function. We further require that 
\begin{lyxlist}{00.00.0000}
\item [{i)}] $2g_{i}+n_{i}-2\geqslant1$ and $g_{i}\geqslant0$ or $(g_{i},n_{i})=(0,2),$ 
\item [{ii)\label{ii)index-condition-ii}}] $\sum_{i=1}^{q}2g_{i}-2+n_{i}=2g-2g_{0}-n_{0}.$
\end{lyxlist}
Given $\mathbf{x}=\left(x_{1},\dots,x_{n_{0}}\right)$, we write $\mathbf{x}^{(j)}=\left(x_{i}:i\in I_{j}\right)\in\mathbb{R}^{n_{j}}$. 

The following lemma is proven by Anantharaman and Monk \cite[Theorem 6.1]{An.Mo2023}.
\begin{lem}
\label{lem:integration-lemma}We have that 
\begin{multline*}
\mathbb{E}_{g}\left[\sum_{\gamma\in\mathcal{P}^{\textup{n-simp}}\left(X\right)}F_{L}\left(\ell_{\gamma}\left(X\right)\right)\right]\\
=\sum_{\substack{\left(g_{0},n_{0}\right)\\
2g_{0}-2+n_{0}\geqslant1
}
}\frac{1}{n_{0}!}\int_{\mathbb{R}_{\geqslant0}^{n_{0}}}\frac{1}{V_{g_{0},n_{0}}\left({\bf x}\right)}\int_{\mathcal{M}_{g_{0},n_{0}}(\mathbf{x})}\sum_{\alpha\textup{ filling }S_{g_{0},n_{0}}}F_{L}\left(\ell_{\alpha}(Y)\right)\mathrm{d}\textup{Vol}_{\mathrm{WP}}\left(Y\right)\phi_{g}^{\left(g_{0},n_{0}\right)}(\mathbf{x})\mathrm{d}\mathbf{x},
\end{multline*}
where 
\begin{equation}
\phi_{g}^{\left(g_{0},n_{0}\right)}\left(\mathbf{x}\right)=x_{1}\dots x_{n_{0}}\frac{V_{g_{0},n_{0}}\left(\mathbf{x}\right)}{V_{g}}\sum_{\left(\mathbf{i},\mathbf{j},q\right)\in\mathcal{A}_{g_{0},n_{0}}}V_{g_{1},n_{1}}\left(\mathbf{x}^{(1)}\right)\dots V_{g_{q},n_{q}}\left(\mathbf{x}^{(q)}\right).\label{eq:vol-terms}
\end{equation}
When $(g_{i},n_{i})=(0,2)$, the volume $V_{g_{i},n_{i}}(x,y)$ is
interpreted as $\frac{1}{x}\delta_{x=y}$, where $\delta_{x=y}=1$
if $x=y$ and is zero otherwise.
\end{lem}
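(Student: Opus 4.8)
The plan is to obtain the identity from Mirzakhani's integration formula \cite{Mi2007}, after sorting the non-simple geodesics according to the subsurface that each of them fills. So the first step is to make precise the \emph{filling subsurface} $\mathcal{S}(\gamma)$ of a primitive non-simple closed geodesic $\gamma$ on a closed hyperbolic surface $X$: take a small regular neighbourhood of $\gamma$, which deformation retracts onto a graph with a vertex at each self-intersection of $\gamma$; cap off with a disc every complementary region of this neighbourhood that is a disc; and replace each remaining boundary curve by its geodesic representative, which is simple, pairwise disjoint and essential because $\gamma$ is non-simple. The result is an embedded subsurface $\mathcal{S}(\gamma)\subseteq X$ with totally geodesic boundary, of some signature $(g_0,n_0)$ with $2g_0-2+n_0\geqslant1$, and $\gamma$ fills $\mathcal{S}(\gamma)$ in the sense of the definition above. (If $\mathcal{S}(\gamma)=X$ there is no proper decomposition; such $\gamma$ are handled separately, and by Lemma \ref{lem:filling-bound} they have length $\geqslant C|\chi(X)|$, hence do not occur at all once $F_L$ is supported in a bounded interval and $g$ is large; below I therefore take $\mathcal{S}(\gamma)$ to be a proper subsurface.)

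Next I would fix a topological type $[S]$, i.e.\ an $\mathrm{MCG}_g$-orbit of isotopy classes of essential embedded subsurfaces of $\Sigma_g$ of a given signature $(g_0,n_0)$, and group together the geodesics with $\mathcal{S}(\gamma)$ of type $[S]$. By construction the function
\[
f^{[S]}(X)\eqdf\sum_{\gamma:\,\mathcal{S}(\gamma)\text{ of type }[S]}F_L(\ell_\gamma(X))
\]
depends only on the hyperbolic structure that $X$ induces on a subsurface of type $[S]$: if $S'\subseteq X$ realises $[S]$ with its geodesic boundary, then $f^{[S]}(X)=\sum_{\alpha\text{ filling }S'}F_L(\ell_\alpha(S'))$. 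Cutting $X$ along $\partial S'$ and unfolding by Mirzakhani's integration formula, $\int_{\mathcal{M}_g}f^{[S]}\,dX$ becomes an integral over $\mathbf{x}\in\mathbb{R}_{\geqslant0}^{n_0}$ of $x_1\cdots x_{n_0}$ times the Weil--Petersson volume of the product of moduli spaces of the pieces of $X$ cut along $\partial S'$, in which the only factor that is not constant on the product is $\sum_{\alpha\text{ filling}}F_L(\ell_\alpha(Y))$ on the $\mathcal{M}_{g_0,n_0}(\mathbf{x})$ factor; the complementary connected components simply contribute $\prod_k V_{i_k,j_k}(\mathbf{x}^{(k)})$. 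Writing the $\mathcal{M}_{g_0,n_0}(\mathbf{x})$ integral as $V_{g_0,n_0}(\mathbf{x})$ times an average over $\mathcal{M}_{g_0,n_0}(\mathbf{x})$ produces exactly the shape of (\ref{eq:vol-terms}).

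Finally I would sum over topological types. For fixed $(g_0,n_0)$, the types $[S]$ are classified by how the complement breaks into connected pieces: the partition $\mathbf{j}$ of the boundary circles among those pieces and their genera $\mathbf{i}$, which is precisely the data $(q,\mathbf{i},\mathbf{j})\in\mathcal{A}_{g_0,n_0}$; condition (i) records that each complementary piece is a genuine hyperbolic surface with boundary, and condition (ii) is additivity of Euler characteristic under cutting along simple closed curves, $2-2g=(2-2g_0-n_0)+\sum_k(2-2g_k-n_k)$. The factor $\frac{1}{n_0!}$ accounts for the labelling of the $n_0$ boundary components of $S$; there is no further automorphism factor, since the filling geodesic $\alpha$ breaks any residual symmetry of $S'$. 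Dividing by $V_g$ to pass from $\int_{\mathcal{M}_g}$ to $\mathbb{E}_g$ and summing over all $(g_0,n_0)$ with $2g_0-2+n_0\geqslant1$ yields the claimed identity with $\phi_g^{(g_0,n_0)}$ as in (\ref{eq:vol-terms}).

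I expect the main obstacle to be exactly this bookkeeping rather than any analytic point: one must check that $\gamma\mapsto\mathcal{S}(\gamma)$ is well defined and partitions $\mathcal{P}^{\textup{n-simp}}(X)$ precisely into the finitely many $\mathrm{MCG}_g$-types parametrised by $\bigsqcup_{(g_0,n_0)}\mathcal{A}_{g_0,n_0}$, that Mirzakhani's unfolding legitimately applies to a function attached to a subsurface-filling configuration rather than to a single multicurve length function, and that the combinatorial and symmetry factors coming out of the unfolding are exactly the $\frac{1}{n_0!}$ and the sum over $\mathcal{A}_{g_0,n_0}$ displayed. All of this is carried out in detail in \cite[Theorem 6.1]{An.Mo2023}.
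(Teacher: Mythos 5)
The paper does not prove this lemma at all: it simply cites it, saying ``The following lemma is proven by Anantharaman and Monk \cite[Theorem 6.1]{An.Mo2023}.'' Your proposal is an accurate high-level sketch of the argument used in that reference — constructing the filling subsurface $\mathcal{S}(\gamma)$, partitioning $\mathcal{P}^{\textup{n-simp}}(X)$ by the $\mathrm{MCG}_g$-orbit of the pair (subsurface type, complementary decomposition), applying Mirzakhani's integration formula to each orbit, and reassembling — so at the level of ideas you take the same route. Two points in the sketch are glossed over, however. First, you dispose of the case $\mathcal{S}(\gamma)=X$ (more generally, any configuration where the complement carries no piece with $2g_i-2+n_i\geqslant1$, so that $\mathcal{A}_{g_0,n_0}=\emptyset$) by appealing to Lemma~\ref{lem:filling-bound} and assuming $g$ large; but the lemma is stated as an identity valid for every $g\geqslant2$ and every $F_L$, so for a genuine proof one either needs a convention that makes those degenerate terms vanish for all $g$, or one must explicitly include a term for the full-surface case. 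Second, the claim that ``there is no further automorphism factor, since the filling geodesic $\alpha$ breaks any residual symmetry'' is doing real work: in Mirzakhani's integration formula the prefactor is $\lvert\mathrm{Sym}(\Gamma)\rvert^{-1}$ for the boundary multicurve $\Gamma=\partial\mathcal{S}(\gamma)$, and reconciling that group-theoretic factor with the explicit $\frac{1}{n_0!}$ and the labelled sum over $\mathcal{A}_{g_0,n_0}$ — in particular when two complementary pieces are homeomorphic — requires a careful counting argument that is precisely what \cite[Theorem 6.1]{An.Mo2023} carries out, not something that ``breaking symmetry by $\alpha$'' dispenses with on its own. Both are fillable from the reference you correctly point to, and the overall structure of your argument is sound.
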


In order to prove Proposition \ref{prop:effective-nonsimple-expansion},
we first show that $\phi_{g}^{\left(g_{0},n_{0}\right)}\left(\mathbf{x}\right)$
has an effective genus expansion.
\begin{lem}
\label{lem:vol-function-bound}There exists a constant $c>2$ such
that for any $\left(g_{0},n_{0}\right)$ and $k\geqslant1$ there
are continuous functions $\left\{ h_{j}^{g_{0},n_{0}}\left(\mathbf{x}\right)\right\} _{j=1}^{k}$
such that 
\[
\left|\phi_{g}^{\left(g_{0},n_{0}\right)}\left(\mathbf{x}\right)-\sum_{j=1}^{k}\frac{h_{j}^{g_{0},n_{0}}\left(\mathbf{x}\right)}{g^{j}}\right|\leqslant\frac{\left(ck\right)^{ck}\left(c\left(g_{0}+n_{0}\right)\right)^{c\left(g_{0}+n_{0}\right)}(1+\left|\mathbf{x}\right|)^{ck+n_{0}}\exp\left(|\mathbf{x}|\right)}{g^{k+1}},
\]
for all $g>\left(ck\right)^{c}$.
\end{lem}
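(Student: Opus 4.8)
The plan is to factor $\phi_g^{(g_0,n_0)}(\mathbf{x})$ as in \eqref{eq:vol-terms} into the three pieces
$x_1\cdots x_{n_0}$, the ratio $V_{g_0,n_0}(\mathbf{x})/V_{g}$, and the sum $\sum_{(\mathbf{i},\mathbf{j},q)\in\mathcal{A}_{g_0,n_0}}\prod_{i=1}^q V_{i_i,j_i}(\mathbf{x}^{(i)})$, and to produce an effective genus expansion for each, then multiply. The first factor is a fixed polynomial in $\mathbf{x}$ contributing only a factor $(1+|\mathbf{x}|)^{n_0}$ to the final bound, so the work is in the other two.

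First I would handle the volume ratio. Write
\[
\frac{V_{g_0,n_0}(\mathbf{x})}{V_g}=\frac{V_{g_0,n_0}(\mathbf{x})}{V_{g_0,n_0}}\cdot\frac{V_{g_0,n_0}}{V_g}.
\]
The first ratio has an effective genus expansion with error $\frac{((|\mathbf{x}|+1)ck)^{ck}\exp(|\mathbf{x}|)}{g^{k+1}}$ by Theorem~\ref{thm:Wp-vols} (equivalently Corollary~\ref{thm:main-estimate-WP-expansion}), but one must be careful that the expansion there is in powers of the ``correct'' genus; since $g_0$ is fixed and $V_{g_0,n_0}$ does not depend on $g$, the number $V_{g_0,n_0}/V_g$ is a genuine constant for each $g$ and by Theorem~\ref{Large-g-Vg} (Mirzakhani--Zograf) it decays like $(4\pi^2)^{-(2g)}(2g)!^{-1}$ up to lower-order corrections; more to the point, the combined object $V_{g_0,n_0}(\mathbf{x})/V_g$ is exactly the kind of ratio Corollary~\ref{thm:main-estimate-WP-expansion} is designed to expand, with the $(c(g_0+n_0))^{c(g_0+n_0)}$ prefactor absorbing the $g_0,n_0$-dependence of $V_{g_0,n_0}$ via Lemma~\ref{lem:(Grushevsky's-bound,-)}. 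So this factor contributes $(c(g_0+n_0))^{c(g_0+n_0)}(|\mathbf{x}|+1)^{ck}\exp(|\mathbf{x}|)\cdot(ck)^{ck}/g^{k+1}$ to the error.

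Next, the sum over $\mathcal{A}_{g_0,n_0}$. For each fixed topological type $(\mathbf{i},\mathbf{j},q)$ I would expand each $V_{i_i,j_i}(\mathbf{x}^{(i)})=V_{i_i,j_i}\cdot\frac{V_{i_i,j_i}(\mathbf{x}^{(i)})}{V_{i_i,j_i}}$, bounding the ratio by $\exp(|\mathbf{x}^{(i)}|/2)$ via Lemma~\ref{lem:Wp-expbound} — since here no genus goes to infinity with $g$ in a controlled way, we do not need an asymptotic expansion of these ratios, just the crude exponential bound. The remaining sum $\sum_{(\mathbf{i},\mathbf{j},q)}\prod V_{i_i,j_i}$ has constraint $\sum_i(2g_i-2+n_i)=2g-2g_0-n_0=:r$, so by Lemma~\ref{Lemma 22} (combined with the combinatorial factor $n_0!$ bounding the number of partitions $\mathbf{j}$, and Lemma~\ref{lem:(Grushevsky's-bound,-)}/the $W_r$ notation \eqref{eq:W-notation}) it is controlled by $C^{?}\cdot W_r$ and therefore, after dividing through by $V_g$, it exactly reconstructs the kind of ratio whose genus expansion is provided by Corollary~\ref{thm:main-estimate-WP-expansion}. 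The point is that $W_r/V_g$ is, up to the explicit Mirzakhani--Zograf asymptotics, of order $1$ with an effective expansion, and the subleading terms in $\mathcal{A}_{g_0,n_0}$ (those with $q\geqslant 2$, or with some $g_i$ small) are suppressed by powers of $1/g$ exactly as in the proof of Proposition~\ref{prop:effective-simple-expansion} — compare the split there into $i\leqslant\lfloor k/2\rfloor$ versus $i>\lfloor k/2\rfloor$ using Lemma~\ref{lem:WP-sum-product-bound}.

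Finally I would multiply the three expansions together, collect terms by powers of $1/g$ up to order $k$ (defining the $h_j^{g_0,n_0}(\mathbf{x})$ as the resulting coefficient functions, which are continuous since they are sums of products of continuous functions), and bound the cross-terms. The error terms combine multiplicatively: each carries at most $(ck)^{ck}$, $(c(g_0+n_0))^{c(g_0+n_0)}$, $(1+|\mathbf{x}|)^{ck+n_0}$, and $\exp(|\mathbf{x}^{(1)}|/2+\cdots)\leqslant\exp(|\mathbf{x}|)$ from the ratio factors, plus another $\exp(|\mathbf{x}|)$ from the $V_{g_0,n_0}(\mathbf{x})/V_{g_0,n_0}$ factor, giving $\exp(2|\mathbf{x}|)$ in total, and one overall power of $1/g^{k+1}$; adjusting $c$ line to line absorbs universal constants. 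The main obstacle I anticipate is \emph{not} any single estimate but the bookkeeping of constants: ensuring that the $(g_0,n_0)$-dependent factor stays of the clean form $(c(g_0+n_0))^{c(g_0+n_0)}$ after summing over the (super-exponentially many in $n_0$) partitions $\mathbf{j}$ and over types in $\mathcal{A}_{g_0,n_0}$, which forces one to use Lemma~\ref{Lemma 22}'s $(D/r)^{q-1}$ decay to beat the $n_0!$ and $q!$ combinatorial growth, and to track that the Gevrey-type error $(ck)^{ck}/g^{k+1}$ from Corollary~\ref{thm:main-estimate-WP-expansion} survives multiplication by the polynomial-in-$\mathbf{x}$ prefactors without the degree of the $\mathbf{x}$-polynomial growing faster than $ck+n_0$.
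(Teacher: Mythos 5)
Your decomposition is organized around the wrong ratio. Corollary~\ref{thm:main-estimate-WP-expansion}(1) expands $V_{a,b}(\mathbf{x})/V_g$ only when $2g-q < 2a+b < 2g$, i.e.\ when $(a,b)$ is \emph{close to} $(g,0)$. You apply it to $V_{g_0,n_0}(\mathbf{x})/V_g$, but $(g_0,n_0)$ is \emph{fixed and small}, so $2g_0+n_0$ is nowhere near $2g$; this ratio decays super-exponentially in $g$ (by Theorem~\ref{Large-g-Vg} roughly like $1/((2g)!(4\pi^2)^{2g})$) and admits no expansion of the form $\sum f_j(\mathbf{x})/g^j$. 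Conversely, you bound \emph{all} of the factors $V_{i_i,j_i}(\mathbf{x}^{(i)})/V_{i_i,j_i}$ in the sum over $\mathcal{A}_{g_0,n_0}$ by the crude exponential from Lemma~\ref{lem:Wp-expbound}, on the grounds that ``no genus goes to infinity with $g$'' — but one of them must: the constraint $\sum_i(2g_i-2+n_i)=2g-2g_0-n_0$ forces at least one $g_l$ to be comparable to $g$.

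The insight you are missing, and which drives the paper's proof, is that in the dominant configurations exactly one factor $V_{g_l,n_l}(\mathbf{x}^{(l)})$ has $2g_l+n_l-3 > 2g-3-k$ (this uniqueness uses $k < g^{1/c}$). It is \emph{that} factor which must be paired with $V_g$: one applies Corollary~\ref{thm:main-estimate-WP-expansion} to $V_{g_l,n_l}(\mathbf{x}^{(l)})/V_g$, treats $V_{g_0,n_0}(\mathbf{x})$ and the remaining small $V_{g_s,n_s}$ as genuinely $g$-independent coefficients (controlled by Lemmas~\ref{lem:Wp-expbound}, \ref{lem:(Grushevsky's-bound,-)}, and \ref{Lemma 22} applied to the product \emph{with $V_{g_l,n_l}$ removed}), and shows the complementary set $\tilde{\mathcal{A}}_{g_0,n_0}$ (where every factor is small) contributes only to the error via Lemma~\ref{lem:WP-sum-product-bound}. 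Your proposal has no analogue of this splitting of $\mathcal{A}_{g_0,n_0}$, and in particular cannot produce the coefficient functions $h_j^{g_0,n_0}(\mathbf{x})$: an asymptotic expansion of the whole sum $\sum_{(\mathbf{i},\mathbf{j},q)}\prod V_{i_i,j_i}/V_g$ is not something Lemma~\ref{Lemma 22} or $W_r/V_g$ alone will give you — those yield upper bounds, not an expansion with controlled coefficients.
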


Before proceeding with the proof of Lemma \ref{lem:vol-function-bound},
we prove an estimate for sums and products of Weil-Petersson volumes
which will help control the error terms. The proof is a straightforward
adaptation of \cite[Lemma 24]{NXW2023}.
\begin{lem}
\label{lem:WP-sum-product-bound}There is a $c>2$ such that for $0\leqslant k\leqslant\sqrt{g}$
and $(g_{0},n_{0})$, if we let $\tilde{\mathcal{A}}_{g_{0},n_{0}}$
denote the subset of $\left(\mathbf{i},\mathbf{j},q\right)\in\mathcal{A}_{g_{0},n_{0}}$
with $\max_{1\leqslant j\leqslant q}2g_{j}+n_{j}-3\leqslant2g-4-k$,
then we have
\[
\frac{1}{V_{g}}\sum_{\left(\mathbf{i},\mathbf{j},q\right)\in\tilde{\mathcal{A}}_{g_{0},n_{0}}}V_{g_{1},n_{1}}\dots V_{g_{q},n_{q}}\leqslant\frac{\left(ck\right)^{ck}\left(c\left(g_{0}+n_{0}\right)\right)^{c\left(g_{0}+n_{0}\right)}}{g^{k+1}},
\]
where we omit any factors of $V_{0,2}$ appearing in the product. 
\end{lem}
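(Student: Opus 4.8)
The plan is to reduce the estimate to the single-variable counting bound of Lemma \ref{Lemma 22} by forgetting the boundary-partition data and tracking only the Euler-characteristic budget. First I would observe that, by the monotonicity $V_{g,n}\leqslant V_{g+1,n-2}$ (cf.\ \cite[Lemma 3.2]{MirzakhaniRandom}) together with Lemma \ref{lem:(Grushevsky's-bound,-)}, each factor $V_{i_j,n_j}$ with $n_j\geqslant1$ is bounded by $C^{r_j}\, r_j!$ where $r_j\eqdf 2g_{i_j}-2+n_j$; hence the product $V_{i_1,j_1}\cdots V_{i_q,j_q}$ is controlled by $C^{\sum r_j}\prod r_j!$. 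By condition (ii) defining $\mathcal A_{g_0,n_0}$ we have $\sum_{j=1}^q r_j = 2g-2g_0-n_0\eqdf r$, so $C^{\sum r_j}=C^{r}$ is absorbed into the normalization by $V_g$, which by Theorem \ref{Large-g-Vg} is of size $\asymp (2g-3)!\,(4\pi^2)^{2g-3}/\sqrt g$; the key point is that $\prod r_j!$ divided by $r!$ is small unless one $r_j$ dominates.

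Next I would separate the combinatorics of $\mathbf j$ (the partition of $\{1,\dots,n_0\}$) from that of $\mathbf i$ (the genus vector): there are at most $q^{n_0}\leqslant (g_0+n_0)^{g_0+n_0}$-type factors coming from distributing the $n_0$ boundary labels among the $q$ pieces, and this is exactly the source of the $\bigl(c(g_0+n_0)\bigr)^{c(g_0+n_0)}$ factor in the claimed bound — so I would peel that off first and reduce to bounding $\frac1{V_g}\sum V_{i_1,n_1}\cdots V_{i_q,n_q}$ over $\mathbf i$ and $\mathbf j$ with the constraints, uniformly. Then, summing over all admissible $(q,\mathbf i,\mathbf j)$ with the \emph{extra} restriction $\max_j r_j\leqslant 2g-4-k$ (this is what $\tilde{\mathcal A}_{g_0,n_0}$ encodes, after accounting for the shift $r_j=2g_{i_j}+n_j-3+1$), I would split into a "largest piece" of Euler characteristic $\tilde r$ with $\tilde r\leqslant 2g-4-k$ and the remaining pieces carrying total Euler characteristic $r-\tilde r\geqslant k$; applying Lemma \ref{Lemma 22} to the remaining $q-1$ pieces gives a factor $c(D/\tilde r)^{q-2}W_{\tilde r}$ against the one distinguished piece, and using $W_{\tilde r}/V_g \lesssim (D/g)^{\,g - O(1) - \tilde r/2}$ from Theorem \ref{Large-g-Vg} — more precisely, that $V_{g'} / V_g$ decays like $g^{-(g-g')}$ times combinatorial factors — one gains a power $g^{-(k+1)}$ from the deficit $r-\tilde r\geqslant k$, exactly as in the proof of \cite[Lemma 24]{NXW2023}.

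The main obstacle, and the only place real care is needed, is making the ratio estimate $W_{\tilde r}/V_g$ (equivalently $V_{g',n'}/V_{g,0}$ for the relevant small $(g',n')$) \emph{effective and uniform in $\tilde r$ up to $2g-4-k$}, since Theorem \ref{Large-g-Vg} only gives an asymptotic with an $O((1+n^2)/g)$ error that degrades as $n$ grows. I would handle this by using the recursion/monotonicity to trade boundary components for genus (so that $n'$ stays bounded by $O(g_0+n_0)$ after the reduction) and then applying Mirzakhani--Zograf's estimate on $V_{g,n}/\bigl((2g-3+n)!(4\pi^2)^{2g-3+n}\bigr)$ together with Stirling to compare $(2\tilde g-3+\tilde n)!$ against $(2g-3)!$; the surplus factorials $\prod_{\text{small pieces}} r_j!$ combined with the $(D/\tilde r)^{q-2}$ from Lemma \ref{Lemma 22} and the budget $r-\tilde r\geqslant k$ then yield $(ck)^{ck}/g^{k+1}$ after a Stirling estimate of the form $\prod r_j! \leqslant (\sum r_j - \tilde r)! \cdot \tilde r!^{\,?}$ — here one uses that the deficit pieces have total size $\geqslant k$ and at least $q-1$ of them, so their factorials contribute at most $k^{ck}$. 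The constraint $k\leqslant\sqrt g$ is precisely what keeps these Stirling corrections and the requirement $g>ck^c$ compatible, and I would state the argument so that the constant $c$ is allowed to change from line to line as usual.
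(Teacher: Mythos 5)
Your overall strategy is essentially the paper's: both follow the skeleton of \cite[Lemma 24]{NXW2023} by reducing boundary components to genus via monotonicity, invoking the Mirzakhani--Zograf asymptotic $V_{g,n}\asymp (2g-3+n)!(4\pi^2)^{2g-3+n}/\sqrt g$, and then using Stirling to reduce the sum over $\tilde{\mathcal A}_{g_0,n_0}$ to a ratio of factorials that decays when the ``largest piece'' is forced to be at most $2g-4-k$. The $(c(g_0+n_0))^{c(g_0+n_0)}$ factor indeed comes from the combinatorics of distributing labels/pieces together with the absorbed $C^{2g_0+n_0}$ and $(2g_0+n_0)!$ normalization constants.

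However, there are two concrete issues. First, your claim that the remaining pieces carry Euler-characteristic budget $r-\tilde r\geqslant k$ is wrong in general: with $r=2g-2g_0-n_0$ and $\tilde r\leqslant 2g-3-k$ one only gets $r-\tilde r\geqslant k+3-(2g_0+n_0)$, which equals $k$ only in the minimal case $2g_0+n_0=3$ and can even be negative. The paper sidesteps this by measuring the deficit against $(2g-3)!$ directly rather than against $r!$, so that the missing $2g_0+n_0-3$ is automatically converted into extra decay in $g$ (and then reabsorbed into the $(c(g_0+n_0))^{c(g_0+n_0)}$ prefactor); your argument needs the same correction. Second, your invocation of Lemma~\ref{Lemma 22} on the remaining $q-1$ pieces should produce $c(D/(r-\tilde r))^{q-2}W_{r-\tilde r}$ (not $W_{\tilde r}$), and then one must bound $V_{g_l,n_l}\,W_{r-\tilde r}/V_g$ uniformly in $\tilde r$ --- this is doable but you have essentially pushed the difficulty into the same Stirling computation the paper does directly (and your ``$\tilde r!^{\,?}$'' flags that you have not carried it out). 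The paper in fact does \emph{not} use Lemma~\ref{Lemma 22} in this proof; it goes straight to Stirling together with the elementary inequality $\prod_i x_i^{x_i}\leqslant (A-j)^{A-j}$ (equation~\eqref{eq:exponentbound}) to control the factorial product, and then explicitly parametrizes the sum by $q$, the maximum $R=\max_j(2g_j+n_j-3)$, and the number $m$ of pieces with $|\chi|=1$. Filling in these two points would make your argument complete and equivalent.
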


\begin{rem}
\label{rem:volume-product-cylinder}The same result also holds with
an analogous proof in the case where $g_{0}=0$ and $n_{0}=2$. This
corresponds to the case of a cylinder which is filled by a simple
closed geodesic.
\end{rem}

\begin{proof}
By \cite[Lemma 3.2, part 3]{MirzakhaniRandom} we see that for each
$n_{i}\geqslant2$, we have 
\[
V_{g_{i},n_{i}}\leqslant V_{g_{i}+\lfloor\frac{n_{i}-2}{2}\rfloor,n_{i}-2\lfloor\frac{n_{i}-2}{2}\rfloor}.
\]
 This allows us to apply Theorem \ref{Large-g-Vg} which tells us
that there exists $C_{1}>0$ with
\begin{align}
V_{g_{1},n_{1}}\cdots V_{g_{q},n_{q}} & \leqslant C_{1}^{q}\prod_{j=1}^{q}\frac{\left(4\pi^{2}\right)^{2g_{j}+n_{j}-3}\left(2g_{j}+n_{j}-3\right)!}{\sqrt{g_{j}+\max\left\{ \lfloor\frac{n_{j}-2}{2}\rfloor,0\right\} }},\label{eq:WPvolreduced}
\end{align}
where, since $V_{0,3}=1$, we interpret the product in (\ref{eq:WPvolreduced})
as only over $V_{g_{i},n_{i}}$ with $g_{j}+\max\left\{ \lfloor\frac{n_{j}-2}{2}\rfloor,0\right\} >0$
(recall that we omit the $V_{0,2}$ factors, so they also can be interpreted
as $1$ throughout). We further see by Theorem \ref{Large-g-Vg} that
there is a $B>0$ with
\begin{equation}
V_{g}\geqslant\frac{B}{\sqrt{g}}\left(2g-3\right)!\left(4\pi^{2}\right)^{2g-3}.\label{Vg-asymp}
\end{equation}
We introduce the notation $\text{\ensuremath{\overline{n_{j}}}}\eqdf\max\left\{ \lfloor\frac{n_{j}-2}{2}\rfloor,0\right\} $.
By applying (\ref{eq:WPvolreduced}) and (\ref{Vg-asymp}) we see
that 

\[
\sum_{\left(\mathbf{i},\mathbf{j},q\right)\in\tilde{\mathcal{A}}_{g_{0},n_{0}}}\frac{V_{g_{1},n_{1}}\cdots V_{g_{q},n_{q}}}{V_{g}}\leqslant\sum_{\left(\mathbf{i},\mathbf{j},q\right)\in\tilde{\mathcal{A}}_{g_{0},n_{0}}}\frac{C_{1}^{q}\sqrt{g}}{\prod_{j=1}^{q}\sqrt{g_{j}+\ensuremath{\overline{n_{j}}}}}\frac{\prod_{j=1}^{q}\left(2g_{j}+n_{j}-3\right)!}{\left(2g-3\right)!},
\]
where the indices $j$ where $(g_{j},n_{j})=(0,2)$ are omitted. We
recall Stirling's approximation which tells us that there exist constants
$1<c_{1}<c_{2}<2$ with 
\begin{equation}
c_{1}\cdot\sqrt{2\pi w}\left(\frac{w}{e}\right)^{w}<w!<c_{2}\cdot\sqrt{2\pi w}\left(\frac{w}{e}\right)^{w},\label{Sterling's approximation}
\end{equation}
for all $w\geqslant1$. We apply Stirling's approximation (\ref{Sterling's approximation})
to see that 
\begin{align}
\frac{\left(2g_{j}+n_{j}-3\right)!}{\sqrt{g_{j}+\ensuremath{\overline{n_{j}}}}} & <c_{2}\frac{\sqrt{2\pi\left(2g_{j}+n_{j}-3\right)}}{\sqrt{g_{j}+\ensuremath{\overline{n_{j}}}}}\cdot\left(\frac{2g_{j}+n_{j}-3}{e}\right)^{2g_{j}+n_{j}-3}\nonumber \\
 & <4\sqrt{\pi}\cdot\left(\frac{2g_{j}+n_{j}-3}{e}\right)^{2g_{j}+n_{j}-3}.\label{stirling 1}
\end{align}
Applying Stirling's approximation again, we see that 
\begin{align}
\frac{\sqrt{g}}{\left(2g-3\right)!} & <\frac{1}{c_{1}}\frac{\sqrt{g}}{\sqrt{2\pi\left(2g-3\right)}}\cdot\left(\frac{e}{2g-3}\right)^{2g-3}\nonumber \\
 & <C\left(\frac{e}{2g-3}\right)^{2g-3}.\label{stirling 2}
\end{align}
Thus by (\ref{stirling 1}) and (\ref{stirling 2}),
\begin{align}
 & \frac{C^{q}\sqrt{g}}{\prod_{j=1}^{q}\sqrt{g_{j}+\ensuremath{\overline{n_{j}}}}}\frac{\prod_{j=1}^{q}\left(2g_{j}+n_{j}-3\right)!}{\left(2g-3\right)!}\nonumber \\
< & C^{q}\frac{\prod_{j=1}^{q}\left(2g_{j}+n_{j}-3\right)^{2g_{j}+n_{j}-3}}{\left(2g-3\right)^{2g-3}}.\label{stirling 3}
\end{align}
Given $s$ integers $x_{i}\geqslant1$ with sum $\sum_{i}x_{i}=A$,
then the product of $x_{i}^{x_{i}}$ is maximised when $s-1$ of the
$x_{i}$ are equal to $1$, that is,
\begin{equation}
\prod_{i=1}^{s}x_{i}^{x_{i}}\leqslant(A-s+1)^{A-s+1}.\label{eq:exponentbound}
\end{equation}
Now suppose that $\max_{1\leqslant i\leqslant q}2g_{j}+n_{j}-3=R$.
We note that by condition (ii) on the indices, $R\geqslant\lfloor\frac{2g-\left(2g_{0}+n_{0}\right)-q}{q}\rfloor$
and if exactly $0\leqslant m_{1}\leqslant q$ indices satisfy $2g_{a}+n_{a}=3$
and exactly $0\leqslant m_{2}\leqslant q-m_{1}$ indices satisfy $(g_{a},n_{a})=(0,2)$
(so that they are omitted from the product in (\ref{stirling 3}))
and we separate out one of the indices satsifying $\max_{1\leqslant i\leqslant q}2g_{a}+n_{a}-3=R$
then the index condition implies that the sum of $2g_{j}+n_{j}-3$
over the remaining $q-m_{1}-m_{2}-1$ indices is equal to $2g-2g_{0}-n_{0}-q-m_{2}-R$.
Thus (\ref{eq:exponentbound}) implies that (\ref{stirling 3}) is
bounded above by 
\[
C^{q}\frac{R^{R}\left(2g-n_{0}-2g_{0}-R-2q+m_{1}+2\right)^{2g-n_{0}-2g_{0}-R-2q+m_{1}+2}}{\left(2g-3\right)^{\left(2g-3\right)}}.
\]
Then, if we write 
\begin{align}
\sum_{\left(\mathbf{i},\mathbf{j},q\right)\in\tilde{\mathcal{A}}_{g_{0},n_{0}}}\prod_{i=1}^{q}V_{g_{i},n_{i}} & =\sum_{q\geqslant1}\sum_{R=\lfloor\frac{2g-\left(2g_{0}+n_{0}\right)-q}{q}\rfloor}^{2g-4-k}\sum_{m_{1}=0}^{q}\sum_{m_{2}=0}^{q-m_{1}}\sum_{\text{\ensuremath{\substack{g_{j},n_{j}\\
2g_{j}+n_{j}\geqslant2\\
\text{exactly \ensuremath{m_{1}} of \ensuremath{2g_{i}+n_{i}=3}}\\
\text{exactly \ensuremath{m_{2}} of \ensuremath{(g_{i},n_{i})=(0,2)}}\\
\sum_{j}\left(2g_{j}+n_{j}-2\right)=2g-\left(2g_{0}+n_{0}\right)\\
\max_{1\leqslant j\leqslant q}2g_{j}+n_{j}-3=R
}
}}}\prod_{\substack{s=1,\ldots,q\\
(g_{s},n_{s})\neq(0,3),(0,2)
}
}V_{g_{s},n_{s}}.\label{eq:arranging-sum-wp-prod-bound}\\
 & =\sum_{q\geqslant1}\sum_{R=\lfloor\frac{2g+n-\left(2g_{0}+n_{0}\right)-q}{q}\rfloor}^{2g-4-k}\sum_{m_{1},m_{2}}\sum_{n_{j}}\sum_{g_{j}}\prod_{\substack{s=1,\ldots,q\\
(g_{s},n_{s})\neq(0,3),(0,2)
}
}V_{g_{s},n_{s}},\nonumber 
\end{align}
where the summation over indices which satisfy the conditions of the
previous line. Given $R$, $q$, $m_{1}$, $m_{2}$ and $\left(n_{1},\dots,n_{q}\right)$
the number of $g_{i}$ in the last summation of (\ref{eq:arranging-sum-wp-prod-bound})
is bounded above by 
\[
q{q \choose m_{1}}{q-m_{1} \choose m_{2}}{2g-2g_{0}-n_{0}-R-q-1-m_{2} \choose q-m_{1}-m_{2}-2}.
\]
This is because we have at most $q$ choices for the index $i$ of
a pair $(g_{i},n_{i})$ with $2g_{i}+n_{i}-3=R$, followed by at most
${q \choose m_{1}}$ choices for the $m_{1}$ indices of pairs $(g_{i},n_{i})$
with $2g_{i}+n_{i}-3=0$, followed by at most ${q-m_{1} \choose m_{2}}$
choices for the $m_{2}$ indices of pairs $(g_{i},n_{i})=(0,2)$,
and then the sum of $2g_{i}+n_{i}-3$ over the remaining indices equals
$2g-2g_{0}-n_{0}-R-q-m_{2}$ and $2g_{i}+n_{i}-3\geq1$, so there
are less than ${2g-2g_{0}-n_{0}-R-q-m_{2}-1 \choose q-m_{1}-m_{2}}$
choices for them. We thus bound 
\begin{align*}
 & \sum_{\left(\mathbf{i},\mathbf{j},q\right)\in\tilde{\mathcal{A}}_{g_{0},n_{0}}}\prod_{i=1}^{q}V_{g_{i},n_{i}}\\
\leqslant & \sum_{q\geqslant1}\sum_{R=\lfloor\frac{2g-\left(2g_{0}+n_{0}\right)-q}{q}\rfloor}^{2g-4-k}\sum_{n_{j}}\sum_{m_{1},m_{2}}qC^{q}{q \choose m_{1}}{q-m_{1} \choose m_{2}}{2g-2g_{0}-n_{0}-R-q-1-m_{2} \choose q-m_{1}-m_{2}-2}\\
 & \,\,\,\,\,\,\,\,\,\,\,\,\,\,\,\,\,\,\,\,\,\,\,\,\,\,\,\,\,\,\,\,\,\,\,\,\,\,\,\,\,\,\,\,\,\,\,\,\,\,\,\,\,\,\,\cdot\frac{R^{R}\left(2g-n_{0}-2g_{0}-R-2q+m_{1}+2\right)^{2g-n_{0}-2g_{0}-R-2q+m_{1}+2}}{\left(2g-3\right)^{2g-3}}\\
\leqslant & \sum_{q\geqslant1}\sum_{R=\lfloor\frac{2g-\left(2g_{0}+n_{0}\right)-q}{q}\rfloor}^{2g-4-k}\sum_{n_{j}}\sum_{m_{1},m_{2}}qC^{q}{q \choose m_{1}}{q-m_{1} \choose m_{2}}\left(2g-2g_{0}-n_{0}-R-q-m_{2}-1\right)^{q-m_{1}-m_{2}-2}\\
 & \,\,\,\,\,\,\frac{R^{R}\left(2g-n_{0}-2g_{0}-R-2q+m_{1}+2\right)^{2g-n_{0}-2g_{0}-R-2q+m_{1}+2}}{\left(2g-3\right)^{2g-3}}\\
\leqslant & \left(c\left(2g_{0}+n_{0}\right)\right)^{c\left(2g_{0}+n_{0}\right)}\sum_{q\geqslant1}\sum_{n_{j}}\sum_{R=\lfloor\frac{2g-\left(2g_{0}+n_{0}\right)-q}{q}\rfloor}^{2g-4-k}\frac{R^{R}\left(2g-n_{0}-2g_{0}-R\right)^{\left(2g-n_{0}-2g_{0}-R\right)}}{\left(2g-3\right)^{2g-3}}.
\end{align*}
For the third inequality we used that $q\leqslant n_{0}$ to bound
the $\sum_{m_{1},m_{2}}qC^{q}{q \choose m_{1}}{q-m_{1} \choose m_{2}}$
contribution by $\left(c\left(2g_{0}+n_{0}\right)\right)^{c\left(2g_{0}+n_{0}\right)}$.
Now 
\begin{align*}
 & \sum_{q\geqslant1}\sum_{n_{j}}\sum_{R=\lfloor\frac{2g-\left(2g_{0}+n_{0}\right)-q}{q}\rfloor}^{2g-4-k}\frac{R^{R}\left(2g-n_{0}-2g_{0}-R\right)^{\left(2g-2g_{0}-n_{0}-R\right)}}{\left(2g-3\right)^{\left(2g-3\right)}}\\
\leqslant & \frac{\left(ck\right)^{ck}}{g^{k+1}}\sum_{q\geqslant1}\sum_{n_{j}}1.
\end{align*}
Indeed, $x\mapsto x^{x}(a-x)^{a-x}$ has minimum at $x=\frac{a}{2}$,
so with $a=2g-(2g_{0}+n_{0})$, the summand is bounded above by its
value at either $R=2g-4-k$ or $R=\lfloor\frac{2g-\left(2g_{0}+n_{0}\right)-q}{q}\rfloor$
for $q\geq2$ and at $R=2g-4-k$ for $q=1$. After extracting the
term $R=2g-4-k$ out of the summation which is bounded by $\frac{\left(ck\right)^{ck}}{g^{k+1}}$,
the remaining contribution is bounded by the value at $R=2g-5-k$
which is at most $\frac{\left(ck\right)^{ck}}{g^{k+2}}$ and since
there are $<2g$ terms in the inner most sum, we obtain the stated
bound. Given $q\geqslant1$ there are at most 
\[
{q+n_{0} \choose q}
\]
choices for the $n_{i}$ which sum to $n_{0}$. Since $q\leqslant n_{0}$,
we see that 
\[
\sum_{\left(\mathbf{i},\mathbf{j},q\right)\in\tilde{\mathcal{A}}_{g_{0},n_{0}}}\prod_{i=1}^{q}V_{g_{i},n_{i}}\leqslant\frac{\left(ck\right)^{ck}\left(c\left(g_{0}+n_{0}\right)\right)^{c\left(g_{0}+n_{0}\right)}}{g^{k+1}},
\]
as required.
\end{proof}
We now complete the proof of Lemma \ref{lem:vol-function-bound}.
\begin{proof}[Proof of Lemma \ref{lem:vol-function-bound}.]
We first deal with the leading contribution. Since $k<g^{\frac{1}{c}}$
for $c>2$, any term of (\ref{eq:vol-terms}) can have at most one
factor $V_{g_{l},n_{l}}$ with $2g_{l}+n_{l}-3\geqslant2g-3-k$. Thus,
$\mathcal{A}_{g_{0},n_{0}}\backslash\tilde{\mathcal{A}}_{g_{0},n_{0}}$
denotes the set of $\left(\mathbf{i},\mathbf{j},q\right)$ with exactly
one such entry and so since it is unique we can re-index the entries
so that it is the $q$th one, but such that $i\mapsto\min I_{i}$
is still increasing for the first $q-1$ indices and this is a bijective
correspondence. After doing this, we also have $2g_{q}+n_{q}-3\leqslant2g-3-(2g_{0}+n_{0}-2)$
and so $2g_{q}+n_{q}-3=2g-3-r$ for some $r\in\left\{ 2g_{0}+n_{0}-2,\ldots,k\right\} $
and the remaining indices satisfy 
\begin{equation}
\sum_{i=1}^{q-1}2g_{i}+n_{i}-3=r+3-2g_{0}-n_{0}-q.\label{eq:r-sum}
\end{equation}
So, if we define for $r\in\left\{ 2g_{0}+n_{0}-2,\ldots,k\right\} $
the sets $\mathcal{A}_{g_{0},n_{0}}^{(r)}$ to be the collections
$(\textbf{i},\textbf{j},q)$ such that $\textbf{i}=(g_{1},\ldots,g_{q-1})$
and $\textbf{j}$ to be a collection of pairwise disjoint non-empty
subsets $I_{i}\subseteq\left\{ 1,\ldots,n_{0}\right\} $ with $i\mapsto\min I_{i}$
increasing for $i=1,\ldots,q-1$, $n_{i}\eqdf|I_{i}|$ and such that
(\ref{eq:r-sum}) holds, then we have a bijection between $\mathcal{A}_{g_{0},n_{0}}\backslash\tilde{\mathcal{A}}_{g_{0},n_{0}}$
and $\bigcup_{r=2g_{0}+n_{0}-2}^{k}\mathcal{A}_{g_{0},n_{0}}^{(r)}$. 

Notice that each set $\mathcal{A}_{g_{0},n_{0}}^{(r)}$ is itself
defined independently of $g$. Given $(\textbf{i},\textbf{j},q)\in\mathcal{A}_{g_{0},n_{0}}^{(r)}$,
we can set $n_{q}=n_{0}-\sum_{i=1}^{q-1}n_{i}$ and $g_{q}=g-\frac{1}{2}\left(r+n_{q}\right)$
so that $n_{q}\leqslant n_{0}\leqslant3r$, and thus by Corollary
\ref{thm:main-estimate-WP-expansion}, there is a universal constant
$c>0$ and continuous functions $\left\{ f_{t}^{g_{q},n_{q}}\right\} _{t=r}^{k}$
such that 
\[
\left|\frac{V_{g_{q},n_{q}}\left(\mathbf{x}^{(q)}\right)}{V_{g}}-\sum_{t=r}^{k}\frac{f_{t}^{g_{q},n_{q}}\left(\mathbf{x}^{(q)}\right)}{g^{t}}\right|\leqslant\frac{\left(|\mathbf{x}^{(q)}|+1\right)^{ck}\left(ckn_{0}\right)^{ck}\exp\left(\frac{1}{2}|\mathbf{x}^{(q)}|\right)}{g^{k+1}},
\]
whenever $g>(ck)^{c}$. Then, 
\begin{align}
 & \Bigg|x_{1}\dots x_{n_{0}}\frac{V_{g_{0},n_{0}}\left(\mathbf{x}\right)}{V_{g}}\sum_{\left(\mathbf{i},\mathbf{j},q\right)\in\mathcal{A}_{g_{0},n_{0}}\backslash\tilde{\mathcal{A}}_{g_{0},n_{0}}}V_{g_{1},n_{1}}\left(\mathbf{x}^{(1)}\right)\dots V_{g_{q},n_{q}}\left(\mathbf{x}^{(q)}\right)\label{eq:error-term-leading-contribution}\\
 & -x_{1}\dots x_{n_{0}}V_{g_{0},n_{0}}\left(\mathbf{x}\right)\sum_{r=2g_{0}+n_{0}-2}^{k}\sum_{\left(\mathbf{i},\mathbf{j},q\right)\in\mathcal{A}_{g_{0},n_{0}}^{(r)}}V_{g_{1},n_{1}}\left(\mathbf{x}^{(1)}\right)\dots V_{g_{q-1},n_{q-1}}\left(\mathbf{x}^{(q-1)}\right)\sum_{t=r}^{k}\frac{f_{t}^{g_{q},n_{q}}\left(\mathbf{x}^{(q)}\right)}{g^{t}}\Bigg|\nonumber \\
 & \leqslant\frac{x_{1}\dots x_{n_{0}}V_{g_{0},n_{0}}(\mathbf{x})\left(ckn_{0}\right)^{ck}\left(|\mathbf{x}|+1\right)^{ck}}{g^{k+1}}\sum_{r=2g_{0}+n_{0}-2}^{k}\sum_{\left(\mathbf{i},\mathbf{j},q\right)\in\mathcal{A}_{g_{0},n_{0}}^{(r)}}e^{\frac{1}{2}|\mathbf{x}^{(q)}|}V_{g_{1},n_{1}}\left(\mathbf{x}^{(1)}\right)\dots V_{g_{q-1},n_{q-1}}\left(\mathbf{x}^{(q-1)}\right).\nonumber 
\end{align}
Note that after interchanging summations, we see that the coefficient
of $g^{-t}$ in the expansion on the second line is
\[
h_{t}^{g_{0},n_{0}}(\mathbf{x})\eqdf x_{1}\dots x_{n_{0}}V_{g_{0},n_{0}}\left(\mathbf{x}\right)\sum_{r=2g_{0}+n_{0}-2}^{t}\sum_{\left(\mathbf{i},\mathbf{j},q\right)\in\mathcal{A}_{g_{0},n_{0}}^{(r)}}V_{g_{1},n_{1}}\left(\mathbf{x}^{(1)}\right)\dots V_{g_{q-1},n_{q-1}}\left(\mathbf{x}^{(q-1)}\right)f_{t}^{g_{q},n_{q}}\left(\mathbf{x}^{(q)}\right).
\]
Using Lemma \ref{lem:Wp-expbound}, we bound (\ref{eq:error-term-leading-contribution})
by
\[
\exp\left(|\mathbf{x}|\right)\frac{\left(ckn_{0}\right)^{ck}}{g^{k+1}}\left(|\mathbf{x}|+1\right)^{ck+n_{0}}V_{g_{0},n_{0}}\sum_{r=2g_{0}+n_{0}-2}^{k}\sum_{\left(\mathbf{i},\mathbf{j},q\right)\in\mathcal{A}_{g_{0},n_{0}}^{(r)}}\prod_{s=1}^{q-1}V_{g_{s},n_{s}},
\]
where we omit the terms $V_{0,2}$ if they appear in the product on
the right-hand side and we have bounded the product of the $x_{i}$
by $\left(|\mathbf{x}|+1\right)^{n_{0}}$. By Lemma \ref{lem:(Grushevsky's-bound,-)},
we have
\begin{equation}
V_{g_{0},n_{0}}\leqslant C^{2g_{0}+n_{0}}\left(2g_{0}+n_{0}\right)!.\label{eq:Vg0-bound}
\end{equation}
We now write
\begin{align*}
\sum_{\left(\mathbf{i},\mathbf{j},q\right)\in\mathcal{A}_{g_{0},n_{0}}^{(r)}}\prod_{s=1}^{q-1}V_{g_{s},n_{s}} & =\sum_{q\geqslant1}\sum_{\text{\ensuremath{\substack{g_{j},n_{j}\\
2g_{j}+n_{j}\geqslant2\\
\sum\left(2g_{j}+n_{j}-2\right)=r+2-2g_{0}-n_{0}
}
}}}\prod_{s=1}^{q-1}V_{g_{s},n_{s}}\\
 & =\sum_{q\geqslant1}\sum_{n_{j}}\sum_{g_{j}}\prod_{s=1}^{q-1}V_{g_{s},n_{s}},
\end{align*}
where the summations over the indices on the second line are subject
to the conditions of those on the first line, and any $V_{0,2}$ term
in the products are omitted.

If $r=2g_{0}+n_{0}-2$, then since the $2g_{j}+n_{j}\geq2$, the product
over the volumes in this case is exactly $1$ since all terms must
be $V_{0,2}$. Likewise, if $r=2g_{0}+n_{0}-1$, then all terms in
the product are $V_{0,2}$ except for one which is either $V_{0,3}=1$
or $V_{1,1}=\frac{\pi^{2}}{6}$.

For when $r\geq2g_{0}+n_{0}$, by Lemma \ref{Lemma 22}, there exist
universal constants $C,D>0$ such that for each $q,r$ and selection
of $n_{j}$, 
\begin{equation}
\sum_{g_{j}}\prod_{s=1}^{q-1}V_{g_{s},n_{s}}\leqslant C\bigg(\dfrac{D}{r-(2g_{0}+n_{0}-2)}\bigg)^{q-1}W_{r+2-2g_{0}-n_{0}},\label{eq:W-bound}
\end{equation}
where we recall the notation $W_{m}$ from (\ref{eq:W-notation}).
Since we have $2\leq r-(2g_{0}+n_{0}-2)\leq r-1$, the right-hand
side is bounded by

\[
C^{q}W_{r-1}\leqslant C^{q}(cr)^{cr},
\]
for some universal $C,c>0$ using Lemma \ref{lem:(Grushevsky's-bound,-)}.
Combining these cases, there exists a universal $c>0$ such that
\[
\sum_{\left(\mathbf{i},\mathbf{j},q\right)\in\mathcal{A}_{g_{0},n_{0}}^{(r)}}\prod_{s=1}^{q-1}V_{g_{s},n_{s}}\leqslant\sum_{q\geqslant1}\sum_{n_{j}}C^{q}(cr)^{cr}\leqslant\sum_{q\geqslant1}C^{q}(cr)^{cr}{q+n_{0} \choose q}\leqslant(cn_{0})^{cn_{0}}(cr)^{cr},
\]
using Stirling's bounds on factorials and that $q\leq n_{0}$. In
total, we find that (\ref{eq:error-term-leading-contribution}) is
bounded by
\begin{align*}
 & \exp\left(|\mathbf{x}|\right)\frac{\left(ckn_{0}\right)^{ck}}{g^{k+1}}\left(|\mathbf{x}|+1\right)^{ck+n_{0}}\left(c\left(g_{0}+n_{0}\right)\right)^{c\left(g_{0}+n_{0}\right)}\sum_{r=2g_{0}+n_{0}-2}^{k}(cr)^{cr}\\
 & \leqslant\exp\left(|\mathbf{x}|\right)\frac{\left(ckn_{0}\right)^{ck}}{g^{k+1}}\left(|\mathbf{x}|+1\right)^{ck+n_{0}}\left(c\left(g_{0}+n_{0}\right)\right)^{c\left(g_{0}+n_{0}\right)},
\end{align*}
for some universal $c>0$ that changes from line to line. 

On the other hand, for indices $\left(\mathbf{i},\mathbf{j},q\right)\in\tilde{\mathcal{A}}_{g_{0},n_{0}}$,
we have

\begin{align}
 & x_{1}\dots x_{n_{0}}\frac{V_{g_{0},n_{0}}\left(\mathbf{x}\right)}{V_{g}}\sum_{\left(\mathbf{i},\mathbf{j},q\right)\in\tilde{\mathcal{A}}_{g_{0},n_{0}}}V_{g_{1},n_{1}}\left(\mathbf{x}^{(1)}\right)\dots V_{g_{q},n_{q}}\left(\mathbf{x}^{(q)}\right)\nonumber \\
\leqslant & \left(|\mathbf{x}|+1\right)^{n_{0}}\exp\left(|\mathbf{x}|\right)\frac{V_{g_{0},n_{0}}}{V_{g}}\sum_{\left(\mathbf{i},\mathbf{j},q\right)\in\tilde{\mathcal{A}}_{g_{0},n_{0}}}V_{g_{1},n_{1}}\dots V_{g_{q},n_{q}}\nonumber \\
\leqslant & \frac{\left(ck\right)^{ck}\left(c\left(g_{0}+n_{0}\right)\right)^{c\left(g_{0}+n_{0}\right)}\exp\left(|\mathbf{x}|\right)\left(|\mathbf{x}|+1\right)^{n_{0}}}{g^{k+1}}.\label{eq:error-bound-second}
\end{align}
by applying Lemmas \ref{lem:Wp-expbound}, \ref{lem:WP-sum-product-bound}
and \ref{lem:(Grushevsky's-bound,-)}, where on the second line we
omit any $V_{0,2}$ terms from the product. Then the claim follows
from (\ref{eq:error-term-leading-contribution}) and (\ref{eq:error-bound-second}).
\end{proof}
We can now prove Proposition \ref{prop:effective-nonsimple-expansion}.
\begin{proof}[Proof of Proposition \ref{prop:effective-nonsimple-expansion}.]
Let $L\geqslant0$ and $F_{L}$ be supported in $[0,L]$. Lemma \ref{lem:integration-lemma}
says that

\begin{align*}
 & \mathbb{E}_{g}\left[\sum_{\gamma\in\mathcal{P}^{\textup{n-simp}}\left(X\right)}F_{L}\left(\ell_{\gamma}\left(X\right)\right)\right]\\
= & \sum_{\substack{\left(g_{0},n_{0}\right)\\
2g_{0}-2+n_{0}\geqslant1
}
}\frac{1}{n_{0}!}\int_{\mathbb{R}_{\geqslant0}^{n_{0}}}\frac{1}{V_{g_{0},n_{0}}\left({\bf x}\right)}\int_{\mathcal{M}_{g_{0},n_{0}}(\mathbf{x})}\sum_{\alpha\textup{ filling }S_{g_{0},n_{0}}}F_{L}\left(\ell_{\alpha}\left(Y\right)\right)\mathrm{d}\textup{Vol}_{\mathrm{WP}}\left(Y\right)\phi_{g}^{\left(g_{0},n_{0}\right)}(\mathbf{x})\mathrm{d}\mathbf{x}.
\end{align*}
 We define
\[
f_{F_{L},j}^{\textup{n-simp}}\eqdf\sum_{\substack{\left(g_{0},n_{0}\right)\\
2g_{0}-2+n_{0}\geqslant1
}
}\frac{1}{n_{0}!}\int_{\mathbb{R}_{\geqslant0}^{n_{0}}}\frac{1}{V_{g_{0},n_{0}}\left({\bf x}\right)}\int_{\mathcal{M}_{g_{0},n_{0}}(\mathbf{x})}\sum_{\alpha\textup{ filling }S_{g_{0},n_{0}}}F_{L}\left(\ell_{\alpha}\left(Y\right)\right)\mathrm{d}\textup{Vol}_{\mathrm{WP}}\left(Y\right)h_{j}^{g_{0},n_{0}}\left(\mathbf{x}\right)\mathrm{d}\mathbf{x},
\]
where $h_{j}^{g_{0},n_{0}}\left(\mathbf{x}\right)$ is given by Lemma
\ref{lem:vol-function-bound}. By Lemma \ref{lem:filling-bound},
there is a $C>0$ such that 
\[
\sum_{\alpha\textup{ filling }S_{g_{0},n_{0}}}F_{L}\left(\ell_{\alpha}(Y)\right)=0,
\]
for any $Y\in\mathcal{M}_{g_{0},n_{0}}(\mathbf{x})$ as soon as $2g_{0}+n_{0}>CL$.
For $g>CL$, it follows from Lemma \ref{lem:vol-function-bound} that
\begin{align}
 & \left|\mathbb{E}_{g}\left[\sum_{\gamma\in\mathcal{P}^{\textup{n-simp}}\left(X\right)}F_{L}\left(\ell_{\gamma}\left(X\right)\right)\right]-\sum_{j=1}^{k}\frac{f_{F_{L},j}^{\textup{n-simp}}}{g^{j}}\right|\label{eq:non-simple-expans-est}\\
= & \Bigg|\sum_{\substack{\left(g_{0},n_{0}\right)\\
1\leqslant2g_{0}-2+n_{0}\leqslant CL
}
}\frac{1}{n_{0}!}\int_{\mathbb{R}_{\geqslant0}^{n_{0}}}\frac{1}{V_{g_{0},n_{0}}\left({\bf x}\right)}\int_{\mathcal{M}_{g_{0},n_{0}}(\mathbf{x})}\sum_{\alpha\textup{ filling }S_{g_{0},n_{0}}}F_{L}\left(\ell_{\alpha}(Y)\right)\mathrm{d}\textup{Vol}_{\mathrm{WP}}\left(Y\right)\phi_{g}^{\left(g_{0},n_{0}\right)}(\mathbf{x})\mathrm{d}\mathbf{x}\nonumber \\
- & \sum_{j=1}^{k}\frac{1}{g^{j}}\sum_{\substack{\left(g_{0},n_{0}\right)\\
1\leqslant2g_{0}-2+n_{0}\leqslant CL
}
}\frac{1}{n_{0}!}\nonumber \\
 & \,\,\,\,\,\,\,\,\,\,\,\,\,\,\,\,\,\,\,\,\,\,\,\,\,\,\int_{\mathbb{R}_{\geqslant0}^{n_{0}}}\frac{1}{V_{g_{0},n_{0}}\left({\bf x}\right)}\int_{\mathcal{M}_{g_{0},n_{0}}(\mathbf{x})}\sum_{\alpha\textup{ filling }S_{g_{0},n_{0}}}F_{L}\left(\ell_{\alpha}(Y)\right)\mathrm{d}\textup{Vol}_{\mathrm{WP}}\left(Y\right)h_{j}^{g_{0},n_{0}}\left(\mathbf{x}\right)\mathrm{d}\mathbf{x}\Bigg|\nonumber \\
\leqslant & \frac{\left(ck\right)^{ck}}{g^{k+1}}\left(cL\right)^{cL}\sum_{\substack{\left(g_{0},n_{0}\right)\\
1\leqslant2g_{0}-2+n_{0}\leqslant CL
}
}\frac{1}{n_{0}!}\int_{\mathbb{R}_{\geqslant0}^{n_{0}}}\frac{1}{V_{g_{0},n_{0}}\left({\bf x}\right)}\nonumber \\
 & \,\,\,\,\,\,\,\,\,\,\,\,\,\,\,\,\,\,\,\,\,\,\,\,\,\,\,\,\,\,\,\,\,\,\,\,\int_{\mathcal{M}_{g_{0},n_{0}}(\mathbf{x})}\sum_{\alpha\textup{ filling }S_{g_{0},n_{0}}}|F_{L}\left(\ell_{\alpha}(Y)\right)|\mathrm{d}\textup{Vol}_{\mathrm{WP}}\left(Y\right)\left(1+\left|\mathbf{x}\right|\right)^{ck+n_{0}}\exp\left(|\mathbf{x}|\right)\mathrm{d}\mathbf{x}.\nonumber 
\end{align}
Now since, by Lemma \ref{lem:counting-filling}, there are at most
$C\left(g+n\right)e^{L}$ filling geodesics of length at most $L$
on a hyperbolic surface of signature $\left(g,n\right)$, and if $\gamma$
fills a surface $X\in\mathcal{M}_{g,n}\left(\mathbf{x}\right)$ then
$\ell_{\gamma}\left(X\right)\geqslant\frac{|\mathbf{x}|}{2}$, e.g.
\cite[Proposition 7]{Wu.Xu2022}, it follows that
\begin{align*}
 & \frac{1}{V_{g_{0},n_{0}}\left({\bf x}\right)}\int_{\mathcal{M}_{g_{0},n_{0}}(\mathbf{x})}\sum_{\alpha\textup{ filling }S_{g_{0},n_{0}}}F_{L}\left(\ell_{\alpha}\left(X\right)\right)\mathrm{d}\textup{Vol}_{\mathrm{WP}}\left(Y\right)\\
\leqslant & C\left(2g_{0}+n_{0}\right)\|F_{L}\|_{\infty}e^{L}\ind_{\left[\sum x_{i}\leqslant2L\right]},
\end{align*}
so that (\ref{eq:non-simple-expans-est}) is bounded by 
\begin{align*}
 & \frac{\left(ck\right)^{ck}}{g^{k+1}}\left(cL\right)^{cL}e^{L}\|F_{L}\|_{\infty}\sum_{\substack{\left(g_{0},n_{0}\right)\\
1\leqslant2g_{0}-2+n_{0}\leqslant CL
}
}\frac{\left(2g_{0}+n_{0}\right)}{n_{0}!}\int_{\mathbb{R}_{\geqslant0}^{n_{0}}}\left(|\mathbf{x}|+1\right)^{ck+n_{0}}\exp\left(|\mathbf{x}|\right)e^{L}\ind_{\left[\sum x_{i}\leqslant2L\right]}\mathrm{d}\mathbf{x}\\
\leqslant & \frac{\left(ck\right)^{ck}}{g^{k+1}}\left(cL\right)^{c(L+k)}\|F_{L}\|_{\infty},
\end{align*}
and the result follows.
\end{proof}

\subsection{Proof of Theorem \ref{thm:main-technical-result}}

We conclude this section with the proof of Theorem \ref{thm:main-technical-result}.
Recall the construction of $f$ from the start of Section \ref{sec:Geometric-estimates}
so that $\check{f}$ is supported in $[-1,1]$. 
\begin{proof}[Proof of Theorem \ref{thm:main-technical-result}.]
By Selberg's trace formula, c.f. Theorem \ref{thm:Selberg-TF}, we
have
\begin{align}
\mathbb{E}_{g}\left[\tr\left(\left(f\left(\sqrt{\Delta-\tfrac{1}{4}}\right)\right)^{t}\right)\right] & =\left(1-\frac{1}{g}\right)\int_{\frac{1}{4}}^{\infty}f\left(\sqrt{r-\tfrac{1}{4}}\right)^{t}\tanh\left(\pi\sqrt{r-\tfrac{1}{4}}\right)\mathrm{d}r\label{eq:trace-formula-pplied}\\
 & \,\,\,\,+\frac{1}{g}\mathbb{E}_{g}\left[\sum_{\gamma\in\mathcal{P}(X)}\sum_{k=1}^{\infty}\frac{\ell_{\gamma}\left(X\right)}{2\sinh\left(\frac{k\ell_{\gamma}(x)}{2}\right)}\check{f}^{*t}\left(k\ell_{\gamma}\left(X\right)\right)\right].\nonumber 
\end{align}
We separate 
\begin{align*}
\mathbb{E}_{g}\left[\sum_{\gamma\in\mathcal{P}(X)}\sum_{k=1}^{\infty}\frac{\ell_{\gamma}\left(X\right)}{2\sinh\left(\frac{k\ell_{\gamma}(x)}{2}\right)}\check{f}^{*t}\left(k\ell_{\gamma}\left(X\right)\right)\right] & =\mathbb{E}_{g}\left[\sum_{\substack{\gamma\in\mathcal{P}(X)\\
\gamma\ \textup{simple}
}
}\sum_{k=1}^{\infty}\frac{\ell_{\gamma}\left(X\right)}{2\sinh\left(\frac{k\ell_{\gamma}(x)}{2}\right)}\check{f}^{*t}\left(k\ell_{\gamma}\left(X\right)\right)\right]\\
 & +\mathbb{E}_{g}\left[\sum_{\substack{\gamma\in\mathcal{P}(X)\\
\gamma\ \textup{non-simple}
}
}\sum_{k=1}^{\infty}\frac{\ell_{\gamma}\left(X\right)}{2\sinh\left(\frac{k\ell_{\gamma}(x)}{2}\right)}\check{f}^{*t}\left(k\ell_{\gamma}\left(X\right)\right)\right].
\end{align*}
Since any non-simple geodesic has length at least $2\text{arcsinh}1$,
for any non-simple $\gamma\in\mathcal{P}\left(X\right)$ we have $\check{f}^{*t}\left(k\ell_{\gamma}\left(X\right)\right)=0$
for any $k\geqslant\frac{t}{2\text{arcsinh}1}$. Therefore 
\[
\mathbb{E}_{g}\left[\sum_{\substack{\gamma\in\mathcal{P}(X)\\
\gamma\ \textup{non-simple}
}
}\sum_{k=1}^{\infty}\frac{\ell}{2\sinh\left(\frac{k\ell}{2}\right)}\check{f}^{*t}\left(k\ell_{\gamma}\left(X\right)\right)\right]=\mathbb{E}_{g}\left[\sum_{\substack{\gamma\in\mathcal{P}(X)\\
\gamma\ \textup{non-simple}
}
}R_{t}\left(\ell_{\gamma}\left(X\right)\right)\right]
\]
where 
\[
R_{t}\left(\ell\right)\eqdf\sum_{k=1}^{\lceil\frac{t}{2\text{arcsinh}1}\rceil}\frac{\ell}{2\sinh\left(\frac{k\ell}{2}\right)}\check{f}^{*t}\left(k\ell\right),
\]
which is smooth and supported in $\left[0,t\right]$. Therefore by
Proposition \ref{prop:effective-nonsimple-expansion},

\begin{align*}
 & \left|\mathbb{E}_{g}\left[\sum_{\substack{\gamma\in\mathcal{P}(X)\\
\gamma\ \textup{non-simple}
}
}R_{t}\left(\ell_{\gamma}\left(X\right)\right)\right]-\sum_{j=1}^{q}\frac{f_{j,R_{t}}^{\textup{n-simp}}}{g^{j}}\right|\\
 & \leqslant\frac{t^{c(t+q)}\left(cq\right)^{cq}}{g^{q+1}}\|R_{t}\|_{\infty}.
\end{align*}
Now 
\[
\|R_{t}\|_{\infty}\leqslant\|\check{f}^{*t}\|_{\infty}\cdot\sum_{k=1}^{\lceil\frac{t}{2\text{arcsinh}1}\rceil}\frac{\ell}{2\sinh\left(\frac{k\ell}{2}\right)}\leqslant ct\|\check{f}^{*t}\|_{\infty}\leqslant ct^{t}\|\check{f}\|_{\infty}^{t},
\]
where in the last inequality we applied Young's convolution inequality.
In total we see that there is a $c>0$ 
\begin{equation}
\left|\mathbb{E}_{g}\left[\sum_{\substack{\gamma\in\mathcal{P}(X)\\
\gamma\ \textup{non-simple}
}
}R_{t}\left(\ell_{\gamma}\left(X\right)\right)\right]-\sum_{j=1}^{q}\frac{f_{j,R_{t}}^{\textup{n-simp}}}{g^{j}}\right|\leqslant\frac{t^{c(t+q)}\left(cq\right)^{cq}}{g^{q+1}}.\label{eq:non-simple-cont-applied}
\end{equation}
Now for the contribution of the simple curves, we can apply Proposition
\ref{prop:effective-simple-expansion} to the function 
\[
G_{t}\left(\ell\right)\eqdf\sum_{k=1}^{\infty}\frac{\ell}{2\sinh\left(\frac{k\ell}{2}\right)}\check{f}^{*t}\left(k\ell\right).
\]
Since $\ell\mapsto\ell G_{t}\left(\ell\right)$ is continuous and
supported in $[0,t]$, we apply Proposition \ref{prop:effective-simple-expansion}
to find a $c>0$ with
\begin{align}
 & \left|\mathbb{E}_{g}\left[\sum_{\substack{\gamma\in\mathcal{P}(X)\\
\gamma\ \textup{simple}
}
}G\left(\ell_{\gamma}\left(X\right)\right)\right]-\int_{0}^{\infty}\sum_{k=1}^{\infty}2\frac{\sinh\left(\frac{\ell}{2}\right)^{2}}{\sinh\left(\frac{k\ell}{2}\right)}\check{f}^{*t}(k\ell)\mathrm{d}\ell-\sum_{j=1}^{q}\int_{0}^{\infty}G_{t}\left(\ell\right)\frac{f_{j}^{\textup{simp}}}{g^{j}}(\ell)\mathrm{d}\ell\right|\label{eq:simple-cont-applied}\\
\leqslant & \frac{t^{c(t+q)}\left(cq\right)^{cq}}{g^{q+1}}\int_{0}^{t}\ell G{}_{t}\left(\ell\right)\exp\left(\ell\right)\mathrm{d}\ell\leqslant\frac{t^{c(t+q)}\left(cq\right)^{cq}}{g^{q+1}},\nonumber 
\end{align}
where on the last line we used that $\sum_{k=1}^{\infty}\frac{\ell^{2}}{2\sinh\left(\frac{k\ell}{2}\right)}\ind_{k\ell\leqslant t}$
is bounded by $t^{2}$ and that $\|\check{f}^{*t}\|_{\infty}\leqslant t^{t}\|\check{f}\|_{\infty}^{t}$
. 

Defining 
\begin{align*}
a_{0}^{t} & \eqdf\int_{\frac{1}{4}}^{\infty}f\left(\sqrt{r-\tfrac{1}{4}}\right)^{t}\tanh\left(\sqrt{r-\tfrac{1}{4}}\right)\mathrm{d}r.\\
a_{1}^{t} & \eqdf\int_{0}^{\infty}\sum_{k=1}^{\infty}2\frac{\sinh\left(\frac{\ell}{2}\right)^{2}}{\sinh\left(\frac{k\ell}{2}\right)}\check{f}^{*t}(k\ell)\mathrm{d}\ell-\int_{\frac{1}{4}}^{\infty}f\left(\sqrt{r-\tfrac{1}{4}}\right)^{t}\tanh\left(\sqrt{r-\tfrac{1}{4}}\right)\mathrm{d}r,
\end{align*}
 and for $j\geqslant2$
\[
a_{j}^{t}\eqdf\int_{0}^{\infty}G_{t}\left(\ell\right)f_{j-1}^{\textup{simp}}\left(\ell\right)\mathrm{d}\ell+f_{j-1,R_{t}}^{\textup{n-simp}},
\]
by (\ref{eq:trace-formula-pplied}), (\ref{eq:non-simple-cont-applied})
and (\ref{eq:simple-cont-applied}) there is a $c>0$ such that
\[
\left|\mathbb{E}_{g}\left[\textup{tr}\left(\left(f\left(\sqrt{\Delta-\tfrac{1}{4}}\right)\right)^{t}\right)\right]-\sum_{i=0}^{q}\frac{a_{i}^{t}}{g^{i}}\right|\leqslant\frac{\left(cqt\right)^{c(q+t)}}{g^{q+1}},
\]
and the conclusion follows for $q\geqslant t$ and $g\geqslant c'q^{c'}$.
\end{proof}

\section{Proof of Theorem \ref{thm:Main-Thm} \label{sec:Proof-of-Theorem}}

The purpose of this section is to prove Theorem \ref{thm:Main-Thm}.
Our approach is inspired by \cite{Ch.Ga.Tr.va2024,Ma.Pu.vH2025}.

\subsection{Master inequalities}

In this section we aim to adapt the arguments of \cite[Section 3.1]{Ma.Pu.vH2025}
to our setting.
\begin{lem}
\label{lem:master-ineqaulity-polynomials}Let $\tilde{h}(x)=\sum_{t=0}^{q-1}s_{t}x^{t}$
be a real-valued polynomial of degree at most $q-1$ and let $h(x)\eqdf x\tilde{h}(x)$.
There is a constant $c>0$ independent of $h$ such that 
\begin{align*}
 & \left|\mathbb{E}_{g}\left[\textup{tr}\left[h\left(f\left(\sqrt{\Delta-\tfrac{1}{4}}\right)\right)\right]\right]-\int_{\frac{1}{4}}^{\infty}h\left(f\left(\sqrt{r-\tfrac{1}{4}}\right)\right)\tanh\left(\pi\sqrt{r-\tfrac{1}{4}}\right)\mathrm{d}r-\frac{\nu_{1}(\tilde{h})}{g}\right|\\
\leqslant & \frac{cq^{c}}{g^{2}}\|\tilde{h}\|_{\left[-f\left(\frac{i}{2}\right),f\left(\frac{i}{2}\right)\right]},
\end{align*}
for all $g\geqslant2$, where 
\[
\nu_{1}\left(\tilde{h}\right)\eqdf\sum_{j=0}^{q-1}s_{j}\int_{0}^{\infty}\sum_{k=1}^{\infty}2\frac{\sinh\left(\frac{r}{2}\right)^{2}}{\sinh\left(\frac{kr}{2}\right)}\check{f}^{*j+1}(kr)\mathrm{d}r-\int_{\frac{1}{4}}^{\infty}h\left(f\left(\sqrt{r-\tfrac{1}{4}}\right)\right)\tanh\left(\pi\sqrt{r-\tfrac{1}{4}}\right)\mathrm{d}r.
\]
\end{lem}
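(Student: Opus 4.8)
The plan is to deduce the lemma from Theorem~\ref{thm:main-technical-result} by linearity. Writing $h(x)=x\tilde h(x)=\sum_{t=0}^{q-1}s_t x^{t+1}$, one has $\tr[h(f(\sqrt{\Delta-\tfrac14}))]=\sum_{t=0}^{q-1}s_t\,\tr[(f(\sqrt{\Delta-\tfrac14}))^{t+1}]$. Applying Selberg's trace formula (Theorem~\ref{thm:Selberg-TF}) to each power and regrouping exactly as in the proof of Theorem~\ref{thm:main-technical-result}, $\mathbb E_g[\tr h(f(\sqrt{\Delta-\tfrac14}))]$ equals $\int_{1/4}^{\infty}h(f(\sqrt{r-\tfrac14}))\tanh(\pi\sqrt{r-\tfrac14})\,\mathrm dr$ plus $\tfrac1{g-1}\,\mathbb E_g\big[\sum_{\gamma\in\mathcal P(X)}\sum_{k\ge1}\tfrac{\ell_\gamma(X)}{2\sinh(k\ell_\gamma(X)/2)}\,\check g(k\ell_\gamma(X))\big]$, where $\check g\eqdf\sum_t s_t\check f^{*(t+1)}$ is the inverse Fourier transform of $g\eqdf h\circ f$ and is supported in $[-q,q]$ (since $h$ has degree $\le q$ and $f=\hat f_0$ has exponential type $1$).

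Next I would split the sum over primitive geodesics into simple and non-simple ones. By Lemma~\ref{lem:integration-lemma} and Lemma~\ref{lem:vol-function-bound}, the density $\phi_g^{(g_0,n_0)}$ has a genus expansion beginning only at order $g^{-1}$, so the non-simple contribution is $O(g^{-2})$ and does not affect the $g^{-1}$-coefficient. For the simple part, Mirzakhani's integration formula reduces matters to integrals of $\tfrac{V_{g-1,2}(\ell,\ell)}{V_g}$ and $\sum_i\tfrac{V_{i,1}(\ell)V_{g-i,1}(\ell)}{V_g}$ against $\ell\mapsto\sum_k\tfrac{\ell}{2\sinh(k\ell/2)}\check g(k\ell)$; Proposition~\ref{prop:effective-simple-expansion} with $k=1$ yields leading term $\int_0^\infty\sum_k\tfrac{2\sinh(\ell/2)^2}{\sinh(k\ell/2)}\check g(k\ell)\,\mathrm d\ell=\nu_1(\tilde h)$ plus an $O(g^{-2})$ remainder, and replacing $\tfrac1{g-1}$ by $\tfrac1g$ disturbs the expansion only from order $g^{-2}$ onwards. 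Thus the $g^{-1}$-coefficient is exactly $\nu_1(\tilde h)$.

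Controlling every $O(g^{-2})$ term by $\tfrac{Cq^{4c}}{g^2}\|\tilde h\|_{[-f(i/2),f(i/2)]}$ — \emph{polynomially} in $q$, rather than with the $(cq)^{cq}$ or $C^q$ constants a naive estimate would give — is where the normalisation $h(x)=x\tilde h(x)$ matters. On the one hand $|\check g(u)|\le\tfrac1{2\pi}\|h\circ f\|_{L^1(\mathbb R)}\le\tfrac1{2\pi}\|f\|_{L^1(\mathbb R)}\|\tilde h\|_{[0,f(0)]}$, because $|h(f(r))|=|f(r)|\,|\tilde h(f(r))|$ and $f(\mathbb R)\subseteq[0,f(0)]$, which sidesteps the exponential-in-$q$ Chebyshev coefficient bounds one would incur by expanding $\tilde h$ monomially. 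On the other hand the exponential weight $2\sinh(\ell/2)$ in the integral defining $\nu_1(\tilde h)$ — against which $\check g$ is merely bounded — must be treated by Fourier--Laplace duality: $\int_0^\infty 2\cosh(\ell/2)\check g(\ell)\,\mathrm d\ell=h(f(\tfrac i2))$ and $\big|\int_0^\infty e^{-\ell/2}\check g(\ell)\,\mathrm d\ell\big|\le2\|\check g\|_\infty$, so $\int_0^\infty 2\sinh(\ell/2)\check g(\ell)\,\mathrm d\ell=O(\|\tilde h\|_{[-f(i/2),f(i/2)]})$; since $f$ maps the relevant segment of the imaginary axis into an interval comparable to $[0,f(\tfrac i2)]$, this gives $|\nu_1(\tilde h)|\le C\|\tilde h\|_{[-f(i/2),f(i/2)]}$, and the same device is what one must apply to the $e^{2\ell}$-weighted remainder integrals carried by the Weil--Petersson volume estimates of Theorem~\ref{thm:Wp-vols}.

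For the remainder term, Theorem~\ref{thm:main-technical-result} itself supplies an $O(g^{-2})$ bound, but with the Gevrey-type constant $(cq)^{cq}$, useful only once $g$ exceeds a fixed power of $q$; to cover all $g\ge2$ I would interpolate, following \cite[Section~3.1]{Ma.Pu.vH2025}, with the a priori bound $\mathbb E_g[\tr|h(f(\sqrt{\Delta-\tfrac14}))|]\le C\|\tilde h\|_{[-f(i/2),f(i/2)]}$ — a consequence of $0\le f(\sqrt{\Delta-\tfrac14})\le f(\tfrac i2)$ and $\mathbb E_g[\tr f(\sqrt{\Delta-\tfrac14})]=O(1)$ (weak convergence / a crude Selberg estimate) — together with the bound on $\nu_1(\tilde h)$ from the previous paragraph. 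Choosing the truncation order of Theorem~\ref{thm:main-technical-result} as a suitable function of $g$ and $q$ and matching it to the a priori regime is what produces the polynomial factor $q^{4c}$ (the exponent tracking a bounded number of uses of Markov's inequality), and I expect this bookkeeping, together with the Fourier-analytic estimates that keep the lower-order coefficients and the remainder integrals polynomially controlled in $q$, to be the main technical point.
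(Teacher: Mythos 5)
Your overall plan --- use Theorem~\ref{thm:main-technical-result} for $g$ larger than a fixed power of $q$, the a priori bound $\mathbb{E}_g[\tr\, h(f(\sqrt{\Delta-\frac{1}{4}}))] = O(\|\tilde h\|_{[-f(i/2),f(i/2)]})$ for small $g$, and interpolate following \cite[Section 3.1]{Ma.Pu.vH2025} --- is the one the paper uses. But there is a genuine gap in how you propose to produce the polynomial factor $q^{4c}$. You assert that the Fourier--Laplace device which bounds $\nu_1(\tilde h)$ is ``the same device one must apply to the $e^{2\ell}$-weighted remainder integrals carried by the Weil--Petersson volume estimates.'' This does not work. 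Those remainders (Propositions~\ref{prop:effective-simple-expansion}, \ref{prop:effective-nonsimple-expansion}) are one-sided error bounds of the form $\int_0^{c_1 q}\ell\,|G(\ell)|\,e^{2\ell}\,\mathrm{d}\ell$: the absolute value of $\check{g}$ kills the sign cancellation that Fourier--Laplace duality relies on, and the weight $e^{2\ell}$ would correspond to evaluating $h\circ f$ at $2i$, where $f(2i)>f(i/2)$ lies outside the interval on which $\|\tilde h\|_{[-f(i/2),f(i/2)]}$ controls $\tilde h$. Such remainders are genuinely exponential in $q$, and no Fourier manipulation will make them polynomial.

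The paper never attempts to make the remainders polynomial in $q$. The factor $q^{4c}$ arises exclusively from the Markov brothers' inequality applied to the degree-$\leqslant q$ polynomial $f_h(x):=\sum_{t=1}^{q}\sum_{i=0}^{q}s_t a_i^t x^i$: the Gevrey error $(cq)^{cq}/g^{q+1}$ of Theorem~\ref{thm:main-technical-result} is negligible once $g>c'q^{c'}$, so the a priori bound transfers to $|f_h(x)|\leqslant C\|\tilde h\|$ on $[0,1/(2q^2)]$ --- the $(cq)^{cq}$ constant only shrinks the \emph{length} of this interval, never the height --- after which \cite[Lemma 2.1]{Ma.Pu.vH2025} gives $\|f_h'\|,\|f_h''\| = O(q^{2c}\|\tilde h\|),O(q^{4c}\|\tilde h\|)$, and a Taylor expansion of $f_h$ around $0$ closes the argument; for $g\leqslant c'q^{c'}$ the inequality is checked directly from the a priori bound and $|\nu_1(\tilde h)|=|f_h'(0)|$. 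You do gesture at ``Markov's inequality'' and the a priori bound, so you are circling the right mechanism, but you present it interleaved with the unworkable Fourier--Laplace control of the remainders, and the re-derivation from Selberg's trace formula and the simple/non-simple split is a detour: the paper uses Theorem~\ref{thm:main-technical-result} as a black box here. (Your direct bound $|\nu_1(\tilde h)|=O(\|\tilde h\|)$ via Fourier--Laplace is in fact sharper than the $O(q^{2c}\|\tilde h\|)$ the paper obtains from the derivative bound, a nice observation but not needed for the stated inequality.)
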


\begin{proof}
By Theorem \ref{thm:main-technical-result}, there is a $c>0$ independent
of $h$ such that 
\[
\left|\mathbb{E}_{g}\left[\textup{tr}\left[h\left(f\left(\sqrt{\Delta-\tfrac{1}{4}}\right)\right)\right]\right]-f_{h}\left(\frac{1}{g}\right)\right|\leqslant\frac{\left(cq\right)^{cq}}{g^{q+1}}\sum_{t=0}^{q-1}|s_{t}|,
\]
for all $g>cq^{c}$. Here 
\[
f_{h}\left(x\right)=\sum_{t=0}^{q-1}\sum_{i=0}^{q}s_{t}a_{i}^{t+1}x^{i}.
\]
As noted in \cite[Section 3.2]{Ma.Pu.vH2025}, a classical result
of V. Markov \cite[Section 2.6, Eq. (9)]{Ti63} implies that 
\[
\sum_{t=0}^{q-1}|s_{t}|\leqslant e^{\frac{q}{f\left(\frac{i}{2}\right)}}\|\tilde{h}\|_{\left[-f\left(\frac{i}{2}\right),f\left(\frac{i}{2}\right)\right]}.
\]
Then 
\begin{equation}
\left|\mathbb{E}_{g}\left[\textup{tr}\left[h\left(f\left(\sqrt{\Delta-\tfrac{1}{4}}\right)\right)\right]\right]-f_{h}\left(\frac{1}{g}\right)\right|\leqslant e^{\frac{q}{f\left(\frac{i}{2}\right)}}\frac{\left(cq\right)^{cq}}{g^{q+1}}\|\tilde{h}\|_{\left[-f\left(\frac{i}{2}\right),f\left(\frac{i}{2}\right)\right]}\leqslant\frac{1}{g^{2}}\|\tilde{h}\|_{\left[-f\left(\frac{i}{2}\right),f\left(\frac{i}{2}\right)\right]}\label{eq:initial}
\end{equation}
for all $g>c'q^{c'}$ for some $c'>c$. It follows that
\begin{align}
\text{\ensuremath{\left|f_{h}\left(\frac{1}{g}\right)\right|}\ensuremath{\ensuremath{\leqslant}}} & \left|\mathbb{E}_{g}\left[\textup{tr}\left[h\left(f\left(\sqrt{\Delta-\tfrac{1}{4}}\right)\right)\right]\right]\right|+\frac{1}{g^{2}}\|\tilde{h}\|_{\left[-f\left(\frac{i}{2}\right),f\left(\frac{i}{2}\right)\right]},\label{eq:apriori-triangle-ineq}
\end{align}
for all $g>c'q^{c'}$. Since $h(x)=x\tilde{h}(x)$,
\begin{equation}
h(x)\leqslant\|\tilde{h}\|_{\left[-f\left(\frac{i}{2}\right),f\left(\frac{i}{2}\right)\right]}x\label{eq:h-bound}
\end{equation}
 for $0\leqslant x\leqslant f\left(\frac{i}{2}\right)$. By the non-negativity
of $f$ on $\mathbb{R}\cup i\mathbb{R}$ and functional calculus,
(\ref{eq:h-bound}) implies 
\begin{align*}
\mathbb{E}_{g}\left[\textup{tr}\left[h\left(f\left(\sqrt{\Delta-\tfrac{1}{4}}\right)\right)\right]\right] & =\mathbb{E}_{g}\left[\frac{1}{g}\sum_{j}h\left(f\left(\sqrt{\lambda_{j}-\tfrac{1}{4}}\right)\right)\right]\\
 & \leqslant\frac{\|\tilde{h}\|_{\left[-f\left(\frac{i}{2}\right),f\left(\frac{i}{2}\right)\right]}}{g}\mathbb{E}_{g}\left[\sum_{j}f\left(\sqrt{\lambda_{j}-\tfrac{1}{4}}\right)\right].
\end{align*}
By for example \cite[Sections 6 and 7]{Wu.Xu2022}\footnote{This follows as a special case with $T$ fixed from Propositions 24,
28, 29 and Theorem 36 of \cite{Wu.Xu2022} which we cite for convenience
although the bound we need is much less difficult. We remark that
the difficulty in the cited results from \cite{Wu.Xu2022} lies in
proving estimates which remain effective for $T\sim4\log g$. Proving
just the bound we need for $T$ fixed is much easier and follows readily
from the work of Mirzakhani and Petri \cite{Mi.Pe19}. }, there is a constant $C>0$ such that
\[
\frac{\mathbb{E}_{g}\left[\sum_{j}f\left(\sqrt{\lambda_{j}-\tfrac{1}{4}}\right)\right]}{g}\leqslant C.
\]
We see that
\begin{equation}
\mathbb{E}_{g}\left[\textup{tr}\ \left[h\left(f\left(\sqrt{\Delta-\tfrac{1}{4}}\right)\right)\right]\right]\leqslant C\|\tilde{h}\|_{\left[-f\left(\frac{i}{2}\right),f\left(\frac{i}{2}\right)\right]}.\label{eq:B-s-bound}
\end{equation}
Then by (\ref{eq:apriori-triangle-ineq}) and (\ref{eq:B-s-bound})
there is a $C>0$ with
\begin{equation}
\left|f_{h}\left(\frac{1}{g}\right)\right|\leqslant C\|\tilde{h}\|_{\left[-f\left(\frac{i}{2}\right),f\left(\frac{i}{2}\right)\right]}\label{eq:apriori-bound}
\end{equation}
for all $g>c'q^{c'}$. Here (\ref{eq:apriori-bound}) plays the role
of the a priori bound \cite[Lemma 6.4]{Ch.Ga.Tr.va2024}. By \cite[Lemma 4.2]{Magee-delaSalle},
which is a variant of the Markov brother's inequality, there is a
$C>0$ independent of $h$ such that
\begin{align}
\|f_{h}'\|_{\left[0,\frac{1}{2c'q^{c'}}\right]} & \leqslant Cq^{2c'}\|\tilde{h}\|_{\left[-f\left(\frac{i}{2}\right),f\left(\frac{i}{2}\right)\right]},\label{eq:nu-1}\\
\|f_{h}''\|_{\left[0,\frac{1}{2c'q^{c'}}\right]} & \leqslant Cq^{4c'}\|\tilde{h}\|_{\left[-f\left(\frac{i}{2}\right),f\left(\frac{i}{2}\right)\right]}.\label{eq:f''-bound}
\end{align}
Note that $f_{h}(0)=\sum_{t=0}^{q-1}s_{t}a_{0}^{t+1}=\int_{\frac{1}{4}}^{\infty}h\left(f\left(\sqrt{r-\tfrac{1}{4}}\right)\right)r\tanh\left(\pi\sqrt{r-\tfrac{1}{4}}\right)\mathrm{d}r$.
So, by Taylor expanding $f_{h}$, we obtain
\begin{align}
 & \left|f_{h}\left(\frac{1}{g}\right)-\int_{\frac{1}{4}}^{\infty}h\left(f\left(\sqrt{r-\tfrac{1}{4}}\right)\right)\tanh\left(\pi\sqrt{r-\tfrac{1}{4}}\right)\mathrm{d}r-\frac{1}{g}\nu_{1}\left(\tilde{h}\right)\right|\label{eq:taylor}\\
\leqslant & \frac{1}{g^{2}}\|f_{h}''\|_{[0,\frac{1}{g}]}\leqslant\frac{Cq^{4c'}}{g^{2}}\|\tilde{h}\|_{\left[-f\left(\frac{i}{2}\right),f\left(\frac{i}{2}\right)\right]}.\nonumber 
\end{align}
for $g>2c'q^{c'}$, where 
\begin{align}
\nu_{1}\left(\tilde{h}\right) & \eqdf\underbrace{\sum_{j=0}^{q-1}s_{j}\int_{0}^{\infty}\sum_{k=1}^{\infty}2\frac{\sinh\left(\frac{r}{2}\right)^{2}}{\sinh\left(\frac{kr}{2}\right)}\check{f}^{*j+1}(kr)\mathrm{d}r-\int_{\frac{1}{4}}^{\infty}h\left(f\left(\sqrt{r-\tfrac{1}{4}}\right)\right)\tanh\left(\pi\sqrt{r-\tfrac{1}{4}}\right)\mathrm{d}r}_{=f_{h}'(0)}.\label{eq:nu_1-def-deriv}
\end{align}
The bound for $g>2c'q^{c'}$ thus holds by the triangle inequality
and both (\ref{eq:initial}) and (\ref{eq:taylor}). Conversely, for
$2c'q^{c'}>g$ we have 
\begin{align*}
 & \left|\mathbb{E}_{g}\left[\tr\:h\left(f\left(\sqrt{\Delta-\tfrac{1}{4}}\right)\right)\right]-\int_{\frac{1}{4}}^{\infty}h\left(f\left(\sqrt{r-\tfrac{1}{4}}\right)\right)\tanh\left(\pi\sqrt{r-\tfrac{1}{4}}\right)\mathrm{d}r-\frac{\nu_{1}(\tilde{h})}{g}\right|\\
 & \leqslant2C\|\tilde{h}\|_{\left[-f\left(\frac{i}{2}\right),f\left(\frac{i}{2}\right)\right]}+\frac{1}{g}\nu_{1}\left(\tilde{h}\right)\leqslant\frac{C'q^{4c'}}{g^{2}}\|\tilde{h}\|_{\left[-f\left(\frac{i}{2}\right),f\left(\frac{i}{2}\right)\right]},
\end{align*}
where for the first inequality we applied the triangle inequality
together with (\ref{eq:B-s-bound}) to bound the first term and (\ref{eq:apriori-bound})
for the second term since it is precisely $f_{h}(0)$, and for the
second inequality we used (\ref{eq:nu_1-def-deriv}), (\ref{eq:nu-1})
and the fact that $\frac{2c'q^{c^{'}}}{g}>1$.
\end{proof}
By some minor adaptations to the proof of \cite[Theorem 7.1]{Ch.Ga.Tr.va2024},
Lemma \ref{lem:master-ineqaulity-polynomials} leads to the following
smooth master inequality.
\begin{prop}
\label{prop:smooth-master}The linear functional $\nu_{1}$ extends
to a compactly supported distribution and there exist $C,m>0$ such
that for any $\tilde{h}\in C^{\infty}\left(\mathbb{R}\right)$ and
any $g\geqslant2$, defining $h(x)=x\tilde{h}(x)$, we have
\begin{align*}
 & \left|\mathbb{E}_{g}\left[\tr\:h\left(f\left(\sqrt{\Delta-\tfrac{1}{4}}\right)\right)\right]-\int_{\frac{1}{4}}^{\infty}h\left(f\left(\sqrt{r-\tfrac{1}{4}}\right)\right)\tanh\left(\pi\sqrt{r-\tfrac{1}{4}}\right)\mathrm{d}r-\frac{\nu_{1}\left(\tilde{h}\right)}{g}\right|\\
 & \leqslant\frac{C}{g^{2}}\left(\|w^{(m)}\|_{[0,2\pi]}+\|\tilde{h}\|_{\left[-f\left(\frac{i}{2}\right),f\left(\frac{i}{2}\right)\right]}\right),
\end{align*}
where $w(\theta)\eqdf\tilde{h}\left(f\left(\frac{i}{2}\right)\cos\theta\right)$.
\end{prop}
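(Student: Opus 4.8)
The plan is to follow the argument of \cite[Theorem 7.1]{Ch.Ga.Tr.va2024}: decompose the smooth datum $\tilde h$ dyadically in frequency, apply the polynomial master inequality Lemma \ref{lem:master-ineqaulity-polynomials} to each block, and sum the resulting estimates, taking the number of derivatives $m$ large enough that the geometric series created by the factor $q^{4c}$ converges. Concretely, write $w(\theta)=\tilde h\bigl(f(\tfrac{i}{2})\cos\theta\bigr)=c_0+\sum_{n\geqslant1}2c_n\cos(n\theta)$. Since $\cos(n\theta)=T_n(\cos\theta)$ for the Chebyshev polynomials $T_n$, the identity $\tilde h(x)=c_0+\sum_{n\geqslant1}2c_nT_n\bigl(x/f(\tfrac{i}{2})\bigr)$ splits into dyadic blocks $\tilde h=\sum_{j\geqslant0}\tilde h_j$, with $\tilde h_j$ a polynomial of degree $O(2^{j})$ collecting the terms $n\asymp 2^{j}$, so that $\|\tilde h_j\|_{[-f(\frac{i}{2}),f(\frac{i}{2})]}$ equals the sup norm of the corresponding Fourier block of $w$. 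Integrating by parts in $c_n=\frac{1}{2\pi}\int_0^{2\pi}w(\theta)e^{-in\theta}\mathrm{d}\theta$ gives $|c_n|\leqslant |n|^{-m}\|w^{(m)}\|_{[0,2\pi]}$, hence $\|\tilde h_j\|_{[-f(\frac{i}{2}),f(\frac{i}{2})]}\leqslant C\,2^{-j(m-1)}\|w^{(m)}\|_{[0,2\pi]}$ for $j\geqslant1$ (the $j=0$ block being bounded by $\|w^{(m)}\|_{[0,2\pi]}$ directly); set $h_j(x)=x\tilde h_j(x)$, so $h=\sum_jh_j$.

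Next one checks that the decomposition is compatible with each term in the statement. Since $f=\hat{f_0}$ is Schwartz and the Laplace spectrum obeys Weyl's law, $f(\sqrt{\Delta-\tfrac14})$ is trace class with $\mathbb{E}_{g}\bigl[\sum_n|f(\sqrt{\lambda_n-\tfrac14})|\bigr]<\infty$; together with $\sum_j\|\tilde h_j\|_\infty\lesssim\|w^{(m)}\|_{[0,2\pi]}$ (valid for $m\geqslant2$), Fubini gives $\mathbb{E}_{g}\bigl[\tr\,h(f(\sqrt{\Delta-\tfrac14}))\bigr]=\sum_j\mathbb{E}_{g}\bigl[\tr\,h_j(f(\sqrt{\Delta-\tfrac14}))\bigr]$, and dominated convergence (using $\int_{1/4}^\infty|f(\sqrt{r-\tfrac14})|\,r\,\mathrm{d}r<\infty$) gives the analogous splitting of the integral term. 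For $\nu_1$, inserting $\tilde p=T_n(\cdot/f(\tfrac{i}{2}))$ into (\ref{eq:nu-1}) gives $|\nu_1(T_n(\cdot/f(\tfrac{i}{2})))|\leqslant Cn^{2c}$, so that $\nu_1(\tilde h)=c_0\nu_1(1)+\sum_{n\geqslant1}2c_n\nu_1\bigl(T_n(\cdot/f(\tfrac{i}{2}))\bigr)$ converges absolutely with $|\nu_1(\tilde h)|\leqslant C\|w^{(M)}\|_{[0,2\pi]}$ whenever $M>2c+1$; this simultaneously shows that $\nu_1$ extends to a distribution supported in $[-f(\tfrac{i}{2}),f(\tfrac{i}{2})]$ — in particular compactly supported, of finite order — and that $\nu_1(\tilde h)=\sum_j\nu_1(\tilde h_j)$.

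It remains to sum the polynomial estimates. Applying Lemma \ref{lem:master-ineqaulity-polynomials} to each $h_j$, which is licit for every $g\geqslant2$ (the lemma imposing no constraint relating $g$ to the degree), and inserting the bound on $\|\tilde h_j\|_{[-f(\frac{i}{2}),f(\frac{i}{2})]}$,
\begin{align*}
&\Bigl|\mathbb{E}_{g}\bigl[\tr\,h(f(\sqrt{\Delta-\tfrac14}))\bigr]-\int_{1/4}^\infty h(f(\sqrt{r-\tfrac14}))\,r\tanh(\pi r)\,\mathrm{d}r-\tfrac{\nu_1(\tilde h)}{g}\Bigr|\\
&\qquad\leqslant\sum_{j\geqslant0}\Bigl|\mathbb{E}_{g}\bigl[\tr\,h_j(f(\sqrt{\Delta-\tfrac14}))\bigr]-\int_{1/4}^\infty h_j(f(\sqrt{r-\tfrac14}))\,r\tanh(\pi r)\,\mathrm{d}r-\tfrac{\nu_1(\tilde h_j)}{g}\Bigr|\\
&\qquad\leqslant\sum_{j\geqslant0}\frac{C\,2^{4cj}}{g^{2}}\,\|\tilde h_j\|_{[-f(\frac{i}{2}),f(\frac{i}{2})]}\;\leqslant\;\frac{C}{g^{2}}\,\|w^{(m)}\|_{[0,2\pi]}\sum_{j\geqslant0}2^{\,j(4c-m+1)},
\end{align*}
and the last series converges once $m>4c+1$. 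Fixing any integer $m>\max\{4c+1,\,2c+2\}$ yields the proposition. I expect the main obstacle to be the bookkeeping of the second paragraph — in particular upgrading $\nu_1$ from a functional on polynomials to a genuine compactly supported distribution of finite order, and pinning down how that order together with the exponent $4c$ of Lemma \ref{lem:master-ineqaulity-polynomials} fixes the admissible range of $m$ — rather than the (routine) geometric summation.
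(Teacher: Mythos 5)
Your proposal is correct and takes essentially the same route as the paper: both expand $\tilde h$ in Chebyshev polynomials (equivalently, Fourier-expand $w$), apply Lemma \ref{lem:master-ineqaulity-polynomials} to the resulting polynomial pieces, control the Chebyshev/Fourier coefficients by $\|w^{(m)}\|_{[0,2\pi]}$ via integration by parts (this is exactly what \cite[Lemma 2.3]{Ma.Pu.vH2025} furnishes), and sum a series that converges once $m>4c+1$. Your dyadic regrouping of the coefficients and the explicit Fubini/absolute-convergence check are minor cosmetic variations on the paper's term-by-term summation and its implicit limit passage.
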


\begin{proof}
The fact that $\nu_{1}$ extends to a compactly supported distribution
is immediate from the fact that for any polynomial $\tilde{h}$ of
degree $q\geqslant1$, we have from (\ref{eq:nu_1-def-deriv}) and
(\ref{eq:nu-1}) that
\[
\left|\nu_{1}(\tilde{h})\right|=\left|f_{h}'(0)\right|\leqslant C(q+1)^{2c'}\|\tilde{h}\|_{\left[-f\left(\frac{i}{2}\right),f\left(\frac{i}{2}\right)\right]}\leqslant C'q^{2c'}\|\tilde{h}\|_{\left[-f\left(\frac{i}{2}\right),f\left(\frac{i}{2}\right)\right]},
\]
and so \cite[Lemma 4.7]{Ch.Ga.Tr.va2024} provides the extension and
tells us that the support is contained in $\left[-f\left(\frac{i}{2}\right),f\left(\frac{i}{2}\right)\right]$.
Let $\tilde{h}_{q}$ be a sequence of polynomials of degree $q-1$
that converge to a $\tilde{h}\in C^{\infty}(\mathbb{R})$ in the $C^{c+1}$
norm where $c>0$ is as in Lemma \ref{lem:master-ineqaulity-polynomials}.
We want to apply Lemma \ref{lem:master-ineqaulity-polynomials} to
$\tilde{h}_{q}$. Each $\tilde{h}_{q}$ can be uniquely expressed
as
\[
\tilde{h}_{q}(x)=\sum_{k=0}^{q-1}a_{k}T_{k}\left(\frac{x}{f\left(\frac{i}{2}\right)}\right)
\]
for $x\in\left[-f\left(\frac{i}{2}\right),f\left(\frac{i}{2}\right)\right]$,
where $T_{k}$ is the Chebyshev polynomial of degree $k$ defined
by $T_{k}(\cos(\theta))=\cos(k\theta)$. Then 
\begin{equation}
h_{q}(x)=\sum_{k=0}^{q-1}a_{k}T_{k}\left(\frac{x}{f\left(\frac{i}{2}\right)}\right)x.\label{eq:chebyshev-expansion}
\end{equation}
By applying Lemma \ref{lem:master-ineqaulity-polynomials} individually
to each term $a_{k}T_{k}\left(\frac{x}{f\left(\frac{i}{2}\right)}\right)$
in the expansion of $\tilde{h}_{q}$,
\begin{align}
 & \left|\mathbb{E}_{g}\left[\tr\ h_{q}\left(f\left(\sqrt{\Delta-\tfrac{1}{4}}\right)\right)\right]-\int_{\frac{1}{4}}^{\infty}h_{q}\left(f\left(\sqrt{r-\tfrac{1}{4}}\right)\right)\tanh\left(\pi\sqrt{r-\tfrac{1}{4}}\right)\mathrm{d}r-\frac{\nu_{1}\left(\tilde{h}_{q}\right)}{g}\right|\nonumber \\
 & \leqslant\frac{C}{g^{2}}\sum_{k=0}^{q-1}|a_{k}|(k+1)^{c}\leqslant\frac{C2^{c}}{g^{2}}\left(|a_{0}|+\sum_{k=1}^{q-1}|a_{k}|k^{c}\right)\leqslant\frac{C}{g^{2}}\left(\|w_{q}^{(c+1)}\|_{[0,2\pi]}+\|\tilde{h}_{q}\|_{\left[-f\left(\frac{i}{2}\right),f\left(\frac{i}{2}\right)\right]}\right),\label{eq:master-inq-poly}
\end{align}
where $w_{q}(\theta)\eqdf\tilde{h}_{q}\left(f\left(\frac{i}{2}\right)\cos\theta\right)$,
and for the last inequality we used \cite[Lemma 2.3]{Ma.Pu.vH2025}
applied to $\tilde{h}_{q}$ and absorbed constants into $C$, thus
the statement holds for each $h_{q}$. We also used the fact that
because $\nu_{1}$ has support inside $\left[-f\left(\frac{i}{2}\right),f\left(\frac{i}{2}\right)\right]$,
its value on $\tilde{h}_{q}$ coincides with its value on the Chebyshev
polynomial expansion of $\tilde{h}_{q}$. The fact that $\tilde{h}_{q}$
converges to $\tilde{h}$ in the $C^{c+1}$ norm and the fact that
all of the bounds in (\ref{eq:master-inq-poly}) are independent of
$q$ then means that (\ref{eq:master-inq-poly}) extends to $\tilde{h}\in C^{\infty}(\mathbb{R})$.
\end{proof}

\subsection{Proof of Theorem \ref{thm:Main-Thm}}

We now analyse the support of the distribution $\nu_{1}$ appearing
in Proposition \ref{prop:smooth-master}. Theorem \ref{thm:Main-Thm}
will then follow from picking a suitable $h$ in Proposition \ref{prop:smooth-master}
and by Markov's inequality. 
\begin{lem}
\label{lem:support-dists}Suppose that $h$ is a smooth function with
$\textup{Supp}h\subset\left(f\left(0\right),f\left(i\sqrt{\frac{1}{4}-0.0023}\right)\right)$
and let $\tilde{h}(x)=\frac{h(x)}{x}$, then 
\[
\nu_{0}\left(\tilde{h}\right)=\nu_{1}\left(\tilde{h}\right)=0,
\]
where $\nu_{1}$ is as in Proposition \ref{prop:smooth-master} and
$\nu_{0}$ is defined by
\[
\nu_{0}\left(\tilde{h}\right)=\int_{\frac{1}{4}}^{\infty}h\left(f\left(\sqrt{r-\tfrac{1}{4}}\right)\right)\tanh\left(\pi\sqrt{r-\tfrac{1}{4}}\right)\mathrm{d}r.
\]
\end{lem}

\begin{proof}
First, we have
\[
\nu_{0}\left(\tilde{h}\right)=\int_{\frac{1}{4}}^{\infty}h\left(f\left(\sqrt{r-\tfrac{1}{4}}\right)\right)\tanh\left(\pi\sqrt{r-\tfrac{1}{4}}\right)\mathrm{d}r=0,
\]
for any $h$ that is zero on a neighbourhood of $[-f(0),f(0)]$ since
$f\left(\sqrt{r-\frac{1}{4}}\right)\in[0,f(0)]$ for any $r\geqslant\frac{1}{4}$,
so the conclusion holds for $\nu_{0}$. 

Now for $P(x)=\sum_{j=0}^{q}s_{j}x^{j}$, define

\[
\tilde{\nu}_{1}(P)=\sum_{j=0}^{q}s_{j}\int_{0}^{\infty}2\cosh\left(\frac{r}{2}\right)\check{f}^{*(j+1)}(r)\mathrm{d}r,
\]
and 
\[
\nu_{1}^{0}(P)=\sum_{j=0}^{q}s_{j}\int_{0}^{\infty}\sum_{k=1}^{\infty}2\frac{\sinh\left(\frac{r}{2}\right)^{2}}{\sinh\left(\frac{kr}{2}\right)}\check{f}^{*(j+1)}(kr)\mathrm{d}r.
\]
Then

\[
\nu_{1}(P)=(\nu_{1}^{0}-\tilde{\nu}_{1})(P)+\tilde{\nu}_{1}(P)-\nu_{0}(P).
\]
We will show that $\nu_{1}^{0}-\tilde{\nu_{1}}$ and $\tilde{\nu_{1}}$
extend to compactly supported distributions with support in $[-f(0),f(0)]$
and at $f\left(\frac{i}{2}\right)$ respectively which implies the
conclusion as $\text{Supp}\tilde{h}\subseteq\text{Supp}h$ and $\text{Supp}h\cap\left([-f(0),f(0)]\cup f\left(\frac{i}{2}\right)\right)=\emptyset$.

First we calculate

\begin{align*}
\tilde{\nu}_{1}(P) & =\sum_{j=0}^{q}s_{j}\int_{-\infty}^{\infty}\cosh\left(\frac{r}{2}\right)\check{f}^{*(j+1)}(r)\mathrm{d}r\\
 & =\sum_{j=0}^{q}s_{j}\int_{-\infty}^{\infty}e^{\frac{r}{2}}\check{f}^{*(j+1)}(r)\mathrm{d}r\\
 & =\sum_{j=0}^{q}s_{j}f^{j+1}\left(\frac{i}{2}\right)\\
 & =f\left(\frac{i}{2}\right)P\left(f\left(\frac{i}{2}\right)\right).
\end{align*}
This means that $\tilde{\nu}_{1}$ extends to a compactly supported
distribution on $C^{\infty}(\mathbb{R})$, and for $h$ with $\textup{Supp}h\subset\left(f\left(0\right),f\left(i\sqrt{\frac{1}{4}-0.0023}\right)\right)$,
we have 
\[
\tilde{\nu}_{1}\left(\tilde{h}\right)=f\left(\frac{i}{2}\right)\tilde{h}\left(f\left(\frac{i}{2}\right)\right)=h\left(f\left(\frac{i}{2}\right)\right)=0.
\]
 Now because $\nu_{0}$ and $\nu_{1}$ extend to compactly supported
distributions, so does $\nu_{1}^{0}=\nu_{0}+\nu_{1}$. Moreover, since
$\tilde{\nu}_{1}$ extends to a compactly supported distribution,
so does $\nu_{1}^{0}-\tilde{\nu_{1}}$. It remains to show that $\nu_{1}^{0}-\tilde{\nu_{1}}$
is supported in $[-f(0),f(0)]$. By \cite[Lemma 4.9]{Ch.Ga.Tr.va2024},
we just need to show that
\[
\limsup_{p\to\infty}\left|(\nu_{1}^{0}-\tilde{\nu_{1}})\left(x^{p}\right)\right|^{\frac{1}{p}}\leqslant f\left(0\right).
\]
We have 
\begin{align*}
\left|(\nu_{1}^{0}-\tilde{\nu_{1}})\left(x^{p}\right)\right| & \leqslant2\int_{0}^{\infty}\left|\sinh\left(\frac{r}{2}\right)-\cosh\left(\frac{r}{2}\right)\right|\check{f}^{*(p+1)}(r)\mathrm{d}r+\int_{0}^{\infty}2\sum_{k\geqslant2}\frac{\sinh\left(\frac{r}{2}\right)^{2}}{\sinh\left(\frac{kr}{2}\right)}\check{f}^{*(p+1)}(kr)\mathrm{d}r\\
 & \leqslant2\int_{0}^{\infty}\check{f}^{*(p+1)}(r)\mathrm{d}r+2\int_{0}^{\infty}\check{f}^{*(p+1)}(r)\sum_{k\geqslant2}\frac{1}{k}\frac{\sinh\left(\frac{r}{2k}\right)^{2}}{\sinh\left(\frac{r}{2}\right)}\mathrm{d}r.
\end{align*}
We claim that for $x\geqslant0$ and $k\geqslant2$ we have
\[
\sinh\left(\frac{x}{2k}\right)^{2}\leqslant\frac{1}{k}\sinh\left(\frac{x}{2}\right).
\]
Indeed, this is equivalent to showing that 
\[
\cosh\left(\frac{x}{k}\right)-\frac{2}{k}\sinh\left(\frac{x}{2}\right)\leqslant1,
\]
as $\sinh^{2}(\frac{x}{2})=\frac{1}{2}\cosh(x)-\frac{1}{2}$. But,
the left hand side is decreasing since its derivative is 
\[
\frac{1}{k}\left(\sinh\left(\frac{x}{k}\right)-\cosh\left(\frac{x}{2}\right)\right)<0,
\]
and so the claim is true as it holds for $x=0$. So,
\[
\left|(\nu_{1}^{0}-\tilde{\nu_{1}})\left(x^{p}\right)\right|\leqslant2\sum_{k=1}^{\infty}\frac{1}{k^{2}}\int_{0}^{\infty}\check{f}^{*(p+1)}(r)\mathrm{d}r=\sum_{k=1}^{\infty}\frac{1}{k^{2}}f(0)^{p+1}=\left(\frac{\pi^{2}}{6}f(0)\right)f(0)^{p}.
\]
Taking $\frac{1}{p}$ powers and $p\to\infty$ then gives the result.

\end{proof}
We are now ready to finish the proof of Theorem \ref{thm:Main-Thm}
\begin{proof}[Proof of Theorem \ref{thm:Main-Thm}.]
Let $\ep=\ep(g)>0$ and $h$ be a non-negative smooth function which
is equal to $1$ in $\left[f\left(i\sqrt{\ep\left(g\right)}\right),f\left(i\sqrt{\frac{1}{4}-0.0024}\right)\right]$
and equal to $0$ in $(-\infty,f(0)]$ and $\left[f\left(i\sqrt{\frac{1}{4}-0.0023}\right),f\left(\frac{i}{2}\right)\right]$.
We claim that there is a $c=c(m)>0$ such that $h$ can be picked
so that
\begin{equation}
\|w^{(m)}\|_{[0,2\pi]}\leqslant c(m)\left(\ep(g)\right)^{-\frac{m}{2}},\label{eq:derivative-bound-h}
\end{equation}
where $w(\theta)=\frac{h\left(f\left(\frac{i}{2}\right)\cos\left(\theta\right)\right)}{f\left(\frac{i}{2}\right)\cos\left(\theta\right)}$
and $m$ is as in Proposition \ref{prop:smooth-master}. Then
\begin{align*}
\mathbb{P}_{g}\left[0.0024\leqslant\lambda_{1}\left(X\right)\leqslant\frac{1}{4}-\ep(g)\right] & \leqslant\mathbb{P}_{g}\left[\text{Tr\,}h\left(f\left(\sqrt{\Delta_{X}-\frac{1}{4}}\right)\right)\geqslant1\right].\\
 & \leqslant\mathbb{E}_{g}\left[\textup{Tr\,}h\left(f\left(\sqrt{\Delta_{X}-\frac{1}{4}}\right)\right)\right]
\end{align*}
by Markov's inequality and 
\[
\mathbb{P}_{g}\left[0.0024\leqslant\lambda_{1}\left(X\right)\leqslant\frac{1}{4}-\ep(g)\right]\leqslant\frac{C\left(\ep(g)\right)^{-\frac{m}{2}}}{g}
\]
by Proposition \ref{prop:smooth-master} and Lemma \ref{lem:support-dists}.
Since by \cite[Theorem 4.8]{MirzakhaniRandom}, 
\[
\mathbb{P}_{g}\left[\lambda_{1}\left(X\right)\leqslant0.0024\right]\to0
\]
as $g\to\infty$, we see that there is a $c>0$ such that 
\[
\mathbb{P}_{g}\left[\lambda_{1}\left(X\right)\leqslant\frac{1}{4}-g^{-c}\right]\to0
\]
as $g\to\infty$.

It remains to establish (\ref{eq:derivative-bound-h}). Since $h$
is identically zero in $[-\infty,f(0)]$, 
\begin{align*}
\sup_{x\in\mathbb{R}}\frac{d^{m}}{dx^{m}}\left(\frac{h(x)}{x}\right) & =\sup_{x\in\mathbb{R}}\sum_{j=0}^{m}{m \choose i}h^{j}(x)\frac{d^{m-j}}{dx^{m-j}}\left(\frac{1}{x}\right)\\
 & \leqslant Cm^{2m}\max_{1\leqslant j\leqslant m}\sup_{x\in\mathbb{R}}h^{(j)}\left(x\right)
\end{align*}
for some $C>0$. The existence of a function $h$ is provided by \cite[Lemma 4.10]{Ch.Ga.Tr.va2024},
which concludes the proof of Theorem \ref{thm:Main-Thm}.
\end{proof}

\section{Large genus expansions of Weil-Petersson volumes\label{sec:Large-genus-expansion}}

Theorem \ref{thm:main-technical-result} relied on the following effective
large genus expansion, which will follow as a corollary of Theorem
\ref{thm:expansions}.
\begin{cor}
\label{thm:main-estimate-WP-expansion}There is a constant $c>0$
so that the following expansions hold.
\begin{enumerate}
\item For any integers $a,b$ satisfying with $2g-q=2a+b<2g$ for some $q\geq1$
with $b\leqslant3q$, there exist continuous functions $\left\{ f_{j}^{a,b}\left(\mathbf{x}\right)\right\} _{j\geq1}$
such that for any $k\geq q$, 
\[
\left|\frac{V_{a,b}\left(\mathbf{x}\right)}{V_{g}}-\sum_{j=q}^{k}\frac{f_{j}^{a,b}\left(\mathbf{x}\right)}{g^{j}}\right|\leqslant\frac{\left(ck(b+1)\right)^{ck}\left(|\mathbf{x}|+1\right)^{ck}\exp\left(\frac{1}{2}\left|\mathbf{x}\right|\right)}{g^{k+1}},
\]
for $g>(ckq)^{c}$.
\item For any integers $a,b$ such that $2a+b=2g+n$ and $g-q\leqslant a\leqslant g$
for some $q$, there exist continuous functions $\left\{ \alpha_{n,j}^{a,b}\left(\mathbf{x}\right)\right\} _{j=1}^{k}$
\[
\left|\frac{V_{a,b}\left(\mathbf{x}\right)}{V_{g,n}}-\prod_{i=1}^{b}\frac{\sinh\left(\frac{x_{i}}{2}\right)}{\left(\frac{x_{i}}{2}\right)}-\sum_{j=1}^{k}\frac{1}{g^{j}}\alpha_{n,j}^{a,b}\left(\mathbf{x}\right)\right|\leqslant\frac{\left(ck(n+1)\right)^{ck}\left(1+\left|\mathbf{x}\right|\right)^{ck}\exp\left(\frac{1}{2}\left|\mathbf{x}\right|\right)}{g^{k+1}},
\]
for $g>(cnk)^{c}$.
\end{enumerate}
\end{cor}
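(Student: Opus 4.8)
The plan is to obtain both expansions by re-assembling the effective large-genus expansions furnished by Theorem~\ref{thm:expansions}: the refined Mirzakhani--Zograf expansion of the pure volumes, which I write schematically as
\[
V_{\gamma,m}=\frac{B}{\sqrt{\gamma}}\,(2\gamma-3+m)!\,(4\pi^{2})^{2\gamma-3+m}\Bigl(1+\sum_{i\geq1}\frac{e_{i,m}}{\gamma^{i}}+\mathrm{err}_{\gamma,m}\Bigr)
\]
with a Gevrey bound on $\mathrm{err}_{\gamma,m}$, together with the expansion $\frac{V_{\gamma,m}(\mathbf{x})}{V_{\gamma,m}}=\sum_{j\geq0}\frac{F_{j,m}(\mathbf{x})}{\gamma^{j}}+\mathrm{err}$ of Theorem~\ref{thm:Wp-vols}, whose leading term is $F_{0,m}(\mathbf{x})=\prod_{i=1}^{m}\frac{\sinh(x_{i}/2)}{x_{i}/2}$. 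I would first record the bookkeeping: in both statements $a=g-d$ for some $0\leq d\leq q$ (so $b=2d+n$ in case~(2), while $b<q$ by hypothesis in case~(1)), and the ``complexity'' $2a-3+b$ of the source equals the target complexity exactly in case~(2) and is smaller than it by $\delta:=2g-2a-b=2d-b\geq1$ in case~(1).

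The core step is algebraic: I would factor $\dfrac{V_{a,b}(\mathbf{x})}{V_{g,n}}=\dfrac{V_{a,b}(\mathbf{x})}{V_{a,b}}\cdot\dfrac{V_{a,b}}{V_{g,n}}$ (with $V_{g,n}=V_{g}$ in case~(1)), apply Theorem~\ref{thm:Wp-vols} with genus $a$ to the first, $\mathbf{x}$-dependent factor, and plug the refined Mirzakhani--Zograf expansions into the numerator and denominator of the second, $\mathbf{x}$-free factor. In the quotient the powers of $4\pi^{2}$ cancel entirely in case~(2) and leave $(4\pi^{2})^{-\delta}$ in case~(1); the factorial quotient is $1$ in case~(2) and a product of $\delta$ reciprocals of the form $(2g-O(1))^{-1}$ in case~(1), each of which I expand as $\frac{1}{2g}\sum_{l\geq0}\bigl(\frac{O(1)}{2g}\bigr)^{l}$; likewise $\sqrt{g/a}=(1-d/g)^{-1/2}$, and every power $a^{-i}=(g-d)^{-i}$ occurring in the $e_{i,m}$ and the $F_{i,b}(\mathbf{x})$ is turned into a series in $1/g$ by the binomial theorem. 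For $g>ck^{c}$ all of these series may be truncated at order $k$; multiplying the finitely many truncated factors and collecting by powers of $1/g$ yields continuous coefficient functions $f_{j}^{a,b}(\mathbf{x})$, resp.\ $\alpha_{n,j}(\mathbf{x})$, as finite combinations of the $F_{i,b}(\mathbf{x})$ and the $e_{i,m}$, and determines the leading behaviour: in case~(1) the overall prefactor, of size $\asymp(8\pi^{2}g)^{-\delta}$, makes the coefficients below order $\delta\geq1$ vanish, while in case~(2) the complexity-matching identity forces the $g^{0}$-coefficient to equal $F_{0,b}(\mathbf{x})=\prod_{i=1}^{b}\frac{\sinh(x_{i}/2)}{x_{i}/2}$ (the product printed in the statement should be read over all $b$ boundary lengths of the source, as it is used e.g.\ in the proof of Proposition~\ref{prop:effective-simple-expansion}).

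The one genuinely delicate point, and the main obstacle, is the error accounting: composing all of these re-expansions must not overshoot the Gevrey budget $(ck)^{ck}$. Each input contributes an error of the form $\frac{(|\mathbf{x}|+1)^{ck}(ck)^{ck}e^{|\mathbf{x}|}}{g^{k+1}}$ (the boundary-length factor) or of the form $\frac{(ck)^{ck}}{g^{k+1}}$ times a constant depending only on the number $b\leq 2q+n$ of boundary components of the source (the pure-volume factor); since the second factor above is $\mathbf{x}$-free there is no doubling of $e^{|\mathbf{x}|}$, and the $b$-dependent constant is absorbed exactly as the factors $(c(g_{0}+n_{0}))^{c(g_{0}+n_{0})}$ are absorbed in the proof of Lemma~\ref{lem:WP-sum-product-bound}. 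The at most $k+1$ binomial re-expansions and the product of at most $k+1$ truncated series each cost a further $(Ck)^{Ck}$ and a further $(C|\mathbf{x}|)^{Ck}$, and the finitely many tail volume-ratios discarded upon truncation are bounded crudely by Theorem~\ref{Large-g-Vg} and Lemma~\ref{lem:(Grushevsky's-bound,-)}. Since all of these quantities are Gevrey of order one in $k$, after enlarging $c$ they combine into a single bound of the shape claimed, valid as soon as the threshold $g>c(k+n+q)^{c}$ makes every truncation legitimate; securing this sharply is precisely what forces the use of the inductive control on the error polynomials built into Theorem~\ref{thm:expansions}, rather than merely the existence of the asymptotic expansions.
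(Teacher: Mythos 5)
Your overall framework (factor $V_{a,b}(\mathbf{x})/V_{g,n}=\frac{V_{a,b}(\mathbf{x})}{V_{a,b}}\cdot\frac{V_{a,b}}{V_{g,n}}$, expand the first factor via the Gevrey version of Mirzakhani's polynomial expansion, shift bases from $a$ to $g$, multiply truncated series, collect errors) matches the paper, and your identification of the typo $\prod_{i=1}^n\to\prod_{i=1}^b$ in case (2) is correct. But there is a genuine gap in how you handle the $\mathbf{x}$-free ratio $\frac{V_{a,b}}{V_{g,n}}$, and it concerns precisely the point the whole section is designed to circumvent.

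You propose to expand numerator and denominator separately via a ``refined Mirzakhani--Zograf expansion''
\[
V_{\gamma,m}=\frac{B}{\sqrt{\gamma}}(2\gamma-3+m)!\,(4\pi^{2})^{2\gamma-3+m}\Bigl(1+\sum_{i\geq1}\frac{e_{i,m}}{\gamma^{i}}+\mathrm{err}_{\gamma,m}\Bigr),
\]
and you attribute such an expansion, with Gevrey control of $\mathrm{err}_{\gamma,m}$, to Theorem~\ref{thm:expansions}. But Theorem~\ref{thm:expansions} contains no expansion of the pure volume $V_{g,n}$ at all: its four statements $A1$--$A4$ are expansions of \emph{ratios} (intersection numbers over $V_{g,n}$ in $A1$, $A2$; the adjacent-volume ratios $\frac{4\pi^2(2g-2+n)V_{g,n}}{V_{g,n+1}}$ in $A3$ and $\frac{V_{g-1,n+2}}{V_{g,n}}$ in $A4$). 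The only input about $V_{g,n}$ against the $\frac{B}{\sqrt{g}}(2g-3+n)!(4\pi^2)^{2g-3+n}$ normalization is Theorem~\ref{Large-g-Vg}, which is a single-term asymptotic with $O\left(\frac{1+n^2}{g}\right)$ error and no higher coefficients or Gevrey error control. So the object you want to plug in is not available, and proving it would be a new (and not obviously easier) task; indeed, avoiding it is the whole point of working with ratios throughout Section~\ref{sec:Large-genus-expansion}.

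The paper's proof instead writes $\frac{V_{a,b}}{V_{g,n}}$ as an explicit finite telescoping product of at most $O(q)$ adjacent ratios, alternately decreasing the boundary number (controlled by $A3(k)$) and increasing the genus (controlled by $A4(k)$), together with an elementary $\prod_j\frac{1}{4\pi^2(2a+b+j-2)}$ factor in case (1) that is Taylor expanded directly. Each factor in the telescope has a Gevrey expansion by Theorem~\ref{thm:expansions}, and these are combined via Lemma~\ref{lem:shift} and Corollary~\ref{cor:product-with-zeroterms}. This is the step you are missing: the ratio is never decomposed as (numerator expansion)/(denominator expansion), it is decomposed multiplicatively into building blocks for which the Gevrey estimates have actually been established. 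Your error-accounting discussion (binomial re-expansions, products of truncated series, absorbing $b$-dependent constants) would then apply essentially unchanged once this telescoping step replaces the appeal to a nonexistent Gevrey expansion of $V_{\gamma,m}$ itself.
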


Let $\overline{\mathcal{M}}_{g,n}$ be the Deligne-Mumford compactification
of $\mathcal{M}_{g,n}$. There are $n$ tautological line bundles
$\mathcal{L}_{i}$ over $\overline{\mathcal{M}}_{g,n}$ whose fiber
at $X\in\overline{\mathcal{M}}_{g,n}$ is the cotangent space at the
$i$th marked point on $X$. We define the $\psi$-classes $\psi_{i}\eqdf c_{1}\left(\mathcal{L}_{i}\right)$
where $c_{1}$ denotes the first Chern class of the bundle $\mathcal{L}_{i}$.
For $\d=\left(d_{1},\dots,d_{n}\right)\in\mathbb{Z}_{\geqslant0}^{n}$
with $\left|\d\right|\eqdf\sum_{i=1}^{n}d_{i}\leqslant3g+n-3$, we
define 
\begin{equation}
\left[\tau_{d_{1}}\dots\tau_{d_{n}}\right]_{g,n}\eqdf\frac{2^{2\left|\d\right|}\prod_{i=1}^{n}\left(2d_{i}+1\right)!!}{(3g+n-3-|\d|)!}\int_{\overline{\mathcal{M}}_{g,n}}\psi_{1}^{d_{1}}\cdots\psi_{n}^{d_{n}}\omega_{WP}^{3g+n-3-\left|\d\right|}.\label{eq:def-int-numbers}
\end{equation}
If $|\d|>3g+n-3$ , $\left[\tau_{d_{1}}\dots\tau_{d_{n}}\right]_{g,n}$
is taken to be identically $0$. The main technical result of this
section is the following.
\begin{thm}
\label{thm:expansions}Let $C>0$ be a constant as in Lemma \ref{lem:(Grushevsky's-bound,-)}.
For any integer $c>\max\left\{ 600,C\right\} $, the following expansions
hold.
\begin{enumerate}
\item[A1(k)]  For any $\mathbf{d}$ and $n$, there exist $\left\{ b_{\mathbf{d},n}^{i}\right\} _{i\geqslant1}$
such that for any $g>(n+k+1)(c(k+1))^{c}$,
\[
\left|\frac{\left[\prod\tau_{d_{i}}\right]_{g,n}}{V_{g,n}}-1-\sum_{i=1}^{k}\frac{b_{\mathbf{d},n}^{i}}{g^{i}}\right|\leqslant\frac{Q_{n}^{k}(d_{1},\ldots,d_{n})}{g^{k+1}},
\]
where $Q_{n}^{k}$ is the polynomial of degree $2(k+1)$ such that
for any $t_{1},\ldots,t_{n}\in\mathbb{Z}_{\geqslant0}$ the coefficient
$[d_{1}^{t_{1}}\cdots d_{n}^{t_{n}}]Q_{n}^{k}=\frac{(c(k+1))^{c(k+1)}(n+k+1)^{k+1}}{t_{1}!\cdots t_{n}!}$,
and the $b_{\mathbf{d},n}^{i}$ are majorised by the polynomials $q_{n}^{i}$
of degree $2i$ whose coefficients satisfy $\left|[d_{1}^{t_{1}}\cdots d_{n}^{t_{n}}]q_{n}^{i}\right|=\frac{(ci)^{ci}(n+i)^{i}}{t_{1}!\cdots t_{n}!}$.
\item[A2(k)]  For any $\d$ and $n$, there exist $\left\{ e_{\d,n}^{i}\right\} _{i\geqslant1}$
and a polynomial $P_{n}^{k}$ of degree at most $2k+1$ such that
for any $g>(n+k+1)(c(k+1))^{c}$,
\[
\left|\frac{\left[\tau_{d_{1}}\tau_{d_{2}}\cdots\tau_{d_{n}}\right]_{g,n}-\left[\tau_{d_{1}+1}\tau_{d_{2}}\cdots\tau_{d_{n}}\right]_{g,n}}{V_{g,n}}-\sum_{i=1}^{k}\frac{e_{\d,n}^{i}}{g^{i}}\right|\leqslant\frac{P_{n}^{k}(d_{1},\ldots,d_{n})}{g^{k+1}},
\]
and for any $t_{1},\ldots,t_{n}\in\mathbb{Z}_{\geqslant0}$ the coefficient
$\left|[d_{1}^{t_{1}}\cdots d_{n}^{t_{n}}]P_{n}^{k}\right|\leqslant\frac{(c(k+1))^{c(k+1)-3}(n+k+1)^{k+1}}{t_{1}!\cdots t_{n}!}$,
and the $e_{\mathbf{d},n}^{i}$ are majorised by polynomials $v_{n}^{i}$
of degree at most $2(i-1)+1$ whose coefficients satisfy $\left|[d_{1}^{t_{1}}\cdots d_{n}^{t_{n}}]v_{n}^{i}\right|\leqslant\frac{(ci)^{ci-3}(n+i)^{i}}{t_{1}!\cdots t_{n}!}$
.
\item[A3(k)]  For any $n$, there exist $\left\{ h_{n}^{i}\right\} _{i\geqslant1}$
so that for any $g>500(n+k+2)(c(k+1))^{c}$,
\[
\left|\frac{4\pi^{2}\left(2g-2+n\right)V_{g,n}}{V_{g,n+1}}-1-\sum_{i=1}^{k}\frac{h_{n}^{i}}{g^{i}}\right|\leqslant\frac{500\left(c(k+1)\right)^{c(k+1)}(n+k+2)^{k+1}}{g^{k+1}},
\]
and $\left|h_{n}^{i}\right|\leqslant500(ci)^{ci}(n+i+1)^{i}$.
\item[A4(k)]  For any $n$, there exist $\left\{ p_{n}^{i}\right\} _{i\geqslant1}$
so that for any $g>(n+k+1)(c(k+1))^{c}$,
\[
\left|\frac{V_{g-1,n+2}}{V_{g,n}}-1-\sum_{i=1}^{k}\frac{p_{n}^{i}}{g^{i}}\right|\leqslant\frac{\left(c(k+1)\right)^{c(k+1)}(n+k+1)^{k+1}}{g^{k+1}},
\]
and $\left|p_{n}^{i}\right|\leqslant\left(ci\right)^{ci}(n+i)^{i}$.
\end{enumerate}
\end{thm}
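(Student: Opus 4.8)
The plan is to prove the four families of estimates A1(k)--A4(k) by a single simultaneous induction on $k$, taking as the engine Mirzakhani's topological recursion \cite{Mi2007} for the Weil-Petersson volume polynomials $V_{g,n}(\mathbf x)$, expanded in the $\mathbf x$-variables and rewritten for the normalized intersection numbers $[\tau_{d_1}\cdots\tau_{d_n}]_{g,n}$ exactly as in Mirzakhani-Zograf \cite{Mi.Zo2015}. In that form the recursion, applied with one index $d_1$ distinguished, expresses $[\tau_{d_1}\cdots\tau_{d_n}]_{g,n}$ as a combination --- with the explicit nonnegative universal coefficients $a_i$ of \cite{Mi.Zo2015} ($a_0=\tfrac12$, the $a_i$ built from $\zeta$-values) --- of three kinds of sums over internal indices: ``join'' sums of normalized intersection numbers on $(g,n-1)$, ``non-separating'' sums on $(g-1,n+1)$, and ``separating'' sums of products over stable two-piece splittings of $(g,\{2,\dots,n\})$. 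A feature of the large-genus regime --- and the reason Mirzakhani-Zograf's analysis is delicate --- is that each internal sum runs over $\Theta(g)$ indices and is individually of the same order as $V_{g,n}$, so the leading behavior of $[\tau_{\mathbf d}]_{g,n}$ emerges only after evaluating these long sums; the tool for that is precisely the smallness of consecutive differences $[\tau_{k+1}\cdots]-[\tau_{k}\cdots]$, which lets one evaluate $\sum_k a_k[\tau_k\cdots]$ by summation by parts against $\sum_k(a_k-a_{k-1})$. Besides the recursion, the only quantitative inputs are the elementary bounds and telescoping properties of the $a_i$, the positivity and monotonicity estimates for the $[\tau_{\mathbf d}]_{g,n}$ from \cite{Mi.Zo2015}, and Lemma \ref{lem:(Grushevsky's-bound,-)}. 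Since every term on the right attaches to a surface with strictly smaller $2g-2+n$, the recursion is well-founded, so within each step of the induction on $k$ we run a secondary strong induction on $2g-2+n$, the finitely many small base cases being evaluated directly.

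The base case $k=0$ is essentially the content of \cite{Mi.Zo2015} (together with \cite{An.Mo2022,An.Mo2023}): A1(0), A3(0), A4(0) are the effective one-term large-genus asymptotics, and A2(0) the companion one-term bound for consecutive differences; the only additional work already here is to track, through the Mirzakhani-Zograf algorithm, the stated shape of the error as a polynomial in $\mathbf d$ whose coefficients are bounded by a constant over $t_1!\cdots t_n!$.

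For the inductive step, assume A1(j)--A4(j) for all $j<k$. One first upgrades A2 to level $k$: subtracting the recursion for the shifted index $d_1+1$ from that for $d_1$, the join and separating contributions reorganize into consecutive differences at strictly smaller $2g-2+n$ (available from the secondary induction at level $k$) weighted by main factors controlled by A1, plus corrections of order $g^{-k}$ governed by A1(k-1) and A2(k-1); the overall gain of one order of $g$ in a difference is the source of the smaller exponents in A2 (e.g.\ $c(k+1)-3$ instead of $c(k+1)$). With A2(k) in hand one proves A1(k) by feeding A2(k) into the summation-by-parts evaluation of the long internal sums in the recursion for $[\tau_{\mathbf d}]_{g,n}$, which expresses the result as $V_{g,n}$ times $1$ plus a finite sum of explicitly computable $1/g$-correction terms (governed by A1(k-1) and A2(k)); this simultaneously produces the $g$-independent coefficients $b^i_{\mathbf d,n}$ and their majorizing polynomials $q^i_n$, and the error polynomial $Q^k_n$. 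Then A3(k) follows by using the identity (from the dilaton/string equations) that expresses $(2g-2+n)V_{g,n}$ through the values $[\tau_0^n\tau_d]_{g,n+1}$ --- equivalently an evaluation of $V_{g,n+1}(\mathbf 0,L_{n+1})$ near $L_{n+1}=2\pi i$, the $4\pi^2$ coming from $(2\pi)^2$ --- whose right-hand side is a rapidly convergent series in $d$ to which A1(k) applies term by term; and A4(k) follows by combining A3(k), used to pass through $V_{g-1,n+2},V_{g-1,n+1},\dots$, with the non-separating relation tying $V_{g-1,n+2}$ to $V_{g,n}$ through intersection numbers, again fed by A1(k) (alternatively, A4(k) can be read directly off the recursion for $[\tau_0^n]_{g,n}=V_{g,n}$).

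The genuinely hard part --- and the reason for the heavily bundled inductive hypotheses (degrees $\le c(k+1)$, coefficient bounds with the exact denominators $t_1!\cdots t_n!$, explicit majorizing polynomials for the $b^i$ and $e^i$) --- is the quantitative bookkeeping: showing that the error polynomials $Q^k_n,P^k_n$ and the coefficient-polynomials propagate through the recursion and through the summation-by-parts evaluations of the $\Theta(g)$-term sums with their coefficients growing no faster than $\dfrac{(c(k+1))^{c(k+1)}(n+k+1)^{c(k+1)}}{t_1!\cdots t_n!}$. The factorial denominators are indispensable: a polynomial $P$ with $|[d_1^{t_1}\cdots d_n^{t_n}]P|\le A/(t_1!\cdots t_n!)$ is of exponential type, $P(\mathbf d)\le A\,e^{|\mathbf d|}$ up to a polynomial factor --- matching the $\exp(|\mathbf x|)$ later appearing in Corollary \ref{thm:main-estimate-WP-expansion} and Theorem \ref{thm:Wp-vols} --- and this class of polynomials is stable under exactly the operations that occur: shifting an index (merging marked points), summing over the $\le n$ choices of index to merge, summing over the $\le|\mathbf d|$ compositions arising in the non-separating and separating terms, and summing against the weights $a_i$ over $\Theta(g)$ internal indices after summation by parts. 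Each operation costs at most a power of $n+k+1$ or of the fixed constant $c$, and the point is that the $O(k)$ rounds of the recursion needed to reach A1(k) and A2(k) accumulate these costs to no worse than the claimed bound --- which is where the hypothesis $c>\max\{600,C\}$ is used to make the arithmetic close. This effective refinement of the Mirzakhani-Zograf induction is the bulk of Section \ref{sec:Large-genus-expansion}; once A1--A4 are established, Corollary \ref{thm:main-estimate-WP-expansion} follows by substituting them into the polynomial identity $V_{g,n}(\mathbf x)=\sum_{\mathbf d}C_{\mathbf d}\,[\tau_{\mathbf d}]_{g,n}\prod_i x_i^{2d_i}$ (for explicit positive rationals $C_{\mathbf d}$) together with the volume-ratio estimates A3 and A4.
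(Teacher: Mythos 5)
Your sketch captures the paper's strategy: a single induction on $k$ driven by Mirzakhani's recursion in the Mirzakhani--Zograf normalization, the $(a_i-a_{i-1})$ summation-by-parts trick to make the $\Theta(g)$-term $\ell$-sums converge, the factorial-denominator shape $|[d_1^{t_1}\cdots d_n^{t_n}]Q|\leq A/(t_1!\cdots t_n!)$ as the inductively preserved invariant (stable under index merges, shifts and Abel-summed sums, and giving the exponential type that later feeds Corollary \ref{thm:main-estimate-WP-expansion}), and the chain A2(k)$\Rightarrow$A1(k)$\Rightarrow$A3(k),A4(k) feeding A2(k+1). Two bookkeeping points differ from the paper: there is no secondary induction on $2g-2+n$, since once A1(r)--A4(r) are established at level $r\leq k$ they hold for all $(g,n)$ and the A2(k+1) step (Lemmas \ref{lem:S_1}--\ref{lem:S_3}) simply invokes A1(k), A3(k), A4(k) at the subsurfaces appearing in the recursion; and A1(k) is deduced from A2(k) not by re-running the recursion but by the direct telescope $1-[\tau_{\mathbf d}]_{g,n}/V_{g,n}=\sum_{j}\sum_{i<d_j}\bigl([\cdots\tau_i\cdots]_{g,n}-[\cdots\tau_{i+1}\cdots]_{g,n}\bigr)/V_{g,n}$, with A4(k) then read off the non-separating relation together with A1(k) (no detour through A3(k)).
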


Theorem \ref{thm:expansions} is a refinement of \cite[Theorem 4.1]{Mi.Zo2015}.
The difference for us is that we need to know explicit dependence
of the error terms on $k$ and $n$. Our proof relies on a careful
analysis of their argument, tracking the coefficients of the expansion
and the coefficients of the error polynomial through the induction. 

\subsection{Preliminaries}

Theorem \ref{thm:expansions}, based on refinements of the arguments
of \cite{Mi.Zo2015}, are based on analysing recursive formulae for
intersection numbers which are recalled in the next theorem, c.f.
\cite[Section 2]{Mi.Zo2015}.
\begin{thm}
\label{thm-Recurrsions}The following recursive formulae for $\left[\tau_{1}\cdots\tau_{n}\right]_{g,n}$
hold.
\end{thm}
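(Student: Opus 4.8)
The plan is to deduce these identities from Mirzakhani's topological recursion for the Weil--Petersson volume polynomials $V_{g,n}(L_{1},\dots,L_{n})$, following \cite[Section 2]{Mi.Zo2015} (and the intersection-theoretic reformulations of Mirzakhani's recursion due to Mulase--Safnuk and to Liu--Xu). Recall that Mirzakhani's recursion \cite{Mi2007} expresses $\frac{\partial}{\partial L_{1}}\bigl(L_{1}V_{g,n}(\mathbf{L})\bigr)$ as a sum of three contributions: a non-separating term, the integral of $V_{g-1,n+1}$ against an explicit universal kernel; a separating term, a sum over all topological types of stable splittings of integrals of products $V_{g_{1},n_{1}}V_{g_{2},n_{2}}$ against the same kernel; and a boundary term, a sum over $j\geqslant 2$ of integrals of $V_{g,n-1}$ against a second universal kernel with the $j$-th boundary absorbed into the first. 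First I would substitute into this identity Mirzakhani's expansion of the volume polynomial in $\psi$-class intersection numbers, $V_{g,n}(\mathbf{L})=\sum_{|\mathbf{d}|\leqslant 3g-3+n}\frac{1}{2^{|\mathbf{d}|}\,\mathbf{d}!}\bigl(\int_{\overline{\mathcal{M}}_{g,n}}\psi_{1}^{d_{1}}\cdots\psi_{n}^{d_{n}}\,\frac{\omega_{WP}^{3g-3+n-|\mathbf{d}|}}{(3g-3+n-|\mathbf{d}|)!}\bigr)\prod_{i}L_{i}^{2d_{i}}$ (with $\mathbf{d}!=d_{1}!\cdots d_{n}!$), and then compare the coefficient of a fixed monomial $\prod_{i}L_{i}^{2d_{i}}$ on the two sides.

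The second step is to evaluate the one- and two-dimensional integrals of monomials $x^{2k+1}$ (resp.\ $x^{2a+1}y^{2b+1}$) against Mirzakhani's kernels; these are polynomials in the remaining boundary length whose coefficients are the universal constants $a_{0}=\tfrac{1}{2}$ and $a_{i}=\zeta(2i)\bigl(1-2^{1-2i}\bigr)$ for $i\geqslant1$, and this is the step at which those constants enter the recursions. The third and decisive step is the change of normalisation: one multiplies the resulting coefficient identity through by the factor $\frac{2^{2|\mathbf{d}|}\prod_{i}(2d_{i}+1)!!}{(3g+n-3-|\mathbf{d}|)!}$ that, by definition (\ref{eq:def-int-numbers}), converts $\psi$-intersection numbers into the brackets $[\tau_{d_{1}}\cdots\tau_{d_{n}}]_{g,n}$. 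The double-factorial weights in (\ref{eq:def-int-numbers}) are engineered precisely so that the binomial coefficients produced by expanding the kernels telescope, so that after this substitution the raw recursion collapses to the clean form stated, with the index shift $d_{1}\mapsto d_{1}+1$ in the boundary term and the partitions of $\{d_{2},\dots,d_{n}\}$ between the two factors in the separating term.

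The main obstacle is purely the combinatorial bookkeeping of this last step: one must check that the powers of $2$, the factorials from the $\omega_{WP}$-powers, the double-factorial normalisation in (\ref{eq:def-int-numbers}), and the factors produced by the kernel integrals all cancel to leave exactly the constants $a_{i}$ together with the stated combinatorial weights, and that the stability constraints $2g_{i}-2+n_{i}\geqslant 1$ on the separating term match the convention that $[\tau_{d_{1}}\cdots\tau_{d_{n}}]_{g,n}$ vanishes when $|\mathbf{d}|>3g+n-3$. None of this is new --- it is exactly the content of \cite[Section 2]{Mi.Zo2015} --- so I would simply recall those computations; the reason to reproduce the recursions explicitly here is that the quantitative induction of Section~\ref{sec:Large-genus-expansion}, and in particular Theorem~\ref{thm:expansions}, requires the precise values of the coefficients and the precise ranges of summation, which I would transcribe carefully from that reference.
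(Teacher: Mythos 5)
The paper does not actually derive these four recursions: it attributes (i) and (ii) to \cite[Propositions 3.3 and 3.4]{Li.Xu2009}, (iii) to \cite{Do-norburry} and \cite{Li.Xu2009}, and (iv) to Mirzakhani \cite{Mi2007}, and this citation \emph{is} the paper's proof. Your endgame --- ``simply recall those computations'' from \cite[Section 2]{Mi.Zo2015} --- therefore lands in the same place.

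However, the derivation you sketch in the first two paragraphs (substitute the $\psi$-class expansion of $V_{g,n}(\mathbf{L})$ into Mirzakhani's recursion for $\partial_{L_1}(L_1 V_{g,n})$, integrate the monomials against the kernels to extract the constants $a_i$, then renormalize via the double-factorial weights of (\ref{eq:def-int-numbers})) is the correct origin only of recursion (iv). It does not, and cannot, yield the other three as stated. Recursion (iii) produces a bracket with $n+1$ insertions from one with $n$, i.e.\ it runs in the \emph{opposite} direction to Mirzakhani's recursion, so it is not a coefficient comparison of the same identity; it is the ``Do--Norbury'' relation, which requires its own argument. Recursions (i) and (ii) are Liu--Xu's higher Weil--Petersson identities, proved in \cite{Li.Xu2009} from the Witten--Kontsevich/KdV structure of $\psi$-intersection numbers together with Mirzakhani's recursion, not by a single coefficient extraction from (iv). If your two-paragraph plan was meant to cover all four items it has a gap for (i)--(iii); if it was meant as a sketch of (iv) before deferring to the references for the rest, you should say so explicitly, since as written the reader is led to expect that all four drop out of the same coefficient comparison.
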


\begin{lyxlist}{00.00.0000}
\item [{i)\label{i)recurssioni}}] 
\begin{align}
[\tau_{0}\tau_{1}\prod_{i=1}^{n}\tau_{d_{i}}]_{g,n+2} & =[\tau_{0}^{4}\prod_{i=1}^{n}\tau_{d_{i}}]_{g-1,n+4}\label{eq:recurs-1A}\\
 & +6\sum_{\substack{I\sqcup J=\{1,...,n\}\\
g_{1}+g_{2}=g
}
}[\tau_{0}^{2}\prod_{i\in I}\tau_{d_{i}}]_{g_{1},|I|+2}[\tau_{0}^{2}\prod_{i\in J}\tau_{d_{i}}]_{g_{2},|J|+2}.\nonumber 
\end{align}
\item [{}]~
\item [{ii)\label{ii)recursion-ii}}] 
\begin{equation}
\left(2g-2+n\right)\left[\prod_{i=1}^{n}\tau_{d_{i}}\right]_{g,n}=\frac{1}{2}\sum_{l=1}^{3g-2+n}\frac{\left(-1\right)^{l-1}l\pi^{2l-2}}{\left(2l+1\right)!}\left[\tau_{l}\prod_{i=1}^{n}\tau_{d_{i}}\right]_{g,n+1}\label{eq:recurssioniii}
\end{equation}
\item [{iii)\label{iii)recursion-iii}}] Let $a_{i}\eqdf\left(1-2^{1-2i}\right)\zeta\left(2i\right)$
where $\zeta$ is the Riemann zeta function and $d_{0}=3g+n-3-|\mathbf{d}|$.
Then
\begin{equation}
\left[\prod_{i=1}^{n}\tau_{d_{i}}\right]_{g,n}=A_{\d}+B_{\d}+C_{\d},\label{eq:recursion4}
\end{equation}
where
\begin{align}
A_{\d} & \eqdf8\sum_{j=2}^{n}\sum_{l=0}^{d_{0}}(2d_{j}+1)a_{l}\left[\tau_{d_{1}+d_{j}+l-1}\prod_{i\neq1,j}\tau_{d_{i}}\right]_{g,n-1},\label{eq:recurs(A)}\\
B_{\d} & \eqdf16\sum_{l=0}^{d_{0}}\sum_{k_{1}+k_{2}=l+d_{1}-2}a_{l}\left[\tau_{k_{1}}\tau_{k_{2}}\prod_{i\neq1}\tau_{d_{i}}\right]_{g-1,n+1},\label{eq:recurs(B)}\\
C_{\d} & \eqdf16\sum_{\substack{g_{1}+g_{2}=g\\
I\sqcup J=\left\{ 2,\dots,n\right\} 
}
}\sum_{l=0}^{d_{0}}\sum_{k_{1}+k_{2}=l+d_{1}-2}a_{l}\left[\tau_{k_{1}}\prod_{i\in I}\tau_{d_{i}}\right]_{g_{1},|I|+1}\left[\tau_{k_{2}}\prod_{i\in J}\tau_{d_{i}}\right]_{g_{2},|J|+1}.\label{eq:recrus(C)}
\end{align}
\end{lyxlist}
Recursion (\ref{eq:recurs-1A})  is a special case of \cite[Propositions 3.3 and 3.4]{Li.Xu2009},
recursion (\ref{eq:recurssioniii}) is proved in \cite{Do-norburry}
and \cite{Li.Xu2009}, and recursion (\ref{eq:recursion4}) is due
to Mirzakhani \cite{Mi2007}. Mirzakhani proved that $V_{g,n}\left(x_{1},\dots,x_{n}\right)$
is a polynomial in $x_{1},\dots,x_{n}$ with coefficients given by
the intersection numbers $\left[\tau_{d_{1}}\dots\tau_{d_{n}}\right]_{g,n}$.
\begin{thm}[{\cite[Theorem 1.1]{Mi07a}}]
\label{thm:Mirz-vol-exp}For $n>0$ and $\ell_{1},\dots,\ell_{n}>0$,
\[
\text{\ensuremath{V_{g,n}\left(2x_{1},\dots,2x_{n}\right)}}=\sum_{\substack{d_{1},\dots,d_{n}\\
|\mathbf{d}|\leqslant3g+n-3
}
}\left[\prod_{i=1}^{n}\tau_{d_{i}}\right]_{g,n}\frac{x_{1}^{2d_{1}}}{\left(2d_{1}+1\right)!}\cdots\frac{x_{n}^{2d_{n}}}{\left(2d_{n}+1\right)!}.
\]
\end{thm}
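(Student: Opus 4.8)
The plan is to deduce Theorem~\ref{thm:Mirz-vol-exp} from Mirzakhani's theorem \cite{Mi07a} that the Weil--Petersson volume of the moduli space of bordered hyperbolic surfaces is a polynomial in the squared boundary lengths whose coefficients are $\psi$-class intersection numbers; once that is in hand, the stated identity is a purely formal rewriting using the normalisation chosen in \eqref{eq:def-int-numbers}.

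First I would recall the geometric input, which is the only substantive part. Fix $n>0$. Mirzakhani proved that $L=(L_1,\dots,L_n)\mapsto V_{g,n}(L)$ is a polynomial in $L_1^2,\dots,L_n^2$ of total degree $3g-3+n$, and that
\[
V_{g,n}(L_1,\dots,L_n)=\int_{\overline{\mathcal{M}}_{g,n}}\exp\!\Big(\omega_{WP}+\tfrac{1}{2}\sum_{i=1}^{n}L_i^2\,\psi_i\Big),
\]
where $\omega_{WP}$ denotes the Weil--Petersson symplectic form extended as a closed class over the Deligne--Mumford compactification (possible by Wolpert's work, so that its powers are the classes appearing in \eqref{eq:def-int-numbers}), and the exponential is expanded and paired against the fundamental class $[\overline{\mathcal{M}}_{g,n}]$. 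I would quote this as a black box: its proof realises $\mathcal{M}_{g,n}(L)$ as a symplectic quotient of a larger moduli space by the Hamiltonian $T^{n}$-action generated by Fenchel--Nielsen twists about curves parallel to the boundary, and applies the Duistermaat--Heckman theorem to track how the symplectic volume depends on $L$, the $\psi_i$ entering as Chern classes of the associated circle bundles.

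Given this, I would finish by bookkeeping. Since $\dim_{\mathbb{C}}\overline{\mathcal{M}}_{g,n}=3g-3+n$, only the top-degree part of the exponential survives, so substituting $L_i=2x_i$ (whence $\tfrac{1}{2}L_i^2=2x_i^2$) and expanding multinomially,
\[
V_{g,n}(2x_1,\dots,2x_n)=\sum_{\substack{d_1,\dots,d_n\geqslant0\\ |\mathbf{d}|\leqslant3g+n-3}}\Big(\prod_{i=1}^{n}\frac{(2x_i^2)^{d_i}}{d_i!}\Big)\frac{1}{(3g+n-3-|\mathbf{d}|)!}\int_{\overline{\mathcal{M}}_{g,n}}\psi_1^{d_1}\cdots\psi_n^{d_n}\,\omega_{WP}^{3g+n-3-|\mathbf{d}|},
\]
the integral being $0$ for $|\mathbf{d}|>3g+n-3$ in accordance with the convention following \eqref{eq:def-int-numbers}. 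Solving \eqref{eq:def-int-numbers} for the integral gives
\[
\frac{1}{(3g+n-3-|\mathbf{d}|)!}\int_{\overline{\mathcal{M}}_{g,n}}\psi_1^{d_1}\cdots\psi_n^{d_n}\,\omega_{WP}^{3g+n-3-|\mathbf{d}|}=\frac{[\tau_{d_1}\cdots\tau_{d_n}]_{g,n}}{2^{2|\mathbf{d}|}\prod_{i=1}^{n}(2d_i+1)!!},
\]
and the elementary identity $(2d+1)!=2^{d}\,d!\,(2d+1)!!$ gives, factor by factor, $\dfrac{(2x_i^2)^{d_i}}{d_i!}\cdot\dfrac{1}{2^{2d_i}(2d_i+1)!!}=\dfrac{x_i^{2d_i}}{(2d_i+1)!}$. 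Plugging both identities into the displayed expansion makes all constants cancel and produces exactly the claimed formula. The only genuine obstacle is the first step: were one to want a self-contained proof rather than invoking \cite{Mi07a}, one would have to establish Mirzakhani's volume formula from scratch --- for instance by deriving the topological recursion for the polynomials $V_{g,n}(L)$ from the generalised McShane identity and identifying the resulting numbers with Witten--Kontsevich intersection numbers --- whereas the remaining two steps are routine.
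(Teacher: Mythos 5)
Your proposal is correct, and it fills in a step the paper does not perform: Theorem~\ref{thm:Mirz-vol-exp} is quoted from Mirzakhani's \cite{Mi07a} without proof, so the paper offers nothing to compare against beyond the definition~\eqref{eq:def-int-numbers}. Your derivation from the intersection-theoretic formula $V_{g,n}(L)=\int_{\overline{\mathcal{M}}_{g,n}}\exp(\omega_{WP}+\tfrac12\sum L_i^2\psi_i)$ is the standard one and the bookkeeping checks out: substituting $L_i=2x_i$, extracting the top-degree term, solving~\eqref{eq:def-int-numbers} for the pure intersection integral, and using $(2d+1)!=2^d\,d!\,(2d+1)!!$ makes the factors of $2$ and the double factorials cancel exactly to give $x_i^{2d_i}/(2d_i+1)!$ per index. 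You also correctly observe that the convention $[\tau_{\mathbf{d}}]_{g,n}=0$ when $|\mathbf{d}|>3g+n-3$ is forced by the dimension count, so the truncation of the sum is automatic. The only caveat, which you acknowledge yourself, is that you treat Mirzakhani's exponential volume formula as a black box; since the paper does the same (it cites \cite{Mi07a} for the whole statement), that is entirely appropriate here.
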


\subsection{Basic estimates and modifying expansions }

We first record two basic estimates that we will frequently use, the
first is an estimate on series involving the coefficients $a_{i}$
appearing in the recursion (\ref{eq:recursion4}).
\begin{lem}
\label{lem:basic estimate}For any $r\in\mathbb{N}$
\[
\sum_{i=1}^{\infty}\left(a_{i+1}-a_{i}\right)i^{r}\leqslant2r!
\]
\end{lem}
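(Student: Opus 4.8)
# Proof Plan for Lemma~\ref{lem:basic estimate}

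The plan is to exploit the explicit form $a_i = (1 - 2^{1-2i})\zeta(2i)$ and the fact that $\zeta(2i) \to 1$ rapidly, so that the increments $a_{i+1} - a_i$ are exponentially small in $i$. First I would note that $1 \leqslant a_i < \zeta(2) = \pi^2/6 < 2$ for all $i \geqslant 1$, and that $a_i$ is increasing, so $a_{i+1} - a_i \geqslant 0$ and the sum is a sum of non-negative terms. The key quantitative input is a bound of the form $a_{i+1} - a_i \leqslant C \cdot 4^{-i}$ for a modest constant $C$: indeed, writing $\zeta(2i) = 1 + 2^{-2i} + 3^{-2i} + \cdots$ one gets $\zeta(2i) - 1 \leqslant 2^{-2i} + \int_2^\infty t^{-2i}\,\mathrm{d}t \leqslant 2^{-2i}(1 + \tfrac{2}{2i-1}) \leqslant 2\cdot 4^{-i}$ for $i \geqslant 1$, and similarly $2^{1-2i}\zeta(2i) \leqslant 2\cdot 2^{1-2i} \leqslant 4\cdot 4^{-i}$, so $|a_i - 1| \leqslant 6\cdot 4^{-i}$. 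The triangle inequality then gives $a_{i+1} - a_i \leqslant |a_{i+1}-1| + |a_i - 1| \leqslant 12\cdot 4^{-i}$ (one can surely do better, e.g. $a_{i+1}-a_i = O(4^{-i})$ directly, but a crude bound suffices).

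With this in hand, the sum reduces to estimating $\sum_{i=1}^\infty 12 \cdot 4^{-i} i^r$. The standard bound here is $\sum_{i \geqslant 1} i^r x^i \leqslant r!/(1-x)^{r+1} \cdot$ (something), but more cleanly: using $i^r \leqslant r! \binom{i+r}{r} \cdot$ is awkward; instead I would use $i^r \leqslant r! \, e^{i/?}$-type bounds or, most simply, the identity $\sum_{i\geqslant 0} \binom{i}{r} x^i = x^r/(1-x)^{r+1}$ together with $i^r \leqslant r!\binom{i}{r} + (\text{lower order})$ — actually the cleanest route is $i^r = r! \sum (\text{Stirling})$, but for a plan it is enough to invoke $\sum_{i\geqslant 1} i^r 4^{-i} \leqslant r! \cdot (4/3)^{r+1} \cdot \tfrac{1}{3}$ or similar, which one checks from $\sum_{i \geqslant 1} i^r x^i = \mathrm{Li}_{-r}(x)$ and the crude bound $\mathrm{Li}_{-r}(x) \leqslant r!\,x/(1-x)^{r+1}$ valid for $0 < x < 1$. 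Plugging $x = 1/4$ gives $\sum_{i\geqslant 1} i^r 4^{-i} \leqslant r! \cdot \tfrac{1}{4}\cdot (4/3)^{r+1} = r! \cdot \tfrac{1}{3}(4/3)^r$, so the whole sum is at most $12 \cdot \tfrac{1}{3}(4/3)^r r! = 4(4/3)^r r!$. This is \emph{not} $\leqslant 2r!$, so one must sharpen the exponential decay rate.

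The sharpening: the true rate is $a_{i+1} - a_i = O(9^{-i})$, not $4^{-i}$. To see this, write $a_{i+1} - a_i = (\zeta(2i+2) - \zeta(2i)) - (2^{-1-2i}\zeta(2i+2) - 2^{1-2i}\zeta(2i))$. The first difference is $\sum_{m \geqslant 2}(m^{-2i-2} - m^{-2i}) < 0$ — wait, this is negative, yet $a_i$ is increasing; the point is that the $2^{1-2i}\zeta(2i)$ term dominates. A cleaner approach: $a_i = \sum_{m\geqslant 0} \frac{1}{(2m+1)^{2i}}$ (the known Dirichlet-series identity, since $(1-2^{1-2i})\zeta(2i) = \sum_{m \text{ odd}} m^{-2i}$), so $a_{i+1} - a_i = \sum_{m \geqslant 1} \big(\frac{1}{(2m+1)^{2i+2}} - \frac{1}{(2m+1)^{2i}}\big) + $ no, that's negative again. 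Correct: $a_i = 1 + \sum_{m\geqslant 1}(2m+1)^{-2i}$ is \emph{decreasing}, contradicting monotonicity claimed above — so I would re-examine: $a_1 = \tfrac12\zeta(2) = \pi^2/12 \approx 0.822$, $a_2 = \tfrac78\zeta(4) = \tfrac78\cdot\tfrac{\pi^4}{90}\approx 0.947$, indeed \emph{increasing} toward $1$. So $a_i = 1 - \sum_{m\geqslant 1}(2m+1)^{-2i}$ (note the sign: $\sum_{m\text{ odd}\geqslant 3} m^{-2i}$ with a minus? let me just use $a_i \nearrow 1$). Then $a_{i+1}-a_i = \sum_{m\geqslant 1}\big((2m+1)^{-2i} - (2m+1)^{-2i-2}\big) = \sum_{m\geqslant 1}(2m+1)^{-2i}\big(1 - (2m+1)^{-2}\big) \leqslant \sum_{m\geqslant 1}(2m+1)^{-2i} \leqslant 3^{-2i}\sum_{m\geqslant 1} 3^{-2i(m-1)}\cdot$ — using $(2m+1) \geqslant 3^{?}$ is false, but $(2m+1)^{-2i} \leqslant 3^{-2i}$ only for $m=1$; better: $\sum_{m\geqslant 1}(2m+1)^{-2i} \leqslant 3^{-2i} + \int_1^\infty (2t+1)^{-2i}\,dt \leqslant 9^{-i}(1 + \tfrac{1}{2(2i-1)\cdot 3^{2i-1}/?})$, so $a_{i+1}-a_i \leqslant 2\cdot 9^{-i}$ for $i\geqslant 1$. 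Then $\sum_{i\geqslant 1}(a_{i+1}-a_i)i^r \leqslant 2\sum_{i\geqslant 1} i^r 9^{-i} \leqslant 2\cdot r!\cdot\tfrac{1/9}{(8/9)^{r+1}} = 2r! \cdot \tfrac{1}{8}(9/8)^r$, which is $\leqslant 2r!$ once we absorb — hmm $\tfrac18(9/8)^r \leqslant 1$ only for small $r$. So even this needs care; the genuinely correct statement must use that $i^r 9^{-i}$ is summed and compared to $r!$ via $i^r \leqslant r! \binom{i+r-1}{r}$-type bounds combined with $\sum \binom{i+r-1}{r} x^i = x/(1-x)^{r+1}\cdot$ adjustments giving $\sum_{i\geqslant 1} i^r 9^{-i} \leqslant r!$ directly since $\big(\tfrac{9}{8}\big)^{r+1}\cdot\tfrac{1}{9}$ needs re-checking — in fact $\sum_{i\geqslant 0} \binom{i+r}{r}x^i = (1-x)^{-r-1}$ and $i^r \leqslant r!\binom{i+r}{r}$ is false for small $i$; the right inequality is $i^r \leqslant r!\binom{i}{r}\cdot r^{?}$.

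\textbf{The main obstacle}, then, is pinning down the exact constant: proving $\sum_{i \geqslant 1}(a_{i+1}-a_i)i^r \leqslant 2\,r!$ with the constant $2$ (rather than some $C_r$) requires the sharp decay $a_{i+1} - a_i \leqslant 2\cdot 9^{-i}$ (or even better, using $a_i = 1 - \sum_{m \geqslant 1}(2m+1)^{-2i}$ exactly and summing termwise), combined with the clean elementary estimate $\sum_{i\geqslant 1} i^r \rho^i \leqslant r!\,\rho/(1-\rho)^{r+1}$ and then checking $2 \cdot \tfrac{1/9}{(8/9)^{r+1}} \leqslant 2$, i.e. $(9/8)^{r+1} \leqslant 9$, which \emph{fails} for $r \geqslant 18$. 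Hence the honest proof must instead sum over $m$ first: $\sum_{i\geqslant 1}(a_{i+1}-a_i)i^r \leqslant \sum_{i\geqslant 1} i^r \sum_{m\geqslant 1}(2m+1)^{-2i} = \sum_{m\geqslant 1}\sum_{i\geqslant 1} i^r (2m+1)^{-2i} \leqslant \sum_{m\geqslant 1} r!\,\frac{(2m+1)^{-2}}{(1-(2m+1)^{-2})^{r+1}} \leqslant r! \sum_{m\geqslant 1}\frac{1}{(2m+1)^2}\big(1-\tfrac19\big)^{-(r+1)}\cdot$ — still exponential in $r$ from the $m=1$ term. The resolution is to treat $m=1$ separately with the true value and bound $m\geqslant 2$ crudely: $\sum_{i\geqslant 1} i^r 3^{-2i} = \mathrm{Li}_{-r}(1/9)$ and one uses the \emph{telescoping/summation-by-parts} identity $\sum_{i\geqslant 1}(a_{i+1}-a_i)i^r = \lim_N a_{N+1}N^r - \sum_{i\geqslant 2} a_i\,(i^r-(i-1)^r) - a_1$, but $a_{N+1}N^r \to \infty$, so this fails too — confirming that the clean bound genuinely rests on the rapid geometric decay of $a_{i+1}-a_i$ and a careful elementary lemma of the form $\sum_{i\geqslant 1} i^r x^i \leqslant r!\,\frac{x}{(1-x)^{r+1}}$. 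I would therefore structure the final proof as: (1) establish $0 \leqslant a_{i+1} - a_i \leqslant C_0\,\rho^i$ with $\rho$ close to $0$ (optimally $\rho = 1/9$ via the odd-zeta representation); (2) invoke the elementary power-series bound; (3) verify the resulting constant is $\leqslant 2$ by a direct check exploiting that with $\rho = 1/9$ the factor $\rho/(1-\rho)^{r+1} = \tfrac19(9/8)^{r+1}$ is \emph{not} uniformly bounded, so in fact one needs an even better $\rho$, or — most likely what the authors actually do — one uses $a_{i+1}-a_i \leqslant a_{i+1}-a_i$ summed against $i^r$ where crucially only finitely many terms matter after comparing $i^r 9^{-i}$ to its max, giving $\sum i^r 9^{-i} \leqslant (\max_i i^r 9^{-i})\cdot\sum 9^{-i/2} \leqslant (r/(2\log 3))^r \cdot \text{const} \leqslant r!\cdot\text{const}$ by Stirling, with the const small enough. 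The cleanest correct path, and the one I would commit to, is: $a_{i+1} - a_i \leqslant \sum_{m\geqslant 1}(2m+1)^{-2i} \leqslant \tfrac{3}{2}\cdot 9^{-i}$, then $\sum_{i\geqslant 1} i^r \cdot \tfrac32 9^{-i} \leqslant \tfrac32 \cdot r! \leqslant 2\,r!$ using the bound $\sum_{i\geqslant 1} i^r 9^{-i} \leqslant r!$ which holds since $\sum_{i \geqslant 1} i^r 9^{-i} \leqslant \sum_{i\geqslant 1}\frac{i^r}{i!}\cdot\frac{i!}{9^i} \leqslant (\sup_i \tfrac{i!}{9^i})\cdot\sum\tfrac{i^r}{i!}$ — no. I will settle the constant in the write-up; \emph{the conceptual content is entirely the exponential decay of $a_{i+1}-a_i$}, and that is the step I would present in detail.
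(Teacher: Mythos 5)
There is a genuine gap, and it has two parts. First, your Dirichlet-series identification is wrong: $(1-2^{1-s})\zeta(s)$ is the \emph{alternating} zeta function $\eta(s)=\sum_{n\geqslant 1}(-1)^{n-1}n^{-s}$, not the odd-index zeta $\lambda(s)=(1-2^{-s})\zeta(s)=\sum_{n\geqslant 0}(2n+1)^{-s}$. So $a_i=\eta(2i)=1-2^{-2i}+3^{-2i}-\cdots$, and $a_{i+1}-a_i=\sum_{n\geqslant 2}(-1)^{n}n^{-2i}(1-n^{-2})=\tfrac34\,4^{-i}-\tfrac89\,9^{-i}+\cdots$, which is $\Theta(4^{-i})$, \emph{not} $O(9^{-i})$. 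Your proposed sharpening of the decay rate is therefore false, and the computation you flag as suspicious (your $a_i=1-\sum_{m\geqslant 1}(2m+1)^{-2i}$ with a sign you cannot resolve) is precisely the symptom of the misidentified series. The paper just cites $0\leqslant a_{i+1}-a_i<4^{-i}$ directly from Mirzakhani--Zograf and does not try to improve it.

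Second, and more importantly, you never find the step that actually closes the constant. You correctly observe that the crude bound $\sum_{i\geqslant 1}i^r x^i\leqslant r!\,x/(1-x)^{r+1}$ introduces an unwanted exponential factor $(1-x)^{-r}$; but your proposal ends without a resolution. The paper's resolution is to weaken $4^{-i}\leqslant e^{-i}$ (since $4>e$) and then compare the sum $\sum_{i\geqslant 0}i^r e^{-i}$ to the integral $\int_0^\infty x^r e^{-x}\,\mathrm{d}x=\Gamma(r+1)=r!$ \emph{exactly}, using the Abel--Plana summation formula for $f(z)=z^r e^{-z}$. This produces
\[
\sum_{i\geqslant 0}\frac{i^r}{e^i}=\Gamma(r+1)+i^{r+1}\int_0^\infty t^r\,\frac{e^{-it}-(-1)^r e^{it}}{e^{2\pi t}-1}\,\mathrm{d}t,
\]
and the correction integral is bounded by $r!$ by splitting at $t=1$, giving the desired $2r!$. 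The conceptual content you are missing is therefore not the decay rate of $a_{i+1}-a_i$ (which you had right on your first attempt with $4^{-i}$), but rather that the factorial must come from comparing the discrete sum $\sum i^r e^{-i}$ to the Gamma integral with a controlled error, rather than from the power-series identity $\sum_i i^r x^i=\mathrm{Li}_{-r}(x)$, whose natural bound has the wrong $r$-dependence.
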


\begin{proof}
We recall that $\left|a_{i+1}-a_{i}\right|<4^{-i}$ by \cite[Equation (2.14)]{Mi.Zo2015}
 so that 

\[
\sum_{i=1}^{\infty}\left(a_{i+1}-a_{i}\right)i^{r}\leqslant\sum_{i=0}^{\infty}\frac{i^{r}}{4^{i}}\leqslant\sum_{i=0}^{\infty}\frac{i^{r}}{e^{i}}.
\]
Now recall the Abel-Plana formula valid for functions $f$ that are
holomorphic on $\mathrm{Re}(z)\geq0$ and satisfy a growth bound $|f(z)|\leq\frac{C}{|z|^{1+\varepsilon}}$
for some $C,\varepsilon>0$ that states

\[
\sum_{i=0}^{\infty}f(j)=\int_{0}^{\infty}f(x)\mathrm{d}x+\frac{1}{2}f(0)+i\int_{0}^{\infty}\frac{f(it)-f(-it)}{e^{2\pi t}-1}\mathrm{d}t.
\]
We apply this formula with $f(z)=z^{r}e^{-z}$ to obtain
\[
\sum_{i=0}^{\infty}\frac{i^{r}}{e^{i}}=\Gamma(r+1)+i^{r+1}\int_{0}^{\infty}t^{r}\frac{e^{-it}-(-1)^{r}e^{it}}{e^{2\pi t}-1}\mathrm{d}t.
\]
The second term on the right-hand side is majorised by 
\begin{align*}
2\int_{0}^{\infty}\frac{t^{r}}{e^{2\pi t}-1}\mathrm{d}t & \leqslant\frac{1}{\pi}\int_{0}^{1}t^{r-1}\mathrm{d}t+2\frac{(r+2)(r+1)}{(2\pi)^{r+2}}r!\int_{1}^{\infty}\frac{1}{t^{2}}\mathrm{d}t\\
 & \leqslant\frac{1}{\pi}+\frac{3}{2\pi^{3}}r!\leqslant r!,
\end{align*}
and so the desired bound follows immediately.
\end{proof}
Next is a simple inequality between the polynomials $q_{n}^{k}$ and
$Q_{n}^{k}$ appearing in $A1(k)$ in Theorem \ref{thm:expansions}.
\begin{lem}
\label{lem:poly-rels}For any $\d,n$ and $t\geqslant1$, we have
$|\d|q_{n}^{t}(d_{1},\ldots,d_{n})\leqslant q_{n}^{t+1}(d_{1},\ldots,d_{n})$.
In particular, $|\d|q_{n}^{t}(d_{1},\ldots,d_{n})\leqslant Q_{n}^{t}(d_{1},\ldots,d_{n})$
since $Q_{n}^{t}\equiv q_{n}^{t+1}$.
\end{lem}

\begin{proof}
By definition,
\begin{align*}
|\d|q_{n}^{t}(d_{1},\ldots,d_{n}) & =\sum_{i=1}^{n}(ct)^{ct}(n+t)^{t}\sum_{\substack{0\leqslant a_{1},\ldots,a_{n}\leqslant2t\\
\sum a_{i}\leqslant2t
}
}\frac{d_{1}^{a_{1}}\cdots d_{i}^{a_{i}+1}\cdots d_{n}^{a_{n}}}{a_{1}!\cdots(a_{i}+1)!\cdots a_{n}!}(a_{i}+1)\\
 & \leqslant\sum_{i=1}^{n}(2t+1)(ct)^{ct}(n+t)^{t}\sum_{\substack{0\leqslant a_{1},\ldots,a_{n}\leqslant2t+1\\
\sum a_{i}\leqslant2t+1
}
}\frac{d_{1}^{a_{1}}\cdots d_{n}^{a_{n}}}{a_{1}!\cdots a_{n}!}\\
 & \leqslant(c(t+1))^{c(t+1)}(n+t+1)^{t+1}\sum_{\substack{0\leqslant a_{1},\ldots,a_{n}\leqslant2(t+1)\\
\sum a_{i}\leqslant2(t+1)
}
}\frac{d_{1}^{a_{1}}\cdots d_{n}^{a_{n}}}{a_{1}!\cdots a_{n}!}=q_{n}^{t+1}(d_{1},\ldots d_{n}).
\end{align*}
\end{proof}
Now we record some basic manipulations of asymptotic expansions. We
will frequently wish to obtain an asymptotic expansion of a product
of multiple expressions for which we have asymptotic expansions of
and the following is a general formula for doing this.
\begin{lem}
\label{lem:product-exp}Suppose that $A_{1},\ldots,A_{n}$ have expansions
\[
\left|A_{i}-\sum_{t=0}^{k}\frac{a_{t}^{(i)}}{g^{t}}\right|\leqslant\frac{C_{k}^{(i)}}{g^{k+1}}.
\]
Then
\begin{align*}
\left|\prod_{i=1}^{n}A_{i}-\sum_{t=0}^{k}\sum_{\substack{m_{1},\ldots,m_{n}\\
\sum m_{i}=t\\
m_{i}\geqslant0
}
}\frac{\prod_{i=1}^{n}a_{m_{i}}^{(i)}}{g^{t}}\right| & \leqslant\frac{1}{g^{k+1}}\left(\sum_{i=1}^{n}C_{k}^{(i)}\prod_{\substack{j=1\\
j\neq i
}
}^{n}a_{0}^{(j)}+\sum_{\substack{m_{1},\ldots,m_{n}\\
\sum m_{i}=k+1\\
0\leqslant m_{i}\leqslant k
}
}\prod_{i=1}^{n}a_{m_{i}}^{(i)}\right)\\
 & +\sum_{\substack{I\subseteq\left\{ 1,\ldots,n\right\} \\
|I|\leqslant n-2
}
}\prod_{i\in I}\left(\sum_{t=0}^{k}\frac{a_{t}^{(i)}}{g^{t}}\right)\prod_{j\notin I}\frac{C_{k}^{(j)}}{g^{k+1}}\\
 & +\frac{1}{g^{k+1}}\sum_{i=1}^{n}C_{k}^{(i)}\sum_{t=1}^{k(n-1)}\sum_{\substack{m_{j}:j\neq i\\
\sum m_{j}=t\\
0\leqslant m_{i}\leqslant k
}
}\frac{\prod_{j\neq i}a_{m_{j}}^{(j)}}{g^{t}}\\
 & +\sum_{t=k+2}^{nk}\sum_{\substack{m_{1},\ldots,m_{n}\\
\sum m_{i}=t\\
0\leqslant m_{i}\leqslant k
}
}\frac{\prod_{i=1}^{n}a_{m_{i}}^{(i)}}{g^{t}}.
\end{align*}
\end{lem}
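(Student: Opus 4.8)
The plan is to prove this by induction on $n$, the number of factors. The base case $n=1$ is trivial (the right-hand side collapses to $C_k^{(1)}/g^{k+1}$). For the inductive step, I would write $\prod_{i=1}^n A_i = \left(\prod_{i=1}^{n-1} A_i\right) A_n$ and apply the induction hypothesis to the product $B \eqdf \prod_{i=1}^{n-1} A_i$, which gives an approximation $B \approx \sum_{t=0}^{k} b_t/g^t$ with $b_t = \sum_{m_1+\dots+m_{n-1}=t} \prod_{i=1}^{n-1} a_{m_i}^{(i)}$ and an explicit error bound $E_{n-1}/g^{k+1}$ (the four-term expression in the statement, with $n$ replaced by $n-1$). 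The key algebraic identity is then the two-factor product rule: if $|B - \sum_{t=0}^k b_t g^{-t}| \leqslant E_{n-1} g^{-(k+1)}$ and $|A_n - \sum_{t=0}^k a_t^{(n)} g^{-t}| \leqslant C_k^{(n)} g^{-(k+1)}$, then expanding the product of the two truncated sums and separating the ``good'' part (total degree $\leqslant k$) from everything else yields
\[
\left| BA_n - \sum_{t=0}^k \Big(\sum_{s=0}^t b_s a_{t-s}^{(n)}\Big) g^{-t} \right| \leqslant \frac{1}{g^{k+1}}\Big( E_{n-1} a_0^{(n)} + b_0 C_k^{(n)} \Big) + (\text{cross terms}),
\]
where the coefficient $\sum_{s=0}^t b_s a_{t-s}^{(n)}$ is exactly $\sum_{m_1+\dots+m_n=t}\prod a_{m_i}^{(i)}$ as desired, and the cross terms are the products of one error bound with the higher-order ($\geqslant 1$) coefficients of the other factor, plus the product of the two error bounds, plus the ``overflow'' terms of total degree between $k+1$ and $nk$ coming from multiplying two truncated polynomials in $1/g$.

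The bookkeeping step is to check that when one substitutes the four-term bound for $E_{n-1}$ into this two-factor estimate and regroups, every resulting term matches one of the four families in the claimed bound for $n$ factors. Concretely: the term $E_{n-1}a_0^{(n)}$ contributes (via its first summand $\sum_i C_k^{(i)}\prod_{j\neq i}a_0^{(j)}$, now including $j=n$) and via its other summands to the first and fourth families; the term $b_0 C_k^{(n)} = C_k^{(n)}\prod_{j=1}^{n-1}a_0^{(j)}$ supplies the missing $i=n$ case of the first family; the pure product-of-errors term $B_{\text{err}}\cdot C_k^{(n)}/g^{k+1}$ builds up the second family (subsets $I$ with $|I|\leqslant n-2$); the products $C_k^{(n)} \cdot (\text{higher coeffs of }B)$ and $(\text{higher coeffs of }A_n)\cdot E_{n-1}$ feed the third family; and the degree-overflow terms assemble into the fourth family with the range extended from $(n-1)k$ to $nk$. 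I would organize this as a sequence of displays, each verifying one family, being careful that the index ranges ($\sum m_i = k+1$ exactly in the first family versus $t$ ranging in $[k+2, nk]$ in the fourth, and $|I| \leqslant n-2$ in the second) are exactly right after the regrouping.

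The main obstacle I anticipate is purely combinatorial rather than conceptual: making sure that no term is double-counted and none is dropped when the $(n-1)$-fold bound is inserted, since the four families overlap in a delicate way (e.g.\ the boundary cases $|I| = n-2$ in family two and $t = k+1$ vs $t = k+2$ in families one and four). A clean way to handle this is to not try to match term-by-term to the exact stated form, but rather to first prove a slightly looser ``master'' bound of the shape ``(sum of all single-error $\times$ coefficient products) + (sum of all multi-error products) + (all overflow monomials)'' by a direct expansion of $\prod_i(\sum_t a_t^{(i)}g^{-t} + R_i)$ with $|R_i|\leqslant C_k^{(i)}g^{-(k+1)}$, and then observe that each piece of this master bound is dominated by the corresponding family in the statement. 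This sidesteps the induction entirely and reduces the proof to: (i) expand the product of $n$ sums-plus-remainders; (ii) the terms with all remainders absent and total $1/g$-degree $\leqslant k$ are the claimed main term; (iii) group the rest. I would flag that step (iii) is where all the care goes, but it is elementary once the terms are laid out.
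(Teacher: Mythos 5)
Your preferred approach --- the ``clean way'' of directly expanding $\prod_i\bigl(\sum_{t=0}^k a_t^{(i)}g^{-t}+R_i\bigr)$ with $R_i = A_i - \sum_{t=0}^k a_t^{(i)}g^{-t}$ and regrouping by which factors contribute a remainder --- is exactly the paper's proof, which phrases this same expansion via the identity $\prod_{i=1}^n x_i = \sum_{I\subseteq\{1,\ldots,n\}}\prod_{i\in I}\tilde x_i\prod_{j\notin I}(x_j-\tilde x_j)$ and then separates the cases $I=\{1,\ldots,n\}$ (main term plus $t=k+1$ and overflow $t\in[k+2,nk]$), $|I|=n-1$ (single-error families one and three), and $|I|\leqslant n-2$ (family two). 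Your initial induction on $n$ would also work but introduces precisely the bookkeeping overhead you anticipate; the direct expansion you then propose to sidestep it is the paper's route.
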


\begin{proof}
The proof relies on the following identity 
\[
\prod_{i=1}^{n}x_{i}=\sum_{I\subseteq\left\{ 1,\ldots,n\right\} }\prod_{i\in I}\tilde{x}_{i}\prod_{j\notin I}(x_{j}-\tilde{x}_{j}),
\]
which can be proven by induction. We apply it to $x_{i}=A_{i}$ and
$\tilde{x}_{i}=\sum_{t=0}^{k}\frac{a_{t}^{(i)}}{g^{t}}$ to obtain
\begin{align*}
 & \left|\prod_{i=1}^{n}A_{i}-\sum_{t=0}^{k}\sum_{\substack{m_{1},\ldots,m_{n}\\
\sum m_{i}=t\\
m_{i}\geqslant0
}
}\frac{\prod_{i=1}^{n}a_{m_{i}}^{(i)}}{g^{t}}\right|\\
= & \left|\sum_{\substack{I\subseteq\left\{ 1,\ldots,n\right\} }
}\prod_{i\in I}\left(\sum_{t=0}^{k}\frac{a_{t}^{(i)}}{g^{t}}\right)\prod_{j\notin I}\left(A_{j}-\sum_{t=0}^{k}\frac{a_{t}^{(j)}}{g^{t}}\right)-\sum_{t=0}^{k}\sum_{\substack{m_{1},\ldots,m_{n}\\
\sum m_{i}=t\\
m_{i}\geqslant0
}
}\frac{\prod_{i=1}^{n}a_{m_{i}}^{(i)}}{g^{t}}\right|.
\end{align*}
The term where $I=\left\{ 1,\ldots,n\right\} $ corresponds to 
\begin{align*}
\prod_{i\in I}\left(\sum_{t=0}^{k}\frac{a_{t}^{(i)}}{g^{t}}\right) & =\sum_{t=0}^{nk}\sum_{\substack{m_{1},\ldots,m_{n}\\
\sum m_{i}=t\\
0\leqslant m_{i}\leqslant k
}
}\frac{\prod_{i=1}^{n}a_{m_{i}}^{(i)}}{g^{t}}\\
 & =\sum_{t=0}^{k}\sum_{\substack{m_{1},\ldots,m_{n}\\
\sum m_{i}=t\\
m_{i}\geqslant0
}
}\frac{\prod_{i=1}^{n}a_{m_{i}}^{(i)}}{g^{t}}+\sum_{\substack{m_{1},\ldots,m_{n}\\
\sum m_{i}=k+1\\
0\leqslant m_{i}\leqslant k
}
}\frac{\prod_{i=1}^{n}a_{m_{i}}^{(i)}}{g^{k+1}}+\sum_{t=k+2}^{nk}\sum_{\substack{m_{1},\ldots,m_{n}\\
\sum m_{i}=t\\
0\leqslant m_{i}\leqslant k
}
}\frac{\prod_{i=1}^{n}a_{m_{i}}^{(i)}}{g^{t}}.
\end{align*}
We also distinguish the terms when $|I|=n-1$ as these will also give
leading order contributions
\begin{align*}
 & \left|\sum_{\substack{I\subseteq\left\{ 1,\ldots,n\right\} \\
|I|=n-1
}
}\prod_{i\in I}\left(\sum_{t=0}^{k}\frac{a_{t}^{(i)}}{g^{t}}\right)\prod_{j\notin I}\left(A_{j}-\sum_{t=0}^{k}\frac{a_{t}^{(j)}}{g^{t}}\right)\right|\leqslant\sum_{i=1}^{n}\left|A_{i}-\sum_{t=0}^{k}\frac{a_{t}^{(i)}}{g^{t}}\right|\prod_{j\neq i}\left(\sum_{t=0}^{k}\frac{a_{t}^{(j)}}{g^{t}}\right)\\
 & \leqslant\sum_{i=1}^{n}\frac{C_{k}^{(i)}}{g^{k+1}}\prod_{j\neq i}\left(\sum_{t=0}^{k}\frac{a_{t}^{(j)}}{g^{t}}\right)=\sum_{i=1}^{n}\frac{C_{k}^{(i)}}{g^{k+1}}\prod_{j\neq i}a_{0}^{(j)}+\sum_{i=1}^{n}\frac{C_{k}^{(i)}}{g^{k+1}}\sum_{t=1}^{k(n-1)}\sum_{\substack{m_{j}:j\neq i\\
\sum m_{j}=t\\
0\leqslant m_{j}\leqslant k
}
}\frac{\prod_{j\neq i}a_{m_{j}}^{(j)}}{g^{t}}.
\end{align*}
The result then follows immediately from putting all of these together.

\end{proof}

An important corollary of the expansion for products that we will
frequently use is when we have an expansion of $A_{1}$ whose first
$r$ coefficients are zero. Then for the product $\prod_{i=1}^{n}A_{i}$,
an expansion up to order $g^{-k}$ relies only on knowing the expansion
up to order $g^{-(k-r-1)}$ for the $A_{i}$ with $i\geqslant2$. 
\begin{cor}
\label{cor:product-with-zeroterms}Suppose that $A_{2},\ldots,A_{n}$
have expansions
\[
\left|A_{i}-\sum_{t=0}^{k}\frac{a_{t}^{(i)}}{g^{t}}\right|\leqslant\frac{C_{k}^{(i)}}{g^{k+1}},
\]
and $A_{1}$ has an expansion for some $s$ satisfying $k-(r+1)\leqslant s\leqslant k$,
\[
\left|A_{1}-\sum_{t=r+1}^{s+r+1}\frac{a_{t}^{(1)}}{g^{t}}\right|\leqslant\frac{C_{s+r+1}^{(1)}}{g^{s+r+2}}
\]
Then
\begin{align*}
 & \left|\prod_{i=1}^{n}A_{i}-\sum_{t=r+1}^{s+r+1}\sum_{\substack{m_{1},\ldots,m_{n}\\
\sum m_{i}=t\\
0\leqslant m_{i}\leqslant k,\ i\geqslant2\\
r+1\leqslant m_{1}\leqslant s+r+1
}
}\frac{\prod_{i=1}^{n}a_{m_{i}}^{(i)}}{g^{t}}\right|\leqslant\frac{C_{s+r+1}^{(1)}}{g^{s+r+2}}\prod_{j=2}^{n}a_{0}^{(j)}+\frac{a_{r+1}^{(1)}}{g^{s+r+2}}\sum_{i=2}^{n}C_{s}^{(i)}\prod_{\substack{j=2\\
j\neq i
}
}^{n}a_{0}^{(j)}\\
 & +\frac{1}{g^{s+r+2}}\sum_{\substack{m_{1},\ldots,m_{n}\\
\sum m_{i}=s+1\\
0\leqslant m_{i}\leqslant k,\ i\geq2
}
}a_{m_{1}+r+1}^{(1)}\prod_{i=2}^{n}a_{m_{i}}^{(i)}+\frac{1}{g^{r+1}}\sum_{\substack{I\subseteq\left\{ 1,\ldots,n\right\} \\
|I|\leqslant n-2\\
1\in I
}
}\left(\sum_{t=0}^{s}\frac{a_{t+r+1}^{(1)}}{g^{t}}\right)\prod_{i\in I\setminus\left\{ 1\right\} }\left(\sum_{t=0}^{s}\frac{a_{t}^{(i)}}{g^{t}}\right)\prod_{j\notin I}\frac{C_{s}^{(j)}}{g^{s+1}}\\
 & +\frac{C_{s+r+1}^{(1)}}{g^{s+r+2}}\sum_{\substack{I\subseteq\left\{ 1,\ldots,n\right\} \\
|I|\leqslant n-2\\
1\notin I
}
}\prod_{i\in I}\left(\sum_{t=0}^{s}\frac{a_{t}^{(i)}}{g^{t}}\right)\prod_{j\notin I\sqcup\left\{ 1\right\} }\frac{C_{s}^{(j)}}{g^{s+1}}+\frac{C_{s+r+1}^{(1)}}{g^{s+r+2}}\sum_{t=1}^{s(n-1)}\sum_{\substack{m_{j}:j\neq1\\
\sum m_{j}=t\\
0\leqslant m_{j}\leqslant s
}
}\frac{\prod_{j\neq i}a_{m_{j}}^{(j)}}{g^{t}}\\
 & +\frac{1}{g^{s+r+2}}\sum_{i=2}^{n}C_{s}^{(i)}\sum_{t=1}^{s(n-1)}\sum_{\substack{m_{j}:j\neq i\\
\sum m_{j}=t\\
0\leqslant m\leqslant s
}
}\frac{a_{m_{1}+r+1}^{(1)}\prod_{j\neq i}a_{m_{j}}^{(j)}}{g^{t}}+\frac{1}{g^{r+1}}\sum_{t=s+2}^{ns}\sum_{\substack{m_{1},\ldots,m_{n}\\
\sum m_{i}=t\\
0\leqslant m_{i}\leqslant s
}
}\frac{a_{m_{1}+r+1}^{(1)}\prod_{i=2}^{n}a_{m_{i}}^{(i)}}{g^{t}}.
\end{align*}
\end{cor}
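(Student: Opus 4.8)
The plan is to reduce Corollary \ref{cor:product-with-zeroterms} to a direct application of Lemma \ref{lem:product-exp}, carefully tracking which terms drop out because the first $r$ coefficients of $A_1$ vanish. First I would relabel: set $\tilde{A}_1 \eqdf g^{r+1}A_1$, so that $\tilde{A}_1$ has an honest expansion $\left|\tilde{A}_1 - \sum_{t=0}^{s}\frac{a_{t+r+1}^{(1)}}{g^{t}}\right| \leqslant \frac{C_{s+r+1}^{(1)}}{g^{s+1}}$ with leading coefficient $a_{r+1}^{(1)}$; the point is that this expansion only goes out to order $g^{-s}$, matching the truncation order of the remaining $A_i$ for $i \geqslant 2$ once we note $s \leqslant k$ (so we may also truncate the expansions of $A_2,\dots,A_n$ at order $g^{-s}$, incurring an error controlled by $C_s^{(i)}$ which is dominated by $C_k^{(i)}$ — this replacement is the one slightly delicate bookkeeping point, since the statement is phrased with $C_s^{(i)}$ throughout).

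Then I would apply Lemma \ref{lem:product-exp} to the $n$ quantities $\tilde{A}_1, A_2, \dots, A_n$, each with its expansion truncated at order $g^{-s}$, obtaining
\[
\left|\tilde{A}_1\prod_{i=2}^{n}A_i - \sum_{t=0}^{s}\sum_{\substack{m_1,\dots,m_n\\ \sum m_i = t\\ m_i \geqslant 0}}\frac{a_{m_1+r+1}^{(1)}\prod_{i=2}^{n}a_{m_i}^{(i)}}{g^{t}}\right|
\]
bounded by the right-hand side of Lemma \ref{lem:product-exp} with $k$ replaced by $s$, the coefficients $a_t^{(1)}$ replaced by $a_{t+r+1}^{(1)}$, and $C_k^{(1)}$ replaced by $C_{s+r+1}^{(1)}$. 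Dividing through by $g^{r+1}$ restores $A_1$ and shifts the summation index $t \mapsto t + r + 1$, which is exactly the left-hand side of the claimed inequality (the condition $m_i \leqslant k$ for $i \geqslant 2$ in the statement is automatically satisfied since $m_i \leqslant s \leqslant k$, so it is harmless to write $k$ there). Matching the seven error terms on the right-hand side of Lemma \ref{lem:product-exp} — the $\sum_i C_s^{(i)}\prod_{j\neq i}a_0^{(j)}$ term splits according to whether the distinguished index $i$ equals $1$ or not, giving the first two error terms of the corollary; the $\sum_{\sum m_i = s+1}$ term gives the third; the $|I| \leqslant n-2$ sum splits according to whether $1 \in I$ giving the fourth and fifth; and the last two diagonal sums give the sixth, seventh and eighth terms after the index shift — is then purely mechanical.

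The main obstacle, such as it is, is the combinatorial bookkeeping: making sure the index shift $t \mapsto t + r + 1$ is applied consistently across all eight terms, that the ranges of the inner summations ($m_j \leqslant s$ versus $m_j \leqslant t$, the upper limits $s(n-1)$ and $ns$ on the diagonal sums) transform correctly under the substitution $k \rightsquigarrow s$ in Lemma \ref{lem:product-exp}, and that the subset $I$ in the statement is understood to range over subsets of $\{1,\dots,n\}$ with $1$ either included or excluded as indicated. There is no analytic content beyond Lemma \ref{lem:product-exp} itself; the only genuine inequality used is $C_s^{(i)} \leqslant C_k^{(i)}$ (monotonicity of the error bounds in the truncation order, which holds because a valid order-$(k+1)$ remainder bound is \emph{a fortiori} valid at any earlier order after absorbing the intermediate coefficients, or simply by hypothesis as stated), together with the trivial observation that all the $a^{(i)}_0, a^{(i)}_t$ appearing are the same constants in both statements. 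I would therefore present the proof as: apply Lemma \ref{lem:product-exp} to $(g^{r+1}A_1, A_2, \dots, A_n)$ with truncation order $s$, then divide by $g^{r+1}$ and reindex, and finally observe that each resulting error term matches one on the displayed list.
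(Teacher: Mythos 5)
Your proposal is correct and is essentially the paper's own argument: the paper also reduces directly to Lemma~\ref{lem:product-exp} by absorbing the $g^{-(r+1)}$ factor (they fold it into the coefficients, writing $\tilde{a}_t^{(1)}\eqdf a_{t+r+1}^{(1)}/g^{r+1}$, whereas you equivalently multiply $A_1$ by $g^{r+1}$) and then re-indexing $t\mapsto t+r+1$. Your version is if anything slightly more explicit about truncating the $i\geqslant 2$ expansions at order $s$ so that the error constants come out as $C_s^{(i)}$ as in the statement, a bookkeeping point the paper leaves implicit.
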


\begin{proof}
We rewrite the expansion for $A_{1}$ as 
\[
\left|A_{1}-\sum_{t=0}^{k}\frac{\tilde{a}_{t}^{(1)}}{g^{t}}\right|\leqslant\frac{\tilde{C}_{k}^{(1)}}{g^{k+1}},
\]
where 
\[
\tilde{a}_{t}^{(1)}\eqdf\frac{a_{t+r+1}^{(1)}}{g^{r+1}},\qquad\tilde{C}_{k}^{(1)}\eqdf\frac{C_{k+r+1}^{(1)}}{g^{r+1}}.
\]
The proof is then immediate from Lemma \ref{lem:product-exp} after
re-indexing summations. For example, 
\begin{align*}
\left|\prod_{i=1}^{n}A_{i}-\sum_{t=0}^{s}\sum_{\substack{m_{1},\ldots,m_{n}\\
\sum m_{i}=t\\
m_{i}\geq0
}
}\frac{\tilde{a}_{m_{1}}^{(1)}\prod_{i=2}^{n}a_{m_{i}}^{(i)}}{g^{t}}\right| & =\left|\prod_{i=1}^{n}A_{i}-\sum_{t=0}^{s}\sum_{\substack{m_{1},\ldots,m_{n}\\
\sum m_{i}=t\\
0\leqslant m_{i}\leqslant s
}
}\frac{a_{m_{1}+r+1}^{(1)}\prod_{i=2}^{n}a_{m_{i}}^{(i)}}{g^{t+r+1}}\right|\\
 & =\left|\prod_{i=1}^{n}A_{i}-\sum_{t=r+1}^{s+r+1}\sum_{\substack{m_{1},\ldots,m_{n}\\
\sum m_{i}=t-(r+1)\\
0\leqslant m\leqslant s
}
}\frac{a_{m_{1}+r+1}^{(1)}\prod_{i=2}^{n}a_{m_{i}}^{(i)}}{g^{t}}\right|\\
 & =\left|\prod_{i=1}^{n}A_{i}-\sum_{t=r+1}^{s+r+1}\sum_{\substack{m_{1},\ldots,m_{n}\\
\sum m_{i}=t\\
0\leqslant m_{i}\leqslant s,\ i\geq2\\
r+1\leqslant m_{1}\leqslant s+r+1
}
}\frac{a_{m_{1}}^{(1)}\prod_{i=2}^{n}a_{m_{i}}^{(i)}}{g^{t}}\right|.
\end{align*}
Similar computation yields the error term.
\end{proof}
All of the expansions that we wish to obtain will be in inverse powers
of $g$, however at intermediary steps towards this, we will deal
with expansions in terms of inverse powers of $g-m$ for $m<g$. The
following result records a way to pass between these expansions.
\begin{lem}
\label{lem:shift}Let $m\in\mathbb{N}$ with $m\leqslant k+1\leqslant g^{\frac{1}{3}}$.
Suppose that $A$ has an expansion of the form
\[
\left|A-\sum_{i=0}^{k}\frac{a_{i}}{(g-m)^{i}}\right|\leqslant\frac{C_{k}}{(g-m)^{k+1}},
\]
Then $A$ also has the expansion 
\[
\left|A-\sum_{i=0}^{k}\frac{m^{i}b_{i}}{g^{i}}\right|\leqslant\frac{C_{k}'}{g^{k+1}},
\]
where $b_{0}=a_{0}$ and 
\[
b_{t}=\sum_{i=1}^{t}{t-1 \choose i-1}m^{-i}a_{i},
\]
and 
\[
C_{k}'=3C_{k}+3k^{2}e^{m}\tilde{a}_{k},
\]
where $\tilde{a}_{i}$ are bounds on the $a_{i}$ such that $\frac{\tilde{a}_{i}}{(i-1)!}$
are increasing for $i\geqslant1.$ 
\end{lem}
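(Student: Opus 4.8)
The plan is to re-expand each term of the given series in inverse powers of $g$ using the generalised binomial theorem and then regroup. Since $m\le k+1\le g^{1/3}<g$, the expansion
\[
\frac{1}{(g-m)^{i}}=\frac{1}{g^{i}}\Bigl(1-\frac{m}{g}\Bigr)^{-i}=\sum_{t\ge i}\binom{t-1}{i-1}\frac{m^{t-i}}{g^{t}}
\]
converges absolutely for each $i\ge 1$. Substituting this into $\sum_{i=0}^{k}a_{i}(g-m)^{-i}$ and collecting the coefficient of $g^{-t}$ for $0\le t\le k$ yields precisely $\sum_{i=1}^{t}\binom{t-1}{i-1}m^{t-i}a_{i}=m^{t}b_{t}$ with $b_{0}=a_{0}$, which is the asserted formula for $b_{t}$; the legitimacy of the regrouping is immediate from absolute convergence (finitely many $i$, each inner series over $t$ convergent). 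After this identification one is left with the remainder
\[
A-\sum_{t=0}^{k}\frac{m^{t}b_{t}}{g^{t}}=E+\sum_{i=1}^{k}a_{i}R_{i},\qquad R_{i}\eqdf\frac{1}{(g-m)^{i}}-\sum_{t=i}^{k}\binom{t-1}{i-1}\frac{m^{t-i}}{g^{t}},
\]
where $|E|\le C_{k}/(g-m)^{k+1}$ is the original error term and $R_{i}=\sum_{t>k}\binom{t-1}{i-1}m^{t-i}g^{-t}$ is the tail of the $i$-th binomial series.

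To bound $E$ I would use that $m\le k+1\le g^{1/3}$ forces $m/g\le g^{-2/3}$ and $(k+1)m/g\le g^{-1/3}$, so that $-(k+1)\log(1-m/g)\le \tfrac{4}{3}(k+1)m/g<1$ in the stated range, giving $(1-m/g)^{-(k+1)}<3$ and hence $|E|\le 3C_{k}g^{-(k+1)}$. For the tail terms I would apply Taylor's theorem with Lagrange remainder to $x\mapsto(1-x)^{-i}$ at $x=m/g$, truncated at degree $k-i$: for some $\xi\in(0,m/g)$,
\[
R_{i}=\frac{1}{g^{i}}\binom{k}{i-1}(1-\xi)^{-(k+1)}\Bigl(\frac{m}{g}\Bigr)^{k-i+1}\le \frac{2\binom{k}{i-1}m^{k-i+1}}{g^{k+1}},
\]
using $(1-\xi)^{-(k+1)}\le(1-m/g)^{-(k+1)}<2$ again in this range.

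The only point requiring genuine care — the main obstacle — is converting $\sum_{i}|a_{i}||R_{i}|$ into the stated form, and here I would use the monotonicity hypothesis $\tilde a_{i}/(i-1)!\le\tilde a_{k}/(k-1)!$ for $1\le i\le k$ to obtain $\tilde a_{i}\binom{k}{i-1}\le k\tilde a_{k}/(k-i+1)!$, whence
\[
\sum_{i=1}^{k}|a_{i}||R_{i}|\le\frac{2}{g^{k+1}}\sum_{i=1}^{k}\tilde a_{i}\binom{k}{i-1}m^{k-i+1}\le\frac{2k\tilde a_{k}}{g^{k+1}}\sum_{r=1}^{k}\frac{m^{r}}{r!}\le\frac{2k\,e^{m}\tilde a_{k}}{g^{k+1}}\le\frac{3k^{2}e^{m}\tilde a_{k}}{g^{k+1}}.
\]
Combining this with the bound on $E$ gives the claimed estimate with $C_{k}'=3C_{k}+3k^{2}e^{m}\tilde a_{k}$. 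Everything else is routine bookkeeping: verifying the combinatorial identity for $b_{t}$ (via the convention $\binom{t-1}{-1}=0$, $\binom{-1}{-1}=1$), and tracking the elementary inequalities that keep $(1-m/g)^{-(k+1)}$ and $(1-\xi)^{-(k+1)}$ bounded by a small absolute constant throughout the regime $k+1\le g^{1/3}$.
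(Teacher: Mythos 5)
Your proof is correct, and while the overall structure (binomial re-expansion followed by regrouping of coefficients, splitting the error into the original remainder $E$ plus the tails $R_i$) matches the paper, your treatment of the tails is genuinely different. The paper estimates $\sum_{i=1}^{k}a_{i}R_{i}$ directly by manipulating the double sum $\sum_{t>k}\sum_{i}{t-1\choose i-1}m^{t-i}a_{i}g^{-t}$, splitting the range of $t$ at $k+m$ and handling each piece separately with factorial estimates and a final geometric-series resummation. You instead recognize $R_{i}$ as the Lagrange remainder of the degree-$(k-i)$ Taylor polynomial of $x\mapsto(1-x)^{-i}$ at $x=m/g$, which immediately gives the closed form
\[
R_{i}={k\choose i-1}(1-\xi)^{-(k+1)}\frac{m^{k-i+1}}{g^{k+1}},
\]
and then a single application of the monotonicity of $\tilde a_{i}/(i-1)!$ absorbs everything into $\sum_{r}m^{r}/r!\leqslant e^{m}$. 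This is cleaner and more conceptual than the paper's sum splitting, and it makes the role of the hypothesis $m\leqslant k+1\leqslant g^{1/3}$ transparent (it is exactly what forces $(1-m/g)^{-(k+1)}$ to stay $O(1)$). Your argument for the $E$-term is also more careful than the paper's, whose intermediate inequality $(1+\frac{m}{g-m})^{k+1}\leqslant(1+\frac{m}{k+1})^{k+1}\leqslant 3$ cannot be right as written (the middle expression is $2^{k+1}$ when $m=k+1$), though the final bound of $3$ is of course correct.

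One small imprecision: you use $(1-\xi)^{-(k+1)}<2$ for the Lagrange remainders but $(1-m/g)^{-(k+1)}<3$ for $E$, and for small $g$ in the allowed range the true bound can be close to $2$ or slightly above (e.g.\ $k=0$, $m=1$, $g=2$ gives exactly $2$). This does not affect the conclusion since you then pass to $3k^{2}$, but it would be cleaner to use the uniform bound $3$ throughout, matching the paper's stated constant.
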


\begin{proof}
We note that 
\begin{align*}
\sum_{i=0}^{k}\frac{a_{i}}{(g-m)^{i}} & =\sum_{i=0}^{k}\frac{a_{i}}{g^{i}}\frac{1}{(1-\frac{m}{g})^{i}}=a_{0}+\sum_{i=1}^{k}\frac{a_{i}}{g^{i}}\sum_{j=0}^{\infty}{i+j-1 \choose i-1}\left(\frac{m}{g}\right)^{j}\\
 & =a_{0}+\sum_{j=0}^{\infty}\sum_{i=1}^{k}{i+j-1 \choose i-1}m^{j}a_{i}\frac{1}{g^{i+j}}=a_{0}+\sum_{t=1}^{\infty}\sum_{\substack{1\leqslant i\leqslant k\\
j\geqslant0\\
i+j=t
}
}{t-1 \choose i-1}m^{j}a_{i}\frac{1}{g^{t}}.
\end{align*}
For $t\leqslant k$, the inner summation becomes 
\[
\sum_{\substack{1\leqslant i\leqslant k\\
j\geqslant0\\
i+j=t
}
}{t-1 \choose i-1}m^{j}a_{i}=m^{t}\sum_{i=1}^{t}{t-1 \choose i-1}m^{-i}a_{i}.
\]
For $t\geqslant k+1$, we have 
\begin{align*}
\sum_{t=k+1}^{\infty}\sum_{\substack{1\leqslant i\leqslant k\\
j\geqslant0\\
i+j=t
}
}{t-1 \choose i-1}m^{j}a_{i}\frac{1}{g^{t}} & =\sum_{i=1}^{k}\frac{a_{i}}{m^{i}}\sum_{t=k+1}^{\infty}{t-1 \choose i-1}\frac{m^{t}}{g^{t}}\\
 & \leqslant\sum_{i=1}^{k}\frac{a_{i}}{(i-1)!}\sum_{t=k+1}^{\infty}\frac{(t-1)!}{(t-i)!}\frac{m^{t-i}}{g^{t}}\\
 & \leqslant\frac{k\tilde{a}_{k}}{k!}\sum_{i=1}^{k}\sum_{t=k+1}^{\infty}\frac{(t-1)!}{(t-i)!}\frac{m^{t-i}}{g^{t}}
\end{align*}
Note that $\frac{m^{t-i}}{(t-i)!}$ is increasing in $i$ when $i\leq t-m$
and decreasing in $i$ when $i\geqslant t-m$. It follows that in
our range of $i$, $\frac{m^{t-i}}{(t-i)!}$ is maximized when $i=\min\left\{ k,t-m\right\} $.
It is thus instructive to split the end summation
\[
\sum_{t=k+1}^{\infty}\frac{(t-1)!}{(t-i)!}\frac{m^{t-i}}{g^{t}}\leqslant\sum_{t=k+1}^{k+m-1}\frac{(t-1)!}{m!}\frac{m^{m}}{g^{t}}+\sum_{t=k+m}^{\infty}\frac{(t-1)!}{(t-k)!}\frac{m^{t-k}}{g^{t}},
\]
where the first summation is zero if $m=1$. Now
\begin{align*}
\frac{k\tilde{a}_{k}}{k!}\sum_{i=1}^{k}\sum_{t=k+1}^{k+m-1}\frac{(t-1)!}{m!}\frac{m^{m}}{g^{t}} & \leqslant\frac{k^{2}\tilde{a}_{k}}{k!g^{k+1}}\sum_{t=0}^{m-2}\frac{(t+k)!}{m!}\frac{m^{m}}{g^{t}}\\
 & =\frac{k^{2}\tilde{a}_{k}}{k!g^{k+1}}\sum_{t=0}^{m-2}\frac{(t+k)!}{t!}\frac{m^{t}}{g^{t}}\frac{t!m^{m-t}}{m!}\\
 & \leqslant\frac{k^{2}\tilde{a}_{k}}{g^{k+1}}\frac{m^{m}}{m!}\sum_{t=0}^{m-2}\frac{(t+k)!}{t!k!}\frac{m^{t}}{g^{t}}.
\end{align*}
where on the last line we used that $t!m^{m-t}$ is decreasing in
$t$ for $t\leqslant m-2$ so is maximized at $t=0$. We moreover
have
\begin{align*}
 & \frac{k\tilde{a}_{k}}{k!}\sum_{i=1}^{k}\sum_{t=k+m}^{\infty}\frac{(t-1)!}{(t-k)!}\frac{m^{t-k}}{g^{t}}\leqslant k^{2}\tilde{a}_{k}\sum_{t=k+m-1}^{\infty}\frac{(t-1)!}{k!(t-k)!}\frac{m^{t-k}}{g^{t}}\\
 & =\frac{mk^{2}\tilde{a}_{k}}{g^{k+1}}\sum_{t=m-1}^{\infty}\frac{(t+k)!}{k!(t+1)!}\frac{m^{t}}{g^{t}}\leqslant\frac{mk^{2}\tilde{a}_{k}}{g^{k+1}}\sum_{t=m-1}^{\infty}\frac{(t+k)!}{k!t!}\frac{m^{t}}{g^{t}}.
\end{align*}
Putting these together, we estimate
\begin{align*}
 & \sum_{t=k+1}^{\infty}\sum_{\substack{1\leq i\leq k\\
j\geq0\\
i+j=t
}
}{t-1 \choose i-1}m^{j}a_{i}\frac{1}{g^{t}}\leqslant\frac{e^{m}k^{2}\tilde{a}_{k}}{g^{k+1}}\sum_{t=0}^{\infty}\frac{(t+k)!}{k!t!}\frac{m^{t}}{g^{t}}\\
 & =\frac{e^{m}k^{2}\tilde{a}_{k}}{g^{k+1}}\left(\frac{1}{1-\frac{m}{g}}\right)^{k}\leqslant\frac{e^{m}k^{2}\tilde{a}_{k}}{g^{k+1}}\exp\left(\frac{mk}{g-m}\right)\leqslant\frac{3e^{m}k^{2}\tilde{a}_{k}}{g^{k+1}},
\end{align*}
where on the last line we us that $mk\leqslant g-m$ since $m\leqslant k\leqslant g^{\frac{1}{3}}$.
It follows that

\[
\left|A-\sum_{t=0}^{k}\frac{m^{t}b_{t}}{g^{t}}\right|\leqslant\frac{C_{k}}{(g-m)^{k+1}}+C\left(\frac{m}{g}\right)^{k+1}\sum_{i=1}^{k}\frac{a_{i}k^{i}}{m^{i}}
\]
with $b_{0}=a_{0}$ and
\[
b_{t}\eqdf\sum_{i=1}^{t}{t-1 \choose i-1}m^{-i}a_{i}.
\]
The result then follows again from $m\leqslant k+1\leqslant g^{\frac{1}{3}}$
so that
\begin{align*}
 & \frac{C_{k}}{(g-m)^{k+1}}=\frac{C_{k}}{g^{k+1}}\left(1+\frac{m}{g-m}\right)^{k+1}\\
 & \leqslant\frac{C_{k}}{g^{k+1}}\left(1+\frac{m}{k+1}\right)^{k+1}\leqslant\frac{3C_{k}}{g^{k+1}}.
\end{align*}
\end{proof}
Often we will both shift an asymptotic expansion base and then take
products of the resulting expansions. When the factor with which one
shifts by is bounded by some common term, and the original asymptotic
expansion coefficients take a form akin to those seen in Theorem \ref{thm:expansions},
then we can obtain a better bound on the resulting asymptotic expansion
coefficients by simultaneously treating each shifted coefficient in
the product. The following lemma is a blackbox for this procedure
that we will apply in our proofs.
\begin{lem}
\label{lem:coeff-prod-error}For $t_{1},\ldots,t_{k}\in\mathbb{N}$
and $b,c>1$, we have
\[
\sum_{p_{1}=1}^{t_{1}}{t_{1}-1 \choose p_{1}-1}b^{t_{1}-p_{1}}(ct_{1})^{cp_{1}}\cdots\sum_{p_{k}=1}^{t_{k}}{t_{k}-1 \choose p_{k}-1}b^{t_{k}-p_{k}}(ct_{k})^{cp_{k}}\leq(c(t_{1}+\ldots+t_{k})+b)^{c(t_{1}+\ldots+t_{k})}.
\]
\end{lem}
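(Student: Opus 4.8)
The plan is to collapse the $k$-fold product into a product of closed forms, one for each index $j$, and then bound each closed form by a suitable power of $c(t_{1}+\dots+t_{k})+b$. First I would treat a single inner sum in isolation: substituting $q=p-1$ and applying the binomial theorem gives
\[
\sum_{p=1}^{t_{j}}\binom{t_{j}-1}{p-1}b^{t_{j}-p}(ct_{j})^{cp}=(ct_{j})^{c}\sum_{q=0}^{t_{j}-1}\binom{t_{j}-1}{q}b^{t_{j}-1-q}\bigl((ct_{j})^{c}\bigr)^{q}=(ct_{j})^{c}\bigl(b+(ct_{j})^{c}\bigr)^{t_{j}-1}.
\]
In particular the left-hand side of the lemma equals $\prod_{j=1}^{k}(ct_{j})^{c}\bigl(b+(ct_{j})^{c}\bigr)^{t_{j}-1}$.

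Next, writing $T=t_{1}+\dots+t_{k}$, I would estimate each factor crudely. Since $(ct_{j})^{c}\leqslant b+(ct_{j})^{c}$ we have $(ct_{j})^{c}\bigl(b+(ct_{j})^{c}\bigr)^{t_{j}-1}\leqslant\bigl(b+(ct_{j})^{c}\bigr)^{t_{j}}$, and since $t_{j}\leqslant T$ and $c>1$ we get $(ct_{j})^{c}\leqslant(cT)^{c}$, so this is at most $\bigl(b+(cT)^{c}\bigr)^{t_{j}}$. The one inequality needing a word is $b+(cT)^{c}\leqslant(cT+b)^{c}$: this follows from the superadditivity $(u+v)^{c}\geqslant u^{c}+v^{c}$ for $u,v\geqslant0$ and $c\geqslant1$ (apply with $u=cT$, $v=b$) together with $b^{c}\geqslant b$, which holds because $b>1$ and $c>1$. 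Hence each factor is bounded by $(cT+b)^{ct_{j}}$, and taking the product over $j$ collapses — the base $cT+b$ no longer depends on $j$ — to $(cT+b)^{c(t_{1}+\dots+t_{k})}$, which is exactly the claimed bound.

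This argument is entirely elementary, so I do not anticipate a genuine obstacle; the only care needed is to invoke the hypotheses $b,c>1$ at the right points (superadditivity of $x\mapsto x^{c}$ uses $c\geqslant1$, while $b^{c}\geqslant b$ uses $b\geqslant1$) and to observe that each inner sum really is an exact binomial expansion rather than an inequality, which is what lets the final product be evaluated in closed form. If one wished to avoid mentioning superadditivity explicitly, the same step can be phrased as the single inequality $u^{c}+v^{c}\leqslant(u+v)^{c}$ for $u,v\geqslant0$ and $c\geqslant1$.
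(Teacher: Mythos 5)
Your proof is correct, and it is genuinely simpler than the one in the paper. You recognize that each inner sum is an exact binomial expansion: with $q=p-1$,
\[
\sum_{p=1}^{t_{j}}\binom{t_{j}-1}{p-1}b^{t_{j}-p}(ct_{j})^{cp}=(ct_{j})^{c}\bigl(b+(ct_{j})^{c}\bigr)^{t_{j}-1},
\]
so the left-hand side collapses to a closed-form product, and the inequality reduces to the single elementary estimate $b+(cT)^{c}\leqslant(cT+b)^{c}$, which you justify by superadditivity of $x\mapsto x^{c}$ for $c\geqslant1$ together with $b^{c}\geqslant b$. The paper instead never evaluates the inner sums in closed form; it shifts indices, applies the Vandermonde-type bound $\binom{n_{1}}{m_{1}}\binom{n_{2}}{m_{2}}\leqslant\binom{n_{1}+n_{2}}{m_{1}+m_{2}}$ to merge the $k$ binomial coefficients into one, and then performs an iterated sequence of change-of-variable manipulations, at each stage resumming a single binomial sum and absorbing $(ct_{j})^{c}+(ct_{j'})^{c}\leqslant(c(t_{j}+t_{j'}))^{c}$ (the same superadditivity step you use, but applied repeatedly inside the reduction). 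Both arguments hinge on that superadditivity at their core; yours extracts it once, cleanly, after evaluating the sums exactly, while the paper's carries the sums through a longer telescoping reduction. Your version is shorter, avoids the merging trick entirely, and is at least as transparent about where the hypotheses $b,c>1$ are used, so there is no gap — just a different and tidier route to the same bound.
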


\begin{proof}
By shifting the summations and using the bound ${n_{1} \choose m_{1}}{n_{2} \choose m_{2}}\leq{n_{1}+n_{2} \choose m_{1}+m_{2}}$
we readily obtain
\begin{align*}
 & \sum_{p_{1}=1}^{t_{1}}{t_{1}-1 \choose p_{1}-1}b^{t_{1}-p_{1}}(ct_{1})^{cp_{1}}\cdots\sum_{p_{k}=1}^{t_{k}}{t_{k}-1 \choose p_{k}-1}b^{t_{k}-p_{k}}(ct_{k})^{cp_{k}}\\
 & \leqslant(ct_{1})^{c}\cdots(ct_{k})^{c}\sum_{p_{1}=0}^{t_{1}-1}\cdots\sum_{p_{k}=0}^{t_{k}-1}{\sum_{q=1}^{k}t_{q}-k \choose \sum_{q=1}^{k}p_{q}}b^{\sum_{q=1}^{k}t_{q}-\sum_{q=1}^{k}p_{q}-k}\prod_{q=1}^{k}(ct_{q})^{cp_{q}}.
\end{align*}
We now make the change of variable $\ell=p_{k-1}+p_{k}$ to obtain
\begin{align*}
 & \sum_{p_{1}=0}^{t_{1}-1}\cdots\sum_{p_{k}=0}^{t_{k}-1}{\sum_{q=1}^{k}t_{q}-k \choose \sum_{q=1}^{k}p_{q}}b^{\sum_{q=1}^{k}t_{q}-\sum_{q=1}^{k}p_{q}-k}\prod_{q=1}^{k}(ct_{q})^{cp_{q}}\\
 & =\sum_{p_{1}=0}^{t_{1}-1}\cdots\sum_{p_{k-1}=0}^{t_{k-1}-1}\sum_{\ell=p_{k-1}}^{p_{k-1}+t_{k}-1}{\sum_{q=1}^{k}t_{q}-k \choose \sum_{q=1}^{k-2}p_{q}+\ell}b^{\sum_{q=1}^{k}t_{q}-\sum_{q=1}^{k-2}p_{q}-\ell-k}(ct_{k})^{c(\ell-p_{k-1})}\prod_{q=1}^{k-1}(ct_{q})^{cp_{q}}\\
 & =\sum_{p_{1}=0}^{t_{1}-1}\cdots\sum_{\ell=0}^{t_{k-1}+t_{k}-2}\sum_{p_{k-1}=0}^{\min\left\{ \ell,t_{k-1}-1\right\} }{\sum_{q=1}^{k}t_{q}-k \choose \sum_{q=1}^{k-2}p_{q}+\ell}b^{\sum_{q=1}^{k}t_{q}-\sum_{q=1}^{k-2}p_{q}-\ell-k}(ct_{k})^{c(\ell-p_{k-1})}\prod_{q=1}^{k-1}(ct_{q})^{cp_{q}}\\
 & \leqslant\sum_{p_{1}=0}^{t_{1}-1}\cdots\sum_{\ell=0}^{t_{k-1}+t_{k}-2}{\sum_{q=1}^{k}t_{q}-k \choose \sum_{q=1}^{k-2}p_{q}+\ell}b^{\sum_{q=1}^{k}t_{q}-\sum_{q=1}^{k-2}p_{q}-\ell-k}\\
 & \,\,\,\,\,\,\,\,\,\,\,\,\,\,\,\,\,\,\,\,\,\,\,\,\,\,\,\,\,\,\,\,\,\,\,\,\,\,\,\,\,\,\,\,\,\,\,\,\cdot\sum_{p_{k-1}=0}^{\ell}{\ell \choose p_{k-1}}(ct_{k-1})^{cp_{k-1}}(ct_{k})^{c(\ell-p_{k-1})}\prod_{q=1}^{k-2}(ct_{q})^{cp_{q}}\\
 & \leqslant\sum_{p_{1}=0}^{t_{1}-1}\cdots\sum_{\ell=0}^{t_{k-1}+t_{k}-2}{\sum_{q=1}^{k}t_{q}-k \choose \sum_{q=1}^{k-2}p_{q}+\ell}b^{\sum_{q=1}^{k}t_{q}-\sum_{q=1}^{k-2}p_{q}-\ell-k}(c(t_{k-1}+t_{k}))^{c\ell}\prod_{q=1}^{k-2}(ct_{q})^{cp_{q}}.
\end{align*}
We now repeat this computation with the change of variables $\ell'=\ell+p_{k-2}$.
Continuing in this manner, we reduce to 
\[
\sum_{p_{1}=0}^{t_{1}-1}\sum_{\ell=0}^{t_{2}+\ldots+t_{k}-(k-1)}{\sum_{q=1}^{k}t_{q}-k \choose p_{1}+\ell}b^{\sum_{q=1}^{k}t_{q}-(p_{1}+\ell)-k}(ct_{1})^{cp_{1}}(c(t_{2}+\ldots+t_{k}))^{c\ell}.
\]
Once again, making the change of variables $q=p_{1}+\ell$ we obtain
\begin{align*}
 & \sum_{p_{1}=0}^{t_{1}-1}\sum_{q=p_{1}}^{p_{1}+t_{2}+\ldots+t_{k}-(k-1)}{\sum_{q=1}^{k}t_{q}-k \choose q}b^{\sum_{q=1}^{k}t_{q}-q-k}(ct_{1})^{cp_{1}}(c(t_{2}+\ldots+t_{k}))^{c(q-p_{1})}\\
 & =\sum_{q=0}^{t_{1}+t_{2}+\ldots+t_{k}-k}\sum_{p_{1}=0}^{\min\left\{ q,t_{1}-1\right\} }{\sum_{q=1}^{k}t_{q}-k \choose q}b^{\sum_{q=1}^{k}t_{q}-q-k}(ct_{1})^{cp_{1}}(c(t_{2}+\ldots+t_{k}))^{c(q-p_{1})}\\
 & \leqslant\sum_{q=0}^{t_{1}+t_{2}+\ldots+t_{k}-k}{\sum_{q=1}^{k}t_{q}-k \choose q}b^{\sum_{q=1}^{k}t_{q}-q-k}\sum_{p_{1}=0}^{q}{q \choose p_{1}}(ct_{1})^{cp_{1}}(c(t_{2}+\ldots+t_{k}))^{c(q-p_{1})}\\
 & \leqslant\sum_{q=0}^{t_{1}+t_{2}+\ldots+t_{k}-k}{\sum_{q=1}^{k}t_{q}-k \choose q}b^{\sum_{q=1}^{k}t_{q}-k-q}(c(t_{1}+\ldots+t_{k}))^{cq}\\
 & =\left(c(t_{1}+\ldots+t_{k})^{c}+g_{1}\right)^{t_{1}+\ldots+t_{k}-k}\\
 & \leqslant\left(c(t_{1}+\ldots+t_{k})+g_{1}\right)^{c(t_{1}+\ldots+t_{k}-k)}.
\end{align*}
Now multiplying by $(ct_{1})^{c}\cdots(ct_{k})^{c}\leq\left(c(t_{1}+\ldots+t_{k})+g_{1}\right)^{ck}$
gives the desired result.
\end{proof}

\subsection{Proof of Theorem \ref{thm:expansions}}

The method of proof of Theorem \ref{thm:expansions} follows that
of Mirzakhani and Zograf \cite[Theorem 4.1]{Mi.Zo2015} which is an
inductive argument according to the following schematic. 

\paragraph{Schematic for the proof of Theorem \ref{thm:expansions}.}
\begin{enumerate}
\item Each of $A1(0)$, $A2(0)$, $A3(0)$, $A4(0)$ hold. 
\item For all $k\geqslant0$, if $A2(k)$ holds, then $A1(k)$ holds.
\item For all $k\geqslant0$, if $A1(r)$, $A2(r)$, $A3(r),$and $A4(r)$
hold for all $r\leqslant k$ then $A2(k+1)$ holds.
\item For all $k\geqslant0$, if $A1(k)$ holds, then statements $A3(k)$
and $A4(k)$ hold.
\end{enumerate}
We now prove each step in this schematic, starting with the most complicated.
We note importantly that in this Section, we must track constants
very carefully to prove the induction statements and so the constants
$c,C$ appearing in the proof \textbf{do not }change from line to
line (they are fixed once and for all as defined in Theorem \ref{thm:expansions}).
\begin{prop}
\label{prop:A2}Suppose that for some $k\geqslant1$, each of $A1(r)$,
$A2(r)$, $A3(r),$ and $A4(r)$ hold for all $r\leqslant k$ then
$A2(k+1)$ holds.
\end{prop}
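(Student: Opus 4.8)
\textbf{Plan for the proof of Proposition \ref{prop:A2}.}

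The goal is to upgrade the inductive hypotheses $A1(r),A2(r),A3(r),A4(r)$ for $r\leqslant k$ to $A2(k+1)$, i.e.\ an expansion for the normalized difference $\bigl([\tau_{d_1}\cdots\tau_{d_n}]_{g,n}-[\tau_{d_1+1}\tau_{d_2}\cdots\tau_{d_n}]_{g,n}\bigr)/V_{g,n}$ with one more term and the sharp error bound. The natural engine is the Mirzakhani recursion (recursion \ref{eq:recursion4}), which expresses $[\prod\tau_{d_i}]_{g,n}=A_{\d}+B_{\d}+C_{\d}$ as a sum of terms each of which is (a product of) intersection numbers of strictly smaller complexity, together with the recursions \ref{eq:recurs-1A}, \ref{eq:recurssoinii} that let one rewrite $[\tau_0\tau_{\ell}\cdots]$ and $[\tau_0^2\tau_{\ell+1}\cdots]$ type terms. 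Following \cite[Theorem 4.1]{Mi.Zo2015}, the plan is: (i) apply the recursion to \emph{both} $[\tau_{d_1}\tau_{d_2}\cdots\tau_{d_n}]_{g,n}$ and $[\tau_{d_1+1}\tau_{d_2}\cdots\tau_{d_n}]_{g,n}$; (ii) observe that many terms in the two expansions cancel or combine, and those that survive are differences of intersection numbers of the shape controlled by $A2(\leqslant k)$ (the $\tau_{d}$ versus $\tau_{d+1}$ difference structure is preserved by the recursion), or are controlled by $A1(\leqslant k)$ after dividing by the appropriate $V_{g',n'}$; (iii) convert each surviving ratio to an expansion in powers of $1/g$ using $A3,A4$ and Theorem \ref{Large-g-Vg} to handle the ratios $V_{g',n'}/V_{g,n}$, rebasing from $g'=g-m$ to $g$ via Lemma \ref{lem:shift}; (iv) multiply the resulting expansions together with Lemma \ref{lem:product-exp} and Corollary \ref{cor:product-with-zeroterms}, using that the $C_{\d}$-type (boundary-separating) terms carry extra powers of $1/g$ so only low-order data is needed from them; (v) collect the coefficients, define $e_{\d,n}^{i}$ as the resulting sum, and read off the error polynomial $P_n^k$.

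The delicate part is \textbf{not} the existence of the expansion — that is essentially in \cite{Mi.Zo2015} — but tracking the explicit bounds on the coefficients of $P_n^k$ and of the majorizing polynomials $v_n^i$ through all of (i)--(v), since in this section the constants $c,C$ are frozen and every inequality must close with the exact exponents claimed in $A2(k+1)$. Concretely: the recursion introduces sums over $l$ (weighted by $a_l=(1-2^{1-2l})\zeta(2l)$, whose telescoped differences are controlled by Lemma \ref{lem:basic estimate}), sums over splittings $I\sqcup J$ of the $n$ marked points and $g_1+g_2=g$, and sums over $k_1+k_2$; each of these multiplies the number of terms and the degrees of the coefficient polynomials. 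The bound $\bigl|[d_1^{t_1}\cdots d_n^{t_n}]P_n^k\bigr|\leqslant (c(k+1))^{c(k+1)-3}(n+k+1)^{c(k+1)-3}/(t_1!\cdots t_n!)$ — note the $-3$ in the exponent, which is \emph{sharper} than the $A1$ bound and must be maintained — forces one to be careful about exactly how many times a factor of $c(k+1)$ or $(n+k+1)$ is spent. The key combinatorial lemma for keeping these factorial-denominator bounds under control when shifting the base of the expansion and then multiplying is Lemma \ref{lem:coeff-prod-error}; the whole proof is organized so that each application of the recursion feeds into exactly one application of that lemma.

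The plan for the write-up is therefore: first expand $[\tau_{d_1}\prod_{i\geqslant2}\tau_{d_i}]_{g,n}-[\tau_{d_1+1}\prod_{i\geqslant2}\tau_{d_i}]_{g,n}$ via \ref{eq:recursion4} and group the difference into (a) the ``main'' terms where the reduction lands on a single lower intersection number — handled by $A2(\leqslant k)$ composed with a ratio $V_{g',n'}/V_{g,n}$ expanded via $A3(\leqslant k)$, $A4(\leqslant k)$ and Lemma \ref{lem:shift} — and (b) the ``product'' terms $C_{\d}$ where the splitting into $g_1,g_2$ produces a genuine product $V_{g_1,\cdot}V_{g_2,\cdot}$; for (b) one uses that, by the index constraints, at least one factor is of ``bounded type'' so it suffices to expand that factor only to low order (Corollary \ref{cor:product-with-zeroterms}) while the other factor's ratio to $V_{g,n}$ contributes an extra $1/g$, plus a Lemma \ref{lem:WP-sum-product-bound}/Lemma \ref{Lemma 22}-style tail estimate for the range of $(g_1,g_2)$ not covered by the finite expansion. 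Then one assembles everything with Lemma \ref{lem:product-exp}, defines $e_{\d,n}^{i}$ as the coefficient of $g^{-i}$ in the assembled finite sum, and verifies by a direct (if tedious) bookkeeping — invoking Lemma \ref{lem:coeff-prod-error} at the product steps and Lemma \ref{lem:basic estimate} at the $a_l$-sums — that the remainder obeys the stated bound with polynomial $P_n^k$ of degree $\leqslant ck+1$ and coefficients bounded by $(c(k+1))^{c(k+1)-3}(n+k+1)^{c(k+1)-3}/(t_1!\cdots t_n!)$, and that the $e_{\d,n}^i$ are majorized by polynomials $v_n^i$ of degree $\leqslant c(i-1)+1$ with the claimed coefficient bound. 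The main obstacle, to be emphasized, is precisely this constant-tracking: showing that the accumulation of factors from the $l$-sum, the $I\sqcup J$ splittings, the rebasing, and the products does not exceed the budget $(c(k+1))^{c(k+1)-3}$ — this is where the choice $c>\max\{600,C\}$ and the $-3$ slack in the exponent (versus $A1$) are used, and where one must be most attentive to not double-counting.
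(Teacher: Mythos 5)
Your plan is structurally aligned with the paper: both start from Mirzakhani's recursion (\ref{eq:recursion4}), decompose the ratio into three pieces coming from $A_{\d}$, $B_{\d}$, $C_{\d}$, expand each via the inductive hypotheses and Lemma \ref{lem:shift}, multiply via Lemma \ref{lem:product-exp}/Corollary \ref{cor:product-with-zeroterms}, and track coefficient polynomials through Lemma \ref{lem:coeff-prod-error} and Lemma \ref{lem:basic estimate}. The $-3$ slack in the $A2$ exponent relative to $A1$, and the $c>600$ budget, are also the right things to flag. However, two aspects of your plan differ from the paper's argument in ways worth noting, one of which is a genuine gap.

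First, you suggest that the surviving terms are ``differences of intersection numbers of the shape controlled by $A2(\leqslant k)$'' because ``the $\tau_d$ versus $\tau_{d+1}$ difference structure is preserved by the recursion.'' The paper instead telescopes the difference \emph{immediately}: since $[\tau_{d_1}\cdots]_{g,n}-[\tau_{d_1+1}\cdots]_{g,n}$ is expressed via (\ref{eq:recursion4}) with coefficients $a_l$, the shift $d_1\mapsto d_1+1$ re-indexes the $l$-sum, and the two sums combine into a single sum against the telescoped weights $a_l-a_{l-1}$ (which decay like $4^{-l}$ and are controlled by Lemma \ref{lem:basic estimate}). The resulting $\tilde{A}_{\d,g,n}$, $\tilde{B}_{\d,g,n}$, $\tilde{C}_{\d,g,n}$ are \emph{not} differences — they are single sums of intersection numbers — and the paper controls the ratios $[\tau_\cdot\cdots]/V_{g',n'}$ with $A1(\leqslant k)$, not $A2(\leqslant k)$. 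In fact the hypothesis $A2(r)$ for $r\leqslant k$ is never invoked in the paper's proof of this proposition (the induction scheme lists it for uniformity, but only $A1$, $A3$, $A4$ carry the load here). Trying to feed the difference structure into $A2(\leqslant k)$ runs into a regress: $A2(k)$ would only give a $k$-term expansion of a difference, while you need $k+1$ terms.

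Second — and this is the real gap — your plan does not articulate where the extra order of $1/g$ required to upgrade from order $k$ to order $k+1$ actually comes from. You note that the $C_{\d}$-type separating terms carry extra powers of $1/g$, which is true, but the $A_{\d}$ and $B_{\d}$ contributions must also gain an order. In the paper this is achieved by writing, for example,
\[
S_1=\frac{1}{4\pi^{2}(2g-2+n)}\cdot\frac{4\pi^{2}(2g-2+n)V_{g,n-1}}{V_{g,n}}\cdot\frac{\tilde{A}_{\d,g,n}}{V_{g,n-1}},
\]
so that the scalar prefactor $\frac{1}{4\pi^2(2g-2+n)}$, whose Taylor expansion at $g=\infty$ begins at order $g^{-1}$, supplies the extra factor; the second factor is handled by $A3(\leqslant k)$ and the third by $A1(\leqslant k)$ applied to the telescoped sum, so that Corollary \ref{cor:product-with-zeroterms} assembles an expansion beginning at order $g^{-1}$ with $k+1$ terms from order-$k$ inputs. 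Without spelling out this normalization, ``expand and multiply'' does not by itself produce the order-$(k+1)$ claim, and in particular one cannot close the induction by applying $A2(\leqslant k)$ in place of $A1(\leqslant k)$, since there is no corresponding mechanism to gain the missing order of $1/g$.
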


\begin{proof}
We write
\[
\frac{\left[\tau_{d_{1}}\dots\tau_{d_{n}}\right]_{g,n}-\left[\tau_{d_{1}+1}\dots\tau_{d_{n}}\right]_{g,n}}{V_{g,n}}=S_{1}+S_{2}+S_{3},
\]
where 
\begin{align*}
S_{1} & =\frac{1}{4\pi^{2}\left(2g-2+n\right)}\frac{4\pi^{2}\left(2g-2+n\right)V_{g,n-1}}{V_{g,n}}\frac{\tilde{A}_{\d,g,n}}{V_{g,n-1}},\\
S_{2} & =\frac{1}{4\pi^{2}\left(2g-2+n\right)}\frac{4\pi^{2}\left(2g-2+n\right)V_{g,n-1}}{V_{g,n}}\frac{V_{g-1,n+1}}{V_{g,n-1}}\frac{\tilde{B}_{\d,g,n}}{V_{g-1,n+1}},\\
S_{3} & =\frac{\tilde{C}_{\d,g,n}}{V_{g,n}},
\end{align*}
and 
\[
\tilde{A}_{\d,g,n}+\tilde{B}_{\d,g,n}+\tilde{C}_{\d,g,n}=\left[\tau_{d_{1}}\dots\tau_{d_{n}}\right]_{g,n}-\left[\tau_{d_{1}+1}\dots\tau_{d_{n}}\right]_{g,n}
\]
are the terms corresponding to (\ref{eq:recurs(A)}), (\ref{eq:recurs(B)})
and (\ref{eq:recrus(C)}) respectively. In Lemmas \ref{lem:S_1},
\ref{lem:S_2} and \ref{lem:S_3} we obtain asymptotic expansions
for each of $S_{1},S_{2}$ and $S_{3}$ which when combined prove
$A2(k+1)$.
\end{proof}
\begin{lem}
\label{lem:S_1}Suppose that for some $k\geqslant1$, each of $A1(r)$,
$A2(r)$, $A3(r),$ and $A4(r)$ hold for all $r\leqslant k$, then
there exist $\left\{ w_{\mathbf{d},n}^{i}\right\} _{i\geqslant1}$
and a polynomial $P_{n}^{k+1}$ of degree at most $2(k+1)+1$ such
that for any $g>(n+k+1)(c(k+1))^{c}$,
\[
\left|S_{1}-\sum_{i=1}^{k+1}\frac{w_{\d,n}^{i}}{g^{i}}\right|\leqslant\frac{P_{n}^{k+1}(d_{1},\ldots,d_{n})}{g^{k+2}},
\]
and for any $t_{1},\ldots,t_{n}\in\mathbb{Z}_{\geqslant0}$ the coefficient
$\left|[d_{1}^{t_{1}}\cdots d_{n}^{t_{n}}]P_{n}^{k+1}\right|\leqslant\frac{(c(k+1))^{c(k+1)-4}(n+k+2)^{k+2}}{t_{1}!\cdots t_{n}!}$,
and the $w_{\mathbf{d},n}^{i}$ are majorised by polynomials $v_{n}^{i}$
of degree at most $2(i-1)+1$ whose coefficients satisfy $\left|[d_{1}^{t_{1}}\cdots d_{n}^{t_{n}}]v_{n}^{i}\right|\leqslant\frac{(ci)^{ci-4}(n+i)^{i}}{t_{1}!\cdots t_{n}!}$
.
\end{lem}

\begin{proof}
By (\ref{eq:recurs(A)}), $\tilde{A}_{\d,g,n}$ is given by 
\[
8\sum_{j=2}^{n}\sum_{i=0}^{d_{0}}(2d_{j}+1)(a_{i}-a_{i-1})\left[\tau_{d_{1}+d_{j}+i-1}\tau_{d_{2}}\dots\hat{\tau}_{d_{j}}\dots\tau_{d_{n}}\right]_{g,n-1},
\]
where we write $a_{-1}\equiv0$ and a hat on an index indicates that
it is removed from the product. We first deal with the case that $|\d|\leqslant g$.
With the coefficients as in $A1(k)$, we have the bound 
\begin{align*}
 & \left|\frac{\tilde{A}_{\d,g,n}}{V_{g,n-1}}-\sum_{t=0}^{k}\sum_{j=2}^{n}\sum_{i=0}^{\infty}8(a_{i}-a_{i-1})(2d_{j}+1)\frac{b_{\mathbf{d}(i,j),n-1}^{t}}{g^{t}}\right|\leqslant\\
 & \underbrace{\left|8\sum_{j=2}^{n}\sum_{i=0}^{\infty}(a_{i}-a_{i-1})(2d_{j}+1)\left(\frac{\left[\tau_{d_{1}+d_{j}+i-1}\tau_{d_{2}}\dots\hat{\tau}_{d_{j}}\dots\tau_{d_{n}}\right]_{g,n-1}}{V_{g,n-1}}-\sum_{t=0}^{k}\frac{b_{\mathbf{d}(i,j),n-1}^{t}}{g^{t}}\right)\right|}_{(1)}\\
 & +\underbrace{\left|8\sum_{j=2}^{n}\sum_{i=d_{0}+1}^{\infty}(2d_{j}+1)(a_{i}-a_{i-1})\frac{\left[\tau_{d_{1}+d_{j}+i-1}\tau_{d_{2}}\dots\hat{\tau}_{d_{j}}\dots\tau_{d_{n}}\right]_{g,n-1}}{V_{g,n-1}}\right|}_{(2)},
\end{align*}
where we have written $\mathbf{d}(i,j)\eqdf(d_{1}+d_{j}+i-1,d_{2},\ldots\hat{d_{j}},\ldots,d_{n})$
and $b_{\mathbf{d}(i,j)}^{0}=1$. We first will deal with the tail
term (2) which can be bounded using the triangle inequality by 
\begin{align*}
(2) & \leqslant8\sum_{j=2}^{n}\sum_{i=d_{0}+1}^{\infty}(2d_{j}+1)(a_{i}-a_{i-1})\leqslant16n\left(|\mathbf{d}|+1\right)\sum_{i=d_{0}+1}^{\infty}4^{-i}\leqslant\frac{1}{g^{k+2}},
\end{align*}
where we use $d_{0}>g\geqslant|\mathbf{d}|$ and $g>cnk^{2}.$  Let
$Q_{n-1}^{k}(x_{1},\ldots,x_{n-1})$ be the polynomial as in the error
term of $A1(k)$ and denote its coefficient of $x_{1}^{a_{1}}\cdots x_{n-1}^{a_{n-1}}$
by $p_{a_{1},\ldots,a_{n-1}}^{k,n-1}$. For $(1)$, we use $A1(k)$
to obtain an upper bound of the form
\begin{align}
(1) & \leqslant\left|8\sum_{j=2}^{n}\sum_{i=0}^{\infty}(a_{i}-a_{i-1})(2d_{j}+1)\left(\frac{\left[\tau_{d_{1}+d_{j}+i-1}\tau_{d_{2}}\dots\hat{\tau}_{d_{j}}\dots\tau_{d_{n}}\right]_{g,n-1}}{V_{g,n-1}}-\sum_{t=0}^{k}\frac{b_{\mathbf{d}(i,j),n-1}^{t}}{g^{t}}\right)\right|\label{eq:S1-error-bound}\\
 & \leqslant\frac{1}{g^{k+1}}8\sum_{j=2}^{n}(2d_{j}+1)\sum_{i=0}^{\infty}\left(a_{i}-a_{i-1}\right)Q_{n-1}^{k}\left(d_{1}+d_{j}+i-1,d_{2},\dots,\hat{d_{j}},\dots,d_{n}\right)\nonumber \\
 & =\frac{1}{g^{k+1}}8\sum_{j=2}^{n}(2d_{j}+1)\sum_{i=0}^{\infty}\left(a_{i}-a_{i-1}\right)\nonumber \\
 & \,\,\,\,\,\,\,\,\,\,\,\,\,\,\,\,\,\,\,\,\,\,\,\,\,\,\,\,\,\,\,\,\,\,\,\cdot\sum_{\ell=0}^{2(k+1)}\sum_{\substack{t_{2},\dots,t_{n-1}\\
\sum t_{j}\leqslant2(k+1)-\ell
}
}p_{\left(\ell,t_{2},\dots,t_{n-1}\right)}^{k,n-1}\left(d_{1}+d_{j}+i-1\right)^{\ell}d_{2}^{t_{2}}\dots d_{n}^{t_{n-1}}\nonumber \\
 & =\frac{1}{g^{k+1}}8\sum_{j=2}^{n}(2d_{j}+1)\sum_{\ell=0}^{2(k+1)}\nonumber \\
 & \,\,\,\,\,\,\,\,\,\,\,\,\,\,\,\,\,\,\,\,\cdot\sum_{\substack{t_{2},\dots,t_{n-1}\\
\sum t_{j}\leqslant2(k+1)-\ell
}
}p_{\left(\ell,t_{2},\dots,t_{n-1}\right)}^{k,n-1}d_{2}^{t_{2}}\dots d_{n}^{t_{n-1}}\sum_{i=0}^{\infty}\left(a_{i}-a_{i-1}\right)\left(d_{1}+d_{j}+i-1\right)^{\ell}.\nonumber 
\end{align}
Now, we have by Lemma \ref{lem:basic estimate}
\begin{align*}
 & \sum_{i=0}^{\infty}\left(a_{i}-a_{i-1}\right)\left(d_{1}+d_{j}+i-1\right)^{\ell}=\sum_{m=0}^{\ell}(d_{1}+d_{j}-1)^{\ell-m}{\ell \choose m}\sum_{i=0}^{\infty}\left(a_{i}-a_{i-1}\right)i^{m}\\
 & \leqslant2\sum_{m=0}^{\ell}(d_{1}+d_{j})^{\ell-m}{\ell \choose m}m!=2\sum_{m=0}^{\ell}\frac{\ell!}{(\ell-m)!}\sum_{s=0}^{\ell-m}{\ell-m \choose s}d_{1}^{s}d_{j}^{\ell-m-s}\\
 & =2\sum_{m=0}^{\ell}\sum_{s=0}^{\ell-m}\frac{\ell!}{(\ell-m-s)!s!}d_{1}^{s}d_{j}^{\ell-m-s}.
\end{align*}
We thus obtain
\begin{align}
(1) & \leqslant\frac{32}{g^{k+1}}\sum_{j=2}^{n}\sum_{\ell=0}^{2(k+1)}\sum_{\substack{t_{2},\dots,t_{n-1}\\
\sum t_{j}\leqslant2(k+1)-\ell
}
}\sum_{m=0}^{\ell}\sum_{s=0}^{\ell-m}\frac{\ell!}{(\ell-m-s)!s!}p_{\left(\ell,t_{2},\dots,t_{n-1}\right)}^{k,n-1}d_{2}^{t_{2}}\dots d_{n}^{t_{n-1}}d_{1}^{s}d_{j}^{\ell-m-s+1}\label{eq:bound-on-A}\\
 & +\frac{16(n-1)}{g^{k+1}}\sum_{\ell=0}^{2(k+1)}\sum_{\substack{t_{2},\dots,t_{n-1}\\
\sum t_{j}\leqslant2(k+1)-\ell
}
}\sum_{m=0}^{\ell}\sum_{s=0}^{\ell-m}\frac{\ell!}{(\ell-m-s)!s!}p_{\left(\ell,t_{2},\dots,t_{n-1}\right)}^{k,n-1}d_{2}^{t_{2}}\dots d_{n}^{t_{n-1}}d_{1}^{s}d_{j}^{\ell-m-s}.\nonumber 
\end{align}
The right hand-side is a polynomial in $d_{1},\ldots,d_{n}$ with
degree bounded by $2(k+1)+1$ for the first term and bounded by $2(k+1)$
in the second term. We now show that the right hand-side is majorised
by a polynomial in $d_{1},\ldots,d_{n}$ of degree at most $2(k+1)+1$
whose coefficient of $d_{1}^{a_{1}}\cdots d_{n}^{a_{n}}$ is bounded
by (note that $a_{j}\geqslant1)$
\[
\frac{(c(k+2))^{c(k+1)+3}(n+k)^{k+2}}{a_{1}!\cdots a_{n}!}.
\]
To this end, notice that in the first term on the right-hand side
(\ref{eq:bound-on-A}), the coefficient of $d_{1}^{a_{1}}\cdots d_{n}^{a_{n}}$
is bounded by 
\[
\frac{32}{g^{k+1}}\sum_{j=2}^{n}\sum_{\substack{t_{1}\leqslant2(k+1)\\
t_{1}\geqslant a_{1}+a_{j}-1
}
}\frac{t_{1}!}{(a_{j}-1)!a_{1}!}p_{\left(t_{1},a_{2},\dots,a_{j-1},a_{j+1},\ldots,a_{n}\right)}^{k,n-1}.
\]
By Statement $A1(k)$,
\[
\left|p_{\left(t_{1},a_{2},\dots,a_{j-1},a_{j+1},\ldots,a_{n}\right)}^{k,n-1}\right|=\frac{(n+k)^{k+1}(c(k+1))^{c(k+1)}}{t_{1}!a_{1}!\cdots a_{j-1}!a_{j+1}!\cdots a_{n}!},
\]
so this coefficient is majorised by
\begin{align*}
 & \frac{32}{g^{k+1}}\sum_{j=2}^{n}\sum_{\substack{t_{1}\leqslant2(k+1)\\
t_{1}\geq a_{1}+a_{j}-1
}
}\frac{(n+k)^{k+1}(c(k+1))^{c(k+1)}}{a_{1}!\cdots a_{j-1}!a_{j+1}!\cdots a_{n}!(a_{j}-1)!}\\
 & \leqslant\frac{32}{g^{k+1}}\frac{(n+k)^{k+1}(2(k+1))(c(k+1))^{c(k+1)}\sum_{j=2}^{n}a_{j}}{a_{1}!\cdots a_{n}!}\\
 & \leqslant\frac{32}{g^{k+1}}\frac{(n+k)^{k+1}(2(k+1)+1)^{2}(c(k+1))^{c(k+1)}}{a_{1}!\cdots a_{n}!}.
\end{align*}
Similarly, the second term has coefficient majorised by
\[
\frac{16}{g^{k+1}}\frac{(n+k)^{k+2}(2(k+1))(c(k+1))^{c(k+1)}}{a_{1}!\cdots a_{n}!}.
\]
Combining the two gives the claimed bound when recalling that $c>600$.

Now define
\[
\tilde{a}_{\mathbf{d},n}^{r}\eqdf8\sum_{j=2}^{n}\sum_{i=0}^{\infty}(a_{i}-a_{i-1})(2d_{j}+1)b_{\mathbf{d}(i,j),n-1}^{r},
\]
so that $\left|\tilde{a}_{\mathbf{d},n}^{0}\right|\leqslant4(n-1)(|\mathbf{d}|+1)$
by Lemma \ref{lem:basic estimate} and $b_{\mathbf{d}(i,j),n-1}^{0}=1$.
Moreover, in much the same way as the error term, we see that $\tilde{a}_{\mathbf{d},n}^{r}$
is a polynomial in $d_{1},\ldots,d_{n}$ of degree at most $2r+1$
bounded by
\begin{align*}
\left|\tilde{a}_{\mathbf{d},n}^{r}\right| & \leqslant16\sum_{j=2}^{n}\sum_{\substack{t_{1},\ldots,t_{n-1}\\
\sum t_{i}\leqslant2r
}
}t_{1}!q_{(t_{1},\ldots,t_{n-1})}^{r,n-1}(2d_{j}+1)\\
 & \,\,\,\,\,\,\,\,\,\,\,\,\,\,\,\,\,\,\,\,\,\,\,\,\,\,\,\,\cdot\sum_{s=0}^{t_{1}}\sum_{p=0}^{t_{1}-s}\frac{1}{p!(t_{1}-s-p)!}d_{1}^{p}d_{j}^{t_{1}-s-p}d_{2}^{t_{2}}\cdots d_{j-1}^{t_{j-1}}d_{j+1}^{t_{j+1}}d_{n}^{t_{n-1}}.
\end{align*}
where $q_{(t_{1},\ldots,t_{n-1})}^{r,n-1}$ are the polynomials majorising
the $b_{\mathbf{d}(i,j),n-1}^{r}$ as in Statement $A1(k)$. By the
inductive hypothesis on $q_{(t_{1},\ldots,t_{n-1})}^{r,n-1}$, the
coefficient of $d_{1}^{a_{1}}\cdots d_{n}^{a_{n}}$ is thus bounded
by 
\begin{align*}
 & \frac{32(n+r-1)^{r}(cr)^{cr}}{a_{1}!a_{2}\cdots a_{j-1}!a_{j+1}!\cdots a_{n}!}\sum_{j=2}^{n}\sum_{\substack{t_{1}\leqslant2r\\
t_{1}\geqslant a_{1}+a_{j}-1
}
}\frac{a_{j}}{a_{j}!}+\frac{16(n+r-1)^{cr+1}(cr)^{cr}}{a_{1}!\cdots a_{n}!}\sum_{\substack{t_{1}\leqslant2r\\
t_{1}\geqslant a_{1}+a_{j}-1
}
}1\\
 & \leqslant\frac{48(n+r-1)^{r+1}(2r+1)^{2}(cr)^{cr}}{a_{1}!\cdots a_{n}!}\leqslant\frac{(n+r-1)^{r+1}(c(r+1))^{cr+3}}{a_{1}!\cdots a_{n}!}.
\end{align*}
In summary, using only $A1(k)$ we have obtained that whenever $g>(n+k+1)(c(k+1))^{c}$
and $g>|\d|$
\[
\left|\frac{\tilde{A}_{\d,g,n}}{V_{g,n-1}}-\sum_{t=0}^{k}\frac{\tilde{a}_{\mathbf{d},n}^{t}}{g^{t}}\right|\leqslant\frac{P_{n,k}(d_{1},\ldots,d_{n})}{g^{k+1}},
\]
where $P_{n,k}(d_{1},\ldots,d_{n})$ is a polynomial in $d_{1},\ldots,d_{n}$
of degree at most $2(k+1)+1$ with coefficient of $d_{1}^{a_{1}}\cdots d_{n}^{a_{n}}$
bounded by 
\[
\frac{(n+k)^{k+2}(c(k+2))^{c(k+1)+3}}{a_{1}!\cdots a_{n}!},
\]
and each $\tilde{a}_{\mathbf{d},n}^{t}$ is majorised by a polynomial
of degree at most $2t+1$ in $d_{1},\ldots,d_{n}$ with coefficient
of $d_{1}^{a_{1}}\cdots d_{n}^{a_{n}}$ bounded by the same thing
but with $k$ replaced by $t-1$.

When $g<|\d|$, we obtain the same result because for any $i\geqslant0$
we have that $|\d(i,j)|=i+|\d|-1\geqslant g$ and so
\begin{align*}
 & \left|\frac{\tilde{A}_{\d,g,n}}{V_{g,n-1}}-\sum_{t=0}^{k}\sum_{j=2}^{n}\sum_{i=0}^{\infty}8(a_{i}-a_{i-1})(2d_{j}+1)\frac{b_{\mathbf{d}(i,j),n-1}^{t}}{g^{t}}\right|\\
 & \leqslant\left|\frac{\tilde{A}_{\d,g,n}}{V_{g,n-1}}-\sum_{j=2}^{n}\sum_{i=0}^{\infty}8(a_{i}-a_{i-1})(2d_{j}+1)\right|+\sum_{j=2}^{n}\sum_{i=0}^{\infty}8(a_{i}-a_{i-1})(2d_{j}+1)\sum_{t=1}^{k}\frac{q_{n-1}^{t}(\d(i,j))}{g^{t}}\\
 & \leqslant\sum_{j=2}^{n}\sum_{i=0}^{\infty}8(a_{i}-a_{i-1})(2d_{j}+1)\frac{Q_{n}^{k}(\d(i,j))}{g^{k+1}}+\sum_{j=2}^{n}\sum_{i=0}^{\infty}8(a_{i}-a_{i-1})(2d_{j}+1)\frac{(k-1)Q_{n}^{k}(\d(i,j))}{g^{k+1}},
\end{align*}
where the second inequality follows from applying $A1(0)$ for the
first term along with Lemma \ref{lem:poly-rels} for $t=1$ and Lemma
\ref{lem:poly-rels} for $1\leqslant t\leqslant k$ for the second
part since 
\[
g^{-t}q_{n-1}^{t}(\d(i,j))\leqslant|\d(i,j)|^{k+1-t}g^{-(k+1)}q_{n-1}^{t}(\d(i,j))\leqslant g^{-(k+1)}Q_{n-1}^{k}(\d(i,j)).
\]
Thus the error is bounded by $k$ multiplied by the error obtained
in the $|\d|<g$ case and so the estimates hold by an identical analysis.

From $A3(k)$ we also have that there exist $\left\{ h_{n}^{i}\right\} _{i\geqslant1}$
so that for any $g>500\left(c(k+1)(n+k+2)\right)^{c}$,
\[
\left|\frac{4\pi^{2}\left(2g-2+n\right)V_{g,n}}{V_{g,n+1}}-1-\sum_{i=1}^{k}\frac{h_{n}^{i}}{g^{i}}\right|\leqslant\frac{500\left(c(k+1)\right)^{c(k+1)}(n+k+2)^{k+1}}{g^{k+1}},
\]
and $h_{n}^{i}\leqslant500(ci)^{ci}(n+i+1)^{i}$. Moreover, 
\begin{align*}
\frac{1}{4\pi^{2}\left(2g-2+n\right)} & =\frac{1}{8\pi^{2}g}\frac{1}{1-\frac{1-\frac{1}{2}n}{g}}\\
= & \begin{cases}
\sum_{p=1}^{\infty}\frac{(4\pi^{2})^{-1}2^{-p}}{g^{p}} & \text{if }n=1,\\
\frac{(8\pi^{2})^{-1}}{g} & \text{if }n=2,\\
\sum_{p=1}^{\infty}\frac{(8\pi^{2})^{-1}\left(\frac{1}{2}n-1\right)^{p-1}}{g^{p}} & \text{if }n>2.
\end{cases}
\end{align*}
So we have the expansion 
\[
\left|\frac{1}{4\pi^{2}\left(2g-2+n\right)}-\sum_{t=1}^{k+1}\frac{\theta_{n}^{t}}{g^{t}}\right|\leqslant\frac{n^{k+1}}{g^{k+2}},
\]
and the coefficients satisfy $\left|\theta_{n}^{t}\right|\leqslant n^{t-1}$.
Setting $\theta_{n}^{0}\eqdf0$, we can use Corollary \ref{cor:product-with-zeroterms}
(and similar analysis to the above for the error term) to obtain the
expansion of $S_{1}$ up to terms of order of $g^{-(k+1)}$ as 
\begin{align*}
\left|S_{1}-\sum_{t=1}^{k+1}\sum_{\substack{m_{1},m_{2},m_{3}\\
\sum m_{i}=t\\
0\leqslant m_{2},m_{3}\leqslant k\\
1\leqslant m_{1}\leqslant k+1
}
}\frac{\theta_{n}^{m_{1}}\tilde{a}_{\mathbf{d},n}^{m_{2}}h_{n}^{m_{3}}}{g^{t}}\right| & \leqslant\frac{P_{k+1,n}^{(1)}(d_{1}\ldots,d_{n})}{g^{k+2}},
\end{align*}
where $\text{\ensuremath{P_{k+1,n}^{(1)}}(\ensuremath{d_{1}\ldots},\ensuremath{d_{n}})}$
is a polynomial of degree at most $2(k+1)+1$ whose coefficient of
$d_{1}^{a_{1}}\cdots d_{n}^{a_{n}}$ is bounded by 
\[
\frac{(n+k+2)^{k+2}(c(k+2))^{c(k+1)+5}}{a_{1}!\cdots a_{n}!}.
\]
The coefficients of the asymptotic expansion are majorised by polynomial
of degrees at most $2(t-1)+1$ in $d_{1},\ldots,d_{n}$ whose coefficients
of $d_{1}^{a_{1}}\cdots d_{n}^{a_{n}}$ are bounded similarly. Indeed,
the coefficient of $d_{1}^{a_{1}}\cdots d_{n}^{a_{n}}$ is bounded
by 
\begin{align*}
 & \sum_{\substack{m_{1},m_{2},m_{3}}
}\frac{500n^{m_{1}-1}(c(m_{2}+1))^{cm_{2}+3}(n+m_{2}-1)^{m_{2}+1}(n+m_{3}+1)^{m_{3}}(cm_{3})^{cm_{3}}}{a_{1}!\cdots a_{n}!}\\
 & \leqslant500(n+t)^{t}\sum_{\substack{m_{1},m_{2},m_{3}}
}\frac{(c(m_{2}+m_{3}))^{c(m_{2}+m_{3})+3}}{a_{1}!\cdots a_{n}!}\\
 & \leqslant\frac{(n+t)^{t}}{a_{1}!\cdots a_{n}!}(c(t-1))^{c(t-1)+6},
\end{align*}
where the summations are over $m_{1},m_{2},m_{3}$ for which $\sum_{i=1}^{3}m_{i}=t$,
$0\leqslant m_{2},m_{3}\leqslant k$ and $1\leqslant m_{1}\leqslant k+1$,
and we use the fact that $m_{2}+m_{3}\leqslant t-1$ as $m_{1}\geqslant1$.
The conclusion now follows since $c>600$. 
\end{proof}
\begin{lem}
\label{lem:S_2}Suppose that for some $k\geqslant1$, each of $A1(r)$,
$A2(r)$, $A3(r),$ and $A4(r)$ hold for all $r\leqslant k$, then
there exist $\left\{ w_{\mathbf{d},n}^{i}\right\} _{i\geqslant1}$
and a polynomial $P_{n}^{k+1}$ of degree at most $2(k+1)+1$ such
that for any $g>(n+k+2)(c(k+2))^{c}$,
\[
\left|S_{2}-\sum_{i=1}^{k+1}\frac{w_{\d,n}^{i}}{g^{i}}\right|\leqslant\frac{P_{n}^{k+1}(d_{1},\ldots,d_{n})}{g^{k+2}},
\]
and for any $t_{1},\ldots,t_{n}\in\mathbb{Z}_{\geq0}$ the coefficient
$\left|[d_{1}^{t_{1}}\cdots d_{n}^{t_{n}}]P_{n}^{k+1}\right|\leqslant\frac{(c(k+2))^{c(k+2)-4}(n+k+2)^{k+1}}{t_{1}!\cdots t_{n}!}$,
and the $w_{\mathbf{d},n}^{i}$ are majorised by polynomials $v_{n}^{i}$
of degree at most $2(i-1)+1$ whose coefficients satisfy $\left|[d_{1}^{t_{1}}\cdots d_{n}^{t_{n}}]v_{n}^{i}\right|\leqslant\frac{(ci)^{ci-4}(n+i)^{i-1}}{t_{1}!\cdots t_{n}!}$
.
\end{lem}

\begin{proof}
By (\ref{eq:recurs(B)}), $\tilde{B}_{\d,g,n}$ is given by 
\[
16\sum_{\ell=0}^{d_{0}}\sum_{k_{1}+k_{2}=\ell+d_{1}-2}(a_{\ell}-a_{\ell-1})\left[\tau_{k_{1}}\tau_{k_{2}}\prod_{i\neq1}\tau_{d_{i}}\right]_{g-1,n+1},
\]
where we write $a_{-1}\equiv0$. We deal with the case when $|\d|\leqslant g$,
with the other case following in a very similar manner to that in
the proof of Lemma \ref{lem:S_1}. From $A1(k)$ and Lemma \ref{lem:shift}
we write 
\begin{align*}
 & \left|\frac{\tilde{B}_{\d,g,n}}{V_{g-1,n+1}}-16\sum_{t=0}^{k}\sum_{\ell=0}^{\infty}(a_{\ell}-a_{\ell-1})\sum_{i+j=d_{1}+\ell-2}\frac{\tilde{b}_{\mathbf{d}(i,j),n+1}^{t}}{g^{t}}\right|\\
 & \leqslant\underbrace{\left|16\sum_{\ell=0}^{\infty}\sum_{i+j=\ell+d_{1}-2}(a_{\ell}-a_{\ell-1})\left(\frac{\left[\tau_{i}\tau_{j}\prod_{i\neq1}\tau_{d_{i}}\right]_{g-1,n+1}}{V_{g-1,n+1}}-\sum_{t=0}^{k}\frac{\tilde{b}_{\mathbf{d}(i,j),n+1}^{t}}{g^{t}}\right)\right|}_{(1)}\\
 & +\underbrace{\left|16\sum_{\ell=d_{0}+1}^{\infty}\sum_{i+j=\ell+d_{1}-2}(a_{\ell}-a_{\ell-1})\frac{\left[\tau_{i}\tau_{j}\prod_{i\neq1}\tau_{d_{i}}\right]_{g-1,n+1}}{V_{g-1,n+1}}\right|}_{(2)},
\end{align*}
where $\mathbf{d}(i,j)=(i,j,d_{2},\ldots,d_{n})$ and $\tilde{b}_{\mathbf{d}(i,j),n+1}^{t}=\sum_{p=1}^{t}{t-1 \choose p-1}b_{\mathbf{d}(i,j),n+1}^{p}.$

We begin with the tail term (2) noting that $\left[\tau_{i}\tau_{j}\prod_{i\neq1}\tau_{d_{i}}\right]_{g-1,n+1}\leqslant V_{g-1,n+1}$,
$a_{\ell}-a_{\ell-1}\leqslant4^{-\ell}$ and the number of non-negative
$i$ and $j$ whose sum is equal to $d_{1}+\ell-2$ is precisely $d_{1}+\ell-1$.
Thus,
\[
(2)\leqslant16\sum_{\ell=d_{0}+1}^{\infty}(d_{1}-1+\ell)4^{-\ell}\leqslant\frac{1}{g^{k+2}},
\]
where we use $d_{0}>g\geqslant|\mathbf{d}|$ and $g>ck^{2}$. For
(1), we use $A1(k)$ and Lemma \ref{lem:shift} to obtain 
\begin{align*}
(1) & \leqslant\frac{42e}{g^{k+1}}\sum_{\ell=0}^{\infty}\sum_{i+j=\ell+d_{1}-2}(a_{\ell}-a_{\ell-1})\left(Q_{n+1}^{k}(i,j,d_{2},\ldots,d_{n})+k^{2}b_{\mathbf{d}(i,j),n+1}^{k}\right).
\end{align*}
We first deal with the term on the right corresponding to the polynomial
$Q_{n+1}^{k}(i,j,d_{2},\ldots,d_{n})$ where we again denote its coefficients
by $p_{(t_{1},\ldots,t_{n+1})}^{k,n+1}$. We have 
\begin{align*}
 & \sum_{\ell=0}^{\infty}\sum_{i+j=\ell+d_{1}-2}(a_{\ell}-a_{\ell-1})Q_{n+1}^{k}(i,j,d_{2},\ldots,d_{n})\\
 & =\sum_{\substack{0\leqslant\tilde{t},t_{1},\ldots,t_{n}\leqslant2(k+1)\\
\tilde{t}_{1}+\sum_{p=1}^{n}t_{p}\leqslant2(k+1)
}
}p_{(\tilde{t}_{1},t_{1},\ldots,t_{n})}^{k,n+1}\sum_{\ell=0}^{\infty}\sum_{i=0}^{\ell+d_{1}-2}(a_{\ell}-a_{\ell-1})i^{\tilde{t}_{1}}(\ell+d_{1}-2-i)^{t_{1}}d_{2}^{t_{2}}\cdots d_{n}^{t_{n}}\\
 & =\sum_{\substack{0\leqslant\tilde{t}_{1},t_{1},\ldots,t_{n}\leqslant2(k+1)\\
\tilde{t}_{1}+\sum_{p=1}^{n}t_{p}\leqslant2(k+1)
}
}p_{(\tilde{t}_{1},t_{1},\ldots,t_{n})}^{k,n+1}\sum_{i=0}^{\infty}\sum_{\ell=\max\left\{ 0,i-(d_{1}-2)\right\} }^{\infty}(a_{\ell}-a_{\ell-1})i^{\tilde{t}_{1}}(\ell+d_{1}-2-i)^{t_{1}}d_{2}^{t_{2}}\cdots d_{n}^{t_{n}}.
\end{align*}
The inner double sum splits into 
\begin{align*}
 & \underbrace{\sum_{i=0}^{d_{1}-2}\sum_{\ell=0}^{\infty}(a_{\ell}-a_{\ell-1})i^{\tilde{t}_{1}}(\ell+d_{1}-2-i)^{t_{1}}d_{2}^{t_{2}}\cdots d_{n}^{t_{n}}}_{(a)}\\
 & +\underbrace{\sum_{i=d_{1}-1}^{\infty}\sum_{\ell=i-(d_{1}-2)}^{\infty}(a_{\ell}-a_{\ell-1})i^{\tilde{t}_{1}}(\ell+d_{1}-2-i)^{t_{1}}d_{2}^{t_{2}}\cdots d_{n}^{t_{n}}.}_{(b)}
\end{align*}
For (a), since $i\leq d_{1}-2$ and $\ell\geq0$, we have 
\begin{align*}
\left(\ell+d_{1}-2\right)^{t_{1}+\tilde{t}_{1}} & =\left(\ell+d_{1}-2-i+i\right)^{t_{1}+\tilde{t}_{1}}\geqslant{t_{1}+\tilde{t}_{1} \choose t_{1}}\left(\ell+d_{1}-2-i\right)^{\tilde{t}_{1}}i^{t_{1}}.
\end{align*}
Thus, using Lemma \ref{lem:basic estimate},
\begin{align*}
(a) & \leqslant\frac{t_{1}!\tilde{t}_{1}!}{(t_{1}+\tilde{t}_{1})!}\sum_{i=0}^{d_{1}-2}\sum_{\ell=0}^{\infty}(a_{\ell}-a_{\ell-1})(\ell+d_{1})^{t_{1}+\tilde{t}_{1}}d_{2}^{t_{2}}\cdots d_{n}^{t_{n}}\\
 & \leqslant\frac{t_{1}!\tilde{t}_{1}!}{(t_{1}+\tilde{t}_{1})!}\sum_{p=0}^{t_{1}+\tilde{t}_{1}}{t_{1}+\tilde{t}_{1} \choose p}d_{1}^{t_{1}+\tilde{t}_{1}-p+1}\sum_{\ell=0}^{\infty}(a_{\ell}-a_{\ell-1})\ell^{p}d_{2}^{t_{2}}\cdots d_{n}^{t_{n}}\\
 & \leqslant2t_{1}!\tilde{t}_{1}!\sum_{p=0}^{t_{1}+\tilde{t}_{1}}\frac{1}{(t_{1}+\tilde{t}_{1}-p)!}d_{1}^{t_{1}+\tilde{t}_{1}-p+1}d_{2}^{t_{2}}\cdots d_{n}^{t_{n}}.
\end{align*}
Thus, the contribution from (a) to the upper bound of (1) is majorised
by a polynomial in $d_{1},\ldots,d_{n}$ of degree at most $2(k+1)+1$
whose coefficient of $d_{1}^{a_{1}}\cdots d_{n}^{a_{n}}$ for $\sum_{i=1}^{n}a_{i}\leqslant2(k+1)+1$
is bounded by 
\[
84e\sum_{a_{1}-1\leqslant t_{1}+\tilde{t}_{1}\leqslant2(k+1)-\sum_{i=2}^{n}a_{i}}\frac{t_{1}!\tilde{t}_{1}!}{(a_{1}-1)!}p_{(\tilde{t}_{1},t_{1},a_{2},\ldots,a_{n})}^{k,n+1}.
\]
By the inductive hypothesis of $A1(k)$, we have 
\[
p_{(\tilde{t}_{1},t_{1},a_{2},\ldots,a_{n})}^{k,n+1}\leqslant\frac{(c(k+1))^{c(k+1)}(n+k+2)^{k+1}}{t_{1}!\tilde{t}_{1}!a_{2}!\cdots a_{n}!},
\]
and so the coefficient is bounded by 
\begin{align*}
 & 84e\frac{a_{1}(c(k+1))^{c(k+1)}(n+k+2)^{k+1}}{a_{1}!\cdots a_{n}!}\sum_{a_{1}-1\leqslant t_{1}+\tilde{t}_{1}\leqslant2(k+1)-\sum_{i=2}^{n}a_{i}}1\\
 & \leqslant84e\frac{(2(k+1)+1)^{3}(c(k+1))^{c(k+1)}(n+k+2)^{k+1}}{a_{1}!\cdots a_{n}!}\\
 & \leqslant\frac{(c(k+2))^{c(k+1)+4}(n+k+2)^{k+1}}{a_{1}!\cdots a_{n}!},
\end{align*}
where on the last line we use that $c>600>84e$. We now turn to the
contribution of (b). We have 
\begin{align*}
(b) & =\sum_{i=d_{1}-1}^{\infty}\sum_{\ell=0}^{\infty}(a_{\ell+i-(d_{1}-2)}-a_{\ell+i-(d_{1}-2)-1})i^{\tilde{t}_{1}}\ell^{t_{1}}d_{2}^{t_{2}}\cdots d_{n}^{t_{n}}\\
 & \leqslant\sum_{i=d_{1}-1}^{\infty}4^{-i+(d_{1}-2)}i^{\tilde{t}_{1}}\sum_{\ell=0}^{\infty}4^{-\ell}\ell^{t_{1}}d_{2}^{t_{2}}\cdots d_{n}^{t_{n}}\leqslant2t_{1}!\sum_{i=d_{1}-1}^{\infty}4^{-i+(d_{1}-2)}i^{\tilde{t}_{1}}d_{2}^{t_{2}}\cdots d_{n}^{t_{n}}\\
 & =2t_{1}!\sum_{q=1}^{\infty}4^{-q}(q+d_{1}-2)^{\tilde{t}_{1}}d_{2}^{t_{2}}\cdots d_{n}^{t_{n}}\leqslant2t_{1}!\sum_{s=0}^{\tilde{t}_{1}}{\tilde{t}_{1} \choose s}d_{1}^{\tilde{t}_{1}-s}\sum_{q=1}^{\infty}4^{-q}q^{s}d_{2}^{t_{2}}\cdots d_{n}^{t_{n}}\\
 & \leqslant4t_{1}!\tilde{t}_{1}!\sum_{s=0}^{\tilde{t}_{1}}\frac{1}{(\tilde{t}_{1}-s)!}d_{1}^{\tilde{t}_{1}-s}d_{2}^{t_{2}}\cdots d_{n}^{t_{n}}.
\end{align*}
Thus once again, the contribution from (b) to the upper bound on (1)
is majorised by a polynomial in $d_{1},\ldots,d_{n}$ of degree at
most $2(k+1)$ whose coefficient of $d_{1}^{a_{1}}\cdots d_{n}^{a_{n}}$
for $\sum_{i=1}^{n}a_{i}\leqslant2(k+1)$ is bounded by
\[
168e\sum_{\substack{a_{1}\leqslant\tilde{t}_{1}\leqslant2(k+1)-\sum_{i=2}^{n}a_{i}\\
0\leqslant t_{1}\leqslant2(k+1)
}
}\frac{t_{1}!\tilde{t}_{1}!}{a_{1}!}p_{(\tilde{t}_{1},t_{1},a_{2}\ldots,a_{n})}^{k,n+1}.
\]
Again, by using the inductive hypothesis of $A1(k)$, we can bound
this by 
\begin{align*}
 & 168e\frac{(c(k+1))^{c(k+1)}(n+k+2)^{k+1}}{a_{1}!\cdots a_{n}!}\sum_{\substack{a_{1}\leqslant\tilde{t}_{1}\leqslant2(k+1)-\sum_{i=2}^{n}a_{i}\\
0\leqslant t_{1}\leqslant2(k+1)
}
}1\\
\leqslant & 168e\frac{(2(k+1))^{2}(c(k+1))^{c(k+1)}(n+k+2)^{k+1}}{a_{1}!\cdots a_{n}!}\\
\leqslant & \frac{(c(k+1))^{c(k+1)+3}(n+k+2)^{k+1}}{a_{1}!\cdots a_{n}!},
\end{align*}
where we use $c>600>168e$. The bound on (1) from the contribution
of the $k^{2}b_{\mathbf{d}(i,j),n+1}^{k}$ is entirely similar since
by the induction hypothesis for $A1(k)$ we may majorize $b_{\mathbf{d}(i,j),n+1}^{k}$
by a polynomial of degree at most $2k$ in the variables $i,j,d_{2},\ldots,d_{n}$.
In total, we can majorize (1) by a polynomial of degree at most $2(k+1)+1$
in the variables $d_{1},\ldots,d_{n}$ whose coefficient of $d_{1}^{a_{1}}\cdots d_{n}^{a_{n}}$
is bounded by 
\[
\frac{(c(k+2))^{c(k+1)+5}(n+k+2)^{k+1}}{a_{1}!\cdots a_{n}!}.
\]
An estimate for $S_{2}$ now holds by using Lemma \ref{lem:product-exp}
with $A3(k)$ and $A4(k)$. Define 
\[
B_{\mathbf{d},n}^{r}\eqdf16\sum_{\ell=0}^{\infty}(a_{\ell}-a_{\ell-1})\sum_{i+j=d_{1}+\ell-2}\tilde{b}_{\mathbf{d}(i,j),n+1}^{r},
\]
so that 
\[
B_{\mathbf{d},n}^{0}=16\sum_{\ell=0}^{\infty}(a_{\ell}-a_{\ell-1})(d_{1}+\ell-1)\leqslant32(d_{1}+1),
\]
Moreover, since $\tilde{b}_{\mathbf{d}(i,j),n+1}^{t}=\sum_{p=1}^{t}{t-1 \choose p-1}b_{\mathbf{d}(i,j),n+1}^{p}$
and $b_{\mathbf{d}(i,j),n+1}^{p}$ is majorised by a polynomial of
degree at most $2p$ in $i,j,d_{2},\ldots,d_{n}$ whose coefficients
of $i^{\tilde{t}_{1}}j^{t_{1}}d_{2}^{t_{2}}\cdots d_{n}^{t_{n}}$
are bounded by
\[
\frac{(cp)^{cp}(n+p+1)^{p}}{t_{1}!\tilde{t}_{1}!t_{2}!\cdots t_{n}!},
\]
 we have that $B_{\mathbf{d},n}^{r}$ is also majorised by a polynomial
of degree at most $2r+1$ in $d_{1},\ldots,d_{n}$ whose coefficients
of $d_{1}^{a_{1}}\cdots d_{n}^{a_{n}}$ are bounded by 
\[
\frac{(n+r+1)^{r}(cr)^{cr+4}}{a_{1}!\cdots a_{n}!}.
\]
The proof of this is identical to the bound for the contribution of
the $Q_{n+1}^{k}(i,j,d_{2},\ldots,d_{n})$ in (1) above by first splitting
into two contributions similar to (a) and (b) and then obtaining analogous
bounds. 

We now recall the expansion of $\frac{1}{4\pi^{2}\left(2g-2+n\right)}$
with coefficients of $\theta_{n}^{t}$ obtained when computing the
expansion of $S_{1}$ in Lemma \ref{lem:S_1}. Then, from Corollary
\ref{cor:product-with-zeroterms} using the estimates on the coefficients
$B_{\mathbf{d},n}^{m_{1}}$, $h_{n-1}^{m_{2}}$ (from $A3(k)$), $p_{n-1}^{m_{3}}$(from
$A4(k)$) and $\theta_{n}^{m_{4}}$,
\begin{align*}
\left|S_{2}-\sum_{t=1}^{k+1}\sum_{\substack{m_{1},m_{2},m_{3},m_{4}}
}\frac{B_{\mathbf{d},n}^{m_{1}}h_{n-1}^{m_{2}}p_{n-1}^{m_{3}}\theta_{n}^{m_{4}}}{g^{t}}\right| & \leqslant\frac{P_{k+1,n}^{(2)}(d_{1},\ldots,d_{n})}{g^{k+2}},
\end{align*}
where the summation is over $m_{1},m_{2},m_{3},m_{4}$ such that $\sum_{i=1}^{4}m_{i}=t$,
$0\leq m_{1},m_{2},m_{3}\leqslant k,$ and $1\leqslant m_{4}\leqslant k+1$,
and $P_{k+1,n}^{(2)}(d_{1},\ldots,d_{n})$ is a polynomial in $d_{1},\ldots,d_{n}$
of degree at most $2(k+1)+1$ and whose coefficient of $d_{1}^{a_{1}}\cdots d_{n}^{a_{n}}$
is bounded by 
\[
\frac{(c(k+1))^{c(k+1)+5}(n+k+2)^{k+1}}{a_{1}!\cdots a_{n}!}.
\]
 We also note that the coefficient of $g^{-t}$ for $t\geqslant1$
is majorised by a polynomial of degree at most $2(t-1)+1$ in $d_{1},\ldots,d_{n}$
since $m_{4}\geqslant1$ so that $m_{1},m_{2},m_{3}\leqslant t-1$,
whose coefficient of $d_{1}^{a_{1}}\cdots d_{n}^{a_{n}}$ is bounded
by 
\begin{align*}
 & \sum_{\substack{m_{1},m_{2},m_{3},m_{4}}
}\frac{500(n+m_{1}+1)^{m_{1}}(cm_{1})^{cm_{1}+4}}{a_{1}!\cdots a_{n}!}(n+m_{2})^{m_{2}}(cm_{2})^{cm_{2}}(n+m_{3}-1)^{m_{3}}(cm_{3})^{cm_{3}}n^{m_{4}-1}\\
 & \leqslant\frac{500(n+t)^{t-1}(c(t-1))^{3}}{a_{1}!\cdots a_{n}!}\sum_{\substack{m_{1},m_{2},m_{3},m_{4}}
}\left(c(m_{1}+m_{2}+m_{3})\right)^{c(m_{1}+m_{2}+m_{3})}\\
 & \leqslant\frac{(n+t)^{t-1}(c(t-1))^{c(t-1)+7}}{a_{1}!\cdots a_{n}!},
\end{align*}
where again the summation is over $m_{1},m_{2},m_{3},m_{4}$ as previously.
The stated expansion now holds since $c>600$.
\end{proof}
\begin{lem}
\label{lem:S_3}Suppose that for some $k\geqslant1$, each of $A1(r)$,
$A2(r)$, $A3(r),$ and $A4(r)$ hold for all $r\leqslant k$, then
there exist $\left\{ w_{\mathbf{d},n}^{i}\right\} _{i\geqslant1}$
and a polynomial $P_{n}^{k+1}$ of degree at most $2(k+1)+1$ such
that for any $g>(c(k+2))^{c}(n+k+2)$,
\[
\left|S_{3}-\sum_{i=1}^{k+1}\frac{w_{\d,n}^{i}}{g^{i}}\right|\leqslant\frac{P_{n}^{k+1}(d_{1},\ldots,d_{n})}{g^{k+2}},
\]
and for any $t_{1},\ldots,t_{n}\in\mathbb{Z}_{\geqslant0}$ the coefficient
$\left|[d_{1}^{t_{1}}\cdots d_{n}^{t_{n}}]P_{n}^{k+1}\right|\leqslant\frac{(c(k+2))^{c(k+2)-4}(n+k+2)^{k+2}}{t_{1}!\cdots t_{n}!}$,
and the $w_{\mathbf{d},n}^{i}$ are majorised by polynomials $v_{n}^{i}$
of degree at most $2(i-1)+1$ whose coefficients satisfy $\left|[d_{1}^{t_{1}}\cdots d_{n}^{t_{n}}]v_{n}^{i}\right|\leqslant\frac{(ci)^{ci-4}(n+i)^{i}}{t_{1}!\cdots t_{n}!}$
.
\end{lem}

\begin{proof}
By (\ref{eq:recrus(C)}) we have
\[
\tilde{C}_{\d,g,n}=16\sum_{\substack{g_{1}+g_{2}=g\\
I\sqcup J=\left\{ 2,\ldots,n\right\} 
}
}\sum_{\ell=0}^{d_{0}}\sum_{k_{1}+k_{2}=\ell+d_{1}-2}(a_{\ell}-a_{\ell-1})\left[\tau_{k_{1}}\prod_{i\in I}\tau_{d_{i}}\right]_{g_{1},|I|+1}\left[\tau_{k_{2}}\prod_{j\in J}\tau_{d_{j}}\right]_{g_{2},|J|+1}.
\]
We deal with the case when $|\d|\leqslant g$, with the other case
following in a very similar manner to that in the proof of Lemma \ref{lem:S_1}.
After dividing by $V_{g,n}$, the associated tail of this term (extending
$\ell$ up to $\infty$) is bounded above by 
\[
16\sum_{\ell=d_{0}+1}^{\infty}\sum_{k_{1}+k_{2}=\ell+d_{1}-2}4^{-\ell}\sum_{\substack{g_{1}+g_{2}=g\\
I\sqcup J=\left\{ 2,\ldots,n\right\} 
}
}\frac{V_{g_{1},|I|+1}V_{g_{2},|J|+1}}{V_{g,n}}.
\]
To deal with the innermost summation, we use \cite[Lemma 3.2, part 3]{MirzakhaniRandom}
followed by (\ref{eq:recurs-1A}), and then \cite[Lemma 5.2(i) and (iii)]{Mi.Zo2015}
to obtain

\begin{align*}
\sum_{\substack{g_{1}+g_{2}=g\\
I\sqcup J=\left\{ 2,\ldots,n\right\} 
}
}\frac{V_{g_{1},|I|+1}V_{g_{2},|J|+1}}{V_{g,n}} & \leqslant\frac{C}{g}\frac{V_{g,n+1}}{V_{g,n}}\sum_{\substack{g_{1}+g_{2}=g\\
I\sqcup J=\left\{ 2,\ldots,n\right\} 
}
}\frac{V_{g_{1},|I|+2}V_{g_{2},|J|+2}}{V_{g,n+1}}\\
 & =\frac{C}{6g}\frac{V_{g,n+1}}{V_{g,n}}\left(\frac{\left[\tau_{1}\tau_{0}^{n}\right]_{g,n+1}}{V_{g,n+1}}-\frac{V_{g-1,n+3}}{V_{g,n+1}}\right)\\
 & \leqslant\frac{C}{6g}\frac{V_{g,n+1}}{V_{g,n}}\left(\left|1-\frac{\left[\tau_{1}\tau_{0}^{n}\right]_{g,n+1}}{V_{g,n+1}}\right|-\left|\frac{V_{g-1,n+3}}{V_{g,n+1}}-1\right|\right)\\
 & \leqslant\frac{C'n}{g(2g+n-2)}\frac{V_{g,n+1}}{V_{g,n}}\leqslant\frac{C'n}{g},
\end{align*}
for some universal constant $C'$. Thus the tail bounds by 
\[
\frac{C'n}{g}\sum_{\ell=d_{0}+1}^{\infty}\sum_{k_{1}+k_{2}=\ell+d_{1}-2}4^{-\ell}\leqslant\frac{C'n}{g}\sum_{\ell=d_{0}+1}^{\infty}(\ell+d_{1}-1)4^{-\ell}\leqslant\frac{1}{g^{k+2}},
\]

since $d_{0}>g\geqslant|\mathbf{d}|$ and $g>ck^{2}$. For the main
term, we note that since by Lemma \ref{lem:WP-sum-product-bound}
and Remark \ref{rem:volume-product-cylinder} we have 
\[
\sum_{\substack{g_{1}+g_{2}=g\\
2g_{1}+|I|\geqslant k+2\\
2g_{2}+|J|\geqslant k+2
}
}\frac{\left[\tau_{k_{1}}\prod_{i\in I}\tau_{d_{i}}\right]_{g_{1},|I|+1}\left[\tau_{k_{2}}\prod_{j\in J}\tau_{d_{j}}\right]_{g_{2},|J|+1}}{V_{g,n}}\leqslant C\frac{V_{g,n+1}}{g^{k+3}V_{g,n}}\leqslant\frac{C}{g^{k+2}},
\]
the only terms that will contribute to an expansion up to an order
$g^{-(k+2)}$ are when either $2g_{1}+|I|<k+2$ or $2g_{2}+|J|<k+2$.
The main term hence becomes

\begin{align}
 & \underbrace{\sum_{\substack{g_{1},i\\
2\leqslant2g_{1}+i\leqslant k+1\\
i\leqslant n-1
}
}\sum_{\substack{I\subseteq\left\{ 2,\ldots,n\right\} \\
|I|=i
}
}\sum_{\ell=0}^{\infty}\sum_{k_{1}+k_{2}=\ell+d_{1}-2}(a_{\ell}-a_{\ell-1})\frac{\left[\tau_{k_{1}}\prod_{i\in I}\tau_{d_{i}}\right]_{g_{1},i+1}\left[\tau_{k_{2}}\prod_{j\notin I}\tau_{d_{j}}\right]_{g-g_{1},n-i}}{V_{g,n}}}_{(a)}\label{eq:C-term}\\
 & +\underbrace{\sum_{\substack{g_{2},j\\
2\leqslant2g_{2}+j\leqslant k+1\\
2g_{2}+j\leqslant2g+n-2-(k+1)\\
0\leqslant j\leqslant n-1
}
}\sum_{\substack{J\subseteq\left\{ 2,\ldots,n\right\} \\
|J|=j
}
}\sum_{\ell=0}^{\infty}\sum_{k_{1}+k_{2}=\ell+d_{1}-2}(a_{\ell}-a_{\ell-1})\frac{\left[\tau_{k_{1}}\prod_{i\notin J}\tau_{d_{i}}\right]_{g-g_{2},n-j}\left[\tau_{k_{2}}\prod_{j\in J}\tau_{d_{j}}\right]_{g_{2},j+1}}{V_{g,n}}}_{(b)}.\nonumber 
\end{align}
In term (a), the asymptotic expansion in $g$ comes from $\frac{\left[\tau_{k_{2}}\prod_{j\notin I}\tau_{d_{j}}\right]_{g-g_{1},n-i}}{V_{g,n}}$
and correspond to when $2g_{1}+|I|<k+2$, whereas in (b) they come
from $\frac{\left[\tau_{k_{1}}\prod_{i\in I}\tau_{d_{i}}\right]_{g-g_{2},n-j}}{V_{g,n}}$
and correspond to when $2g_{2}+|J|<k+2$ and $2g_{1}+|I|\geqslant k+2$.
We begin with (a) and let $m\eqdf m(g_{1},i)$ be the smallest integer
such that $(m-1)\left\lfloor \frac{1}{2}(n-i)\right\rfloor <g_{1}\leqslant m\left\lfloor \frac{1}{2}(n-i)\right\rfloor $.
Then, denote by $\rho_{p}\eqdf\rho(p,g_{1},i)=g_{1}-(p-1)\lfloor\frac{1}{2}(n-i)\rfloor$
so we have the expansion
\begin{align}
 & \frac{\left[\tau_{k_{2}}\prod_{j\notin I}\tau_{d_{j}}\right]_{g-g_{1},n-i}}{V_{g,n}}=\frac{\left[\tau_{k_{2}}\prod_{j\notin I}\tau_{d_{j}}\right]_{g-g_{1},n-i}}{V_{g-g_{1},n-i}}\prod_{p=1}^{m-1}\prod_{s=0}^{\left\lfloor \frac{1}{2}(n-i)\right\rfloor -1}\frac{V_{g-\rho_{p}+s,n-i-2s}}{V_{g-\rho_{p}+s+1,n-i-2s-2}}\label{eq:ginc1}\\
 & \cdot\prod_{s=0}^{\rho_{m}-1}\frac{V_{g-\rho_{m}+s,n-i-2s}}{V_{g-\rho_{m}+s+1,n-i-2s-2}}\label{eq:ginc2}\\
 & \cdot\prod_{p=0}^{m-2}\prod_{s=0}^{2\left\lfloor \frac{1}{2}(n-i)\right\rfloor -1}\frac{4\pi^{2}(2(g-\rho_{p+1})+n-i+2+s)V_{g-\rho_{p+2},n-i-2\lfloor\frac{1}{2}(n-i)\rfloor+s}}{V_{g-\rho_{p+2},n-i-2\lfloor\frac{1}{2}(n-i)\rfloor+s+1}}\label{eq:ninc1}\\
 & \cdot\prod_{s=0}^{2\rho_{m}+i-1}\frac{4\pi^{2}(2(g-\rho_{m})+n-i+2+s)V_{g,n-i-2\rho_{m}+s}}{V_{g,n-i-2\rho_{m}+s+1}}\label{eq:ninc2}\\
 & \cdot\prod_{s=0}^{2g_{1}-1+i}\frac{1}{4\pi^{2}(2(g-g_{1})+n-i+2+s)}.\label{correction}
\end{align}
This expansion is similar to that used in \cite[Equation (4.4)]{Mi.Zo2015}
but we take care to ensure that the number of cusps in any of the
volumes never exceeds $n$ at any step, leading to a more complicated
formula. Let $J=\left\{ 2,\ldots,n\right\} \setminus I\eqdf\left\{ j_{1},\ldots,j_{n-i-1}\right\} $
and $\mathbf{d}(k_{2},J)\eqdf(d_{j_{1}},\ldots,d_{j_{n-i-1}},k_{2})$.
Using Lemma \ref{lem:shift} and $A1(r)$ for any $r\leqslant k$,
we have for the first term on the right-hand side in (\ref{eq:ginc1})
that
\[
\left|\frac{\left[\tau_{k_{2}}\prod_{j\notin I}\tau_{d_{j}}\right]_{g-g_{1},n-i}}{V_{g-g_{1},n-i}}-\sum_{t=0}^{r}\frac{g_{1}^{t}\tilde{b}_{\mathbf{d}(k_{2},J),n-i}^{t}}{g^{t}}\right|\leqslant\frac{1}{g^{r+1}}\left(3Q_{n-i}^{r}(\mathbf{d}(k_{2},J))+3e^{g_{1}}r^{2}q_{n-i}^{r}(\mathbf{d}(k_{2},J))\right),
\]
where $\tilde{b}_{\mathbf{d}(k_{2},J),n-i}^{t}=\sum_{p=1}^{t}{t-1 \choose p-1}g_{1}^{-p}b_{\mathbf{d}(k_{2},J),n-i}^{p}$
and $Q_{n-i}^{r}$ and $q_{n-i}^{r}$ are the polynomials from $A1(r)$
of degrees at most $2(r+1)$ and $2r$ respectively. Due to the bounds
on the coefficients of $Q_{n-i}^{r}$ and $q_{n-i}^{r}$, the error
\[
E_{r,\mathbf{d}(k_{2},J),i}^{(1)}\eqdf3Q_{n-i}^{r}(\mathbf{d}(k_{2},J))+3e^{g_{1}}r^{2}q_{n-i}^{r}(\mathbf{d}(k_{2},J))
\]
is a polynomial of degree at most $2(r+1)$ in $d_{j_{1}},\ldots,d_{j_{n-i-1}},k_{2}$
with coefficient of $\left(\prod_{p=1}^{n-i-1}d_{j_{p}}^{a_{p}}\right)k_{2}^{a_{n-i}}$
being bounded by
\[
6e^{g_{1}}\frac{(c(r+1))^{c(r+1)}(n+r+1)^{r+1}}{a_{1}!\cdots a_{n-i}!}.
\]
 Likewise, for fixed 
\[
(p,s)\in\mathcal{B}\eqdf\left\{ (p,s):0\leqslant p\leqslant m-2,0\leqslant s\leqslant2\left\lfloor \frac{1}{2}(n-i)\right\rfloor -1\right\} ,
\]
 from $A3(r)$ for $r\leqslant k$ and Lemma \ref{lem:shift}, we
get for terms appearing in (\ref{eq:ninc1}) that
\begin{align*}
 & \left|\frac{4\pi^{2}(2(g-\rho_{p+1})+n-i+2+s)V_{g-\rho_{p+2},n-i-2\lfloor\frac{1}{2}(n-i)\rfloor+s}}{V_{g-\rho_{p+2},n-i-2\lfloor\frac{1}{2}(n-i)\rfloor+s+1}}-\sum_{t=0}^{r}\frac{\rho_{p+2}^{t}\tilde{h}_{n-i-2\lfloor\frac{1}{2}(n-i)\rfloor+s+1}^{t}}{g^{t}}\right|\\
 & \leqslant\frac{1}{g^{r+1}}E_{r,p,s,i}^{(2)},
\end{align*}
where
\[
E_{r,p,s,i}^{(2)}\eqdf3000e^{g_{1}}\left(c(r+1)\right)^{c(r+1)}(n+r+2)^{r+1},
\]
and $\tilde{h}_{n-i-2\lfloor\frac{1}{2}(n-i)\rfloor+s+1}^{t}=\sum_{p=1}^{t}{t-1 \choose p-1}\rho_{p+2}^{-p}h_{n-i-2\lfloor\frac{1}{2}(n-i)\rfloor+s+1}^{p}$.
For terms appearing in (\ref{eq:ninc2}), no asymptotic expansion
base shifting is needed and so we obtain directly from $A3(r)$ that
for $s=0,\ldots,2\rho_{m}+i-1$,
\[
\left|\frac{4\pi^{2}(2(g-\rho_{m})+n-i+2+s)V_{g,n-i-2\rho_{m}+s}}{V_{g,n-i-2\rho_{m}+s+1}}-\sum_{t=0}^{r}\frac{h_{n-i-2\rho_{m}+s+1}^{t}}{g^{t}}\right|\leqslant\frac{500(c(r+1))^{c(r+1)}(n+r+2)^{r+1}}{g^{r+1}}.
\]
 For terms in the indexed product in (\ref{eq:ginc1}) and (\ref{eq:ginc2}),
we set 
\[
\mathcal{C}\eqdf\left\{ (p.s):\left(1\leqslant p\leqslant m-1,0\leqslant s\leqslant\left\lfloor \frac{1}{2}(n-i)\right\rfloor -1\right)\vee\left(p=m,0\leqslant s\leqslant\rho_{m}-1\right)\right\} .
\]
Then by $A4(r)$ for $r\leqslant k$ and Lemma \ref{lem:shift}, for
any $(p,s)\in\mathcal{C}$, 
\begin{align*}
 & \left|\frac{V_{g-\rho_{p}+s,n-i-2s}}{V_{g-\rho_{p}+s+1,n-i-2s-2}}-\sum_{t=0}^{r}\frac{(\rho_{p}-s-1)^{t}\tilde{p}_{n-i-2s-2}^{t}}{g^{t}}\right|\\
 & \leqslant\frac{1}{g^{r+1}}\left(3(c(r+1))^{c(r+1)}(n+r+1)^{r+1}+3e^{g_{1}}r^{2}(cr)^{cr}(n+r)^{r}\right)\\
 & \leqslant\frac{1}{g^{r+1}}E_{r,p,s,i}^{(3)},
\end{align*}
where 
\[
E_{r,p,s,i}^{(3)}\eqdf6e^{g_{1}}(c(r+1))^{c(r+1)}(n+r+1-i)^{r+1},
\]
and $\tilde{p}_{n-i-2s-2}^{t}=\sum_{q=1}^{t}{t-1 \choose q-1}(\rho_{p}-s-1)^{-q}p_{n-i-2s-2}^{q}.$ 

Finally, by Taylor expansion,
\begin{align*}
 & \prod_{s=0}^{2g_{1}-1+i}\frac{1}{4\pi^{2}(2(g-g_{1})+n-i+2+s)}=\frac{1}{(8\pi^{2}g)^{2g_{1}+i}}\prod_{s=1}^{2g_{1}+i}\frac{1}{1-\frac{g_{1}-\frac{1}{2}(n-i+1+s)}{g}}\\
 & =\frac{1}{(8\pi^{2})^{2g_{1}+i}}\sum_{p=2g_{1}+i}^{\infty}\frac{1}{g^{p}}\sum_{\substack{m_{1},\ldots,m_{2g_{1}+1}\\
\sum m_{q}=p-(2g_{1}+i)\\
m_{q}\geqslant0
}
}\prod_{s=1}^{2g_{1}+i}\left(g_{1}-\frac{1}{2}(n-i+1+s)\right)^{m_{s}}.
\end{align*}
So for $t\geqslant2g_{1}+i-1$, if we set $\theta_{n}^{t}$
\[
\theta_{n}^{t}\eqdf\frac{1}{(8\pi^{2})^{2g_{1}+i}}\sum_{\substack{m_{1},\ldots,m_{2g_{1}+1}\\
\sum m_{q}=t-(2g_{1}+i)\\
m_{q}\geqslant0
}
}\prod_{s=1}^{2g_{1}+i}\left(g_{1}-\frac{1}{2}(n-i+1+s)\right)^{m_{s}},
\]
then for any $2g_{1}+i\leqslant s\leqslant k+1$, we have the asymptotic
expansion 
\begin{align*}
\left|\prod_{s=0}^{2g_{1}+i}\frac{1}{4\pi^{2}(2(g-g_{1})+n-i+2+s)}-\sum_{t=2g_{1}+i}^{s}\frac{\theta_{n}^{t}}{g^{t}}\right| & \leqslant\sum_{t=s+1}^{\infty}\frac{\theta_{n}^{t}}{g^{t}}.
\end{align*}
The individual coefficients $\theta_{n}^{t}$ satisfy the bound 
\begin{align*}
|\theta_{n}^{t}| & \leqslant\frac{1}{(8\pi^{2})^{2g_{1}+i}}\sum_{\substack{m_{1},\ldots,m_{2g_{1}+1}\\
\sum m_{q}=t-(2g_{1}+i)\\
m_{q}\geq0
}
}\left|g_{1}-1-\frac{1}{2}(n-i)\right|^{t-(2g_{1}+i)}\\
 & \leqslant\frac{1}{(8\pi^{2})^{2g_{1}+i}}{t-1 \choose 2g_{1}+i-1}|g_{1}-1|^{t-(2g_{1}+i)}(n-i)^{t-(2g_{1}+i)}.
\end{align*}
Similarly, the error satisfies the bound
\begin{align*}
g^{s+1}\sum_{t=s+1}^{\infty}\frac{\theta_{n}^{t}}{g^{t}} & \leqslant\frac{g^{s+1}}{(8\pi^{2})^{2g_{1}+i}}\sum_{t=s+1}^{\infty}\frac{1}{g^{t}}\sum_{\substack{m_{1},\ldots,m_{2g_{1}+1}\\
\sum m_{q}=t-(2g_{1}+i)\\
m_{q}\geqslant0
}
}\left|g_{1}-1-\frac{1}{2}(n-i)\right|^{t-(2g_{1}+i)}\\
 & =\frac{g^{s+1}}{(8\pi^{2})^{2g_{1}+i-1}}\sum_{t=s+1}^{\infty}\frac{1}{g^{t}}{t-1 \choose 2g_{1}+i-1}\left|g_{1}-1-\frac{1}{2}(n-i)\right|^{t-(2g_{1}+i)}\\
 & =\frac{\left|g_{1}-1-\frac{1}{2}(n-i)\right|^{s+1-(2g_{1}+i)}}{(8\pi^{2})^{2g_{1}+i}}\sum_{t=0}^{\infty}{t+s \choose 2g_{1}+i-1}\frac{\left|g_{1}-1-\frac{1}{2}(n-i)\right|^{t}}{g^{t}}\\
 & \leqslant\frac{\left|g_{1}-1-\frac{1}{2}(n-i)\right|^{s+1-(2g_{1}+i)}}{\left(8\pi^{2}\right)^{2g_{1}+i}}{s \choose 2g_{1}+i-1}\sum_{t=0}^{\infty}{t+s \choose s}\left(\frac{\left|g_{1}-1-\frac{1}{2}(n-i)\right|}{g}\right)^{t}\\
 & \leqslant\frac{\left|g_{1}-1-\frac{1}{2}(n-i)\right|^{s+1-(2g_{1}+i)}}{\left(8\pi^{2}\right)^{2g_{1}+i-1}}{s \choose 2g_{1}+i-1}\exp\left(\frac{s\left|g_{1}-1-\frac{1}{2}(n-i)\right|}{g-\left|g_{1}-1-\frac{1}{2}(n-i)\right|}\right)\\
 & \leqslant\frac{3\left|g_{1}-1-\frac{1}{2}(n-i)\right|^{s+1-(2g_{1}+i)}}{\left(8\pi^{2}\right)^{2g_{1}+i-1}}{s \choose 2g_{1}+i-1}\eqdf E_{s,i}^{(4)},
\end{align*}
where the second inequality follows from
\begin{align*}
{t+s \choose 2g_{1}+i-1} & =\frac{(t+s)!}{t!s!}\frac{s!}{(2g_{1}+i-1)!}\frac{t!}{(t+s-(2g_{1}+i-1))!}\\
 & =\frac{(t+s)!}{t!s!}\frac{s!}{(2g_{1}+i-1)!}\frac{1}{(s-(2g_{1}+i-1))!}\frac{t!}{\prod_{q=1}^{t}(q+s-(2g_{1}+i-1))}\\
 & \leqslant\frac{(t+s)!}{t!s!}\frac{s!}{(2g_{1}+i-1)!(s-(2g_{1}+i-1))!},
\end{align*}
and on the penultimate line we have used $\sum_{t=0}^{\infty}{t+a \choose a}\frac{1}{b^{t}}=\left(1+\frac{1}{b-1}\right)^{a}$,
and on the final line we use that $g>(s+1)\left|g_{1}-1-\frac{1}{2}(n-i)\right|$
since $g_{1}(s+1)\leqslant(k+2)^{2}$, $(n-i)(s+1)\leq n(k+2)$ and
$(ck)^{c}(n+k)<g$ with $c>600$ from the induction hypothesis. 

Treating $\prod_{s=0}^{2g_{1}-1+i}\frac{1}{4\pi^{2}(2(g-g_{1})+n-i+2+s)}$
as a single term since we have a full asymptotic expansion for it,
the number of terms in (\ref{eq:ginc1})-(\ref{correction}) is $3g_{1}+i+2$.
By Corollary \ref{cor:product-with-zeroterms}, taking $r=2g_{1}+i-1$
and $s=k-r$ we obtain (where we recall the definitions of $m=m(g_{1},i)$
and $\rho_{p}$ in equations (\ref{eq:ginc1})-(\ref{correction}))
\begin{align*}
 & \left|\frac{\left[\tau_{k_{2}}\prod_{j\notin I}\tau_{d_{j}}\right]_{g-g_{1},n-i}}{V_{g,n}}-\sum_{t=2g_{1}+i}^{k+1}\frac{1}{g^{t}}\sum_{\mathrm{indices}}A_{\alpha_{1}}\prod_{(p,s)\in\mathcal{B}}B_{p,s}^{\beta_{p,s}}\prod_{(p,s)\in\mathcal{C}}C_{p,s}^{\gamma_{p,s}}\prod_{j=0}^{2\rho_{m}+i-1}h_{n-i-2\rho_{m}+j+1}^{t_{j}}\theta_{n}^{\alpha_{2}}\right|\\
 & \leqslant\frac{E_{\mathbf{d}(k_{2},J),n,g_{1}}^{k}}{g^{k+2}}.
\end{align*}
where the summation is over all indices $\alpha_{1},\beta_{p,s}:(p,s)\in\mathcal{B},\gamma_{p,s}:(p,s)\in\mathcal{C},t_{0},\ldots,t_{2\rho_{m}+i-1},\alpha_{2}$
such that
\[
\alpha_{1}+\alpha_{2}+\sum_{j}t_{j}+\sum_{(p,s)}\beta_{p,s}+\sum_{(p,s)}\gamma_{p,s}=t,
\]
and 
\begin{align*}
0 & \leqslant\alpha_{1},\beta_{p,s}\gamma_{p,s},t_{j}\leqslant t-(2g_{1}+i),
\end{align*}
\[
2g_{1}+i\leqslant\alpha_{2}\leqslant k+1.
\]
\begin{align*}
A_{\alpha_{1}} & =g_{1}^{\alpha_{1}}\tilde{b}_{\mathbf{d}(k_{2},J),n-i}^{\alpha_{1}},\\
B_{p,s}^{\beta} & =\rho_{p+2}^{\beta}\tilde{h}_{n-i-2\lfloor\frac{1}{2}(n-i)\rfloor+s+1}^{\beta},\\
C_{p,s}^{\gamma} & =(\rho_{p}-s-1)^{\gamma}\tilde{p}_{n-i-2s-2}^{\gamma}.
\end{align*}
Note that $A_{0}=B_{p,s}^{0}=C_{p,s}^{0}=h_{n-i-2\rho_{m}+j+1}^{0}=1$
by definition. The error $E_{\mathbf{d}(k_{2},J),n,g_{1}}^{k}$ is
determined by Corollary \ref{cor:product-with-zeroterms}. It follows
that if for $0\leqslant t\leqslant2g_{1}+i-1$, we set $\Xi_{I,k_{2},g_{1},n}^{t}\eqdf0$
and for $2g_{1}+i\leqslant t\leqslant k+1$,
\[
\Xi_{I,k_{2},g_{1},n}^{t}\eqdf\sum_{\mathrm{indices}}A_{\alpha_{1}}\prod_{(p,s)\in\mathcal{B}}B_{p,s}^{\beta_{p,s}}\prod_{(p,s)\in\mathcal{C}}C_{p,s}^{\gamma_{p,s}}\prod_{j=0}^{2\rho_{m}+i-1}h_{n-i-2\rho_{m}+j+1}^{t_{j}}\theta_{n}^{\alpha_{2}},
\]
where the summation is over all indices as above, then
\begin{equation}
\left|(a)-\sum_{t=0}^{k+1}\frac{1}{g^{t}}\sum_{\substack{g_{1},i\\
2\leqslant2g_{1}+i\leqslant k+1\\
i\leqslant n-1
}
}\sum_{\substack{I\subseteq\left\{ 2,\ldots n\right\} \\
|I|=i
}
}\sum_{\ell=0}^{\infty}\sum_{k_{1}+k_{2}=\ell+d_{1}-2}(a_{\ell}-a_{\ell-1})\left[\tau_{k_{1}}\prod_{i\in I}\tau_{d_{i}}\right]_{g_{1},i+1}\Xi_{I,k_{2},g_{1},n}^{t}\right|\leq\frac{P_{k,n}^{(a)}(d_{1},\ldots,d_{n})}{g^{k+2}},\label{eq:coeff-(a)}
\end{equation}
where $P_{k,n}^{(a)}(d_{1},\ldots,d_{n})$ is a polynomial in $d_{1},\ldots,d_{n}$
of degree at most $2(k+1)$ whose coefficient of $d_{1}^{a_{1}}\cdots d_{n}^{a_{n}}$
is bounded by
\[
\frac{(n+k+2)^{k+2}}{a_{1}!\cdots a_{n}!}(c(k+2))^{c(k+2)-5},
\]
and the coefficient of $g^{-t}$ in the asymptotic expansion of (a)
is zero for $t=0,1$ and a polynomial in $d_{1},\ldots,d_{n}$ of
degree at most $2(t-2)$ whose coefficient of $d_{1}^{a_{1}}\cdots d_{n}^{a_{n}}$
is bounded by
\[
\frac{(ct)^{ct-5}(n+t)^{t}}{a_{1}!\cdots a_{n}!},
\]
for $t\geqslant2$. Note that for fixed $t$, the summation over the
$g_{i},i$ is such that $2g_{1}+i\leqslant t$ because $\Xi_{I,k_{2},g_{1},n}^{t}\equiv0$
when $2g_{1}+i>t$, this is also the reason why the coefficients of
$g^{0}$ and $g^{-1}$ are zero. 

The proof of the coefficient bounds for $P_{k,n}^{(a)}(d_{1},\ldots,d_{n})$
follows from an estimate on the coefficient bounds of the polynomial
$E_{\mathbf{d}(k_{2},J),n,g_{1}}^{k}$ that we will prove later. Namely,
that \textbf{$E_{\mathbf{d}(k_{2},J),n,g_{1}}^{k}$ }is a polynomial
in $d_{j_{1}},\ldots,d_{j_{n-i-1}},k_{2}$ of degree at most $2(k+1)$
whose coefficient of $\left(\prod_{p=1}^{n-i-1}d_{j_{p}}^{a_{p}}\right)k_{2}^{a_{n-i}}$
is bounded by 
\[
\frac{(n+k+2)^{k+2-(2g_{1}+i)}}{a_{1}!\cdots a_{n-i}!}\left(c(k+2)\right)^{c(k+2)-(c-4)(2g_{1}+i)}.
\]
Indeed, let $\xi_{(t_{1},\ldots,t_{n-i})}^{t,I,k_{2},g_{1},n}$ be
the coefficient of $\left(\prod_{p=1}^{n-i-1}d_{j_{p}}^{t_{p}}\right)k_{2}^{t_{n-i}}$
in $E_{\mathbf{d}(k_{2},J),n,g_{1}}^{k}$. Then, 
\begin{align*}
 & \left|(a)-\sum_{t=0}^{k+1}\frac{1}{g^{t}}\sum_{\substack{g_{1},i\\
2\leqslant2g_{1}+i\leqslant k+1\\
i\leqslant n-1
}
}\sum_{\substack{I\subseteq\left\{ 2,\ldots n\right\} \\
|I|=i
}
}\sum_{\ell=0}^{\infty}\sum_{k_{1}+k_{2}=\ell+d_{1}-2}(a_{\ell}-a_{\ell-1})\left[\tau_{k_{1}}\prod_{i\in I}\tau_{d_{i}}\right]_{g_{1},i+1}\Xi_{I,k_{2},g_{1},n}^{t}\right|\\
 & \leqslant\sum_{\substack{g_{1},i\\
2\leqslant2g_{1}+i\leqslant k+1\\
i\leqslant n-1
}
}\sum_{\substack{I\subseteq\left\{ 2,\ldots n\right\} \\
|I|=i
}
}\sum_{\ell=0}^{\infty}\sum_{k_{1}+k_{2}=\ell+d_{1}-2}(a_{\ell}-a_{\ell-1})\left[\tau_{k_{1}}\prod_{i\in I}\tau_{d_{i}}\right]_{g_{1},i+1}E_{\mathbf{d}(k_{2},J),n,g_{1}}^{k}\\
 & \leqslant\sum_{\substack{g_{1},i\\
2\leqslant2g_{1}+i\leqslant k+1\\
i\leqslant n-1
}
}C^{g_{1}+\frac{i+1}{2}}(2g_{1}+i+1)!\sum_{\substack{I\subseteq\left\{ 2,\ldots n\right\} \\
|I|=i
}
}\sum_{\ell=0}^{\infty}\sum_{k_{2}=0}^{\ell+d_{1}-2}(a_{\ell}-a_{\ell-1})E_{\mathbf{d}(k_{2},J),n,g_{1}}^{k}\\
 & =\sum_{\substack{g_{1},i\\
2\leqslant2g_{1}+i\leqslant k+1\\
i\leqslant n-1
}
}\sum_{\substack{t_{1},\ldots,t_{n-i}\\
\sum_{j}t_{j}\leqslant2(k+1)\\
t_{j}\geqslant0
}
}C^{g_{1}+\frac{i+1}{2}}(2g_{1}+i+1)!\sum_{\substack{I\subseteq\left\{ 2,\ldots n\right\} \\
|I|=i
}
}\sum_{\ell=0}^{\infty}\sum_{k_{2}=0}^{\ell+d_{1}-2}(a_{\ell}-a_{\ell-1})\xi_{(t_{1},\ldots,t_{n-i})}^{t,I,k_{2},g_{1},n}d_{j_{1}}^{t_{1}}\cdots d_{j_{n-i-1}}^{t_{n-i-1}}k_{2}^{t_{n-i}}.
\end{align*}
where the final inequality follows from $\left[\tau_{k_{1}}\prod_{i\in I}\tau_{d_{i}}\right]_{g_{1},i+1}\leqslant V_{g_{1},i+1}$
and then Lemma \ref{lem:(Grushevsky's-bound,-)}. Then, by Lemma \ref{lem:basic estimate}
we have
\begin{align*}
\sum_{\ell=0}^{\infty}(a_{\ell}-a_{\ell-1})\sum_{k_{2}=0}^{\ell+d_{1}-2}k_{2}^{t_{n-i}} & \leqslant\sum_{\ell=0}^{\infty}(a_{\ell}-a_{\ell-1})(\ell+d_{1})^{t_{n-i}+1}\\
 & =\sum_{\ell=0}^{\infty}(a_{\ell}-a_{\ell-1})\sum_{q=0}^{t_{n-i}+1}{t_{n-i}+1 \choose q}\ell^{q}d_{1}^{t_{n-i}+1-q}\\
 & \leqslant2(t_{n-i}+1)!\sum_{q=0}^{t_{n-i}+1}\frac{d_{1}^{t_{n-i}+1-q}}{(t_{n-i}+1-q)!}.
\end{align*}
We further bound 
\begin{align*}
C^{g_{1}+\frac{i+1}{2}}(2g_{1}+i+1)! & \leqslant C^{g_{1}+\frac{i+1}{2}}(k+2)^{2g_{1}+i+1}\\
 & \leqslant(c(k+2))^{2g_{1}+i+1}.
\end{align*}
The terms that contribute to the coefficient of $d_{1}^{a_{1}}\cdots d_{n}^{a_{n}}$
are those $J$ which contain the set $J_{1}=\left\{ j\neq1:a_{j}\neq0\right\} $
hence the summation over $i$ is restricted to those $i$ for which
$n-i-1\geqslant|J_{1}|$ and given such an $i$ there are at most
${n \choose i}\leqslant n^{i}$ sets $I$ that don't contain any element
of $J_{1}$. For such a $J$ we set the $t_{j}$ for $j\in J\setminus J_{1}$
equal to 0 and for $j\in J_{1}$ we set $t_{j}=a_{j}$ and lastly
$t_{n-i}+1-q=a_{1}$ so that we only sum over $t_{n-i}\geqslant a_{1}-1$.
With these considerations and the bound on $\xi_{(t_{1},\ldots,t_{n-i})}^{t,I,k_{2},g_{1},n}$,
the coefficient of $d_{1}^{a_{1}}\cdots d_{n}^{a_{n}}$ is bounded
by (noting that for $j\not\notin J_{1}$ we have $a_{j}!=1)$
\begin{align*}
 & \frac{2(n+k+2)^{k+2}}{a_{1}!\cdots a_{n}!}\sum_{\substack{g_{1},i\\
3\leqslant2g_{1}+i\leqslant k+1\\
i\leqslant n-1
}
}\sum_{\substack{a_{1}-1\leqslant t_{n-i}\leqslant2(k+1)-\sum_{j=2}^{n}a_{j}\\
t_{n-i}\leqslant2(k+1)-(2g_{1}+i)\\
t_{j}\geqslant0
}
}(t_{n-i}+1)(c(k+2))^{c(k+2-(2g_{1}+i))+6(2g_{1}+i)+2}\\
 & \leqslant\frac{(n+k+2)^{k+2}}{a_{1}!\cdots a_{n}!}(c(k+2))^{c(k+2)-5},
\end{align*}
where in the last line, we have used that the number of terms in
the interior sum is bounded by $2(k+1)$, the number of terms in the
exterior sum is at most $(k+1)^{2}$ and the fact that $c>600$ and
$2g_{1}+i\geqslant2$ to absorb the constant $2$ and bound the exponent
by $-c(2g_{1}+i)+6(2g_{1}+i+1)+1<-6$. This proves the estimate for
$P_{k,n}^{(a)}(d_{1},\ldots,d_{n})$.

We now prove the claim about the coefficient of $g^{-t}$.

\textbf{Claim. }The coefficient of $g^{-t}$ in (\ref{eq:coeff-(a)})
is a polynomial in $d_{1},\ldots,d_{n}$ of degree at most $2(t-2)$
whose coefficient of $d_{1}^{a_{1}}\cdots d_{n}^{a_{n}}$ is bounded
by
\[
\frac{(ct)^{ct-5}(n+t)^{t}}{a_{1}!\cdots a_{n}!}.
\]
The coefficient of $g^{0}$ and $g^{-1}$ is zero.

\textbf{Proof of claim. }The coefficient of $g^{-t}$ is given by
\begin{equation}
\sum_{\substack{g_{1},i\\
2\leqslant2g_{1}+i\leqslant t\\
i\leqslant n-1
}
}\sum_{\substack{I\subseteq\left\{ 2,\ldots n\right\} \\
|I|=i
}
}\sum_{\ell=0}^{\infty}\sum_{k_{1}+k_{2}=\ell+d_{1}-2}(a_{\ell}-a_{\ell-1})\left[\tau_{k_{1}}\prod_{i\in I}\tau_{d_{i}}\right]_{g_{1},i+1}\Xi_{I,k_{2},g_{1},n}^{t},\label{eq:Cg-tcoeff}
\end{equation}
which is a polynomial in $d_{1},\ldots,d_{n}$, and the summation
of $g_{1},i$ is restricted to $2g_{1}+i\leqslant t$ because $\Xi_{I,k_{2},g_{1},n}^{t}\equiv0$
whenever $t>2g_{1}+i$ . Since $2\leqslant2g_{1}+i\leqslant t$, the
coefficients of $g^{0}$ and $g^{-1}$ are zero. To determine the
coefficient of $d_{1}^{a_{1}}\cdots d_{n}^{a_{n}}$, as with the error
term, we note that the only terms that have contribution are those
with $I$ such that $J=\left\{ 2,\ldots,n\right\} \setminus I$ contains
$J_{1}=\left\{ j\neq1:a_{j}\neq0\right\} $. The summation over $i$
is restricted to those $i$ for which $n-i-1\geqslant|J_{1}|$ and
given such an $i$ there are at most ${n \choose i}\leqslant n^{i}$
sets $I$ that don't contain any element of $J_{1}$. For such an
$I$, the $\Xi_{I,k_{2},g_{1},n}^{t}$ is a polynomial in $d_{j_{1}},\ldots,d_{j_{n-i-1}},k_{2}$
of degree at most $2(t-2)$ and with coefficient of $\left(\prod_{r=1}^{n-i-1}d_{j_{r}}^{a_{r}}\right)k_{2}^{a_{n-i}}$
bounded by 

\[
\frac{(n+t)^{t-(2g_{1}+i)}}{a_{1}!\cdots a_{n-i}!}\left(ct\right)^{ct-(c-4)(2g_{1}+i)}.
\]
Indeed, for $g_{1}>0$, the coefficient is by definition bounded by
the following summed over all choices of $\alpha_{1},\beta_{p,s},\gamma_{p,s},t_{j},\alpha_{2}$
\begin{align}
 & \frac{500^{2g_{1}+i}}{a_{1}!\cdots a_{n-i}!}\sum_{u_{1}=1}^{\alpha_{1}}\left(\prod_{(p,s)\in\mathcal{B}}\sum_{u_{p,s}=1}^{\beta_{p,s}}\right)\left(\prod_{(p,s)\in\mathcal{C}}\sum_{v_{p,s}=1}^{\gamma_{p,s}}\right){\alpha_{1}+\sum_{(p,s)\in\mathcal{B}}\beta_{p,s}+\sum_{(p,s)\in\mathcal{C}}\gamma_{p,s}-(1+|\mathcal{B}|+|\mathcal{C}|) \choose u_{1}+\sum_{(p,s)\in\mathcal{B}}u_{p,s}+\sum_{(p,s)\in\mathcal{C}}v_{p,s}-(1+|\mathcal{B}|+|\mathcal{C}|)}\nonumber \\
 & \cdot g_{1}^{\alpha_{1}+\sum_{(p,s)\in\mathcal{B}}\beta_{p,s}+\sum_{(p,s)\in\mathcal{C}}\gamma_{p,s}-(u_{1}+\sum_{(p,s)\in\mathcal{B}}u_{p,s}+\sum_{(p,s)\in\mathcal{C}}v_{p,s})}\label{eq:Ca-coeff}\\
 & \cdot(c\alpha_{1})^{cu_{1}}\prod_{(p,s)\in\mathcal{B}}(c\beta_{p,s})^{cu_{p,s}}\prod_{(p,s)\in\mathcal{C}}(c\gamma_{p,s})^{cv_{p,s}}\nonumber \\
 & \cdot\alpha_{2}^{2g_{1}+i}g_{1}^{\alpha_{2}-(2g_{1}+i)}\left(\prod_{j=0}^{2\rho_{m}+i-1}(ct_{j})^{ct_{j}}\right)(n+t)^{t-(2g_{1}+i)},\nonumber 
\end{align}
where we use the fact that ${n_{1} \choose m_{1}}{n_{2} \choose m_{2}}\leqslant{n_{1}+n_{2} \choose m_{1}+m_{2}}$,
the inductive bound on the coefficient of $\left(\prod_{r=1}^{n-i-1}d_{j_{r}}^{a_{r}}\right)k_{2}^{a_{n-i}}$
in $b_{\mathbf{d}(k_{2,}J),n-i}^{u_{1}}$ to obtain that the coefficient
of $\left(\prod_{r=1}^{n-i-1}d_{j_{r}}^{a_{r}}\right)k_{2}^{a_{n-i}}$
in $A_{\alpha_{1}}$ (which is a polynomial of degree at most $2\alpha_{1}\leqslant2(t-2)$
since $\alpha_{2}\geqslant2g_{1}+i\geqslant2\implies\alpha_{1}\leqslant t-2$)
is bounded by
\begin{align*}
\frac{(n+t)^{\alpha_{1}}}{\alpha_{1}!\cdots\alpha_{n-i}!}\sum_{u_{1}=1}^{\alpha_{1}}{\alpha_{1}-1 \choose u_{1}-1}g_{1}^{\alpha_{1}-u_{1}}(c\alpha_{1})^{cu_{1}},
\end{align*}
and the inductive bounds on the other coefficients to obtain (noting
that $\beta_{p,s},\gamma_{p,s},t_{j}\leqslant t-2$)
\begin{align*}
B_{p,s}^{\beta_{p,s}} & \leqslant500(n+t)^{\beta_{p,s}}\sum_{u_{p,s}=1}^{\beta_{p,s}}{\beta_{p,s}-1 \choose u_{p,s}-1}g_{1}^{\beta_{p,s}-u_{p,s}}(c\beta_{p,s})^{cu_{p,s}},\\
C_{p,s}^{\gamma_{p,s}} & \leqslant(n+t)^{\gamma_{p,s}}\sum_{v_{p,s}=1}^{\gamma_{p,s}}{\gamma_{p,s}-1 \choose v_{p,s}-1}g_{1}^{\gamma_{p,s}-v_{p,s}}(c\gamma_{p,s})^{cv_{p,s}},\\
h_{n-i-2\rho_{m}+j+1}^{t_{j}} & \leqslant500(n+t)^{t_{j}}(ct_{j})^{ct_{j}},\\
|\theta_{n}^{\alpha_{2}}| & \leqslant\alpha_{2}^{2g_{1}+i-1}g_{1}^{\alpha_{2}-(2g_{1}+i)}(n-i)^{\alpha_{2}-(2g_{1}+i)}.
\end{align*}
The remaining summations in (\ref{eq:Ca-coeff}) are bounded using
Lemma \ref{lem:coeff-prod-error} to obtain 
\begin{align*}
(\ref{eq:Ca-coeff}) & \leqslant\frac{500^{2g_{1}+i}(n+t)^{t-(2g_{1}+i)}}{a_{1}!\cdots a_{n-i}!}\alpha_{2}^{2g_{1}+i}g_{1}^{\alpha_{2}-(2g_{1}+i)}\left(\prod_{j=0}^{2\rho_{m}+i-1}(ct_{j})^{ct_{j}}\right)\cdot\\
 & \cdot\left(c\left(\alpha_{1}+\sum_{(p,s)\in\mathcal{B}}\beta_{p,s}+\sum_{(p,s)\in\mathcal{C}}\gamma_{p,s}\right)+g_{1}\right)^{c\left(\alpha_{1}+\sum_{(p,s)\in\mathcal{B}}\beta_{p,s}+\sum_{(p,s)\in\mathcal{C}}\gamma_{p,s}\right)}\\
 & \leqslant\frac{500^{2g_{1}+i}(n+t)^{t-(2g_{1}+i)}}{a_{1}!\cdots a_{n-i}!}\left(c\left(t-\alpha_{2}\right)+g_{1}\right)^{c\left(t-\alpha_{2}\right)}\alpha_{2}^{2g_{1}+i}g_{1}^{\alpha_{2}-(2g_{1}+i)}\\
 & \leqslant\frac{500^{2g_{1}+i}(n+t)^{t-(2g_{1}+i)}}{a_{1}!\cdots a_{n-i}!}\left(c\left(t-\alpha_{2}\right)+2g_{1}+\alpha_{2}\right)^{c\left(t-\alpha_{2}\right)+\alpha_{2}}\\
 & \leqslant\frac{(n+t)^{t-(2g_{1}+i)}}{a_{1}!\cdots a_{n-i}!}\left(ct\right)^{ct-(c-1)(2g_{1}+i)}.
\end{align*}
The conclusion is identical but easier when $g_{1}=0$ since then
there are less asymptotic expansion base changes required. Summing
over the choices of $\alpha_{1},\beta_{p,s},\gamma_{p,s},t_{j},\alpha_{2}$
introduces a factor bounded by $t^{3g_{1}+i+2}$ since there are precisely
$3g_{1}+i+2$ different coefficients each of whose maximal value is
$t$. But, $c>600$ and $3(2g_{1}+i)>3g_{1}+i+2$ as $2g_{1}+i\geqslant2$
so we obtain the bound 
\[
\frac{(n+t)^{t-(2g_{1}+i)}}{a_{1}!\cdots a_{n-i}!}\left(ct\right)^{ct-(c-4)(2g_{1}+i)}
\]
for the coefficient of $\left(\prod_{r=1}^{n-i-1}d_{j_{r}}^{a_{r}}\right)k_{2}^{a_{n-i}}$
in $\Xi_{I,k_{2},g_{1},n}^{t}$. 

To obtain the coefficient of $g^{-t}$ in $(a)$ we now insert this
bound on the coefficients into (\ref{eq:Cg-tcoeff}). Then identically
to the computation for the bound on the coefficients of the error
term $P_{k,n}^{(a)}(d_{1},\ldots,d_{n})$, we obtain the following
bound on the coefficient of $d_{1}^{a_{1}}\cdots d_{n}^{a_{n}}$ 
\begin{align*}
 & \frac{2(n+t)^{t}}{a_{1}!\cdots a_{n}!}\sum_{\substack{g_{1},i\\
2\leqslant2g_{1}+i\leqslant t\\
i\leqslant n-1
}
}\sum_{\substack{a_{1}-1\leq t_{n-i}\leqslant2t-\sum_{j=2}^{n}a_{j}\\
t_{n-i}\leqslant2t-(2g_{1}+i)\\
t_{j}\geqslant0
}
}(t_{n-i}+1)(t+1)^{2g_{1}+i+1}(ct)^{ct-(c-4)(2g_{1}+i)}\\
 & \leqslant\frac{2(n+t)^{t}}{a_{1}!\cdots a_{n}!}\sum_{\substack{g_{1},i\\
2\leqslant2g_{1}+i\leqslant t\\
i\leqslant n-1
}
}(ct)^{ct-(c-4)(2g_{1}+i)+2g_{1}+i+4}\leqslant\frac{(n+t)^{t}}{a_{1}!\cdots a_{n}!}(ct)^{ct-5},
\end{align*}
since $c>600$. This conclude the proof of the claim. 

We finally prove the claim about the coefficients of the polynomial
\textbf{$E_{\mathbf{d}(k_{2},J),n,g_{1}}^{k}$}.

\noindent\textbf{Claim. $E_{\mathbf{d}(k_{2},J),n,g_{1}}^{k}$ }is
a polynomial in $d_{j_{1}},\ldots,d_{j_{n-i-1}},k_{2}$ of degree
at most $2(k+1)$ whose coefficient of $\left(\prod_{p=1}^{n-i-1}d_{j_{p}}^{a_{p}}\right)k_{2}^{a_{n-i}}$
is bounded by 
\[
\frac{(n+k+2)^{k+2-(2g_{1}+i)}}{a_{1}!\cdots a_{n-i}!}\left(c(k+2)\right)^{c(k+2))-(c-4)(2g_{1}+i)}.
\]

\noindent\textbf{Proof of claim.} The proof of this claim is very
similar in flavor to the way that we obtained bounds for the individual
coefficients above using the induction hypotheses on coefficients
and error terms and using Lemma \ref{lem:coeff-prod-error} and so
we omit the details. 

We note by near identical computation and arguments, a similar bound
holds for the (b) term in (\ref{eq:C-term}). Putting everything together,
we obtain an asymptotic expansion for $S_{3}$ of the form 
\[
\left|S_{3}-\sum_{t=2}^{k+1}\frac{\sigma_{\mathbf{d},n}^{t}}{g^{t}}\right|\leqslant\frac{P_{k+1,n}^{(3)}(d_{1},\ldots,d_{n})}{g^{k+2}},
\]
where $P_{k+1,n}^{(3)}(d_{1},\ldots,d_{n})$ is a polynomial in $d_{1},\ldots,d_{n}$
of degree at most $2(k+1)$ whose coefficient of $d_{1}^{a_{1}}\cdots d_{n}^{a_{n}}$
is bounded by
\[
\frac{(n+k+2)^{k+2}}{a_{1}!\cdots a_{n}!}(c(k+2))^{c(k+2)-3},
\]
and the $\sigma_{\mathbf{d},n}^{t}$ are some polynomials in $d_{1},\ldots,d_{n}$
of degree at most $2(t-2)$ whose coefficient of $d_{1}^{a_{1}}\cdots d_{n}^{a_{n}}$
is bounded by 
\[
\frac{(n+t)^{t}}{a_{1}!\cdots a_{n}!}(ct)^{ct-3}.
\]
 
\end{proof}
\begin{lem}
\label{lem:A3A4}Suppose that for $k\geqslant0$, $A1(k)$ holds,
then $A3(k)$ and $A4(k)$ hold.
\end{lem}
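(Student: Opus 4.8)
The plan is to read off $A3(k)$ and $A4(k)$ from $A1(k)$ by specialising the recursions of Theorem~\ref{thm-Recurrsions} to the case $d_1=\dots=d_n=0$, in which every intersection number collapses to a Weil--Petersson volume ($\left[\tau_0\cdots\tau_0\right]_{g,n}=V_{g,n}$), dividing by a suitable $V_{g,m}$, and feeding in the expansions supplied by $A1(k)$. Throughout one may also use the lower-order statements $A3(r)$, $A4(r)$ with $r<k$, which are available at this point of the induction, and one may pass between expansions in powers of $g^{-1}$ and $(g-m)^{-1}$ by Lemma~\ref{lem:shift}; I would prove $A3(k)$ first. For $A3(k)$, take recursion (\ref{eq:recurssioniii}) with $n$ marked points and all $d_i=0$:
\[
(2g-2+n)V_{g,n}=\frac12\sum_{l=1}^{3g-2+n}\frac{(-1)^{l-1}l\pi^{2l-2}}{(2l+1)!}\left[\tau_{l}\tau_0^{n}\right]_{g,n+1}.
\]
Dividing by $V_{g,n+1}$ and multiplying by $4\pi^2$, the leading term is $1$ by the elementary identity $\sum_{l\geqslant1}\frac{(-1)^{l-1}l\pi^{2l-2}}{(2l+1)!}=\frac{1}{2\pi^2}$, obtained by writing $1-\frac{\sin x}{x}=\sum_{l\geqslant1}\frac{(-1)^{l-1}x^{2l}}{(2l+1)!}$, differentiating once, and setting $x=\pi$.

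To make this rigorous I would split the $l$-sum at $l=g$. For $l<g$ the hypothesis of $A1(k)$ holds for the signature $(g,n+1)$ and $\mathbf d=(l,0,\dots,0)$, so I substitute its expansion of $\left[\tau_l\tau_0^{n}\right]_{g,n+1}/V_{g,n+1}$; for $l\geqslant g$ (including the vanishing terms with $l>3g-2+n$) I use only $\left[\tau_l\tau_0^{n}\right]_{g,n+1}\leqslant V_{g,n+1}$, so that this part of the sum is factorially small in $g$. Interchanging the two finite sums and completing each $\sum_{l<g}$ to $\sum_{l\geqslant1}$ (again at a factorially small cost absorbed into the $g^{-(k+1)}$ error) yields the claimed expansion with $h_n^i\eqdf2\pi^2\sum_{l\geqslant1}\frac{(-1)^{l-1}l\pi^{2l-2}}{(2l+1)!}\,b_{(l,0,\dots,0),n+1}^{i}$. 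The bounds on $h_n^i$ and on the remainder follow by inserting the coefficient bounds on $b_{(l,0,\dots,0),n+1}^{i}$ and $Q^k_{n+1}$ from $A1(k)$: each is majorised by a polynomial in $l$ with coefficient of $l^t$ at most $(ci)^{ci}(n+i+1)^{ci}/t!$ (respectively with $k+1$ in place of $i$), so $\sum_{l\geqslant1}\frac{l\pi^{2l-2}}{(2l+1)!}e^l$ is an absolute constant whose product with $2\pi^2$ is comfortably below $500$; this gives $|h_n^i|\leqslant500(ci)^{ci}(n+i+1)^{ci}$ and the error $500(c(k+1))^{c(k+1)}(n+k+2)^{c(k+1)}g^{-(k+1)}$ exactly as stated.

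For $A4(k)$ I would run the same scheme on recursion (\ref{eq:recurs-1A}) with all $d_i=0$, which after specialisation has the shape $\left[\tau_1\tau_0^{m+1}\right]_{g,m+2}=V_{g-1,m+4}+(\text{a finite sum of products }V_{g_1,\cdot}V_{g_2,\cdot},\ g_1+g_2=g)$. Dividing by $V_{g,m+2}$, the left side is $1+\sum_i b^i g^{-i}+O(g^{-(k+1)})$ by $A1(k)$ on $(g,m+2)$; by Lemmas~\ref{Lemma 22} and \ref{lem:WP-sum-product-bound} the splitting sum divided by $V_{g,m+2}$ is $O(g^{-1})$, and the coefficient of $g^{-i}$ in its expansion (for $i\leqslant k$) only involves ratios $V_{g-m',\cdot}/V_{g,m+2}$ in which one factor has bounded complexity; these are expanded to the required order by iterating $A3(r)$ and $A4(r)$ with $r\leqslant i-1<k$, while the terms in which both factors have unbounded complexity are below $g^{-(k+1)}$ by Lemma~\ref{lem:WP-sum-product-bound} and pass into the error. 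Solving for $V_{g-1,m+4}/V_{g,m+2}$ gives $A4(k)$ for every $n=m+2\geqslant2$. The two cases $n\in\{0,1\}$, which this recursion does not reach, I would obtain from the factorisation $\frac{V_{g-1,n+2}}{V_{g,n}}=\frac{V_{g-1,n+2}}{V_{g-1,n+4}}\cdot\frac{V_{g-1,n+4}}{V_{g,n+2}}\cdot\frac{V_{g,n+2}}{V_{g,n}}$, expanding the outer two factors by iterating $A3(k)$ (on genus $g-1$ after a shift by Lemma~\ref{lem:shift}, and on genus $g$) and the middle factor by the already-proved case $n+2\geqslant2$ of $A4(k)$; the $(4\pi^2)^2$ and the falling-factorial prefactors then cancel up to a ratio of the form $1+O(g^{-1})$, and one collects terms and checks $|p_n^i|\leqslant(ci)^{ci}(n+i)^{ci}$.

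The main obstacle is not conceptual but the propagation of explicit constants: since in this section $c$ and $C$ are fixed once and for all, every step — completing and interchanging the $l$-sums, expanding the splitting products, shifting expansion bases, and chaining $A3$ with $A4$ — must be carried through with explicit control of the resulting expansion coefficients and error polynomials, so that they close up exactly into the stated forms (the constant $500$, the exponents $c(k+1)$ and $(n+k+1)$, $(n+k+2)$). The only genuinely delicate points are ensuring that the absolute constants arising from $\sum_{l\geqslant1}\frac{l\pi^{2l-2}e^l}{(2l+1)!}$ and from Lemmas~\ref{Lemma 22}--\ref{lem:WP-sum-product-bound} are swallowed by the huge constant $c>600$, and checking that the $n\in\{0,1\}$ bootstrap genuinely produces leading coefficient $1$ (which it does, e.g. since $(2g-3)/(2g-1)\to1$).
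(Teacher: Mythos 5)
Your proof is essentially the paper's. For $A3(k)$, both you and the authors start from recursion (\ref{eq:recurssioniii}) at $\mathbf d=\mathbf 0$, use the identity $2\sum_{\ell\geq1}\frac{(-1)^{\ell-1}\ell\pi^{2\ell}}{(2\ell+1)!}=1$ to isolate the $1$, substitute the $A1(k)$ expansion of $[\tau_\ell\tau_0^n]_{g,n+1}/V_{g,n+1}$ into the $\ell$-sum, and control the tail via the super-exponential decay of $\pi^{2\ell}/(2\ell+1)!$ (the paper bounds $\ell^3\pi^{2\ell}/(2\ell+1)!\leq 110e^{-\ell}$ and invokes Lemma~\ref{lem:basic estimate}, you bound the analogous sum against $e^{\ell}$; either works). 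For $A4(k)$ you also use the same identity obtained by specialising recursion (\ref{eq:recurs-1A}) to $\mathbf d=\mathbf 0$ and dividing by $V_{g,n}$, feeding $A1(k)$ into the intersection-number term and treating the splitting sum as in Lemma~\ref{lem:S_3}.

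The one place where you add something the paper leaves implicit is the observation that the $A4(k)$ identity, coming from recursion (\ref{eq:recurs-1A}) with $n-2\geq 0$ extra $\tau_0$'s, only covers $n\geq 2$, whereas the statement of $A4(k)$ (and its use in, e.g., Corollary~\ref{thm:main-estimate-WP-expansion} for $V_{g-1,2}/V_g$) requires $n=0,1$ as well. The paper's proof does not address those cases. Your bootstrap --- write $V_{g-1,n+2}/V_{g,n}$ as a telescoping product of ratios controlled by $A3(k)$ and the already-established $n+2\geq2$ case of $A4(k)$, shifting expansion bases by Lemma~\ref{lem:shift} --- is a correct way to close this. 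The delicate part you flag is real: the shift and the product both inflate coefficients, so one has to recheck that the resulting bounds stay within the $(ci)^{ci}(n+i)^{ci}$ and $(c(k+1))^{c(k+1)}(n+k+1)^{c(k+1)}$ envelopes of $A4(k)$; this is exactly the kind of bookkeeping that Lemmas~\ref{lem:shift}, \ref{lem:product-exp}, and \ref{lem:coeff-prod-error} are built for, and because only two extra factors enter, the margins are comfortable once $c>600$.
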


\begin{proof}
We recall by Theorem \ref{thm-Recurrsions} statement \ref{eq:recurssioniii},
that

\[
\frac{4\pi^{2}\left(2g-2+n\right)V_{g,n}}{V_{g,n+1}}-1=2\sum_{\ell=1}^{3g+n-2}\dfrac{(-1)^{\ell-1}\ell\pi^{2\ell}}{(2\ell+1)!}\Bigg(\dfrac{[\tau_{\ell}\tau_{0}^{n}]_{g,n+1}}{V_{g,n+1}}-1\Bigg)+2\sum_{\ell=3g+n-1}^{\infty}\dfrac{(-1)^{\ell-1}\ell\pi^{2\ell}}{(2\ell+1)!},
\]
since 
\[
2\sum_{\ell=1}^{\infty}\dfrac{(-1)^{\ell-1}\ell\pi^{2\ell}}{(2\ell+1)!}=1.
\]
By $A1(k)$, there exist functions $b_{(\ell,0,\ldots,0),n}^{i}$
and a polynomial $Q_{n+1}^{k}$ such that 

\[
\Bigg|\dfrac{[\tau_{\ell}\tau_{0}^{n}]_{g,n+1}}{V_{g,n+1}}-1-\sum_{i=1}^{k}\frac{b_{(\ell,0,\ldots,0),n+1}^{i}}{g^{i}}\Bigg|\leqslant\frac{Q_{n+1}^{k}(\ell,0,\ldots,0)}{g^{k+1}}
\]
where $Q_{n+1}^{k}(\ell,0,\ldots,0)$ is a polynomial in $\ell$ of
degree at most $2(k+1)$ such that the coefficient of $\ell^{t_{1}}$
is 
\[
\frac{(c(k+1))^{c(k+1)}\left(n+k+2\right)^{k+1}}{t_{1}!}.
\]
Then defining
\[
h_{n}^{i}\eqdf2\sum_{\ell=1}^{\infty}\dfrac{(-1)^{\ell-1}\ell\pi^{2\ell}}{(2\ell+1)!}b_{(\ell,0,\ldots,0),n+1}^{i},
\]
we have that
\begin{align*}
\left|\frac{4\pi^{2}\left(2g-2+n\right)V_{g,n}}{V_{g,n+1}}-1-\sum_{i=1}^{k}\frac{h_{n}^{i}}{g^{i}}\right|\leqslant & \left|\frac{2}{g^{k+1}}\sum_{\ell=1}^{\infty}\dfrac{(-1)^{\ell-1}\ell\pi^{2\ell}}{(2\ell+1)!}Q_{n+1}^{k}(\ell,0,\ldots,0)\right|\\
 & +\left|2\sum_{i=1}^{k}\frac{1}{g^{k}}\sum_{\ell=3g+n-1}^{\infty}\dfrac{(-1)^{\ell-1}\ell\pi^{2\ell}}{(2\ell+1)!}b_{(\ell,0,\ldots,0),n+1}^{i}\right|\\
 & +2\sum_{\ell=3g+n-1}^{\infty}\dfrac{(-1)^{\ell-1}\ell\pi^{2\ell}}{(2\ell+1)!}.
\end{align*}
Since for any $\ell\geqslant1$, 
\[
\frac{\ell^{3}\pi^{2\ell}}{\left(2\ell+1\right)!}\leqslant\frac{110}{e^{\ell}},
\]
we can use Lemma \ref{lem:basic estimate} to obtain

\begin{align*}
\left|2\sum_{\ell=1}^{\infty}\dfrac{(-1)^{\ell-1}\ell\pi^{2\ell}}{(2\ell+1)!}Q_{n+1}^{k}(\ell,0,\ldots,0)\right| & \leqslant\sum_{t_{1}=0}^{2(k+1)}\frac{(c(k+1))^{c(k+1)}\left(n+k+2\right)^{k+1}}{t_{1}!}\sum_{\ell=1}^{\infty}\dfrac{2\ell^{t_{1}+1}\pi^{2\ell}}{(2\ell+1)!}\\
 & \leqslant(c(k+1))^{c(k+1)}\left(n+k+2\right)^{k+1}\left(23+\sum_{t_{1}=2}^{2(k+1)}\frac{1}{t_{1}!}\sum_{\ell=1}^{\infty}\dfrac{2\ell^{t_{1}+1}\pi^{2\ell}}{(2\ell+1)!}\right)\\
 & \leqslant(c(k+1))^{c(k+1)}\left(n+k+2\right)^{k+1}\left(23+220\sum_{t_{1}=2}^{2(k+1)}\frac{1}{t_{1}!}\sum_{\ell=1}^{\infty}\dfrac{\ell^{t_{1}-2}}{e^{\ell}}\right)\\
 & \leqslant(c(k+1))^{c(k+1)}\left(n+k+2\right)^{k+1}\left(23+440\sum_{t_{1}=2}^{2(k+1)}\frac{1}{t_{1}(t_{1}-1)}\right)\\
 & \leqslant463(c(k+1))^{c(k+1)}\left(n+k+2\right)^{k+1}.
\end{align*}
By an identical argument, the coefficients satisfy the bound 
\[
h_{n}^{i}\leqslant463\left(ci\right)^{ci}\left(n+i+1\right)^{i}.
\]
Moreover, using Lemma \ref{lem:poly-rels} gives
\begin{align*}
\left|2\sum_{i=1}^{k}\frac{1}{g^{k}}\sum_{\ell=3g+n-1}^{\infty}\dfrac{(-1)^{\ell-1}\ell\pi^{2\ell}}{(2\ell+1)!}b_{(\ell,0,\ldots,0),n+1}^{i}\right| & \leqslant\sum_{t_{1}=0}^{2(k+1)}\frac{(c(k+1))^{c(k+1)}\left(n+k+2\right)^{k+1}}{t_{1}!}\sum_{\ell=3g+n-1}^{\infty}\dfrac{2\ell^{t_{1}+1}\pi^{2\ell}}{(2\ell+1)!}.
\end{align*}
Since $t_{1}\leqslant2(k+1)<g<\ell$, we have
\begin{align*}
\dfrac{\ell^{t_{1}+1}\pi^{2\ell}}{(2\ell+1)!} & =\frac{\ell^{t_{1}+1}}{\prod_{q=0}^{t_{1}}(2\ell+1-q)}\frac{\pi^{2\ell}}{(2\ell-t_{1})!}\\
 & \leqslant\frac{\pi^{2\ell}}{\ell!}.
\end{align*}
Thus, 
\begin{align*}
\sum_{\ell=3g+n-1}^{\infty}\dfrac{2\ell^{t_{1}+1}\pi^{2\ell}}{(2\ell+1)!} & \leqslant2\sum_{\ell=3g+n-1}^{\infty}\frac{(\pi^{2})^{\ell}}{\ell!}\\
 & \leqslant2e^{\pi^{2}}\frac{(e\pi^{2})^{3g+n-1}}{(3g+n-1)^{3g+n-1}}\\
 & \leqslant2e^{\pi^{2}}g^{-2g},
\end{align*}
where the last line holds because

\[
\frac{(e\pi^{2})^{3g+n-1}g^{2g}}{(g+1)^{3g+n-1}}\leqslant\left(\frac{\left(e\pi^{2}\right)^{3}}{g}\right)^{g}<1,
\]
because $g>\left(e\pi^{2}\right)^{3}$. It follows that 
\begin{align*}
\left|2\sum_{i=1}^{k}\frac{1}{g^{k}}\sum_{\ell=3g+n-1}^{\infty}\dfrac{(-1)^{\ell-1}\ell\pi^{2\ell}}{(2\ell+1)!}b_{(\ell,0,\ldots,0),n+1}^{i}\right| & \leqslant\frac{2e^{\pi^{2}+1}(c(k+1))^{c(k+1)}\left(n+k+2\right)^{k+1}}{g^{2g}}\\
 & =2e^{\pi^{2}+1}\left(\frac{(c(k+1))^{c}(n+k+2)}{g}\right)^{k+1}\frac{1}{g^{2g-k-1}}\\
 & \leqslant\frac{1}{g^{k+2}},
\end{align*}
since $(c(k+1))^{c}(n+k+2)<g$. An easier and similar bound holds
for the series tail. Combining the error terms then gives the conclusion
for $A3(k)$.

The proof of $A4(k)$ uses the identity (\ref{eq:recurs-1A})

\[
\frac{V_{g-1,n+2}}{V_{g,n}}=\frac{\left[\tau_{1}\tau_{0}^{n-1}\right]_{g,n}}{V_{g,n}}-6\sum_{\substack{I\sqcup J=\left\{ 1,\dots,n-2\right\} \\
g_{1}+g_{2}=g
}
}\frac{V_{g_{1},|I|+2}V_{g_{2},|J|+2}}{V_{g,n}}.
\]
The expansion of the intersection number term follows immediately
from $A1(k)$ and the expansion of the second term on the right-hand
side follows from an easier argument used in the expansion of $S_{3}$
in the proof of Lemma \ref{lem:S_3} since it is a slightly modified
version of (\ref{eq:C-term}) when $k_{1}=k_{2}=d_{1}=\ldots=d_{n}=0$
and $\ell=2$.
\end{proof}
\begin{prop}
Suppose that for some $k\geqslant0$, $A2(k)$ holds, then $A1(k)$
also holds.
\end{prop}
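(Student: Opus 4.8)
The plan is to deduce $A1(k)$ from $A2(k)$ by an exact telescoping in the $\tau$-indices. The two ingredients are the permutation symmetry of the intersection numbers $\left[\tau_{d_1}\cdots\tau_{d_n}\right]_{g,n}$ (immediate from (\ref{eq:def-int-numbers}), since the $\psi$-classes are interchanged by permuting marked points) and the identity $\left[\tau_0^n\right]_{g,n}=V_{g,n}$, which follows from Theorem \ref{thm:Mirz-vol-exp} by setting $\mathbf{x}=\mathbf{0}$ (only the $\mathbf{d}=\mathbf{0}$ term survives).

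Fix $g,n,\mathbf{d}$ with $g>\max\{|\mathbf{d}|,(n+k+1)^c(c(k+1))^c\}$. Building $\mathbf{d}$ up from $\mathbf{0}$ one coordinate at a time and telescoping each coordinate increment, one obtains
\[
\frac{\left[\prod_{i=1}^n\tau_{d_i}\right]_{g,n}}{V_{g,n}}=1-\sum_{j=1}^n\sum_{m=0}^{d_j-1}\frac{\left[\tau_m\tau_{d_1}\cdots\tau_{d_{j-1}}\tau_0^{n-j}\right]_{g,n}-\left[\tau_{m+1}\tau_{d_1}\cdots\tau_{d_{j-1}}\tau_0^{n-j}\right]_{g,n}}{V_{g,n}},
\]
where symmetry has been used to move the incremented index into the first slot. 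Writing $\mathbf{D}(j,m)\eqdf(m,d_1,\dots,d_{j-1},0,\dots,0)$, one has $|\mathbf{D}(j,m)|<|\mathbf{d}|<g$, so $A2(k)$ applies to every summand. Setting
\[
b_{\mathbf{d},n}^i\eqdf-\sum_{j=1}^n\sum_{m=0}^{d_j-1}e_{\mathbf{D}(j,m),n}^i,\qquad Q_n^k(\mathbf{d})\eqdf\sum_{j=1}^n\sum_{m=0}^{d_j-1}P_n^k(\mathbf{D}(j,m)),
\]
the triangle inequality gives $\left|\frac{\left[\prod\tau_{d_i}\right]_{g,n}}{V_{g,n}}-1-\sum_{i=1}^k\frac{b_{\mathbf{d},n}^i}{g^i}\right|\leqslant\frac{Q_n^k(\mathbf{d})}{g^{k+1}}$; the base case $\mathbf{d}=\mathbf{0}$ (empty telescoping sum) is precisely $\left[\tau_0^n\right]_{g,n}=V_{g,n}$, so $b^i\equiv0$ and $Q_n^k\equiv0$ there.

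It remains to check the degree and coefficient bounds, which is where the ``$-3$'' slack in the exponents appearing in $A2(k)$ is consumed; there is no analytic content here, only the bookkeeping of a Faulhaber summation. The elementary fact is $\sum_{m=0}^{d-1}m^t=\sum_{j=0}^t\frac{t!}{j!(t+1-j)!}B_j\,d^{t+1-j}$ (Bernoulli numbers, $B_1=-\tfrac12$): summing a polynomial over $m$ raises the degree in the summed variable by one, and if a polynomial $R$ satisfies $\bigl|[x^{s_0}d_1^{s_1}\cdots]R\bigr|\leqslant A/(s_0!\,s_1!\cdots)$ then $\bigl|[d^{s}d_1^{s_1}\cdots]\sum_{m=0}^{d-1}R(m,d_1,\dots)\bigr|\leqslant\frac{A}{s!\,s_1!\cdots}\sum_{\ell\geqslant0}\frac{|B_\ell|}{\ell!}<\frac{2A}{s!\,s_1!\cdots}$, since $\sum_{\ell}|B_\ell|/\ell!<2$. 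Applying this with $R(x,d_1,\dots,d_{j-1})=P_n^k(x,d_1,\dots,d_{j-1},0,\dots,0)$ — specializing the unused slots to $0$ only drops monomials — and then summing over the at most $n$ relevant values of $j$ (for a fixed monomial $d_1^{t_1}\cdots d_n^{t_n}$ only those $j$ with $t_{j+1}=\dots=t_n=0$ contribute, and for such $j$ one has $t_1!\cdots t_j!=t_1!\cdots t_n!$), one finds that $Q_n^k$ has degree at most $(ck+1)+1\leqslant c(k+1)$ (as $c>2$) and $\bigl|[d_1^{t_1}\cdots d_n^{t_n}]Q_n^k\bigr|\leqslant\frac{2n\,(c(k+1))^{c(k+1)-3}(n+k+1)^{c(k+1)-3}}{t_1!\cdots t_n!}\leqslant\frac{(c(k+1))^{c(k+1)}(n+k+1)^{c(k+1)}}{t_1!\cdots t_n!}$, using $2n\leqslant(c(k+1))^3(n+k+1)^3$. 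The identical computation applied to the majorants $v_n^i$ of the $e_{\mathbf{D}(j,m),n}^i$ shows $b_{\mathbf{d},n}^i$ is majorized by a polynomial $q_n^i$ of degree at most $c(i-1)+2\leqslant ci$ with $\bigl|[d_1^{t_1}\cdots d_n^{t_n}]q_n^i\bigr|\leqslant\frac{2n\,(ci)^{ci-3}(n+i)^{ci-3}}{t_1!\cdots t_n!}\leqslant\frac{(ci)^{ci}(n+i)^{ci}}{t_1!\cdots t_n!}$. This yields $A1(k)$ on the stated range of $g$. The ``main obstacle'' is thus only the patient tracking of these constants through the Faulhaber step, which the formulation of the induction has been designed to absorb.
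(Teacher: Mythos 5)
Your proof is correct and follows essentially the same route as the paper's: both telescope from $[\tau_0^n]_{g,n}=V_{g,n}$ to the target intersection number, apply $A2(k)$ to each unit increment (using permutation symmetry of the $\tau$'s to put the incremented index in the first slot), and then sum the error polynomial over the telescoping index, letting the resulting degree-plus-one and constant factor be absorbed by the $-3$ slack built into $A2(k)$. The only variation is in bookkeeping: you bound the inner power sum via the exact Faulhaber/Bernoulli identity and $\sum_{\ell\geqslant 0}|B_\ell|/\ell!<2$, whereas the paper uses the integral comparison $\sum_{i=1}^{d_j}(1-i/d_j)^t\leqslant d_j/(t+1)$ and further observes that for a fixed monomial $d_1^{a_1}\cdots d_n^{a_n}$ only the single summand with $j=\max\{j:a_j>0\}$ contributes (rather than your ``at most $n$'' of them), yielding marginally tighter constants — both comfortably within the available slack.
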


\begin{proof}
For $A1(k)$, observe first that
\begin{align*}
 & 1-\frac{\left[\tau_{d_{1}}\cdots\tau_{d_{n}}\right]_{g,n}}{V_{g,n}}=\frac{[\tau_{0}^{n}]_{g,n}-\left[\tau_{d_{1}}\cdots\tau_{d_{n}}\right]_{g,n}}{V_{g,n}}\\
 & =\frac{\left(\sum_{i=0}^{d_{1}-1}[\tau_{i}\tau_{0}^{n-1}]_{g,n}-[\tau_{i+1}\tau_{0}^{n-1}]_{g,n}\right)+\dots+\left(\sum_{i=0}^{d_{n}-1}[\tau_{d_{1}}\cdots\tau_{d_{n-1}}\tau_{i}]_{g,n}-[\tau_{d_{1}}\cdots\tau_{d_{n-1}}\tau_{i+1}]_{g,n}\right)}{V_{g,n}}.
\end{align*}
Now, for $e_{\mathbf{d},n}^{t}$ and $P_{n}^{k}(\mathbf{d})$ the
coefficients and error polynomial from $A2(k)$, if we denote by $p_{(t_{1},\ldots,t_{n})}^{k,n}$
the coefficient of $d_{1}^{t_{1}}\cdots d_{n}^{t_{n}}$ in $P_{n}^{k}(\mathbf{d})$,
then we have for each $j=1,\ldots,n$
\begin{align*}
 & \left|\sum_{i=0}^{d_{j}-1}\left(\frac{\left[\tau_{d_{1}}\cdots\tau_{d_{j-1}}\tau_{i}\tau_{0}^{n-j}\right]_{g,n}-\left[\tau_{d_{1}}\cdots\tau_{d_{j-1}}\tau_{i+1}\tau_{0}^{n-j}\right]_{g,n}}{V_{g,n}}-\sum_{t=1}^{k}\frac{e_{(d_{1},\ldots,d_{j},i,0,\ldots,0)}^{t}}{g^{t}}\right)\right|\\
 & \leqslant\sum_{i=0}^{d_{j}-1}\left|\frac{\left[\tau_{d_{1}}\cdots\tau_{d_{j-1}}\tau_{i}\tau_{0}^{n-j}\right]_{g,n}-\left[\tau_{d_{1}}\cdots\tau_{d_{j-1}}\tau_{i+1}\tau_{0}^{n-j}\right]_{g,n}}{V_{g,n}}-\sum_{t=1}^{k}\frac{e_{(d_{1},\ldots,d_{j},i,0,\ldots,0)}^{t}}{g^{t}}\right|\\
 & \leqslant\frac{1}{g^{k+1}}\sum_{i=0}^{d_{j}-1}\sum_{p=0}^{ck+1}\sum_{\substack{0\leq t_{1},\dots,t_{j}\\
\sum t_{j}=p
}
}p_{(t_{1},\dots,t_{j},0,...,0)}^{k,n}d_{1}^{t_{1}}\cdots d_{j-1}^{t_{j-1}}i^{t_{j}}\\
 & =\frac{1}{g^{k+1}}\sum_{p=0}^{ck+1}\sum_{\substack{0\leqslant t_{1},\dots,t_{j}\\
\sum t_{j}=p
}
}p_{(t_{1},\dots,t_{j},0,...,0)}^{k,n}d_{1}^{t_{1}}\cdots d_{j-1}^{t_{j-1}}d_{j}^{t_{j}}\sum_{i=1}^{d_{j}}\left(1-\frac{i}{d_{j}}\right)^{t_{j}}.
\end{align*}
For any non-negative integer $s$ we have the bound
\[
\sum_{i=1}^{d_{j}}\left(1-\frac{i}{d_{j}}\right)^{s}\leqslant\frac{d_{j}}{s+1},
\]
which is trivial when $s=0$ and for $s\geqslant1$, one can bound
by the integral
\[
\int_{0}^{d_{j}}\left(1-\frac{x}{d_{j}}\right)^{s}\mathrm{d}x=\frac{d_{j}}{s+1}.
\]
Thus 
\begin{align*}
 & \left|\sum_{i=0}^{d_{j}-1}\left(\frac{\left[\tau_{d_{1}}\cdots\tau_{d_{j-1}}\tau_{i}\tau_{0}^{n-j}\right]_{g,n}-\left[\tau_{d_{1}}\cdots\tau_{d_{j-1}}\tau_{i+1}\tau_{0}^{n-j}\right]_{g,n}}{V_{g,n}}-\sum_{t=1}^{k}\frac{e_{(d_{1},\ldots,d_{j},i,0,\ldots,0),n}^{t}}{g^{t}}\right)\right|\\
 & \leqslant\frac{1}{g^{k+1}}\underbrace{\sum_{p=0}^{ck+1}\sum_{\substack{0\leqslant t_{1},\dots,t_{j}\\
\sum t_{j}=p
}
}p_{(t_{1},\dots,t_{j},0,...,0)}^{k,n}d_{1}^{t_{1}}\cdots d_{j-1}^{t_{j-1}}\frac{d_{j}^{t_{j}+1}}{t_{j}+1}}_{\eqdf\tilde{Q}_{n}^{k,j}(d_{1},\ldots,d_{j})}.
\end{align*}
Now $\text{\ensuremath{\tilde{Q}_{n}^{k,j}}}$ is a polynomial in
$d_{1},\dots,d_{j}$ of degree at most $2k+2=2(k+1)$ and so we define
\[
\tilde{Q}_{n}^{k}\left(d_{1},\dots,d_{n}\right)\eqdf\sum_{j=1}^{n}\tilde{Q}_{n}^{k,j}\left(d_{1},\dots,d_{j}\right),
\]
which is also a polynomial of degree at most $2(k+1)$ in $d_{1},\ldots,d_{n}$.
We wish to determine a bound on the coefficient of a given monomial
$d_{1}^{a_{1}}\cdots d_{n}^{a_{n}}$ in $Q_{n}^{k}(d_{1},\ldots,d_{n})$.
Note that the contribution to this coefficient only comes from $\tilde{Q}_{n}^{k,m}$
where $m\leqslant n$ is the largest index for which $a_{m}$ is non-zero.
Indeed, if $j<m$ then $\text{\ensuremath{\tilde{Q}_{n}^{k,j}}}$
is a polynomial in $d_{1},\ldots,d_{j}$ and in particular the power
of $d_{m}$ is always zero in any of its monomials so it cannot give
contribution to the coefficient since $a_{m}>0$. Moreover, if $j>m$
then $\text{\ensuremath{\tilde{Q}_{n}^{k,j}}}$ is a polynomial in
$d_{1},\ldots,d_{j}$ such that every monomial has $d_{j}$ occurring
with exponent at least 1. But $a_{m}$ is by definition the last non-zero
exponent and so $d_{1}^{a_{1}}\cdots d_{n}^{a_{n}}$ does not feature
as a monomial in $\tilde{Q}_{n}^{k,j}$.

The coefficient of $d_{1}^{a_{1}}\cdots d_{n}^{a_{n}}$ hence satisfies
\[
\frac{p_{(a_{1},\dots,a_{m-1},a_{m}-1,0,...,0)}^{k,n}}{a_{m}}\leqslant\frac{(c(k+1))^{c(k+1)}(n+k+1)^{c(k+1)}}{a_{1}!\cdots a_{m}!}.
\]
We hence obtain an expansion for $A1(k)$ since the above shows that
\[
\left|1-\frac{\left[\tau_{d_{1}}\cdots\tau_{d_{n}}\right]_{g,n}}{V_{g,n}}-\sum_{t=1}^{k}\frac{1}{g^{t}}\sum_{j=1}^{n}\sum_{i=0}^{d_{j}-1}e_{(d_{1},\ldots,d_{j},i,0,\ldots,0),n}^{t}\right|\leqslant\frac{\tilde{Q}_{n}^{k}(d_{1},\ldots,d_{n})}{g^{k+1}},
\]
so that for $t=1,\ldots,k$ we set
\[
b_{\mathbf{d},n}^{t}\eqdf-\sum_{j=1}^{n}\sum_{i=0}^{d_{j}-1}e_{(d_{1},\ldots,d_{j},i,0,\ldots,0),n}^{t}.
\]
An identical argument to the error term but replacing $k+1$ by $t$
shows that the $b_{\mathbf{d},n}^{t}$ can be majorised by polynomials
of degree at most $2(t-1)+2=2t$ in $d_{1},\ldots,d_{n}$ whose coefficient
of $d_{1}^{a_{1}}\cdots d_{n}^{a_{n}}$ is bounded by
\[
\frac{(ct)^{ct}(n+t)^{ct}}{a_{1}!\cdots a_{m}!},
\]
and so $A1(k)$ holds because the polynomials $q_{n}^{t}$ and $Q_{n}^{k}$
majorize polynomials with the above properties by definition.
\end{proof}
\begin{prop}
The base cases of the induction, $A1(0),A2(0),A3(0),A4(0)$ hold true.
\end{prop}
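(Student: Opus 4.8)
The plan is to get $A3(0)$ and $A4(0)$ directly from the Mirzakhani--Zograf asymptotics (Theorem~\ref{Large-g-Vg}), to deduce $A1(0)$ from $A2(0)$ by telescoping, and to spend the real effort on $A2(0)$, running the decomposition of Proposition~\ref{prop:A2} with crude a~priori bounds in place of the (unavailable) inductive hypotheses.

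\emph{The statements $A3(0)$ and $A4(0)$.} Both are immediate from Theorem~\ref{Large-g-Vg}. Since the error factor $1+O(\tfrac{1+n^{2}}{g})$ there carries a universal constant and the hypotheses $g>500(n+2)^{c}c^{c}$, $g>(n+1)^{c}c^{c}$ put $g$ far above a fixed multiple of $1+n^{2}$, that factor is invertible. Dividing the expansion for $V_{g,n+1}$ by the one for $V_{g,n}$ gives $\frac{4\pi^{2}(2g-2+n)V_{g,n}}{V_{g,n+1}}=1+O\big(\tfrac{1+(n+1)^{2}}{g}\big)$, and, using $2(g-1)-3+(n+2)=2g-3+n$, dividing the expansion for $V_{g-1,n+2}$ by the one for $V_{g,n}$ gives $\frac{V_{g-1,n+2}}{V_{g,n}}=1+O\big(\tfrac{1+(n+2)^{2}}{g}\big)$. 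For $c$ large these absorb into $500c^{c}(n+2)^{c}$ and $c^{c}(n+1)^{c}$, which is exactly $A3(0)$ and $A4(0)$; they also supply the $O(1)$-bounds on $\tfrac{V_{g,n+1}}{gV_{g,n}}$, $\tfrac{V_{g-1,n+1}}{V_{g,n-1}}$ used below.

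\emph{The statement $A2(0)$.} As in Proposition~\ref{prop:A2} write
\[
\frac{[\tau_{d_{1}}\cdots\tau_{d_{n}}]_{g,n}-[\tau_{d_{1}+1}\tau_{d_{2}}\cdots\tau_{d_{n}}]_{g,n}}{V_{g,n}}=S_{1}+S_{2}+S_{3},
\]
the three pieces coming from $\tilde A_{\mathbf d,g,n},\tilde B_{\mathbf d,g,n},\tilde C_{\mathbf d,g,n}$ in Mirzakhani's recursion (Theorem~\ref{thm-Recurrsions}(iv)). Since no inductive hypothesis is available we substitute: the trivial bound $[\tau_{e_{1}}\cdots\tau_{e_{m}}]_{g',m}\leqslant V_{g',m}$; Grushevsky's bound $V_{g',m}\leqslant C^{2g'+m}(2g'+m)!$ (Lemma~\ref{lem:(Grushevsky's-bound,-)}); the $O(1)$-ratios from $A3(0)$, $A4(0)$; and $\sum_{i\geqslant0}|a_{i}-a_{i-1}|\leqslant C$ together with Lemma~\ref{lem:basic estimate} to sum the $a$-weighted tails. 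For $S_{1}=\frac{1}{4\pi^{2}(2g-2+n)}\cdot\frac{4\pi^{2}(2g-2+n)V_{g,n-1}}{V_{g,n}}\cdot\frac{\tilde A_{\mathbf d,g,n}}{V_{g,n-1}}$ the middle factor is $O(1)$ and $|\tilde A_{\mathbf d,g,n}|\leqslant Cn(|\mathbf d|+1)V_{g,n-1}$, so $|S_{1}|\leqslant\frac{Cn(|\mathbf d|+1)}{g}$; the same computation with the extra $O(1)$-factor $\frac{V_{g-1,n+1}}{V_{g,n-1}}$ gives $|S_{2}|\leqslant\frac{C(d_{1}+1)}{g}$. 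For $S_{3}=\frac{\tilde C_{\mathbf d,g,n}}{V_{g,n}}$, bounding the two intersection numbers by $V_{g_{1},|I|+1}$, $V_{g_{2},|J|+1}$ and pulling out $\sum_{\ell}|a_{\ell}-a_{\ell-1}|(\ell+d_{1}-1)\leqslant C(d_{1}+1)$ reduces matters to $\frac{1}{V_{g,n}}\sum_{g_{1}+g_{2}=g,\,I\sqcup J=\{2,\dots,n\}}V_{g_{1},|I|+1}V_{g_{2},|J|+1}$, which one bounds --- as in the proof of Lemma~\ref{lem:WP-sum-product-bound} --- by splitting the genus sum by the smaller of $g_{1},g_{2}$, estimating the smaller-genus volume by Grushevsky and the larger-genus one via the ratio $\frac{V_{g-g_{1},|J|+1}}{V_{g,n}}\leqslant\frac{C}{(cg)^{2g_{1}+|I|}}$ from Theorem~\ref{Large-g-Vg}: stability forces $2g_{1}+|I|\geqslant2$ and $2g_{2}+|J|\geqslant2$ so every term carries at least $g^{-2}$, and the binomial counting the partitions with a given $|I|$ is dominated by the corresponding power of $g$ since $g>(n+1)^{c}c^{c}$, so the sum is $\leqslant\frac{Cn^{2}}{g^{2}}$ and $|S_{3}|\leqslant\frac{Cn^{2}(|\mathbf d|+1)}{g^{2}}$. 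Adding, the left side is $\leqslant\frac{Cn(|\mathbf d|+1)}{g}$, which for $c>600$ has the form $\frac{P_{n}^{0}(\mathbf d)}{g}$ with $P_{n}^{0}$ of degree $1$ and $|[d_{1}^{t_{1}}\cdots d_{n}^{t_{n}}]P_{n}^{0}|\leqslant\frac{c^{c-3}(n+1)^{c-3}}{t_{1}!\cdots t_{n}!}$; no coefficients $e_{\mathbf d,n}^{i}$ occur at this order.

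\emph{The statement $A1(0)$.} Telescoping in each argument,
\[
1-\frac{[\tau_{d_{1}}\cdots\tau_{d_{n}}]_{g,n}}{V_{g,n}}=\sum_{j=1}^{n}\sum_{i=0}^{d_{j}-1}\frac{[\tau_{d_{1}}\cdots\tau_{d_{j-1}}\tau_{i}\tau_{0}^{n-j}]_{g,n}-[\tau_{d_{1}}\cdots\tau_{d_{j-1}}\tau_{i+1}\tau_{0}^{n-j}]_{g,n}}{V_{g,n}},
\]
one applies $A2(0)$ to each difference and sums using $\sum_{i=1}^{d_{j}}(1-\tfrac{i}{d_{j}})^{s}\leqslant\tfrac{d_{j}}{s+1}$; this is verbatim the telescoping argument used to prove the implication ``$A2(k)\Rightarrow A1(k)$'' specialised to $k=0$, and yields $Q_{n}^{0}$ of degree $\leqslant ck+2=2\leqslant c$ with $|[d_{1}^{t_{1}}\cdots d_{n}^{t_{n}}]Q_{n}^{0}|\leqslant\frac{c^{c}(n+1)^{c}}{t_{1}!\cdots t_{n}!}$, so $A1(0)$ holds. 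The one genuine difficulty throughout is the $n$-dependence: a naive bound on the $S_{3}$ partition sum via Lemma~\ref{Lemma 22} costs a factor $2^{n-1}$, and only by exploiting that the bounded-genus factor necessarily has few marked points does one recover a bound polynomial in $n$ --- which is essential, since the purpose of the ensuing induction is precisely to keep the constants in the shape recorded in Theorem~\ref{thm:expansions}.
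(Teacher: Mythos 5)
The paper's own proof is two sentences: $A1(0)$ and $A2(0)$ are taken from \cite[Theorem~A.1]{Li.Wr2024}, and $A3(0),A4(0)$ are then deduced from $A1(0)$ by the same argument as in Lemma~\ref{lem:A3A4}. Your proposal reverses the logical order, starting from $A3(0),A4(0)$ and ending with $A1(0)$, and tries to replace the Lipnowski--Wright input by Mirzakhani--Zograf (Theorem~\ref{Large-g-Vg}). This is where the argument breaks.

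The gap is exactly the $n$-uniformity you yourself flag at the end. You write that the error factor $1+O\bigl(\tfrac{1+n^{2}}{g}\bigr)$ in Theorem~\ref{Large-g-Vg} ``carries a universal constant,'' but nothing in the paper's statement of Mirzakhani--Zograf (nor in \cite{Mi.Zo2015} itself) provides an implied constant or threshold $g_{0}$ that is uniform --- let alone polynomially controlled --- in $n$: the theorem is stated ``as $g\to\infty$'' for each fixed $n$. Establishing this uniformity is precisely the content of the Anantharaman--Monk / Lipnowski--Wright refinement, and \cite[Theorem~A.1]{Li.Wr2024} is what the paper cites for the base case exactly to sidestep this issue. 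Without uniformity you cannot obtain $A3(0)$ or $A4(0)$ (whose hypotheses range over all $g>500(n+2)^{c}c^{c}$, i.e.\ polynomially large in $n$), and since your proof of $A2(0)$ then calls on $A3(0),A4(0)$ both for the $O(1)$-ratios and to bound $\frac{V_{g-g_{1},|J|+1}}{V_{g,n}}$ in the $S_{3}$ estimate, the whole chain collapses at its first link. Note that the paper's own applications of Theorem~\ref{Large-g-Vg} (as in Lemma~\ref{lem:WP-sum-product-bound}) are always preceded by a reduction to $n_{j}\leqslant 3$ via \cite[Lemma~3.2, part~3]{MirzakhaniRandom}, precisely so that $n$ is bounded when Mirzakhani--Zograf is invoked --- a step your argument does not have and cannot afford, since you need the estimate for arbitrary $n$ with polynomial constants.

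The parts of your argument that do not touch Mirzakhani--Zograf are fine: the telescoping deduction $A2(0)\Rightarrow A1(0)$ is the specialisation $k=0$ of the paper's own implication, and your bounds on $S_{1},S_{2}$ using $[\tau_{\cdot}]\leqslant V$, $\sum_{i}|a_{i}-a_{i-1}|<\infty$ and Lemma~\ref{lem:basic estimate} are correct given $A3(0),A4(0)$. But a valid base case needs either a direct quantitative input (the route the paper takes) or a genuinely new derivation of the $n$-uniform large-genus asymptotic; the proposal as written supplies neither.
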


\begin{proof}
$A1(0)$ and $A2(0)$ follow from identical computation in \cite[Theorem A.1]{Li.Wr2024}
except for modifying the bound on the sum of the products of the volumes
in the proof of Lemma A.9 therein with a simple argument using the
bounds in \cite[Lemma 5.1]{Mi.Zo2015} to allow for extension of the
estimates from $n=o(\sqrt{g})$ to $n<cg$. $A3(0)$ and $A4(0)$
follow from $A1(0)$ via the exact same method as in the proof of
Lemma \ref{lem:A3A4}.
\end{proof}

\subsection{Proof of Corollary \ref{thm:main-estimate-WP-expansion}}

We now prove Corollary \ref{thm:main-estimate-WP-expansion} and note
that the constants $c,C$ appearing can\textbf{ }once again change
from line to line as we are only interested in their existence.
\begin{lem}
\label{lem:bordered}There is a $c>0$ such that for any $n$ and
$k$ there exists continuous functions $\left\{ \alpha_{n,j}\left(\mathbf{x}\right)\right\} _{j=1}^{k}$
with 
\[
\left|\frac{V_{g,n}\left(\mathbf{x}\right)}{V_{g,n}}-\prod_{i=1}^{n}\frac{\sinh\left(\frac{x_{i}}{2}\right)}{\left(\frac{x_{i}}{2}\right)}-\sum_{j=1}^{k}\frac{1}{g^{j}}\alpha_{n,j}\left(\mathbf{x}\right)\right|\leqslant\frac{\left(cnk\right)^{ck}\left(1+\left|\mathbf{x}\right|\right)^{ck}\exp\left(\frac{1}{2}\left|\mathbf{x}\right|\right)}{g^{k+1}},
\]
for $g>c(n+k)^{c}$. Furthermore each $\alpha_{n,j}\left(\mathbf{x}\right)$
satisfies 
\begin{equation}
\alpha_{n,j}\left(\mathbf{x}\right)\leqslant\left(cnj\right)^{cj}\left(1+\left|\mathbf{x}\right|\right)^{cj}\exp\left(\frac{1}{2}\left|\mathbf{x}\right|\right).\label{eq:expansion-function-est}
\end{equation}
\end{lem}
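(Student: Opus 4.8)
The plan is to deduce Lemma~\ref{lem:bordered} from Theorem~\ref{thm:expansions} via Mirzakhani's polynomiality result (Theorem~\ref{thm:Mirz-vol-exp}), which expresses $V_{g,n}(2x_1,\dots,2x_n)$ as a sum over $\mathbf{d}$ of $\left[\prod\tau_{d_i}\right]_{g,n}$ times $\prod_i x_i^{2d_i}/(2d_i+1)!$. Dividing by $V_{g,n}$ and substituting the expansion $\left[\prod\tau_{d_i}\right]_{g,n}/V_{g,n} = 1 + \sum_{i=1}^k b_{\mathbf{d},n}^i/g^i + O(Q_n^k(\mathbf{d})/g^{k+1})$ from statement $A1(k)$ of Theorem~\ref{thm:expansions}, I would interchange the (finite) sum over $\mathbf{d}$ with the sum over $i$, yielding a candidate expansion with coefficients
\[
\alpha_{n,j}(\mathbf{x}) = \sum_{d_1,\dots,d_n\geqslant 0} \frac{b_{\mathbf{d},n}^j}{2^{2|\mathbf{d}|}}\,\frac{x_1^{2d_1}}{(2d_1+1)!}\cdots\frac{x_n^{2d_n}}{(2d_n+1)!},
\]
and the leading term $j=0$ contributing $\prod_i \sinh(x_i/2)/(x_i/2)$ since $\sum_{d\geqslant 0} (x/2)^{2d}/(2d+1)! = \sinh(x/2)/(x/2)$ (using $2^{-2d}\frac{x^{2d}}{(2d+1)!}=\frac{(x/2)^{2d}}{(2d+1)!}$). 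This observation is essentially already written out in the displayed chain of inequalities appearing at the end of Section~\ref{sec:Large-genus-expansion} in the excerpt.

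There are two sources of error to control, labelled $(a)$ and $(b)$ in that display. Term $(b)$ is the genuine error $g^{-(k+1)}\sum_{\mathbf{d}} Q_n^k(\mathbf{d})\,2^{-2|\mathbf{d}|}\prod_i x_i^{2d_i}/(2d_i+1)!$; to bound it I would use the coefficient bound $\bigl|[d_1^{t_1}\cdots d_n^{t_n}]Q_n^k\bigr|\leqslant (c(k+1))^{c(k+1)}(n+k+1)^{c(k+1)}/(t_1!\cdots t_n!)$ from $A1(k)$, so that
\[
Q_n^k(\mathbf{d}) \leqslant (c(k+1))^{c(k+1)}(n+k+1)^{c(k+1)}\prod_{i=1}^n \sum_{t_i=0}^{c(k+1)} \frac{d_i^{t_i}}{t_i!} \leqslant (c(k+1))^{c(k+1)}(n+k+1)^{c(k+1)} e^{|\mathbf{d}|},
\]
and then, since a filling-type geometric input is not needed here, bound $\sum_{d\geqslant 0} e^{d}2^{-2d}(x/2)^{2d}2^{2d}/(2d+1)!$... more carefully: I would factor term by term as $\sum_{d_i\geqslant 0} d_i^{t_i}(x_i/2)^{2d_i}/((2d_i+1)!\,t_i!)$ and bound each factor, using that $d_i^{t_i}/t_i!\leqslant e^{d_i}$ and $\sum_d e^{d_i}(x_i/2)^{2d_i}/(2d_i+1)! \leqslant e^{x_i}$ (crudely, via $(x_i/2)^{2d}/(2d+1)!\leqslant$ terms of $e^{x_i}$ times $e^{d_i}$-type growth, which needs a short estimate $\sum_d e^{d}y^{2d}/(2d+1)!\leqslant C e^{Cy}$ valid after absorbing constants). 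This produces the factor $(1+|\mathbf{x}|)^{ck}(c k)^{ck}e^{|\mathbf{x}|}$, or after bookkeeping the stated $c^{ck+n}(k(n+k))^{ck+1}(1+|\mathbf{x}|)^{ck}\exp(|\mathbf{x}|)$. Term $(a)$, namely $\sum_{j=1}^k g^{-j}\sum_{|\mathbf{d}|>3g+n-3} b_{\mathbf{d},n}^j 2^{-2|\mathbf{d}|}\prod x_i^{2d_i}/(2d_i+1)!$, is a tail term: since $|\mathbf{d}|>3g+n-3\geqslant g$ (for $g\geqslant 2$) and $b_{\mathbf{d},n}^j$ is majorized by a polynomial of degree $\leqslant cj$ with the stated factorial-reciprocal coefficient bounds, the factor $2^{-2|\mathbf{d}|}$ combined with the rapid decay of $1/(2d_i+1)!$ makes this term superexponentially small in $g$, hence absorbable into $g^{-(k+1)}$ times the claimed bound — here I would invoke that for $|\mathbf{d}|>g$ the product $\prod_i (x_i/2)^{2d_i}/(2d_i+1)!$ is bounded by $e^{|\mathbf{x}|}/(g!)^{c}$-type quantities, giving room to spare. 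The bound \eqref{eq:expansion-function-est} on each $\alpha_{n,j}(\mathbf{x})$ individually follows by exactly the same computation applied to the single series defining $\alpha_{n,j}$, using the coefficient bound $\bigl|[d_1^{t_1}\cdots d_n^{t_n}] q_n^j\bigr| \leqslant (cj)^{cj}(n+j)^{cj}/(t_1!\cdots t_n!)$ for the majorizing polynomial of $b_{\mathbf{d},n}^j$.

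The main obstacle I anticipate is not conceptual but bookkeeping: getting the precise shape of the error bound, in particular the interplay between the $(c(k+1))^{c(k+1)}$-type constants coming from $A1(k)$ and the $(1+|\mathbf{x}|)^{ck}$ and $\exp(|\mathbf{x}|)$ factors coming from summing the power series, and verifying that the tail term $(a)$ genuinely fits under $g^{-(k+1)}$ rather than merely under $g^{-(k+1)}$ times something growing in $k$. One has to be careful that the summation ranges over $t_i$ are $\leqslant c(k+1)$ (finitely many), so the geometric/factorial series are honestly convergent and the constant absorption is legitimate; and that the condition $g > c(n+k)^c$ is strong enough to apply $A1(k)$, which requires $g > \max\{|\mathbf{d}|, (n+k+1)^c(c(k+1))^c\}$ — but $|\mathbf{d}|$ ranges up to $3g+n-3$, so one must instead split at $|\mathbf{d}|\leqslant g$ and handle $|\mathbf{d}|>g$ as a tail where $A1(k)$'s hypothesis $g>|\mathbf{d}|$ fails but the trivial bound $\left[\prod\tau_{d_i}\right]_{g,n}\leqslant V_{g,n}$ (from Lemma~\ref{lem:Wp-expbound}) together with the factorial decay suffices. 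Once these ranges are pinned down, the rest is routine estimation.
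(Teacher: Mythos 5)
Your proposal matches the paper's proof in all essentials: you define $\alpha_{n,j}$ by the same series over all $\mathbf{d}$, split the sum at $|\mathbf{d}|\leqslant g$ versus $|\mathbf{d}|>g$ so that $A1(k)$'s hypothesis $g>|\mathbf{d}|$ is satisfied on the main piece, bound the main error via the factorial-reciprocal coefficient bounds on $Q_n^k$ and the $q_n^j$ majorants of $b_{\mathbf{d},n}^j$, and kill both tails ($|\mathbf{d}|>g$) by the superexponential decay of $2^{-2|\mathbf{d}|}$ together with the trivial bound $\left[\prod\tau_{d_i}\right]_{g,n}\leqslant V_{g,n}$. The only variation is cosmetic: where you crush $d^t/t!\leqslant e^{d}$ and absorb the resulting $\sinh(\sqrt{e}\,x_i/2)$-type sum into $\exp(|\mathbf{x}|)$, the paper instead applies the operator $\left(\tfrac{x_i}{2}\partial_{x_i}\right)^{t_i}$ to the series and to $\exp(|\mathbf{x}|)$, which is what produces the explicit $(1+|\mathbf{x}|)^{ck}$ factor in the stated bound — both routes close with a large enough $c$.
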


\begin{proof}
We recall Theorem \ref{thm:Mirz-vol-exp} says that
\[
V_{g,n}\left(\mathbf{x}\right)=\sum_{\substack{d_{1},\dots,d_{n}\\
|\mathbf{d}|\leqslant3g+n-3
}
}\frac{1}{2^{2\left|\d\right|}}\left[\prod_{i=1}^{n}\tau_{d_{i}}\right]_{g,n}\frac{x_{1}^{2d_{1}}}{\left(2d_{1}+1\right)!}\ensuremath{\cdots}\frac{x_{n}^{2d_{n}}}{\left(2d_{n}+1\right)!}.
\]
We set 
\begin{equation}
\alpha_{n,j}\left(\mathbf{x}\right)\eqdf\sum_{d_{1},\dots,d_{n}=0}^{\infty}\frac{b_{\mathbf{d},n}^{j}}{2^{2\left|\d\right|}}\frac{x_{1}^{2d_{1}}}{\left(2d_{1}+1\right)!}\cdots\frac{x_{n}^{2d_{n}}}{\left(2d_{n}+1\right)!}\label{eq:coeficients-vol-boundary}
\end{equation}
so that by Theorem \ref{thm:expansions}, (since $b_{\mathbf{d},n}^{0}=1$)
\begin{align*}
 & \left|\frac{V_{g,n}\left(\mathbf{x}\right)}{V_{g,n}}-\sum_{j=0}^{k}\frac{1}{g^{j}}\sum_{d_{1},\dots,d_{n}=0}^{\infty}\frac{b_{\mathbf{d},n}^{j}}{2^{2\left|\d\right|}}\frac{x_{1}^{2d_{1}}}{\left(2d_{1}+1\right)!}\cdots\frac{x_{n}^{2d_{n}}}{\left(2d_{n}+1\right)!}\right|\\
= & \left|\frac{V_{g,n}\left(\mathbf{x}\right)}{V_{g,n}}-\prod_{i=1}^{n}\frac{\sinh\left(\frac{x_{i}}{2}\right)}{\left(\frac{x_{i}}{2}\right)}-\sum_{j=1}^{k}\frac{1}{g^{j}}\sum_{d_{1},\dots,d_{n}=0}^{\infty}\frac{b_{\mathbf{d},n}^{j}}{2^{2\left|\d\right|}}\frac{x_{1}^{2d_{1}}}{\left(2d_{1}+1\right)!}\cdots\frac{x_{n}^{2d_{n}}}{\left(2d_{n}+1\right)!}\right|\\
\leqslant & \sum_{j=1}^{k}\frac{1}{g^{j}}\sum_{\substack{d_{1},\dots,d_{n}\\
\left|\d\right|\geqslant3g+n-2
}
}\frac{|b_{\mathbf{d},n}^{j}|}{2^{2\left|\d\right|}}\frac{x_{1}^{2d_{1}}}{\left(2d_{1}+1\right)!}\cdots\frac{x_{n}^{2d_{n}}}{\left(2d_{n}+1\right)!}\\
 & \,\,\,\,\,\,\,\,\,\,\,\,\,\,\,\,\,\,\,\,\,\,\,+\frac{1}{g^{k+1}}\sum_{\substack{d_{1},\dots,d_{n}\\
|\mathbf{d}|\leqslant3g+n-3
}
}Q_{n}^{k}(\d)\frac{1}{2^{2\left|\d\right|}}\frac{x_{1}^{2d_{1}}}{\left(2d_{1}+1\right)!}\cdots\frac{x_{n}^{2d_{n}}}{\left(2d_{n}+1\right)!}.
\end{align*}
By majorising the $b_{\d,n}^{j}$ by the polynomials $q_{n}^{j}(\d)$
and then applying Lemma \ref{lem:poly-rels} to bound $q_{n}^{j}(\d)g^{-j}$
by $g^{-k-1}Q_{n}^{k}(\d)$ in the first term on the right-hand side
(since $|\d|>3g+n-2$) we can bound the right-hand side by 
\begin{align*}
\frac{k}{g^{k+1}}\sum_{\substack{d_{1},\dots,d_{n}}
}Q_{n}^{k}(\d)\prod_{i=1}^{n}\frac{\left(\frac{x_{i}}{2}\right)^{2d_{i}}}{\left(2d_{i}+1\right)!} & \leqslant\frac{k(c(k+1))^{c(k+1)}(n+k+1)^{k+1}}{g^{k+1}}\sum_{\substack{t_{1,},\ldots,t_{n}\\
\sum_{i}t_{i}\leqslant2(k+1)
}
}\prod_{i=1}^{n}\sum_{d=0}^{\infty}\frac{d^{t_{i}}\left(\frac{x_{i}}{2}\right)^{2d}}{t_{i}!\left(2d\right)!}\\
 & \hspace*{-1cm}=\frac{k(c(k+1))^{c(k+1)}(n+k+1)^{k+1}}{g^{k+1}}\sum_{\substack{t_{1,},\ldots,t_{n}\\
\sum_{i}t_{i}\leqslant2(k+1)
}
}\prod_{i=1}^{n}\frac{1}{t_{i}!}\left(\frac{x_{i}}{2}\frac{\mathrm{d}}{\mathrm{d}x_{i}}\right)^{t_{i}}\cosh\left(\frac{x_{i}}{2}\right).
\end{align*}
Expanding the differential operator and noting that its coefficients
satisfy the recursion relation of the Stirling numbers of the second
kind ${t_{i} \brace p}$, we obtain
\[
\left(\frac{x_{i}}{2}\frac{\mathrm{d}}{\mathrm{d}x_{i}}\right)^{t_{i}}=\frac{1}{2^{t_{i}}}\sum_{p=1}^{t_{i}}{t_{i} \brace p}x_{i}^{p}\frac{\mathrm{d}^{p}}{\mathrm{d}x_{i}^{p}}.
\]
Then since $\frac{\mathrm{d}^{p}}{\mathrm{d}x_{i}^{p}}\cosh\left(\frac{x_{i}}{2}\right)\leqslant\frac{1}{2^{p}}e^{\frac{x_{i}}{2}}$
and ${t_{i} \brace p}\leqslant{t_{i} \choose p}p^{t_{i}-p}$ we obtain
\[
\left(\frac{x_{i}}{2}\frac{\mathrm{d}}{\mathrm{d}x_{i}}\right)^{t_{i}}\cosh\left(\frac{x_{i}}{2}\right)\leqslant\frac{e^{\frac{x_{i}}{2}}}{2^{t_{i}}}\sum_{p=1}^{t_{i}}{t_{i} \choose p}x_{i}^{p}t_{i}^{t_{i}-p}\leqslant\frac{e^{\frac{x_{i}}{2}}}{2^{t_{i}}}(x_{i}+t_{i})^{t_{i}}\leqslant\frac{e^{\frac{x_{i}}{2}}t_{i}!}{2^{t_{i}}}(x_{i}+1)^{t_{i}}.
\]
It hence follows that
\begin{align*}
\frac{k}{g^{k+1}}\sum_{\substack{d_{1},\dots,d_{n}}
}Q_{n}^{k}(\d)\prod_{i=1}^{n}\frac{\left(\frac{x_{i}}{2}\right)^{2d_{i}}}{\left(2d_{i}+1\right)!} & \leqslant e^{\frac{|\mathbf{x}|}{2}}(|\mathbf{x}|+1)^{c(k+1)}\sum_{\substack{t_{1,},\ldots,t_{n}\\
\sum_{i}t_{i}\leqslant2(k+1)
}
}\frac{1}{2^{\sum_{i}t_{i}}}\\
 & =e^{\frac{|\mathbf{x}|}{2}}(|\mathbf{x}|+1)^{c(k+1)}\sum_{p=0}^{2(k+1)}{p+n-1 \choose n-1}\frac{1}{2^{p}}\\
 & \leqslant e^{\frac{|\mathbf{x}|}{2}}(|\mathbf{x}|+1)^{c(k+1)}(n+c(k+1))^{c(k+1)}\sqrt{e},
\end{align*}
and the result follows immediately since the bound on the coefficients
is dealt with identically to the error term.
\end{proof}
We now prove Corollary \ref{thm:main-estimate-WP-expansion}.
\begin{proof}[Proof of Corollary \ref{thm:main-estimate-WP-expansion}]
 For the first part, we have $a,b$ and $q\geqslant1$ with $2g>2a+b=2g-q$
and $b\leqslant3q$. Then,
\begin{align*}
\frac{V_{a,b}\left(\mathbf{x}\right)}{V_{g}} & =\frac{V_{a,b}\left(\mathbf{x}\right)}{V_{a,b}}\cdot\frac{V_{a,b}}{V_{g}}\\
 & =\frac{V_{a,b}\left(\mathbf{x}\right)}{V_{a,b}}\cdot\prod_{j=0}^{q-1}\frac{4\pi^{2}\left(2a+b+j-2\right)V_{a,b+j}}{V_{a,b+j+1}}\prod_{j=0}^{g-a-1}\frac{V_{a+j,2g-2a-2j}}{V_{a+j+1,2g-2a-2j-2}}\prod_{j=0}^{q-1}\frac{1}{4\pi^{2}\left(2a+b+j-2\right)}.
\end{align*}
Now let $a=g-p$ so that because $b\leqslant3q,$ we have $p\leqslant2q$.
By Lemma \ref{lem:bordered}, there is a $c>0$ such that for any
$k$ there exist continuous functions $\left\{ \alpha_{b,j}\left(\mathbf{x}\right)\right\} _{j=1}^{k}$
with 
\begin{equation}
\left|\frac{V_{a,b}\left(\mathbf{x}\right)}{V_{a,b}}-\prod_{i=1}^{b}\frac{\sinh\left(\frac{x_{i}}{2}\right)}{\left(\frac{x_{i}}{2}\right)}-\sum_{j=1}^{k}\frac{1}{(g-p)^{j}}\alpha_{b,j}\left(\mathbf{x}\right)\right|\leqslant\frac{\left(cbk\right)^{ck}\left(1+\left|\mathbf{x}\right|\right)^{ck}\exp\left(\frac{1}{2}\left|\mathbf{x}\right|\right)}{(g-p)^{k+1}},\label{eq:bordered-expansion}
\end{equation}
whenever $g>c(p+k)^{c}$. By $A3(k)$ in Theorem \ref{thm:expansions}
we obtain an expansion for $\frac{4\pi^{2}\left(2a+b+j-2\right)V_{a,b+j}}{V_{a,b+j+1}}$
up to order $k$ with a base $g-p$ holding for $g>c(p+k)^{c}$ (the
$c$ is taken larger than that obtained in the induction statement)
and by $A4(k)$ in Theorem \ref{thm:expansions}, we obtain an expansion
for $\frac{V_{a+j,2p-2j}}{V_{a+j+1,2p-2j-2}}$ up to order $k$ with
a base $g-p+j+1$ holding for $g>c(p+k)^{c}$. Moreover, we obtain
an asymptotic expansion of $\prod_{j=0}^{2p-b-1}\frac{1}{4\pi^{2}\left(2a+b+j-2\right)}$
via a Taylor expansion. Using Lemma \ref{lem:shift} to shift these
expansions to have base $g$ and then Corollary \ref{cor:product-with-zeroterms}
to obtain an expansion of their product, we obtain the result in a
very similar way as in the proofs of Lemmas \ref{lem:S_1}, \ref{lem:S_2}
and \ref{lem:S_3} after noting that by assumption, $q\leqslant k$.
Note that due to the $\prod_{j=0}^{q-1}\frac{1}{4\pi^{2}\left(2a+b+j-2\right)}$
term, the asymptotic expansion starts at order $g^{-q}$ which is
at most order $g^{-1}$ due to $q\geqslant1$.

For the second part, since $2a+b=2g+n$ and $g-q\leqslant a\leqslant g$,
we have 
\[
\frac{V_{a,b}\left(\mathbf{x}\right)}{V_{g,n}}=\frac{V_{a,b}\left(\mathbf{x}\right)}{V_{a,b}}\cdot\prod_{i=0}^{g-a-1}\frac{V_{a+i,b-2i}}{V_{a+i+1,b-2i-2}}.
\]
By Lemma \ref{lem:bordered}, there is a $c>0$ such that for any
$k$ there exist continuous functions $\left\{ \alpha_{b,j}\left(\mathbf{x}\right)\right\} _{j=1}^{k}$
for which the expansion (\ref{eq:bordered-expansion}) holds whenever
$g>c(b+q+k)^{c}$. And from $A4(k)$ in in Theorem \ref{thm:expansions},
we obtain an expansion for $\frac{V_{a+i,b-2i}}{V_{a+i+1,b-2i-2}}$
up to order $k$ with a base $a+i+1$ holding for $g>c(n+q+k)^{c}$.
The desired asymptotic expansion then holds after application of Lemma
\ref{lem:shift} and Lemma (\ref{lem:product-exp}). The leading order
$g^{0}$ term is the product of the $g^{0}$ expansions of $\frac{V_{a,b}\left(\mathbf{x}\right)}{V_{a,b}}$
and $\frac{V_{a+i,b-2i}}{V_{a+i+1,b-2i-2}}$ which are $\prod_{i=1}^{b}\frac{\sinh\left(\frac{x_{i}}{2}\right)}{\left(\frac{x_{i}}{2}\right)}$
and $1$ respectively which yields the stated leading order asymptotic.
\end{proof}

\bibliographystyle{abbrv}
\bibliography{unitarybundlesbib}

\begin{thebibliography}{10}

\bibitem{Alon}
N.~Alon.
\newblock Eigenvalues and expanders.
\newblock {\em Combinatorica}, 6(2):83--96, 1986.
\newblock Theory of computing (Singer Island, Fla., 1984).

\bibitem{An.Mo2022}
N.~Anantharaman and L.~Monk.
\newblock A high-genus asymptotic expansion of {{Weil}}--{{Petersson}} volume
  polynomials.
\newblock {\em Journal of Mathematical Physics}, 63(4):043502, Apr. 2022.

\bibitem{An.Mo2023}
N.~Anantharaman and L.~Monk.
\newblock {Friedman-Ramanujan} functions in random hyperbolic geometry and
  application to spectral gaps.
\newblock {\em arXiv:2304.02678}, 2023.

\bibitem{AM-Mobius}
N.~Anantharaman and L.~Monk.
\newblock A {M}oebius inversion formula to discard tangled hyperbolic surfaces.
\newblock {\em arXiv:2401.01601}, 2024.

\bibitem{An.Mo.sg2024}
N.~Anantharaman and L.~Monk.
\newblock Spectral gap of random hyperbolic surfaces.
\newblock {\em arXiv:2403.12576}, 2024.

\bibitem{An.Mo2025}
N.~Anantharaman and L.~Monk.
\newblock {Friedman-Ramanujan} functions in random hyperbolic geometry and
  application to spectral gaps {II}.
\newblock {\em arXiv:2502.12268}, 2025.

\bibitem{Ba.Mo.Po2025}
W.~Ballmann, S.~Mondal, and P.~Polymerakis.
\newblock On the spectral stability of finite coverings.
\newblock {\em arXiv:2507.17466}, 2025.

\bibitem{Be2016}
N.~Bergeron.
\newblock {\em The spectrum of hyperbolic surfaces}.
\newblock Springer, 2016.

\bibitem{Bo.Gi.Sc1984}
O.~Bohigas, M.-J. Giannoni, and C.~Schmit.
\newblock Characterization of chaotic quantum spectra and universality of level
  fluctuation laws.
\newblock {\em Physical review letters}, 52(1):1, 1984.

\bibitem{bordenave2015new}
C.~Bordenave.
\newblock A new proof of {F}riedman's second eigenvalue theorem and its
  extension to random lifts.
\newblock {\em Annales scientifiques de l'Ecole normale sup{\'e}rieure},
  4(6):1393--1439, 2020.

\bibitem{BordenaveCollins}
C.~Bordenave and B.~Collins.
\newblock Eigenvalues of random lifts and polynomials of random permutation
  matrices.
\newblock {\em Ann. of Math. (2)}, 190(3):811--875, 2019.

\bibitem{Bo.Co2023}
C.~Bordenave and B.~Collins.
\newblock Norm of matrix-valued polynomials in random unitaries and
  permutations.
\newblock {\em arXiv preprint arXiv:2304.05714}, 2023.

\bibitem{BrooksMakover}
R.~Brooks and E.~Makover.
\newblock Random construction of {R}iemann surfaces.
\newblock {\em J. Differential Geom.}, 68(1):121--157, 2004.

\bibitem{Buser2}
P.~Buser.
\newblock On the bipartition of graphs.
\newblock {\em Discrete Appl. Math.}, 9(1):105--109, 1984.

\bibitem{Bu2010}
P.~Buser.
\newblock {\em Geometry and Spectra of Compact {{Riemann}} Surfaces}.
\newblock Modern Birkh{\"a}user Classics. Birkh{\"a}user Boston, Inc., Boston,
  MA, 2010.

\bibitem{BBD}
P.~Buser, M.~Burger, and J.~Dodziuk.
\newblock Riemann surfaces of large genus and large {$\lambda_1$}.
\newblock In {\em Geometry and analysis on manifolds ({K}atata/{K}yoto, 1987)},
  volume 1339 of {\em Lecture Notes in Math.}, pages 54--63. Springer, Berlin,
  1988.

\bibitem{CMN}
I.~Calder{\'o}n, M.~Magee, and F.~Naud.
\newblock Spectral gap of random {S}chottky surfaces.
\newblock {\em arXiv:2407.21506}, 2024.

\bibitem{Cassidy}
E.~Cassidy.
\newblock Random permutations acting on {$k$}--tuples have near--optimal
  spectral gap for {$k=poly(n)$}.
\newblock {\em arXiv:2412.13941}, 2024.

\bibitem{Ch.Ga.Tr.va2024}
C.-F. Chen, J.~{Garza-Vargas}, J.~A. Tropp, and R.~{van Handel}.
\newblock A new approach to strong convergence.
\newblock {\em Annals of Mathematics, to appear}, 2024.

\bibitem{Ch.Ga.Ha2024}
C.-F. Chen, J.~{Garza-Vargas}, and R.~van Handel.
\newblock A new approach to strong convergence {{II}}. {{The}} classical
  ensembles.
\newblock {\em arXiv:2412.00593}, Nov. 2024.

\bibitem{Do-norburry}
N.~Do and P.~Norbury.
\newblock Weil-{P}etersson volumes and cone surfaces.
\newblock {\em Geom. Dedicata}, (141):93--107, 2009.

\bibitem{Ey.Ga.Gi.Gr.Le2023}
B.~Eynard, E.~Garcia-Failde, A.~Giacchetto, P.~Gregori, and D.~Lewa{\'n}ski.
\newblock Resurgent large genus asymptotics of intersection numbers.
\newblock {\em arXiv preprint arXiv:2309.03143}, 2023.

\bibitem{Ey.Ga.Gr.Le.Sc2024}
B.~Eynard, E.~Garcia-Failde, P.~Gregori, D.~Lewa{\'n}ski, and R.~Schiappa.
\newblock Resurgent asymptotics of {Jackiw--Teitelboim} gravity and the
  nonperturbative topological recursion.
\newblock {\em Annales Henri Poincare}, 25(9):4121--4193, 2024.

\bibitem{FriedmanRelative}
J.~Friedman.
\newblock Relative expanders or weakly relatively {R}amanujan graphs.
\newblock {\em Duke Math. J.}, 118(1):19--35, 2003.

\bibitem{Friedman}
J.~Friedman.
\newblock A proof of {A}lon's second eigenvalue conjecture and related
  problems.
\newblock {\em Mem. Amer. Math. Soc.}, 195(910):viii+100, 2008.

\bibitem{Go1984}
W.~M. Goldman.
\newblock The symplectic nature of fundamental groups of surfaces.
\newblock {\em Advances in Mathematics}, 54(2):200--225, 1984.

\bibitem{Gr2001}
S.~Grushevski.
\newblock An explicit upper bound for {Weil-Petersson} volumes of the moduli
  spaces of punctured {R}iemann surfaces.
\newblock {\em Mathematische Annalen}, 321:1--13, 2001.

\bibitem{Gu.Pa.Yo11}
L.~Guth, H.~Parlier, and R.~Young.
\newblock Pants decompositions of random surfaces.
\newblock {\em Geometric and Functional Analysis}, 21:1069--1090, 2011.

\bibitem{HaagerupThr}
U.~Haagerup and S.~Thorbj{\o}rnsen.
\newblock A new application of random matrices: {${\rm Ext}(C^*_{\rm
  red}(F_2))$} is not a group.
\newblock {\em Ann. of Math. (2)}, 162(2):711--775, 2005.

\bibitem{Three-mates}
W.~Hide, D.~Macera, and J.~Thomas.
\newblock Spectral gap with polynomial rate for random covering surfaces.
\newblock {\em arXiv:2505.08479}, 2025.

\bibitem{Hi.Ma2023}
W.~Hide and M.~Magee.
\newblock Near optimal spectral gaps for hyperbolic surfaces.
\newblock {\em Annals of Mathematics}, 198(2):791--824, 2023.

\bibitem{Hi.Mo.Nau2025}
W.~Hide, J.~Moy, and F.~Naud.
\newblock On the spectral gap of negatively curved covers.
\newblock {\em arXiv:2502.10733}, 2025.

\bibitem{Hu.Mc.Ya2024}
J.~Huang, T.~McKenzie, and H.-T. Yau.
\newblock Ramanujan property and edge universality of random regular graphs.
\newblock {\em arXiv:2412.20263}, 2025.

\bibitem{HY21}
J.~Huang and H.-T. Yau.
\newblock Spectrum of random {$d$}-regular graphs up to the edge.
\newblock {\em Comm. Pure Appl. Math}, 77:1635--1723, 2024.

\bibitem{Huber}
H.~Huber.
\newblock \"{U}ber den ersten {E}igenwert des {L}aplace-{O}perators auf
  kompakten {R}iemannschen {F}l\"{a}chen.
\newblock {\em Commentarii Mathematici Helvetici}, 49:251--259, 1974.

\bibitem{Li.Wr2024}
M.~Lipnowski and A.~Wright.
\newblock Towards optimal spectral gaps in large genus.
\newblock {\em The Annals of Probability}, 52(2):545--575, 2024.

\bibitem{Li.Xu2009}
K.~Liu and H.~Xu.
\newblock Recursion {{Formulae}} of {{Higher Weil}}{\textendash}{{Petersson
  Volumes}}.
\newblock {\em International Mathematics Research Notices}, 2009(5):835--859,
  Jan. 2009.

\bibitem{Lo.Ma2023}
L.~Louder and M.~Magee.
\newblock Strongly convergent unitary representations of limit groups.
\newblock {\em arXiv:2210.08953}, Jan. 2023.

\bibitem{LPS}
A.~Lubotzky, R.~Phillips, and P.~Sarnak.
\newblock Ramanujan graphs.
\newblock {\em Combinatorica}, 8(3):261--277, 1988.

\bibitem{Magee-Survey}
M.~Magee.
\newblock Strong convergence of unitary and permutation representations of
  discrete groups.
\newblock {\em Proceedings of the ECM, to appear}, 2025.

\bibitem{Magee-delaSalle}
M.~Magee and M.~de~la Salle.
\newblock Strong asymptotic freeness of haar unitaries in quasi-exponential
  dimensional representations.
\newblock {\em arXiv:2409.03626}, 2024.

\bibitem{Ma.Na.Pu2022}
M.~Magee, F.~Naud, and D.~Puder.
\newblock A random cover of a compact hyperbolic surface has relative spectral
  gap {$\frac{3}{16}-\varepsilon$}.
\newblock {\em Geometric and Functional Analysis}, 32(3):595--661, June 2022.

\bibitem{Ma.Pu.vH2025}
M.~Magee, D.~Puder, and R.~van Handel.
\newblock Strong convergence of uniformly random permutation representations of
  surface groups.
\newblock {\em arXiv:2504.08988}, 2025.

\bibitem{Ma.Th23}
M.~Magee and J.~Thomas.
\newblock Strongly convergent unitary representations of right-angled artin
  groups.
\newblock {\em Duke Mathematical Journal, to appear}, 2023.

\bibitem{MSS15}
A.~Marcus, D.~Spielman, and N.~Srivastava.
\newblock Interlacing families {I}: {B}ipartite {R}amanujan graphs of all
  degrees.
\newblock {\em Ann. of Math.}, 182:307--325, 2015.

\bibitem{M88}
G.~Margulis.
\newblock Explicit group theoretic constructions of combinatorial schemes and
  their applications to the design of expanders and concentrators.
\newblock {\em J. Prob. of Info. Trans}, page 39–46, 1988.

\bibitem{Mi2007}
M.~Mirzakhani.
\newblock Simple geodesics and {{Weil-Petersson}} volumes of moduli spaces of
  bordered {{Riemann}} surfaces.
\newblock {\em Inventiones mathematicae}, 167(1):179--222, 2007.

\bibitem{Mi07a}
M.~Mirzakhani.
\newblock Weil-{{Petersson}} volumes and intersection theory on the moduli
  space of curves.
\newblock {\em Journal of the American Mathematical Society}, 20(1):1--23,
  2007.

\bibitem{MirzakhaniRandom}
M.~Mirzakhani.
\newblock Growth of {W}eil-{P}etersson volumes and random hyperbolic surfaces
  of large genus.
\newblock {\em Journal of Differential Geometry}, 94(2):267--300, 2013.

\bibitem{Mi.Pe19}
M.~Mirzakhani and B.~Petri.
\newblock Lengths of closed geodesics on random surfaces of large genus.
\newblock {\em Commentarii Mathematici Helvitici}, 94(4):869--889, 2019.

\bibitem{Mi.Zo2015}
M.~Mirzakhani and P.~Zograf.
\newblock Towards large genus asymptotics of intersection numbers on moduli
  spaces of curves.
\newblock {\em Geometric and Functional Analysis}, 25(4):1258--1289, July 2015.

\bibitem{Mo25}
J.~Moy.
\newblock Spectral gap of random covers of negatively curved noncompact
  surfaces.
\newblock {\em arXiv:2505.07056}, 2025.

\bibitem{NXW2023}
X.~Nie, Y.~Xue, and Y.~Wu.
\newblock Large genus asymptotics for lengths of separating closed geodesics on
  random surfaces.
\newblock {\em Journal of Topology}, 16(1):106--175, 2023.

\bibitem{ST2001}
G.~Schumacher and S.~Trapani.
\newblock Estimates of weil–petersson volumes via effective divisors.
\newblock {\em Communications in Mathematical Physics}, 222, Aug. 2001.

\bibitem{Selberg}
A.~Selberg.
\newblock Harmonic analysis and discontinuous groups in weakly symmetric
  {R}iemannian spaces with applications to {D}irichlet series.
\newblock {\em J. Indian Math. Soc. (N.S.)}, 20:47--87, 1956.

\bibitem{Se1965}
A.~Selberg.
\newblock On the estimation of {F}ourier coefficients of modular forms.
\newblock {\em Proc. Sympos. Pure Math.}, Vol. VIII:1--15, 1965.

\bibitem{Ti63}
A.~F. Timan.
\newblock Theory of approximation of functions of a real variable. translated
  by {J}. {B}erry.
\newblock {\em International Series of Monographs on Pure and Applied
  Mathematics}, 34.

\bibitem{Tr.Wi1996}
C.~A. Tracy and H.~Widom.
\newblock On orthogonal and symplectic matrix ensembles.
\newblock {\em Communications in Mathematical Physics}, 177:727--754, 1996.

\bibitem{vH2025}
R.~van Handel.
\newblock The strong convergence phenomenon.
\newblock {\em Current Developments in Mathematics 2025, to appear}, 2025.

\bibitem{Wu.Xu2022}
Y.~Wu and Y.~Xue.
\newblock Random hyperbolic surfaces of large genus have first eigenvalues
  greater than {$\frac{3}{16}-\varepsilon$}.
\newblock {\em Geometric and Functional Analysis}, 32(2):340--410, 2022.

\end{thebibliography}

\noindent Will Hide, \\
Mathematical Institute,\\
University of Oxford, \\
Andrew Wiles Building, OX2 6GG Oxford,\\
United Kingdom\\
\texttt{william}\texttt{.hide@}\texttt{maths}\texttt{.ox.ac.}\texttt{uk}\texttt{}~\\
\texttt{}~\\
Davide Macera, \\
Institute for Applied Mathematics\\
Faculty of Mathematics and Natural Sciences\\
Endenicher Allee 60\\
53115 Bonn\\
\texttt{macera}\texttt{@}\texttt{iam}\texttt{.}\texttt{uni-bonn}\texttt{.}\texttt{de}\texttt{}~\\
\texttt{}~\\
Joe Thomas, \\
Department of Mathematical Sciences,\\
Durham University, \\
Lower Mountjoy, DH1 3LE Durham,\\
United Kingdom\\
\texttt{joe}\texttt{.}\texttt{thomas}\texttt{@}\texttt{durham}\texttt{.ac.}\texttt{uk}
\end{document}